\newcommand{\algor}[1]{}
\newcommand{\invadraw}[1]{#1}
\newcommand{\jonly}[1]{}
\def\con{\mbox{con}}
\def\Sq{\mathop{\fam0 Sq}}
\def\diag{\mathop{\fam0 diag}}
\def\con{\mathop{\fam0 Con}}
\def\id{\mathop{\fam0 id}}
\def\sgn{\mathop{\fam0 sgn}}
\def\gsgn{\mathop{\fam0 gsgn}}
\def\csgn{\mathop{\fam0 csgn}}
\def\t{\widetilde}
\def\R{{\mathbb R}} \def\Z{{\mathbb Z}}
\long\def\comment#1\endcomment{}
\theoremstyle{theorem}
\newtheorem{theorem}{Theorem}[subsection]
    \newtheorem{lemma}[theorem]{Lemma}
    \newtheorem{proposition}[theorem]{Proposition}
    \newtheorem{conjecture}[theorem]{Conjecture}
\theoremstyle{definition}
\newtheorem{remark}[theorem]{Remark}
\newtheorem{example}[theorem]{Example}
\newtheorem{pr}[theorem]{Assertion}
\newtheorem{problem}[theorem]{Problem}
\begin{document}


\title{Invariants of graph drawings in the plane}

\author{A. Skopenkov
\footnote{Moscow Institute of Physics and Technology, Independent University of Moscow.
Supported in part by the Simons-IUM fellowship and Russian Foundation for Basic Research Grant No. 19-01-00169.
Email: \texttt{skopenko@mccme.ru}.
\texttt{https://users.mccme.ru/skopenko/}.}
}

\date{}

\maketitle

\begin{abstract}
We present a simplified exposition of some classical and modern results on graph drawings in the plane.
These results are chosen so that they illustrate some spectacular recent higher-dimensional results on the border of geometry, combinatorics and topology.
We define a $\Z_2$-valued {\it self-intersection invariant} (i.e. the van Kampen number) and its generalizations.
We present elementary formulations and arguments accessible to mathematicians not specialized in any of the areas discussed.
So most part of this survey could be studied before textbooks on algebraic topology, as an introduction to starting ideas of algebraic topology motivated by algorithmic, combinatorial and geometric problems.
\end{abstract}


\tableofcontents

\subsection*{Introduction}\label{s:intrgr}
\addcontentsline{toc}{subsection}{Introduction}

{\bf Why this survey could be interesting.}
In this survey we present a simplified exposition of some classical and modern results on graph drawings in the plane (\S\ref{0-gra}, \S\ref{s:mucoge}).
These results are chosen so that they illustrate some spectacular recent higher-dimensional results on the border  of geometry, combinatorics and topology (\S\ref{s:high}).

We exhibit a connection between non-planarity of the complete graph $K_5$ on 5 vertices and
results on intersections in the plane of {\it algebraic interiors} of curves (namely, the Topological Radon-Tverberg Theorems in the plane \ref{ratv-totv}, \ref{ratv-tvpl}).
Recent resolution of the Topological Tverberg Conjecture \ref{c:tt} on multiple intersections for maps from simplex to Euclidean space used a higher-dimensional $r$-fold generalization of this connection (i.e. a connection between the $r$-fold van Kampen-Flores Conjecture \ref{c:vkfg} and Conjecture \ref{c:tt}).

Recall that invariants of knots were initially defined using {\it presentations of the fundamental group} at the beginning of the 20th century and, even in a less elementary way, at the end of the 20th century.
An elementary description of knot invariants via plane diagrams (initiated in J. Conway's work of the second half of the 20th century) increased interest in knot theory and made that
part of topology a part of graph theory as well.

Analogously, we present elementary formulations and arguments that do not
involve configuration spaces and cohomological obstructions.
Nevertheless, the main contents of this survey is an {\it introduction to starting ideas of
algebraic topology} (more precisely, to configuration spaces and
cohomological obstructions) {\it motivated by algorithmic, combinatorial and geometric problems.}
We believe that describing simple applications of topological methods in elementary language makes
these methods more accessible (although this is called `detopologization' in \cite[\S1]{MTW12}).
Such an introduction \jonly{is independent of} could be studied before textbooks on algebraic topology
(if a reader is ready to accept without proof some results from \S\ref{s:rainbow}).
For textbooks written in the spirit of this article see e.g. \cite{Sk20, Sk}.


More precisely, it is fruitful to invent or to interpret homotopy-theoretical arguments in terms of invariants defined via intersections or preimages.\footnote{Examples are definition of the mapping degree \cite[\S2.4]{Ma03}, \cite[\S8]{Sk20} and definition of the Hopf invariant via linking, i.e. via intersection \cite[\S8]{Sk20}.
Importantly, `secondary' not only `primary' invariants allow interpretations in terms of {\it framed} intersections; for a recent application see \cite{Sk17}.}
In this survey we describe in terms of double and multiple intersection numbers those arguments that are often exposed in a less elementary language of homotopy theory.

No knowledge of algebraic topology is required here.
Important ideas are introduced in non-technical particular cases and then generalized.\footnote{The `minimal generality' principle (to introduce important ideas in non-technical particular cases) was put forward by classical figures in mathematics and mathematical exposition, in particular by V. Arnold.
Cf. `detopologization' tradition described in \cite[Historical notes in \S1]{MTW12}.}
So this survey is accessible to mathematicians not specialized in the area.


{\bf Contents of this survey.}
Both \S\ref{0-gra} and \S\ref{s:mucoge} bring the reader to the frontline of research.

In \S\ref{0-gra} we present a polynomial algorithm for recognizing graph planarity (\S\ref{0vkam2}),
together with all the necessary definitions, some motivations  and preliminary results (\S\S\ref{0hygr}--\ref{0grapl}).
This algorithm, the corresponding planarity criterion (Proposition \ref{t:pl-vk}) and a simple proof of the non-planarity of $K_5$ (\S\ref{0grapl}) are interesting because they can be generalized to higher dimensions and higher multiplicity of intersections (Theorems \ref{t:lvkf}, \ref{t:tvkf}, \ref{t:rec}, \ref{t:nphh}, \ref{t:ozmawaco}  and  \ref{t:algal}, see also  Conjectures \ref{c:lvkfg} and \ref{c:vkfg}).


In \S\ref{s:mucoge} we introduce in a simplified way results on multiple intersections in the plane of {\it algebraic interiors} of curves  (namely, the topological Radon-Tverberg theorems \ref{ratv-totv},  \ref{ratv-tvpl}, and  the topological Tverberg conjecture in the plane \ref{ratv-tvplc}).
A generalization of the ideas from \S\ref{0grapl}, \S\ref{0-ratvtopl}, \cite[Lemmas 6 and 7]{ST17}  could give a simple proof of the topological Tverberg theorem, and of its `quantitative' version, at least for primes (\S\ref{s:triple}).
This is interesting in particular because the topological Tverberg conjecture in the plane \ref{ratv-tvplc} is still open.
We also give a simplified formulation of the \"Ozaydin theorem in the plane \ref{vankamz-oz} on cohomological obstructions for multiple intersections of algebraic interiors of curves.
This formulation can perhaps be applied to obtain a simple proof.


In \S\ref{s:high} we indicate how elementary results of \S\ref{0-gra} and \S\ref{s:mucoge} illustrate some spectacular recent higher-dimensional results.
Detailed description of those recent results is outside purposes of this survey.
In \S\ref{s:hirtvkf} we state classical and modern results and conjectures on complete hypergraphs (since the results only concern {\it complete} hypergraphs, we present simplified statements not involving hypergraphs).
These results generalize non-planarity of $K_5$ (Proposition \ref{0-ra2}.a and Theorem \ref{grapl-nonalm})
and the results on intersections of algebraic interiors of curves (linear and topological Radon and Tverberg theorems in the plane \ref{0-radpl}, \ref{ratv-tv1}, \ref{ratv-totv}, \ref{ratv-tvpl}).
In \S\ref{s:hialg} we state modern algorithmic results on realizability of arbitrary hypergraphs; they generalize Proposition \ref{grapl-ea}.b.
In \S\ref{s:hiae} we do the same for {\it almost realizability}.
This notion is defined there but implicitly appeared in \S\ref{0grapl}, \S\ref{s:mucoge}.
We introduce \"Ozaydin Theorem \ref{t:ozmawa}, which is a higher-dimensional version of the above-mentioned
\"Ozaydin Theorem in the plane \ref{vankamz-oz}, and which is an important ingredient in recent resolution of the topological Tverberg conjecture \ref{c:tt}.

The main notion of this survey linking together \S\ref{0-gra} and \S\ref{s:mucoge} is a $\Z_2$-valued `self-intersection' invariant  (i.e. the van Kampen and the Radon numbers defined in \S\ref{0grapl}, \S\ref{0-ratvtopl}).
Its generalizations to $\Z_r$-valued invariants and to cohomological obstructions are defined and used to obtain elementary formulations and proofs of \S\ref{0-gra} and \S\ref{s:mucoge} mentioned above.
For applications of other generalizations see \cite[\S4]{Sk16}, \cite{Sk16', ST17}.
For invariants of plane curves and caustics see \cite{Ar95} and the references therein.

\begin{remark}[generalizations in five different directions]\label{r:five}
The main results exposed in this survey can be obtained from the easiest of them (Linear van Kampen-Flores and Radon Theorems for the plane \ref{0-ra2}.a, \ref{0-radpl}) by generalizations in five different directions.
Thus the results can naturally be numbered by a vector of length five.

First, a result can give intersection of simplices of some dimensions, or of the same dimension.
This relates \S\ref{0-gra} to \S\ref{s:mucoge}.

Second, a `qualitative' result on the existence of intersection can be generalized to a `quantitative' result on  the algebraic number of intersections.
(This relates Proposition \ref{0-ra2}.a to Proposition \ref{0-ra2}.b,
Theorem \ref{ratv-tvpl} to Problem \ref{p:boundary}, etc.)

Third, a linear result can be generalized to a topological result, which is here equivalent to a piecewise linear result.
(This relates Propositions \ref{0-ra2}.ab to Theorem \ref{grapl-nonalm} and Lemma \ref{11-vankam}, etc.)

Fourth, a result on double intersection can be generalized to multiple intersections.
(This relates Proposition \ref{0-ra2}.a and Theorem \ref{grapl-nonalm} to Theorems  \ref{ratv-tv1} and \ref{ratv-tvpl}, etc; note that the $r$-tuple intersection version might not hold for $r$ not a power of a prime.)

Fifth, a result in the plane can be generalized to higher dimensions.
This relates \S\ref{0-gra} and \S\ref{s:mucoge} to \S\ref{s:high}.
\end{remark}


{\bf Structure of this survey.}
Subsections of this survey (except appendices) can be read independently of each other, and so in any order.
In one subsection we indicate relations to other subsections, but these indications can be ignored.
If in one subsection we use a definition or a result from the other, then we only use a specific shortly stated  definition or result.
However, we recommend to read subsections in any order consistent with the following diagram.
$$\xymatrix{
\ref{0hygr} \ar[r] &\ref{s:grapl} \ar[r] &\ref{0grapl} \ar[r] \ar[dr] \ar@(ur,ul)[rr]
&\ref{0vkam2} \ar[drr] &\ref{s:hirtvkf} \ar[r] &\ref{s:hialg} \ar[r] &\ref{s:hiae}\\
&\ref{0thint} \ar@(ur,ul)[rr] \ar[ur] &\ref{s:ratvpl} \ar[r] & \ref{0-ratvtopl} \ar[r] &\ref{s:ttp} \ar[r] \ar[u] &\ref{s:tvkam} \ar[u]
}$$
Main statements are called theorems, important statements are lemmas or propositions, less important statements which are not referred to outside a subsection are assertions.
Less important or better known material is moved to appendices.




{\bf Historical notes}.
All the results of this survey are known.

For history, more motivation,
more proofs, related problems and generalizations see surveys \cite{BBZ, Zi11, Sk16, BZ, Sh18} (to \S\ref{s:mucoge} and \S\ref{s:hirtvkf}) and \cite{Sk06, Sk14}, \cite[\S1]{MTW}, \cite[\S5 `Realizability of complexes']{Sk} (to \S\ref{0-gra} and \S\ref{s:hialg}).
Discussion of those related problems and generalizations is outside purposes of this survey.

I do not give original references to trivial or standard results (e.g. to Propositions \ref{0-ra2}
and \ref{ratv-totv2}), as well as to classical results
for which original references are given in the above surveys (e.g. to F\'ary or Radon theorems \ref{grapl-fary}, \ref{0-radpl}, \ref{t:lr}).
I do give original references to modern results from \cite{HT74, BB} on.
I also refer to some proofs which are not original but which could be useful to the reader (for example, in connection with this survey).

Exposition of the polynomial algorithm for recognizing graph planarity (\S\ref{0vkam2}) is new.
First, we give an elementary statement of the corresponding planarity criterion (Proposition \ref{t:pl-vk}).
Second, we do not require knowledge of cohomology theory but show how some notions of that theory naturally appear in studies planarity of graphs.
Cf. \cite{Fo04}, \cite[Appendix D]{MTW}, \cite[\S1.4.2]{Sc13}.

Elementary formulation of the topological Radon theorem (\S\ref{0-ratvtopl}) in the spirit of \cite{Sc04, SZ} is
presumably folklore.
The proof follows the idea of L. Lovasz and A. Schrijver \cite{LS98}.
Elementary formulation of the topological Tverberg theorem and conjecture in the plane (\S\ref{s:tvtopl})
is due to T. Sch\"oneborn and G. Ziegler  \cite{Sc04, SZ}.
An idea of an elementary proof of that result (\S\ref{s:triple}) and a simplified formulation of M. \"Ozaydin's results (\S\ref{s:tvkam}) are apparently new.

The paper \cite{ERS} was used in preparation of the first version of this paper; most part of the first version of \S\ref{s:mucoge} is written jointly with A. Ryabichev; some proofs from \S\ref{s:appgr} were written by A. Ryabichev and T. Zaitsev.
I am grateful to P. Blagojevi{\'c}, I. Bogdanov, G. Chelnokov, A. Enne, R. Fulek, R. Karasev, Yu. Makarychev, A. Ryabichev, M. Schaefer, G. Sokolov, M. Tancer, T. Zaitsev, R. \v Zivaljevi\'c and anonymous referees for useful discussions.


{\bf Conventions.}
Unless the opposite is
indicated, by {\it $k$ points in the plane} we mean a $k$-element subset of the plane; so these $k$ points are assumed to be pairwise distinct.
We often denote points by numbers not by letters with subscript numbers.
Denote $[n]:=\{1,2,\ldots,n\}$.

\section{Planarity of graphs}\label{0-gra}

A (finite) {\bf graph} $(V,E)$ is a finite set $V$ together with a collection $E\subset {V\choose 2}$
of two-element subsets of $V$ (i.e. of non-ordered pairs of elements).\footnote{The common term for this notion is {\it a graph without loops and multiple edges} or {\it a simple graph}.}
The elements of this finite set $V$ are called {\it vertices}.
Unless otherwise indicated, we assume that $V=[|V|]$.
The pairs of vertices from $E$ are called {\it edges}.
The edge joining vertices $i$ and $j$ is denoted by $ij$ (not by $(i,j)$ to avoid confusion with ordered pairs).

\begin{figure}[h]
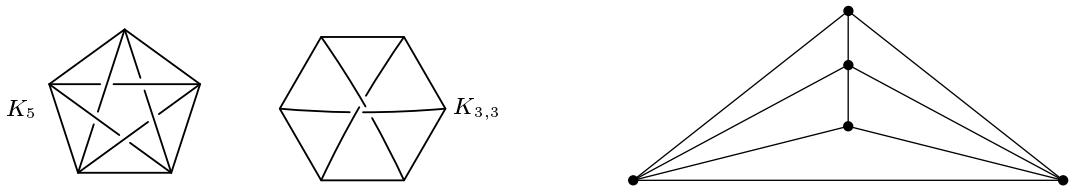
\centering
\includegraphics{{resko00.23}.eps}\qquad\qquad \includegraphics[scale=0.5]{K-5-without-edge.eps}
\caption{(Left) Nonplanar graphs $K_5$ and $K_{3,3}$.
\newline
(Right) A planar drawing of $K_5$ without one of the edges.}\label{k5}
\end{figure}

A {\it complete graph} $K_n$ on $n$ vertices is a graph in which every pair of vertices is connected by an edge, i.e.  $E={V\choose 2}$.
A {\it complete bipartite graph} $K_{m,n}$  is a graph whose vertices can be partitioned into two subsets of $m$ elements and of $n$ elements, so that

$\bullet$ every two vertices from different subsets are joined by an edge, and

$\bullet$ every edge connects vertices from different subsets.


In \S\ref{0hygr} and \S\ref{s:grapl} we present two formalizations
of realizability of graphs in the plane: the linear realizability and the planarity (i.e. piecewise linear realizability).
The formalizations turn out to be equivalent by F\'ary Theorem \ref{grapl-fary}; their higher-dimensional generalizations (\S\ref{s:hialg}) are not equivalent, see \cite{vK41}, \cite[\S2]{MTW}.
Both formalizations are important.
These formalizations are presented independently of each other, so \S\ref{0hygr} is essentially not used below (except for Proposition \ref{0-ra2}.b making the proof of Lemma \ref{11-vankam} easier, and footnote \ref{f:alg}, which are trivial and not important).
However, before more complicated study of planarity it could be helpful to study linear realizability.
The tradition of studying both linear and piecewise linear problems is also important for \S\ref{s:mucoge}, see Remark \ref{r:five}.

\subsection{Linear realizations of graphs}\label{0hygr}

\begin{proposition}\label{0-ra2}\footnote{These are `linear' versions of the nonplanarity of the graphs $K_5$ and $K_{3,3}$.
But they can be proved easier (because the Parity Lemma \ref{0-even}.b and \cite[Intersection Lemma 1.4.4]{Sk20} are not required for the proof). }
(a) (cf. Theorems \ref{grapl-nonalm} and \ref{0-radpl})
From any $5$ points in the plane one can choose two disjoint pairs such that the segment with the ends at the first pair intersects the segment with the ends at the second pair.

(b) (cf. Proposition \ref{ratv-vk2l} and Lemma \ref{11-vankam})
If no $3$ of $5$ points in the plane belong to a line, then the number of intersection points of interiors of segments joining the $5$ points is odd.


(c) (cf. Remark \ref{grapl-k33}.d) Two triples of points in the plane are given.
Then there exist two intersecting segments without common vertices and such that each segment joins points from distinct triples.
\end{proposition}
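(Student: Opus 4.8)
The plan is to prove part (a) first, deduce part (b) by a perturbation-and-parity argument, and finally handle part (c) by a reduction to (a). For part (a), I would argue by contradiction and use convex position. Given $5$ points in the plane, consider their convex hull. If all $5$ points are vertices of the convex hull, they form a convex pentagon, and then the two diagonals $13$ and $24$ (in cyclic order $1,2,3,4,5$) are disjoint pairs whose segments cross inside the pentagon — done. If the convex hull is a quadrilateral with one point inside, the two diagonals of the quadrilateral cross. If the convex hull is a triangle $123$ with two points $4,5$ inside, then the segment $45$ lies inside the triangle; extend the line through $4$ and $5$ until it meets the boundary of the triangle in two points lying on two distinct sides, say on sides $12$ and $13$; then segment $45$ separates vertex $1$ from the opposite side, or more simply: the segment $45$ must cross one of the three cevians from a vertex through... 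I would instead phrase it cleanly: in the triangle case, one of the three segments $14,\ 24,\ 34$ together with an appropriate edge gives a crossing — concretely, point $5$ lies in one of the three triangles $\{1,2,4\},\{2,3,4\},\{1,3,4\}$ into which $4$ subdivides $123$, say $5\in\triangle 124$, and then segment $35$ crosses segment $12$ or $14$ or $24$; choosing the edge of $\triangle124$ that $5$ "faces" gives two disjoint pairs. The degenerate/collinear cases are handled by noting that three collinear points already yield trivially a pair of segments one of which contains a point of the other, or by perturbing.

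For part (b), assume no $3$ of the $5$ points are collinear. I would show the parity of the number of crossings is a topological invariant of the position of the $5$ points (as long as general position is maintained), by a continuity argument: move the $5$ points along any generic path in the configuration space; the number of crossings changes only when three points become collinear, i.e. when a crossing point passes through a vertex or a crossing appears/disappears at the boundary, and each such event changes the count by an even number (a crossing point sliding off the end of a segment through a vertex causes two crossings to appear or disappear simultaneously, by a local analysis). Hence the parity is the same for all generic $5$-point configurations, and it suffices to compute it for the convex pentagon, where the $\binom{5}{2}=10$ segments are the $5$ sides and $5$ diagonals; the sides contribute no interior crossings, and the $5$ diagonals of a convex pentagon pairwise cross in exactly $5$ points (the vertices of the inner pentagram), which is odd.

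For part (c), given triples $A=\{a_1,a_2,a_3\}$ and $B=\{b_1,b_2,b_3\}$, I would choose one point to discard and reduce to part (a). More efficiently: consider the $6$ points; if some point of one triple lies in the convex hull of the other triple, say $b_1\in\triangle a_1a_2a_3$, then one of the segments $b_1a_1,\ b_1a_2,\ b_1a_3$ is "blocked" and we instead use that $b_1$ lies in one of the sub-triangles and get a segment from $b_1$ to an $a_i$ crossing a segment $a_ja_k$ — but we need the crossing segment to also join an $a$ to a $b$, so this needs care. The cleaner route: apply part (a) to the $5$ points $a_1,a_2,a_3,b_1,b_2$; this yields two crossing segments with disjoint endpoint-pairs among these $5$ points. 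If one of the resulting segments is $a_ia_j$ and the other is $b_1b_2$, I am done with that crossing — but I need each segment to be of the form (an $a$)-(a $b$). So instead I would iterate: the crossing pairs from (a) cannot both be "mixed" in the wrong way forever; examining the finitely many combinatorial types of which pairs (a) returns, and invoking (a) on several $5$-subsets, one can always extract a crossing of the required mixed type. The main obstacle will be exactly this bookkeeping in part (c): ensuring the pair of crossing segments produced has \emph{one endpoint in each triple for each segment}, rather than a same-triple segment crossing. I expect to resolve it by a short case analysis on the convex hull of the six points (which points are extreme, and the interleaving pattern of $a$'s and $b$'s along the hull), analogous to the proof of the linear Radon-type statement \ref{0-radpl} and Remark \ref{grapl-k33}.d that it is compared to.
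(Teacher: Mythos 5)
Your parts (a) and (b) are essentially correct. For (a) you do exactly what the paper intends: a case analysis of the convex hull (pentagon, quadrilateral with one interior point, triangle with two interior points), with collinear degenerations handled separately. For (b) you take a genuinely different route: rather than simply counting the crossings in each hull case (one checks the count is $5$, $3$ and $1$ respectively, which is the ``easier'' proof the footnote to Proposition \ref{0-ra2} alludes to), you prove invariance of the parity along a generic motion of the five points and evaluate it on the convex pentagon. This works, and it has the merit of being the linear counterpart of the proof of Lemma \ref{11-vankam}; but be aware it is heavier than needed here, and your local analysis is stated imprecisely: at an event where a point $C$ passes through the interior of the segment $AB$, the two affected crossings are those of $AB$ with $CD$ and with $CE$ ($D,E$ the remaining points), and they need not both appear or both disappear --- if $D$ and $E$ lie on opposite sides of the line $AB$, one appears while the other disappears and the count changes by $0$. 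The change is even in every case, so the conclusion stands, but the parenthetical claim as written is wrong.

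Part (c) has a genuine gap. First, a crossing of $a_ia_j$ with $b_1b_2$ proves nothing for (c), since (c) requires \emph{both} segments to join points of distinct triples; your sentence ``I am done with that crossing'' contradicts the statement and should go. More importantly, the actual plan --- apply (a) to several $5$-point subsets and ``extract'' a mixed--mixed crossing by bookkeeping over ``the finitely many combinatorial types of which pairs (a) returns'' --- is not an argument: part (a) gives no control whatsoever over which disjoint pairs cross, no case analysis is actually set up, and there is no reason a crossing supplied on a $5$-subset is ever of the required bipartite type. Statement (c) is the linear non-planarity of $K_{3,3}$ and needs its own proof. Two routes work: a convex-hull case analysis of the six points (the paper warns this is lengthy), or --- what the paper itself recommends, and what is closest to your part (b) --- the same deformation argument applied to the bipartite count: for six points in general position the parity of the number of crossings among vertex-disjoint mixed segments is invariant under generic motion (when a point crosses the interior of a mixed segment exactly two such pairs change status, and collinearities within a single triple affect nothing), and for a convex hexagon on which the two triples alternate only the three long diagonals cross pairwise, giving the odd value $3$; degenerate configurations then follow by a small perturbation, since intersection of closed segments is a closed condition. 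As submitted, however, (c) is not proved.
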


Proposition \ref{0-ra2} is easily proved by analyzing the {\it convex hull} of the points  (see definition in \S\ref{s:ratvpl}).
See another proof in \jonly{\cite[\S1.6]{Sk18}.} \S\ref{s:appgr}.
For part (c) the analysis is lengthy, so using methods from the proof of Lemmas \ref{11-vankam} and \ref{star} might be preferable.

\begin{theorem}[General Position; see proof in \S\ref{s:appgr}]\label{0-gp3}
For any $n$ there exist $n$ points in $3$-space such that no segment joining the points intersects the interior of any other such segment.
\end{theorem}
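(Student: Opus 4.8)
The plan is to realize the required configuration as any $n$ points in \emph{general position} in $\R^3$, meaning that no three of them lie on a common line and no four of them lie on a common plane, and then to check that such a configuration automatically has the stated property. So first I would observe that general position configurations exist for every $n$. One way: take the points $p_t:=(t,t^2,t^3)$ for $n$ pairwise distinct reals $t$; for any four distinct parameters the corresponding $4\times4$ Vandermonde-type determinant is nonzero, so those four points span a non-degenerate tetrahedron, and in particular no three of the $n$ points are collinear. Alternatively one builds the configuration by adding points one at a time, at each step choosing a point off the finitely many lines and planes spanned by the points already chosen.

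It then remains to prove: if $n$ points are in general position in $\R^3$, then no segment joining two of them meets the interior of a segment joining two others. I would argue by contradiction. Let $AB$ and $PQ$ be two distinct such segments (so $\{A,B\}\neq\{P,Q\}$), and suppose some point $x$ lies on $AB$ and in the interior of $PQ$; in particular $x\neq P$ and $x\neq Q$. There are two cases according to how many endpoints the segments share (they cannot share both, since they are distinct).

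If $\{A,B\}\cap\{P,Q\}=\emptyset$, then $A,B,P,Q$ are four distinct points, and $x$ lies on both lines $AB$ and $PQ$. Two lines with a common point are coplanar, so $A,B,P,Q$ all lie in one plane, contradicting general position. If instead the segments share exactly one endpoint, say $A=P$ with $B\neq Q$, then since $x$ lies in the interior of $PQ=AQ$ we have $x\neq A$; thus $A$ and $x$ are two distinct common points of the lines $AB$ and $AQ$, so these lines coincide and $A,B,Q$ are collinear, again contradicting general position. This completes the argument.

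I do not expect a genuine obstacle here — the proof is elementary linear algebra/affine geometry. The only point that needs care is the shared-endpoint case together with the fact that it is the \emph{interior} of a segment that is involved: an endpoint of one segment lying on the other segment is not to be counted, which is exactly why the hypothesis ``no three collinear'' is needed in addition to ``no four coplanar.''
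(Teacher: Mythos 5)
Your proof is correct and follows essentially the same route as the paper: construct $n$ points with no three collinear and no four coplanar (the paper does this by the same inductive avoidance-of-planes argument you mention as an alternative; your moment-curve construction is just a more explicit variant), and then rule out an intersection by the same two-case analysis on whether the segments share an endpoint.
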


\begin{proposition}\label{1-k5-1}\footnote{See proof in \jonly{\cite[\S1.6]{Sk18}.} \S\ref{s:appgr}.
Propositions \ref{1-k5-1} and
\jonly{\cite[1.6.1]{Sk18}} \ref{grapl-ram}
are not formally used in this paper.
However, they illustrate by 2-dimensional examples how boolean functions appear in the study of embeddings.
This is one of the ideas behind recent higher-dimensional
$NP$-hardness Theorem \ref{t:nphh}.}
Suppose that no 3 of 5 points $1,2,3,4,5$ in the plane belong to a line.
If the segments

(a) $jk$, $1\le j<k\le 5$, $(j,k)\ne(1,2)$, have disjoint interiors then the points $1$ and $2$ lie on different  sides of the triangle $345$, cf. figure \ref{k5}, right;

(b) $jk$, $1\le j<k\le 5$, $(j,k)\not\in\{(1,2),(1,3)\}$, have disjoint interiors then

EITHER the points $1$ and $2$ lie on different sides of the triangle $345$,

OR the points $1$ and $3$ lie on different sides of the triangle $245$.

(c) $jk$, $1\le j<k\le 5$, $(j,k)\not\in\{(1,2),(1,3),(1,4)\}$, have disjoint interiors then

EITHER the points $1$ and $2$ lie on different sides of the triangle $345$,

OR the points $1$ and $3$ lie on different sides of the triangle $245$,

OR the points $1$ and $4$ lie on different sides of the triangle $235$.

\end{proposition}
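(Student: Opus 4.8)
The plan is to prove all three parts in one stroke, deducing them from Proposition \ref{0-ra2}.b by a parity count. For $j\in\{2,3,4\}$ let $T_j$ be the triangle whose vertices are the three points of $\{2,3,4,5\}\setminus\{j\}$, so $T_2=345$, $T_3=245$ and $T_4=235$; saying that $1$ and $j$ ``lie on different sides of $T_j$'' means that exactly one of them lies inside the triangle $T_j$. The first observation is that the edges of $K_5$ whose relative interior can meet the open segment $1j$ are exactly the three edges of $T_j$: an edge of $K_5$ avoids both endpoints of $1j$ precisely when it joins two of the remaining three points, while an edge sharing a vertex with $1j$ meets $1j$ only at that vertex, since no three of the five points are collinear. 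Hence, writing $c(1j)$ for the number of edges of $K_5$ crossing the open segment $1j$, we have that $c(1j)$ equals the number of edges of $T_j$ met by that segment.

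The key step is to show that $c(1j)$ is odd if and only if $1$ and $j$ lie on different sides of $T_j$. The boundary $\partial T_j$ is a simple closed curve, so a segment with both endpoints off $\partial T_j$ meets $\partial T_j$ in an even number of points if its endpoints are on the same side, and in an odd number of points if they are on different sides. Because no three of the five points are collinear: neither $1$ nor $j$ lies on a line through two vertices of $T_j$, so neither endpoint lies on $\partial T_j$; the segment $1j$ passes through no vertex of $T_j$; and $1j$ meets each edge of $T_j$ in at most one point. Therefore the number of points of $\overline{1j}\cap\partial T_j$ equals $c(1j)$, and the equivalence follows. Put $\varepsilon_j:=1$ if $1$ and $j$ lie on different sides of $T_j$, and $\varepsilon_j:=0$ otherwise; thus $c(1j)\equiv\varepsilon_j\pmod 2$.

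Now I would bring in the hypothesis. In part (a) every pair of $K_5$-edges with intersecting interiors contains the edge $12$; in parts (b) and (c) every such pair contains one of $12,13$, respectively one of $12,13,14$. Any two of these distinguished edges share the vertex $1$, hence do not cross, so each crossing point of $K_5$ lies on exactly one distinguished edge (and on exactly one edge of the corresponding triangle $T_j$, two edges of which meet only at a vertex that the distinguished edge avoids); in particular no three of the ten segments are concurrent. Consequently the number $N$ of intersection points of $K_5$ equals $c(12)$ in part (a), $c(12)+c(13)$ in part (b), and $c(12)+c(13)+c(14)$ in part (c). By Proposition \ref{0-ra2}.b the number $N$ is odd. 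Reducing modulo $2$: in (a) we get $\varepsilon_2\equiv1$, which is the assertion; in (b), $\varepsilon_2+\varepsilon_3$ is odd, so $\varepsilon_2=1$ or $\varepsilon_3=1$, which is the stated dichotomy; in (c), $\varepsilon_2+\varepsilon_3+\varepsilon_4$ is odd, so at least one of the three equals $1$, which is the stated trichotomy.

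All the substance sits in Proposition \ref{0-ra2}.b; granting it, the argument is bookkeeping, and the one spot that wants care is the degeneracy discussion of the second paragraph, where ``no three collinear'' is used to keep the count of intersections of $1j$ with $\partial T_j$ honest. For part (a) in isolation one could argue directly via the convex hull of the five points, excluding by hand the cases ``$1$ and $2$ both inside $T_2$'' and ``both outside $T_2$''; but that case analysis becomes unwieldy for part (c), so the parity count is the cleaner route.
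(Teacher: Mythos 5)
Your proof is correct and follows essentially the same route as the paper's own argument in \S\ref{s:appgr}: reduce each part to Proposition \ref{0-ra2}.b by observing that, under the hypothesis, every interior intersection point lies on one of the distinguished edges $1j$ and on an edge of the opposite triangle, so the odd total forces an odd crossing count (hence separation) for at least one $j$. The only cosmetic difference is that the paper perturbs to general position and treats part (a) with "(b),(c) analogous," whereas you handle all three parts uniformly and rule out concurrences directly from the hypothesis.
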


Informally speaking, a graph is {\it linearly realizable} in the plane if the graph has a planar drawing without self-intersection and such that every edge is drawn as a line segment.
Formally, a graph $(V,E)$ is called {\bf linearly realizable} in the plane if there exists $|V|$ points in the plane corresponding to the vertices so that no segment joining a pair (of points) from $E$
intersects the interior of any other such segment.\footnote{We do not require that `no isolated vertex lies on any of the segments'
because this property can always be achieved.}


The following results are classical:

$\bullet$ $K_4$ and $K_5$ without one of the edges are linearly realizable in the plane (figure \ref{k5}, right).

$\bullet$ neither $K_5$ nor $K_{3,3}$ is linearly realizable in the plane (Proposition \ref{0-ra2}.ac);

$\bullet$ every graph is linearly realizable in $3$-space
(General Position Theorem \ref{0-gp3}; linear
realizability in $3$-space is defined analogously to the plane).

A criterion for linear realizability of graphs in the plane follows from the F\'ary Theorem \ref{grapl-fary} below and any planarity criterion (e.g. Kuratowski Theorem \ref{grapl-kur} below).


\begin{proposition}[{\cite[Chapters 1 and 6]{Ta}}; cf. \S\ref{s:hialg}; see comments in \jonly{\cite[\S1.6]{Sk18}}
\S\ref{s:appgr}] \label{1-alg} There is an algorithm for recognizing the linear realizability of graphs in the plane.\footnote{\label{f:alg} Rigorous definition of the notion of algorithm is complicated, so we do not give it here.
Intuitive understanding of algorithms is sufficient to read this text.
To be more precise, the above statement means that there is an algorithm for calculating the function from the set of all graphs to $\{0,1\}$, which maps graph to 1 if the graph is linearly realizable in the plane, and to 0 otherwise.
All other statements on algorithms in this paper can be formalized analogously.}
\end{proposition}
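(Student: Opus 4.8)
The plan is to reduce linear realizability to planarity and then invoke a planarity criterion of finite character. First I would apply F\'ary Theorem \ref{grapl-fary}, according to which a graph is linearly realizable in the plane if and only if it is planar (i.e. admits a piecewise linear, equivalently topological, drawing without self-intersections). So it suffices to describe an algorithm recognizing planarity of a given finite graph $(V,E)$.

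For that I would use Kuratowski Theorem \ref{grapl-kur}: $(V,E)$ is planar if and only if it contains no subgraph which is a subdivision of $K_5$ or of $K_{3,3}$. A subdivision of $K_5$ (resp. $K_{3,3}$) inside $(V,E)$ is specified by a choice of $5$ (resp. $6$) branch vertices among the at most $|V|$ vertices and by $\binom{5}{2}$ (resp. $9$) pairwise internally disjoint paths in $(V,E)$ joining the prescribed pairs of branch vertices; each such path may be taken of length at most $|V|$, so there are only finitely many candidate configurations, and checking them all is a terminating procedure (more efficiently one runs a disjoint-paths routine, cf. \cite[Chapters 1 and 6]{Ta}). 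The algorithm outputs $0$ if some such subdivision is found and $1$ otherwise; by the F\'ary and Kuratowski Theorems this is exactly the indicator of linear realizability.

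An alternative, not using F\'ary or Kuratowski, is to observe that linear realizability of $(V,E)$ is the assertion that there exist $2|V|$ real numbers (coordinates of points representing the vertices) such that for every pair of independent edges $ij,kl\in E$ the corresponding open segments are disjoint. Disjointness of two open segments is a boolean combination of polynomial equalities and inequalities in the coordinates --- it is decided by the signs of the relevant $2\times 2$ determinants, exactly as in Propositions \ref{0-ra2}.b and \ref{1-k5-1}, with finitely many degenerate subcases (collinear or shared-line configurations). Hence ``$(V,E)$ is linearly realizable'' is the truth value, evaluated over $\R$, of a fixed first-order sentence in the language of ordered fields, and this truth value is computable by Tarski's theorem on decidability of the first-order theory of real closed fields.

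I do not expect a genuine obstacle: the statement is soft and the work is bookkeeping. The only care needed is to make the Kuratowski-subdivision search manifestly finite (bounding branch vertices and path lengths as above), or, in the second approach, to verify that ``the open segment $ij$ is disjoint from the open segment $kl$'' is genuinely semialgebraic, including all degenerate cases; one also notes, as in the text, that isolated vertices may be dropped and that the ``no isolated vertex lies on a segment'' clause is automatically achievable and adds nothing.
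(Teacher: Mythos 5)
Your proposal is correct, but both of your routes differ from the paper's own argument. The paper's proof (sketched in \S\ref{s:appgr}) is self-contained and does not use the F\'ary or Kuratowski Theorems: it introduces an isotopy relation on general position point sets and proves that for each $n$ there are only finitely many isotopy classes of $n$-element general position subsets of the plane (the observation that $n$ lines split the plane into at most $(n^2+n+2)/2$ parts lets one build representatives by placing points one at a time into cells of the arrangement of lines spanned by the previously placed points); since whether two segments on a given point set cross depends only on the isotopy class, one checks the non-crossing condition on a representative of each class. Your first route instead outsources the geometry to F\'ary \ref{grapl-fary} and the topology to Kuratowski \ref{grapl-kur}, reducing everything to a finite search for forbidden subdivisions; this is logically sound (the subdivision search does not depend on Proposition \ref{1-alg}), but note that the paper deliberately proves \ref{1-alg} directly because it then \emph{deduces} the planarity algorithm, Proposition \ref{grapl-ea}.a, from \ref{1-alg} together with F\'ary --- your organization inverts that, trading the paper's elementary enumeration for two substantial theorems quoted without proof. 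Your second route (realizability as a first-order sentence over $\R$, decided by Tarski) is closest in spirit to the paper's, since both rest on the fact that realizability depends only on the sign pattern (order type) of the configuration; the paper's isotopy lemma is precisely an elementary, hands-on substitute for that general decision procedure, so what you gain in brevity you pay for with a much heavier black box.
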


By the F\'ary Theorem \ref{grapl-fary} and Propositions \ref{grapl-ea}.bc polynomial and even linear algorithms exist.

\subsection{Algorithmic results on graph planarity}\label{s:grapl}


Informally speaking, a graph is planar if it can be drawn `without self-intersections' in the plane.
Formally, a graph $(V,E)$ is called {\bf planar} (or piecewise-linearly realizable in the plane) if in the plane there exist

$\bullet$ a set of $|V|$ points corresponding to the vertices, and

$\bullet$ a set of non-self-intersecting polygonal lines joining pairs (of points) from $E$

such that no of the polygonal lines intersects the interior of any other polygonal line.\footnote{Then any two of the polygonal lines either are disjoint or intersect by a common end vertex.
We do not require that `no isolated vertex lies on any of the polygonal lines'
because this property can always be achieved.
See an equivalent definition of planarity in the beginning of \S\ref{0grapl}.}

For example, the graphs $K_5$ and $K_{3,3}$ (fig.~\ref{k5}) are not planar by Theorem \ref{grapl-nonalm} and its analogue for $K_{3,3}$ \jonly{\cite[Remark 1.4.4]{Sk18}.} Remark \ref{r:grapl-k33}.

The following theorem shows that any planar graph can be drawn without self-intersections in the plane so that every edge is drawn as a segment.

\begin{theorem}[F\'ary]\label{grapl-fary} If a graph is planar (i.e. piecewise-linearly realizable in the plane), then it is linearly realizable in the plane.
\end{theorem}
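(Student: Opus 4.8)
The plan is to prove the statement by induction on the number $n$ of vertices, after two preliminary reductions. First, it suffices to treat \emph{maximal} planar graphs, i.e. planar graphs to which no edge can be added without destroying planarity; equivalently (for $n\ge 3$), graphs admitting a plane drawing in which every face, including the unbounded one, is bounded by a triangle. Indeed, given a plane drawing of a planar graph $G$, one may add polygonal arcs until every face is a triangle, obtaining a maximal planar graph $G'\supseteq G$; a linear realization of $G'$ restricts to a linear realization of $G$ by forgetting the added segments. So from now on $G$ is maximal planar with a fixed plane drawing whose outer face is a triangle $abc$. Second, I will in fact prove the slightly stronger statement that $G$ has a straight-line drawing inducing the \emph{same} combinatorial embedding and realizing the outer triangle as \emph{any prescribed} triangle $T$ in the plane; this stronger form is what makes the induction go through.

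The base case $n=3$ is trivial: draw $G$ as $T$. For the inductive step, $n\ge 4$: a maximal planar graph on $n$ vertices has $3n-6$ edges, so the degrees sum to $6n-12$; since the three outer vertices each have degree $\ge 3$, some \emph{interior} vertex $v$ has degree $d\le 5$. Delete $v$; the neighbours of $v$ form, in the given embedding, a $d$-cycle $C_v=u_1u_2\cdots u_d$ bounding the region that was the star of $v$, which is now an empty $d$-gonal face. Add $d-3$ chords inside this face to re-triangulate it, choosing them so that no added chord is already an edge of $G$ (this choice is the delicate point, discussed below), and let $G''$ be the resulting maximal planar graph; it has $n-1$ vertices and the same outer triangle. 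By the inductive hypothesis $G''$ has a straight-line drawing with outer face $T$. In it, $C_v$ is drawn as a simple $d$-gon $P$ with $d\le 5$ which the drawn chords cut into $d-2$ genuine triangles; in particular $P$ is star-shaped, so it has a point $p$ in its kernel (for $d\le 4$ this is immediate; for $d=5$ one uses that the re-triangulating chords were chosen to avoid the edges of $G$, which is what prevents $C_v$ from having a ``bad'' chord). Placing $v$ at such a $p$, joining it to $u_1,\dots,u_d$ by segments — which stay inside $P$ and hence cross nothing already drawn — and erasing the added chords yields the desired straight-line drawing of $G$.

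The main obstacle is precisely this vertex-reinsertion step: a straight-line drawing can be extended by one more vertex inside a face only if that face is drawn as a star-shaped polygon, and an arbitrary $d$-gon need not be star-shaped once $d\ge 5$. This forces the induction to (i) select $v$ of degree at most $5$, via the edge count, and (ii) re-triangulate the hole $C_v$ with care: if some chord $u_iu_{i+2}$ already belongs to $G$, then $\{v,u_i,u_{i+2}\}$ spans a \emph{separating triangle} of $G$, and one should instead recurse on the portions of $G$ inside and outside that triangle rather than on $G-v$. Carrying out this bookkeeping — the separating-triangle case split, together with checking star-shapedness of the at-most-pentagon $P$ — is the only genuinely careful part; everything else is routine. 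An alternative that avoids the ad hoc pentagon analysis is to prove the stronger ``rubber-band'' assertion that pinning the outer triangle as $T$ and placing every interior vertex at the barycentre of its neighbours already produces a straight-line drawing; but that route needs a bit of linear algebra and a connectivity argument, so I would fall back on it only if the combinatorial case analysis above proved unwieldy.
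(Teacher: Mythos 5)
The paper does not prove F\'ary's theorem at all: it is quoted as a classical result (the historical notes explicitly defer to the cited surveys for references), so there is no in-paper argument to compare yours against. Your sketch is the standard inductive proof --- reduce to a triangulation, delete an interior vertex $v$ of degree $d\le 5$, re-triangulate the hole, redraw by induction with a prescribed outer triangle and the same combinatorial embedding, and re-insert $v$ in the kernel of the resulting $d$-gon --- and as a sketch it is essentially sound.

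Two of your justifications are misplaced, and one step deserves more care. First, star-shapedness of the polygon $P$ has nothing to do with the chords you drew or with their avoiding $E(G)$: being cut into triangles by chords is true of \emph{every} simple polygon and implies nothing; what you actually need is that every simple polygon with at most $5$ vertices is star-shaped (at most two reflex vertices; this is the $\lfloor n/3\rfloor$ art-gallery bound for $n\le5$, or a short case analysis), plus a point of the kernel from which the segments to $u_1,\dots,u_d$ meet $P$ only at their endpoints --- i.e.\ a full-dimensional kernel or a small general-position perturbation of the inductive drawing (perturbing interior vertices keeps it a straight-line embedding with outer face $T$). Second, the role of choosing the $d-3$ chords outside $E(G)$ is purely combinatorial: it keeps $G''$ simple so the inductive hypothesis applies. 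For $d\le5$ such a choice always exists, so your separating-triangle detour is avoidable: for $d=4$ at most one diagonal of $C_v$ can already be an edge of $G$ (both would have to be drawn in the disk on the other side of $C_v$ and would cross), and for $d=5$ the existing chords are pairwise non-interleaving in that disk, hence at most two of the five, and each forbids only two of the five fan triangulations, so a usable fan remains. Finally, the opening reduction ``add edges until every face is a triangle'' needs the standard lemma that every planar graph on at least three vertices is a spanning subgraph of a \emph{simple} triangulation; when a face is not bounded by a cycle, or a candidate chord already occurs as an edge elsewhere, one first makes the graph $2$-connected and then fans from a suitable boundary vertex. With these points tightened your induction goes through; the Tutte barycentric fallback you mention is also legitimate (maximal planar graphs on $\ge4$ vertices are $3$-connected) but heavier than needed.
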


For history (involving more mathematicians to whom this theorem is attributed) and proofs see 
\cite[Chapter 6]{Ta}.


\begin{proposition}\label{grapl-ea} (a) There is an algorithm for recognizing graph planarity.


(b) (cf. Theorems \ref{grapl-ea-r} and \ref{t:rec}) There is an algorithm for recognizing graph planarity, which is polynomial in the number of vertices $n$ in the graph (i.e. there are numbers $C$ and $k$ such that for each graph the number of steps in the algorithm does not exceed $Cn^k$).%
\footnote{Since for a planar graph with $n$ vertices and $e$ edges we have $e \le 3n-6$ and since there are planar graphs with  $n$ vertices and $e$ edges such that $e=3n-6$, the `complexity' in the number of edges is `the same' as the `complexity' in the number of vertices.}

(c) There is an algorithm for recognizing graph planarity, which is linear in the number of vertices $n$ in the graph (linearity is defined as polynomiality with $k=1$).
\end{proposition}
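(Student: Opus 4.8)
The plan is to prove the three parts in increasing order of difficulty, each time reducing the question ``is $G$ planar?'' to a computation whose cost we can bound. Logically part (c) subsumes (b) and (a), but it is cleaner to treat them separately, since the algorithms behind (a) and (b) are short and self-contained while (c) needs serious bookkeeping.

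For part (a) I would use the Kuratowski Theorem \ref{grapl-kur}: a graph is planar if and only if it contains no subgraph that is a subdivision of $K_5$ or of $K_{3,3}$. Given $G=(V,E)$, enumerate all subsets $S\subseteq V$ of size $5$ or $6$ (the candidate branch vertices) and all ways of grouping them into the edge pattern of $K_5$ or $K_{3,3}$; for each pattern, test whether $G$ contains internally disjoint paths realizing it (a bounded number of path-finding queries). There are finitely many subsets and patterns, so this halts and returns the correct answer. This is hopelessly inefficient, but decidability is all (a) asks. (One could equally well enumerate all rotation systems of $G$ and check Euler's formula $v-e+f=2$; again this is a finite search.)

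For part (b) the naive Kuratowski search is exponential, so I would instead invoke the planarity criterion Proposition \ref{t:pl-vk}, which I expect characterizes planarity by the vanishing of a $\Z_2$-valued obstruction (the van Kampen number / a $\Z_2$-cohomology class). After first discarding any graph with $e>3n-6$ edges (nonplanar by Euler's formula), this obstruction is computed by solving a system of linear equations over $\Z_2$ with $O(n)$ unknowns and equations, so Gaussian elimination over $\Z_2$ decides planarity in $O(n^3)$ time. As an alternative not relying on \S\ref{0grapl}, one can use an incremental fragment-addition scheme of Demoucron--Malgrange--Pertuiset type: maintain a planar embedding of a subgraph, repeatedly insert a fragment that fits into the fewest faces, and declare $G$ nonplanar as soon as some fragment fits into no face; correctness follows from a short exchange argument, and the running time is $O(n^2)$.

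For part (c) I would invoke the Hopcroft--Tarjan algorithm \cite{HT74}: run a depth-first search, compute lowpoints, decompose $G$ into a nested family of paths, and process these paths in a carefully chosen order while maintaining, for the already embedded part, a compact record of which ``side'' of the current outer cycle each pending branch must lie on. The conflicts among branches are stored so that each edge is touched $O(1)$ times amortized. This last point is exactly where the difficulty lies: the mathematical content (that a linear-time planarity test exists) is already plausible from the $O(n^2)$ algorithm above, but proving linearity requires the delicate amortized analysis of \cite{HT74} (or, alternatively, the PQ-tree machinery of Booth--Lueker, or a left--right / Boyer--Myrvold characterization), and verifying those data-structure invariants is the main obstacle. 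I would therefore present (a) and (b) in full and, for (c), give only the DFS/lowpoint setup and the statement of the invariants, citing \cite{HT74} for the amortized bound.
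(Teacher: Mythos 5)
Your proposal follows essentially the same route as the paper: part (b) via the equivalence $(i)\Leftrightarrow(iii)$ of Proposition \ref{t:pl-vk} together with Gauss elimination over $\Z_2$, part (c) by citing \cite{HT74} (the paper also points to the short proof in \cite{BM04}), and for (a) the paper's primary argument is Proposition \ref{1-alg} plus the F\'ary Theorem \ref{grapl-fary}, but it explicitly lists the Kuratowski-based exhaustive search you describe as an alternative. One small correction: the system in Proposition \ref{t:pl-vk}(iii) has one variable per vertex--edge pair and one equation per pair of non-adjacent edges, i.e. $O(n^2)$ of each once graphs with $e>3n-6$ are discarded, so elimination runs in time roughly $O(n^6)$ rather than $O(n^3)$ --- still polynomial, which is all that part (b) claims.
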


Part (a) follows from Proposition~\ref{1-alg} and the F\'ary Theorem \ref{grapl-fary}.
Part (a) can also be proved using Kuratowski Theorem \ref{grapl-kur} below (see for details \cite[Chapters 1 and 6]{Ta}) or considering {\it thickenings} \cite[\S1]{Sk20}.
However, the corresponding algorithms are slow, i.e. have more than $2^n$ steps, if the graph has $n$ vertices (`exponential complexity').
So other ways of recognizing planarity are interesting.

Part (b) is deduced from equivalence of planarity and solvability of certain system of linear equations
with coefficients in $\Z_2$ (see $(i)\Leftrightarrow(iii)$ of Proposition \ref{t:pl-vk} below).
The deduction follows because there is a polynomial in $N$ algorithm for recognizing the solvability of a system of $\le N$ linear equations with coefficients in $\Z_2$ and with $\le N$ variables (this algorithm is constructed using {\it Gauss elimination of variables algorithm}, see details in \cite{CLR, Vi02}).

Part (c) is proved in \cite{HT74}, see a short proof in \cite{BM04}.
The algorithm does not generalize to higher dimensions (as opposed to the algorithm of (b)).


\begin{figure}[h]\centering
\includegraphics[scale=0.8]{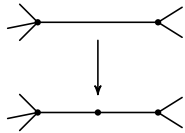}
\caption{Subdivision of edge}\label{podr}
\end{figure}

The {\it subdivision of edge} operation for a graph is shown in fig.~\ref{podr}.
Two graphs are called {\it homeomorphic} if one can be obtained from the other by subdivisions of edges and inverse operations.
This is equivalent to the existence of a graph that can be obtained from each of these graphs by subdivisions of  edges.
Some motivations for this definition are given in \cite[\S5.3]{Sk20}.

It is clear that homeomorphic graphs are either both planar or both non-planar.

A graph is planar if and only if some graph homeomorphic to it is linearly realizable in the plane.\footnote{This follows by the definition of planarity.
If the graph is planar, then every edge is presented by a polygonal line.
Define a new graph as follows: the vertices of a new graph correspond to the vertices of the polygonal line, and the edges of a new graph correspond to the edges of the polygonal line. The proof of the converse implication is analogous.}

\begin{theorem}[Kuratowski]\label{grapl-kur}
A graph is planar if and only if it has no subgraphs homeomorphic to $K_5$ or $K_{3,3}$ (fig.~\ref{k5}).
\end{theorem}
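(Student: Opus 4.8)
The statement to prove is the Kuratowski Theorem: a graph is planar if and only if it has no subgraph homeomorphic to $K_5$ or $K_{3,3}$.

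\textbf{Overall strategy.} The easy direction is necessity: if $G$ is planar then every subgraph is planar, and since homeomorphic graphs are simultaneously planar or non-planar (already noted in the excerpt just before the theorem), a subgraph homeomorphic to the non-planar $K_5$ or $K_{3,3}$ cannot occur. So I would dispatch this in one line, citing the non-planarity of $K_5$ (Theorem~\ref{grapl-nonalm}) and of $K_{3,3}$. The substance is sufficiency: if $G$ contains no subdivision of $K_5$ or $K_{3,3}$, then $G$ is planar. I would prove the contrapositive: assume $G$ is non-planar and extract a subdivision of $K_5$ or $K_{3,3}$. The plan is to induct on the number of edges and reduce to a well-chosen minimal counterexample, then analyze its structure via $2$-connectivity and $3$-connectivity.

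\textbf{Reduction to a $3$-connected minor-minimal obstruction.} First I would reduce to connected, then $2$-connected graphs: a graph is planar iff each of its blocks (maximal $2$-connected subgraphs, plus bridge-edges) is planar, and a Kuratowski subgraph lives inside a single block, so it suffices to treat $2$-connected $G$. Next, among $2$-connected non-planar graphs with no Kuratowski subdivision, take one, call it $G$, with the fewest edges. I would show $G$ must be $3$-connected: if $\{u,v\}$ is a separating pair, split $G$ along it into the pieces $G_1,G_2$ obtained by adding the edge $uv$ to each side; each $G_i$ has fewer edges than $G$, so each is planar or contains a Kuratowski subdivision. If either contains one, a short surgery (routing the added edge $uv$ through the other piece, which is $2$-connected hence contains a $u$--$v$ path) transplants it back into $G$, contradiction; if both $G_i$ are planar, one glues planar drawings along the edge $uv$ to get a planar drawing of $G$, again a contradiction. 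Hence $G$ is $3$-connected. I would also delete vertices of degree $\le 2$ to keep things clean (a degree-$2$ vertex is just a subdivision point).

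\textbf{The $3$-connected case.} For $3$-connected $G$ I would use the fact that every $3$-connected graph on $\ge 5$ vertices has an edge $e=xy$ whose contraction $G/e$ is again $3$-connected (Tutte's wheel theorem, or Thomassen's contractible-edge lemma). Since $G/e$ has fewer edges, by minimality it is planar (it cannot contain a Kuratowski subdivision, since such would pull back to one in $G$). Fix a planar embedding of $G/e$; let $z$ be the contracted vertex. Look at the face cycle $C$ of $G/e - z$ bounding the face that contained $z$. The neighbours of $x$ (other than $y$) and of $y$ (other than $x$) all lie on $C$. If these two neighbour-sets can be separated into two arcs of $C$, then I can re-expand $z$ into the edge $xy$ inside that face, placing $x$ and $y$ so all their edges are drawn without crossings, yielding a planar embedding of $G$ — contradicting non-planarity. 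Therefore the neighbour-sets interleave on $C$, and a case analysis (either three common neighbours of $x$ and $y$ on $C$, or two neighbours of $x$ and two of $y$ alternating around $C$, using $3$-connectivity to find the connecting paths) exhibits explicitly a subdivision of $K_5$ or of $K_{3,3}$ in $G$. This contradiction shows no minimal counterexample exists, completing the proof.

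\textbf{Main obstacle.} The delicate part is the final case analysis in the $3$-connected case: one must carefully use $3$-connectivity to guarantee the internally disjoint paths needed to realize the branch vertices of $K_5$ or $K_{3,3}$, and correctly enumerate the configurations of how $N(x)$ and $N(y)$ interleave around the face cycle $C$. The reduction steps (blocks, $2$-separations) are routine surgery but must be stated precisely so that the transplanted Kuratowski subdivision genuinely uses a $u$--$v$ path in the complementary piece without creating coincidences. An alternative I would keep in mind is to bypass contractions entirely and argue directly via an ear-decomposition / bridge-analysis of a maximal planar subgraph (the classical approach), but the $3$-connected-reduction route above is the cleanest to write out in limited space.
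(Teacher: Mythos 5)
Your proof outline is sound, but note that the paper itself does not prove Kuratowski's theorem at all: it states it and refers the reader to \cite[\S2.9]{GDI} (the theorem is then used as a black box, e.g.\ in deducing the Hanani--Tutte Theorem \ref{vkam-z2} and the implication (ii)$\Rightarrow$(i) of Proposition \ref{t:pl-vk}). So there is no proof in the paper to compare against; what you have written is the standard Thomassen-style argument (reduce to blocks, then to an edge-minimal $2$-connected counterexample, show it is $3$-connected, contract a contractible edge, and analyze how the neighbours of $x$ and $y$ interleave on the face cycle $C$), and that route does prove the theorem. Two points in your sketch deserve more care than a parenthetical remark. First, the pullback step ``a Kuratowski subdivision in $G/e$ would pull back to one in $G$'' is a genuine lemma: if the contracted vertex $z$ is a branch vertex of a $K_5$-subdivision in $G/e$ and its four branch edges split $2$--$2$ between $x$ and $y$, the preimage in $G$ is not a $K_5$-subdivision but a $K_{3,3}$-subdivision, so the statement must be ``yields a subdivision of $K_5$ \emph{or} $K_{3,3}$,'' and this case analysis should be spelled out. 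Second, in the $2$-separation surgery you should verify both that each piece $G_i$ (with the edge $uv$ added) is again $2$-connected and that it has strictly fewer edges than $G$, and in the gluing step that the edge $uv$ can be assumed to lie on the outer face of one of the two planar drawings; these are routine but are exactly the coincidences your own ``main obstacle'' paragraph warns about. With those details filled in, your argument is a correct and self-contained proof, more elementary in prerequisites than simply citing the literature as the survey does.
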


For history (involving more mathematicians to whom this theorem is attributed) and proofs see \cite{Th81}.
A particularly simple proof of this theorem by Makarychev can be found e.g. in \cite{Ma97}, \cite[\S2.9]{GDI}.


\subsection{Intersection number for polygonal lines in the plane}\label{0thint}



Before starting to read this section a reader might want to look at Assertion \ref{pla-kon} and applications from \cite[\S1.4]{Sk20}.

\jonly{Before reading this section a reader might want to look at \cite[Assertion 1.3.4]{Sk18} and applications from \cite[\S1.4]{Sk20}.
Comments and proofs are also presented in \cite[\S1.3]{Sk18}.}

Some points in the plane {\bf are in general position}, if no three of them lie in a line and no three segments joining them have a common interior point.

\begin{proposition}\label{grapl-gp} Any two polygonal lines in the plane whose vertices are in general position intersect at a finite number of points.
\end{proposition}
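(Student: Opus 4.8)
The plan is to show that two polygonal lines whose combined vertex set is in general position meet in only finitely many points by reducing to the intersection of individual segments. First I would recall that each polygonal line is by definition a finite union of straight segments, say the first line is $\sigma_1\cup\dots\cup\sigma_p$ and the second is $\tau_1\cup\dots\cup\tau_q$. Then the intersection of the two lines is the finite union $\bigcup_{i,j}(\sigma_i\cap\tau_j)$, so it suffices to show each $\sigma_i\cap\tau_j$ is a finite set of points; equivalently, that no $\sigma_i$ and $\tau_j$ share a whole subsegment.

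Next I would use the general position hypothesis to rule out the only way a pair of segments can meet in infinitely many points, namely overlapping along a common subsegment. If $\sigma_i$ and $\tau_j$ overlapped in a segment, the four endpoints of $\sigma_i$ and $\tau_j$ would be collinear, which is forbidden once we check that these four points include three distinct ones lying on a line — the only degenerate case being when $\sigma_i$ and $\tau_j$ literally share endpoints. Here I would separate cases according to how many vertices $\sigma_i$ and $\tau_j$ have in common: if they share no vertex, collinearity of all four endpoints immediately violates ``no three in a line''; if they share one vertex, the remaining three endpoints are distinct and their collinearity again violates the hypothesis; if they share both endpoints, then $\sigma_i=\tau_j$, and I would argue this is excluded (or handled) because then the two polygonal lines would not be in ``general position'' in the intended sense, or more carefully, a common edge still meets the requirement once we note the statement is about the number of points of intersection of the lines as sets and a shared edge is a degenerate input one typically excludes; the cleanest route is to note that the definition of general position for the vertices already forces the segments to be pairwise non-overlapping. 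Two distinct non-overlapping segments meet in at most one point, so $\sigma_i\cap\tau_j$ has at most one point.

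Finally I would assemble the count: there are at most $pq$ pairs $(\sigma_i,\tau_j)$, each contributing at most one intersection point, so the two polygonal lines meet in at most $pq$ points, which is finite. I expect the main obstacle to be the bookkeeping around shared vertices and the genuinely degenerate case where the two polygonal lines share an edge: one must either argue that the general-position hypothesis on the union of all vertices already precludes two of the segments being equal or overlapping (which it does, since equal segments force collinear triples among the vertices as soon as any third vertex of either line exists, and a careful statement handles the trivial case of single-edge lines), or else note that the statement is about finiteness and a shared edge would make the intersection infinite, so that input is implicitly excluded. I would phrase the write-up to make this exclusion explicit and short, keeping the core argument — segments meet finitely often, there are finitely many of them — front and center.
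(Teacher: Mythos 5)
Your proof is correct and follows essentially the same route as the paper: decompose each polygonal line into its finitely many segments and observe that, by general position, any two of these segments meet in finitely many (indeed at most one) points. The paper states this in two lines, leaving implicit the overlap analysis you spell out, so your write-up is just a more detailed version of the same argument.
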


\begin{proof} A polygonal line is a finite union of segments.
Every two segments in general position intersect at a finite number of points.
\end{proof}

\begin{lemma}[Parity]\label{0-even} (a) If 6 vertices of two triangles in the plane are in general position, then the boundaries of the triangles intersect at an even number of points.

(b) Any two closed polygonal lines in the plane whose vertices are in general position intersect at an even number of points.\footnote{This is not trivial because the polygonal lines may have self-intersections and because the Jordan Theorem is not obvious.
It is not reasonable to deduce the Parity Lemma from the Jordan Theorem or the Euler Formula because this could form a vicious circle.}
\end{lemma}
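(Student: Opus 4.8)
\textbf{Proof proposal for the Parity Lemma \ref{0-even}.}

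The plan is to reduce (b) to (a), and then prove (a) by a direct counting argument. For the reduction: a closed polygonal line decomposes as a cycle of segments, and any closed polygonal line is homologous (over $\Z_2$) to a sum of triangle boundaries — concretely, fix one vertex $v_0$ of the first polygonal line and cone all its segments to $v_0$, writing the closed polygonal line as a $\Z_2$-sum of boundaries of (possibly degenerate) triangles with apex $v_0$; do the same for the second polygonal line with an apex $w_0$. One must first perturb all vertices slightly so that the union of the two original vertex sets together with $v_0,w_0$ is in general position — this does not change parity of intersection numbers because, by Proposition \ref{grapl-gp}, a small perturbation only moves finitely many intersection points and such points can only appear or disappear in pairs as a vertex crosses a segment. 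Then bilinearity of the mod $2$ intersection pairing over this decomposition reduces the statement to the case where both closed polygonal lines are triangle boundaries, which is exactly (a).

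For part (a): let $T=\partial(abc)$ and $T'=\partial(def)$ be the two triangle boundaries, with the $6$ vertices in general position. I would count, for each vertex $x\in\{a,b,c\}$, the parity of the number of edges of $T'$ whose relative interior is crossed by... actually the cleanest route is: consider the three vertices $a,b,c$ of the first triangle and ask how many of them lie \emph{inside} the triangle $def$. Each edge of $T$ joining two vertices of $T$ that are on the same side of $T'$ (both inside, or both outside) meets $T'$ in an even number of points; each edge joining a vertex inside to a vertex outside meets $T'$ in an odd number of points — this uses that a segment from an interior point of a convex region to an exterior point crosses the boundary an odd number of times (again appealing to general position so that no crossing is at a vertex of $T'$ and no tangencies occur). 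Summing over the three edges of $T$, the total number of intersection points of $T$ with $T'$ is congruent mod $2$ to the number of edges of $T$ with endpoints on opposite sides of $T'$; and in a triangle, the number of edges separating the "inside" vertices from the "outside" vertices is always even (it is $0$ if all three are inside or all three outside, and $2$ if exactly one or exactly two are inside — in each case even). Hence $|T\cap T'|$ is even.

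The main obstacle I anticipate is not the combinatorial counting but making the "inside/outside" and "odd number of crossings" statements rigorous without circularly invoking the Jordan curve theorem for general polygons. The point is that here the relevant region is a \emph{triangle}, i.e. a convex set, for which "inside" means "in the convex hull minus the boundary" and the parity-of-crossings statement is elementary: a line meets the boundary of a convex polygon in at most two points (or a whole edge, excluded by general position), so a segment with one endpoint strictly inside and one strictly outside crosses $\partial(def)$ exactly once, and a segment with both endpoints strictly outside crosses it $0$ or $2$ times, while both-inside gives $0$. This convexity shortcut is exactly why (a) is stated for triangles and then bootstrapped to (b) via the cone decomposition, avoiding the circularity flagged in the footnote. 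I would also need to handle the degenerate triangles produced in the cone decomposition (when $v_0$ is collinear with a segment of the polygonal line, or a segment is incident to $v_0$): such a "triangle" has boundary equal to a single segment traversed twice, contributing $0$ mod $2$, so it may simply be discarded — and after the initial perturbation to general position these degeneracies can be assumed away entirely.
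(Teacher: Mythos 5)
Your proof is correct. For part (b) it is essentially the route the paper takes: your cone from a vertex $v_0$, with each spoke $v_0M$ lying in two of the cone triangles and hence cancelling mod~$2$, is exactly the fan decomposition behind the paper's inductive identity $|1234\ldots n\cap b|\underset2\equiv|123\cap b|+|134\ldots n\cap b|$ (and coincides with the ``singular cone'' variant in the appendix); the only cosmetic difference is that you cone both closed polygonal lines at once, while the paper first treats the case where one of them is a triangle and then repeats the reduction. One remark: the perturbation you insert is superfluous, and its justification (``intersection points appear or disappear in pairs'') is itself an unproved parity claim of the same nature as the lemma; but since $v_0,w_0$ are among the given vertices, every pair of cone triangles has its six vertices inside the hypothesized general position set, so part (a) applies directly and you can simply delete that step. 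Where you genuinely diverge is part (a): the paper intersects the solid (convex hull of the) first triangle with the boundary of the second, observes that this is a finite union of arcs whose endpoints are precisely the boundary--boundary intersection points, and concludes evenness from the fact that arcs have two endpoints --- an argument chosen because it generalizes to higher dimensions \cite{Sk14}. You instead classify the vertices of one triangle as inside or outside the other and use convexity (a general position line meets $\partial(def)$ in $0$ or $2$ points, so an edge crosses it an odd number of times exactly when it separates an inside vertex from an outside one) together with the observation that a triangle has $0$ or $2$ separating edges. This is valid and quite elementary, but it leans on convexity and the planar inside/outside dichotomy, whereas the paper's endpoint count is the version that carries over to the higher-dimensional statements the survey is building toward.
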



\begin{proof}[Proof of (a)]\footnote{If we prove that the triangle splits the plane into two parts (this case of the Jordan Theorem is easy), then part (a) would follow because the boundary of one triangle comes {\it into} the other triangle as many times as it comes {\it out}.
The following proof can be generalized to higher dimensions \cite{Sk14}.}
The intersection of the convex hull of one triangle and the boundary of the other triangle is a finite  union of polygonal lines (non-degenerate to points).
The boundaries of the triangles intersect at the endpoints of the polygonal lines.
The number of endpoints is even, so the fact follows.
\end{proof}

\begin{figure}[h]\centering
\includegraphics{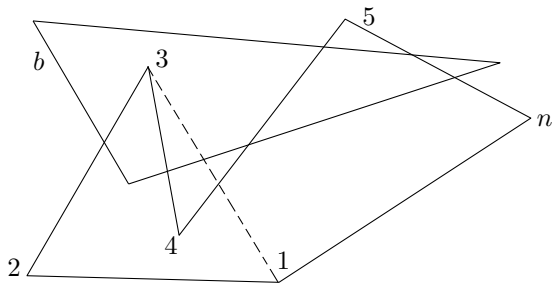}
\caption{Proof of the Parity Lemma \ref{0-even}.b}
\label{f:cut}
\end{figure}

\begin{proof}[Proof of (b)]
First assume that {\it one of the polygonal lines, say $b$, is the boundary of a triangle.}
\footnote{Part (b) is proved analogously to (a) under this assumption.
We present a different proof of this `intermediate' case, reducing it to (a).
This generalizes to a proof of the general case.}
Denote by $1,2,\ldots,n$ the consecutive vertices of the other polygonal line.
Let us prove the lemma by induction on $n\ge3$.
For the proof of the inductive step denote by $a_1\ldots a_k$ the closed polygonal line with consecutive vertices  $a_1,\ldots,a_k$.
Then (fig. \ref{f:cut})
$$
|1234\ldots n\cap b|\underset2\equiv |123\cap b|+|134\ldots n\cap b|\underset2\equiv 0.
$$
Here the second equality follows by (a)
and the inductive hypothesis.

{\it The general case} is reduced to the above particular case analogously to the above reduction of the particular case to (a).
Just replace $b$ by the second polygonal line.
\end{proof}

\begin{figure}[h]
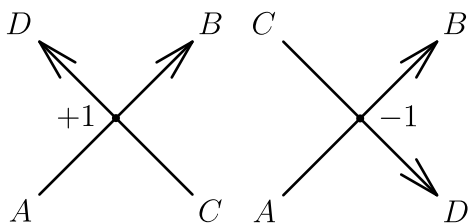
\centering
\includegraphics{aa1.eps}\quad \includegraphics{aa2.eps}
\caption{The sign of intersection point}
\label{f:sign}
\end{figure}

Let $A,B,C,D$ be points in the plane, of which no three belong to a line.
Define {\bf the sign} of intersection point of oriented segments $\overrightarrow{AB}$ and $\overrightarrow{CD}$ as the number $+1$ if $ABC$ is oriented clockwise and the number $-1$ otherwise (fig.~\ref{f:sign} and \ref{f:gl2}).

\begin{figure}[h]
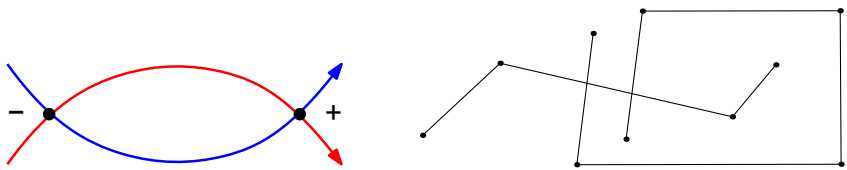

\centerline{\includegraphics[width=4.5cm]{intersection_sign_graphs.eps} \qquad \includegraphics{curves.eps}}
\caption{Two curves intersecting at an even number of points the sum of whose signs is zero (left) or non-zero (right)} \label{f:gl2}
\end{figure}


The following lemma
is proved analogously to the Parity Lemma \ref{0-even}.

\begin{lemma}[Triviality]\label{l:triv}
For any two closed polygonal lines in the plane whose vertices are in general position the sum of signs of their intersection points is zero.
\end{lemma}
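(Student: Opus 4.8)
\textbf{Proof proposal for the Triviality Lemma \ref{l:triv}.}

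The plan is to mimic the inductive argument used for the Parity Lemma \ref{0-even}.b, but now keeping track of signs. First I would establish the base case: if $A_1A_2A_3$ and $B_1B_2B_3$ are two triangles whose six vertices are in general position, then the sum of the signs of the intersection points of $\partial(A_1A_2A_3)$ with $\partial(B_1B_2B_3)$ is zero. This is the signed analogue of Parity Lemma \ref{0-even}.a. Here I would use the same observation as in the proof of \ref{0-even}.a: the intersection of the convex hull (the closed triangle) of $A_1A_2A_3$ with the boundary polygonal line $\partial(B_1B_2B_3)$ is a finite disjoint union of arcs, and each such arc has two endpoints lying on $\partial(A_1A_2A_3)$ — one where $\partial(B_1B_2B_3)$ enters the closed triangle and one where it leaves. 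The key point is that at the ``entering'' endpoint and the ``leaving'' endpoint of a given arc the local orientation data are opposite, so the two signs cancel. Summing over all arcs gives total sign zero.

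Next I would run the induction on the number $n$ of vertices of the first polygonal line $a = 1\,2\,\ldots\,n$, with the second line $b$ held fixed, first in the special case where $b$ is the boundary of a triangle. Let $\sigma(a,b)$ denote the sum of signs of intersection points of $a$ with $b$. The crucial additivity identity is
$$\sigma(1\,2\,\ldots\,n,\ b) = \sigma(1\,2\,3,\ b) + \sigma(1\,3\,4\,\ldots\,n,\ b),$$
which holds because the triangle $1\,2\,3$ together with the polygonal line $1\,3\,4\,\ldots\,n$ traverses exactly the edges of $a$ once each plus the diagonal $13$ twice with opposite orientations, so the contribution of the diagonal to the signed count cancels. (This is the signed version of the congruence displayed in the proof of \ref{0-even}.b, and the sign cancellation on the repeated diagonal is the point where one must check signs carefully rather than just parities.) By the base case $\sigma(1\,2\,3,\ b) = 0$, and by the inductive hypothesis $\sigma(1\,3\,4\,\ldots\,n,\ b) = 0$, so $\sigma(a,b) = 0$. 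Finally, the general case where neither line is a triangle is reduced to this special case exactly as in \ref{0-even}.b: fix $a$ and now induct on the number of vertices of $b$, decomposing $b$ into a triangle plus a shorter closed polygonal line, the repeated diagonal again contributing zero to the signed count because it is traversed twice with opposite orientations.

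The main obstacle I anticipate is the base case — specifically, verifying that the sign of intersection defined via the orientation of the triple $ABC$ behaves correctly at the two endpoints of each arc in the convex-hull argument, i.e. that ``enter'' and ``leave'' points of the same arc genuinely carry opposite signs. This requires a small local analysis of how $\partial(B_1B_2B_3)$ crosses an edge of $\partial(A_1A_2A_3)$: crossing an edge from outside to inside versus inside to outside reverses the side on which the third vertex of the crossing $B$-edge lies relative to the oriented $A$-edge, hence reverses the clockwise/counterclockwise test, hence reverses the sign. Once this local lemma is in hand, everything else is the same bookkeeping as in the Parity Lemma, with ``$\equiv_2$'' replaced by ``$=$'' and every cancellation justified by a repeated edge being traversed in opposite orientations. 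One should also note at the outset that general position guarantees all intersection points are transverse crossings of segment interiors, so signs are well-defined and the counts are finite by Proposition \ref{grapl-gp}.
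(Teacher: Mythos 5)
Your proof is correct and follows essentially the same route as the paper, which proves the Triviality Lemma ``analogously to the Parity Lemma \ref{0-even}'': the triangle--triangle base case via the convex-hull arc argument (now with entering/leaving endpoints carrying opposite signs), then the induction splitting off a triangle, the doubly-traversed diagonal cancelling because its two orientations give opposite signs, and finally the same reduction applied to the second polygonal line. You merely make explicit the sign bookkeeping that the paper leaves to the reader.
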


The rest of this subsection is formally not used later.

\begin{pr}[see proof in \S\ref{s:appgr}]\label{pla-kon}
Take 14 general position points in the plane, of which 7 are red and another 7 are yellow.

(a) Then the number of intersection points of the red segments (i.e. the segments joining the red points) with the yellow segments is even.

(b) Electric current flows through every red segment.
The sum of the currents flowing to any red point equals the sum of the currents issuing out of the point.
The current also flows through the yellow segments conforming to the same Kirchhoff's law.
Let us orient each red or yellow segment accordingly to the direction of the current passing through it.
Assign to each intersection point of oriented red and yellow segments the product of currents passing through these segments and the sign of the intersection point.
Then the sum of all assigned products (i.e. {\it the flow of the red current through the yellow one}) is zero.
\end{pr}


\begin{remark}[on generalization to cycles]\label{r:1cyc}
(a) The Parity Lemma \ref{0-even} and Assertion \ref{pla-kon}.a have the following common generalization.
A {\it 1-cycle} (modulo 2) is a finite collection of segments in the plane such that every point of the plane is the end of an even number of them.
Then any two 1-cycles in the plane whose vertices are in general position intersect at an even number of points.

(b) A {\it 2-cycle} (modulo 2) is a finite collection of triangles in the plane such that every segment in the plane is the side of an even number of them.
If a point and vertices of triangles of a 2-cycle are in general position, then the point belongs to an even number of the triangles.
\end{remark}


\subsection{Self-intersection invariant for graph drawings}\label{0grapl}

We shall consider plane drawings of a graph such that the edges are drawn as polygonal lines and intersections are allowed.
Let us formalize this for graph $K_n$ (formalization for arbitrary graphs is presented at the beginning of \S\ref{s:vkam2-ic}).

{\bf A piecewise-linear (PL) map $f:K_n\to\R^2$} of the graph $K_n$ to the plane is a collection of ${n\choose2}$ (non-closed) polygonal lines pairwise joining some $n$ points in the plane.
{\bf The image $f(\sigma)$ of edge $\sigma$} is the corresponding polygonal line.
{\bf The image of a collection of edges} is the union of images of all the edges from the collection.

\begin{theorem}[Cf. Proposition \ref{0-ra2}.a and Theorems \ref{ratv-totv}, \ref{t:tvkf}] \label{grapl-nonalm}
For any PL (or continuous) map $K_5\to\R^2$
there are two non-adjacent edges whose images intersect.
\end{theorem}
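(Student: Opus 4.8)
The plan is to reduce the continuous case to the PL case by a standard approximation argument, and then to prove the PL statement by a parity count using the tools developed in \S\ref{0thint}. For the approximation step: any continuous map $K_5\to\R^2$ can be approximated arbitrarily closely by a PL map, and one can further perturb the vertices of the resulting polygonal lines into general position (in the sense of \S\ref{0thint}). If the approximating PL map has two non-adjacent edges whose images intersect, the conclusion for the continuous map does not follow directly --- but the contrapositive works cleanly: if some continuous map $K_5\to\R^2$ has images of every pair of non-adjacent edges disjoint, then by compactness these images are separated by a positive distance, so a sufficiently close PL approximation in general position also has disjoint images on non-adjacent edge pairs, contradicting the PL case. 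So it suffices to treat PL maps in general position.

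\textbf{The PL case.} First I would reduce to a general position PL map $f\colon K_5\to\R^2$: a small perturbation of the images of the edges (keeping the vertices fixed, or perturbing them too) makes all vertices of all the polygonal lines into general position without creating or destroying any intersections among non-adjacent edge images (intersections among adjacent edges at the shared vertex are irrelevant). Now suppose, for contradiction, that the images of all five pairs of non-adjacent edges of $K_5$ are disjoint. Consider the Petersen-type structure: recall $K_5$ has exactly five pairs of disjoint (non-adjacent) edges forming a $5$-cycle, namely $\{12,34\},\{34,51\},\{51,23\},\{23,45\},\{45,12\}$ --- equivalently, for each vertex $i$ there is a unique pair of disjoint edges avoiding $i$, namely the two edges of the triangle on the other four vertices not incident to a fixed... more simply: the edges of $K_5$ split, for each of the $\binom{5}{1}$ choices, into an edge and the opposite edge of the ``pentagon of edges.'' The key computation is the van Kampen parity count: define
$$
v(f):=\sum_{\{\sigma,\tau\}}|f(\sigma)\cap f(\tau)| \bmod 2,
$$
where the sum ranges over the five unordered pairs of non-adjacent edges. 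The strategy is to show that $v(f)$ does not depend on the PL map $f$ (it is invariant under the elementary moves that relate any two general-position PL maps: moving one vertex across an edge image, or homotoping one edge image across a vertex), and then to exhibit one particular PL map --- for instance the linear map on the $5$ vertices of a convex pentagon, or any linear map realizing ``$K_5$ minus an edge'' planarly plus one extra polygonal edge --- for which $v(f)=1$. Proposition \ref{0-ra2}.b already records that for five points in general position the number of crossings among the $\binom{5}{2}$ segments is odd; combined with the Parity Lemma \ref{0-even}, which shows crossings among adjacent edges and among pairs sharing a vertex contribute evenly, this forces the crossings among non-adjacent pairs to have odd total, i.e. $v=1$ for a linear map, hence $v(f)=1$ for every PL $f$. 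But $v(f)=1$ implies at least one summand $|f(\sigma)\cap f(\tau)|$ is odd, hence nonzero, so some non-adjacent pair of edge images intersects --- contradiction.

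\textbf{Where the work is.} The substantive step is proving that $v(f) \bmod 2$ is a homotopy invariant of the PL map, i.e.\ unchanged under the two elementary moves. For the move that pushes a vertex $i$ of one polygonal line across a segment of the image of an edge $\tau$: the parity of $|f(\sigma)\cap f(\tau)|$ changes for each edge $\sigma$ incident to $i$, and at vertex $i$ the edges of $K_5$ incident to $i$ that are non-adjacent to $\tau$ come in a configuration whose count is even --- this is precisely the combinatorial heart of the van Kampen obstruction, and it is where one uses that $K_5$ is the relevant graph (the ``link condition''): for each vertex $i$ and each edge $\tau$ not incident to $i$, the number of edges at $i$ disjoint from $\tau$ is even. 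One checks directly: if $\tau = jk$ and $i \notin\{j,k\}$, among the four edges at $i$ (to the other four vertices), those disjoint from $jk$ are the edges from $i$ to the two vertices $\ell,m$ with $\{i,\ell,m\}\cap\{j,k\}=\emptyset$ --- wait, there are only $5$ vertices, so at $i$ the edges go to $j,k$, and the remaining one vertex; thus exactly $1$ edge at $i$ is disjoint from $jk$, which is odd, not even. So the naive count fails, and the correct bookkeeping must instead pair the move at vertex $i$ with the simultaneous change it forces --- crossing a whole segment changes two endpoints' worth of incidences, or one tracks the move as sliding past an endpoint of $\tau$'s polygonal line. The cleanest route, which I would follow, is the one indicated by the cross-references: use the Parity Lemma \ref{0-even} and Triviality Lemma \ref{l:triv} to show directly that for any two general-position PL maps $f_0,f_1$ with the same images on all but one edge, the relevant sums agree mod $2$, via forming closed polygonal lines from $f_0$ and $f_1$ of that edge and applying \ref{0-even}.b --- this sidesteps the move-by-move analysis and is exactly the ``van Kampen number is well-defined'' argument. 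The main obstacle is therefore getting this invariance bookkeeping exactly right; everything else (approximation, the model computation via Proposition \ref{0-ra2}.b) is routine.
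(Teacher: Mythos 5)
Your overall route is the paper's: reduce the continuous case to a general position PL map by approximation, define the van Kampen number $v(f)$, prove that it does not depend on $f$ by changing one edge at a time and applying the Parity Lemma \ref{0-even}.b, and compute $v=1$ for a linear map via Proposition \ref{0-ra2}.b. However, the combinatorial bookkeeping contains genuine errors. $K_5$ has fifteen unordered pairs of non-adjacent edges, not five: each of the five $4$-element vertex subsets splits into two disjoint edges in three ways, so the disjointness graph on the ten edges is the Petersen graph, not a $5$-cycle. If $v(f)$ is literally the sum over your five listed pairs, the argument collapses: for the convex pentagon those five pairs are pairs of \emph{sides}, which do not cross, so that sum is $0$, whereas Proposition \ref{0-ra2}.b concerns the crossings over all pairs (which for a general position linear map are exactly the crossings of non-adjacent segments, since adjacent segments meet only at the common endpoint). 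Relatedly, your reason for abandoning the move-by-move analysis is a miscount: for a vertex $i$ and an edge $jk$ with $i\notin\{j,k\}$ there are two further vertices (five in all), hence exactly \emph{two} edges at $i$ disjoint from $jk$ --- an even number --- so that local analysis in fact goes through; you discarded it on the basis of ``exactly $1$''.

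The fallback argument you settle on is the paper's, but as written it has a gap at precisely the point the paper flags as essential. When $f_0,f_1$ agree on all edges except $\sigma$ (with, say, $f_0|_\sigma$ linear), the difference $v(f_0)-v(f_1)$ equals $|(f_0\sigma\cup f_1\sigma)\cap f_0\Delta|\bmod 2$, where $\Delta$ is the union of the edges of $K_5$ disjoint from $\sigma$. The Parity Lemma \ref{0-even}.b applies to \emph{two closed} polygonal lines; $f_0\sigma\cup f_1\sigma$ is one of them, but you never identify the second. What makes the lemma applicable is that the three edges of $K_5$ non-adjacent to a given edge form a triangle, so $f_0\Delta$ is itself a closed polygonal line --- this is exactly the property of $K_5$ the paper singles out as necessary for the proof, and your ``five pairs forming a $5$-cycle'' picture suggests it has not been internalized. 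With the count of pairs corrected and this observation added, your argument becomes the paper's proof of Lemma \ref{11-vankam}, and your approximation step for deducing the theorem from that lemma matches the paper's as well.
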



Theorem \ref{grapl-nonalm} is deduced from its `quantitative version': for `almost every' drawing of $K_5$ in the plane the number of intersection points of non-adjacent edges is odd.
The words `almost every' are formalized below in Lemma \ref{11-vankam}.
Formally, Theorem \ref{grapl-nonalm} follows by Lemma \ref{11-vankam} using a version of \cite[Approximation Lemma 1.4.6]{Sk20}, cf. Remark \ref{r:ae}.c.


Let $f:K_n\to\R^2$ be a PL map.
It is called {\bf a general position} PL map if all the vertices of the polygonal lines are in general position.
Then by Proposition~\ref{grapl-gp} the images  of any two non-adjacent edges intersect by a finite number of points.
Let {\bf the van Kampen number} (or the self-intersection invariant) $v(f)\in\Z_2$ be the parity of the number of
all such intersection points, for all pairs of non-adjacent edges.

\begin{example}\label{1-vankam} (a) A convex pentagon with the diagonals forms a general position PL map $f:K_5\to\R^2$ such that $v(f)=1$.

(b) A convex quadrilateral with the diagonals forms a general position PL map $f:K_4\to\R^2$ such that $v(f)=1$.
A triangle and a point inside forms a general position PL map $f:K_4\to\R^2$ such that $v(f)=0$.
Cf. \S\ref{s:ratvpl} and \S\ref{0-ratvtopl}.
\end{example}

\begin{figure}[h]\centering
\includegraphics{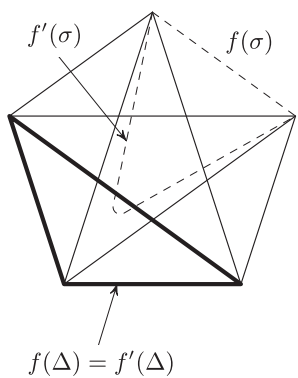}
\caption{The residue $v(f)$ is independent of $f$}
\label{k5move}
\end{figure}

\begin{lemma}[Cf. Proposition \ref{0-ra2}.b and Lemma \ref{ratv-vk2}]\label{11-vankam}
For any general position PL map $f:K_5\to\R^2$ the van Kampen number $v(f)$ is odd.
\end{lemma}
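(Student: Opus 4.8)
The plan is to show that the van Kampen number $v(f)\in\Z_2$ does not depend on the general position PL map $f:K_5\to\R^2$, and then to evaluate it on one convenient map. Since Example \ref{1-vankam}.a exhibits a general position PL map (the convex pentagon with its diagonals) with $v(f)=1$, invariance will finish the proof.

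To prove invariance I would connect any two general position PL maps $f_0,f_1:K_5\to\R^2$ by a generic PL homotopy and track how $v$ changes. The space of PL maps of $K_5$ is, after fixing the combinatorial type of each edge-path, parametrized by the positions of the vertices (and the interior vertices of the polygonal lines) in $\R^2$; a generic path between $f_0$ and $f_1$ in this parameter space stays in general position except at finitely many instants, where exactly one of the following \emph{elementary moves} occurs: (i) a vertex of one polygonal line crosses through the interior of a segment of another polygonal line (a point crossing an edge), (ii) three segments momentarily share a common interior point (triple point), or (iii) a vertex of the graph $K_n$ passes through an edge not incident to it — but for the count $v(f)$, which only involves non-adjacent \emph{edges} of $K_5$, it suffices to track moves (i) and (ii) together with the move where the endpoint (a vertex of $K_5$) of one edge crosses another edge. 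The key observation is that each such move changes the number of intersection points between the two polygonal lines involved by an \emph{even} number, hence leaves $v(f)$ unchanged mod $2$. For a point-through-edge move the local picture is that two intersection points are created or destroyed simultaneously (the generic ``finger'' pushed across a segment), so the parity is preserved. For a triple-point move the three pairwise intersection numbers each change by one, but in $v(f)$ we only sum over pairs of \emph{non-adjacent} edges of $K_5$, and a short check of the combinatorics of $K_5$ shows that the number of non-adjacent pairs among any three edges meeting at a point is $0$ or $2$ — never odd — so again parity is preserved. The most delicate move is when a vertex $i$ of $K_5$ crosses an edge $\sigma$ with $i\notin\sigma$: as $i$ passes $\sigma$, the five edges incident to $i$ each gain or lose an intersection with $\sigma$, but $\sigma$ itself has exactly three endpoints among the other four vertices, so of the five edges at $i$ exactly $5-3=2$ are adjacent to $\sigma$ and $3$ are non-adjacent — wait, I must recount: in $K_5$ with $i\notin\sigma=jk$, the edges at $i$ are $i\ell$ for $\ell\in\{1,\dots,5\}\setminus\{i\}$, a set of size $4$; among these, $ij$ and $ik$ are adjacent to $\sigma$ and the other $2$ are non-adjacent, so the change to $v(f)$ is $2\cdot 1\equiv 0\pmod2$. (One also uses here that near such a move the count of intersections of each $i\ell$ with $\sigma$ changes by exactly one by general position.) Thus every elementary move preserves $v$ mod $2$, so $v(f_0)=v(f_1)$.

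The only point requiring genuine care — the step I expect to be the main obstacle — is the claim that a generic PL homotopy between two general position maps decomposes into these finitely many elementary moves, each local and of the stated type; this is the PL transversality / general position argument that underlies the whole method. Rather than developing it from scratch, I would cite the Approximation Lemma referred to after Theorem \ref{grapl-nonalm} (a version of \cite[Approximation Lemma 1.4.6]{Sk20}) and the standard fact that in a one-parameter family of PL maps of graphs to $\R^2$ the degenerate instants are isolated and of codimension one, each realizing one of the listed moves. Alternatively one can give a self-contained combinatorial argument: it suffices to check invariance under moving one vertex of $K_5$ at a time along a straight segment avoiding all lower-dimensional coincidences, which reduces everything to the vertex-through-edge move analyzed above together with the routine case where the moving vertex is an endpoint of the edge (adjacent edges, not counted).

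\medskip
\noindent Finally, by Example \ref{1-vankam}.a there is a general position PL map $f_0:K_5\to\R^2$ (convex pentagon plus diagonals) with exactly the $5$ diagonal–diagonal crossings of a convex pentagon, i.e. $v(f_0)=\binom{5}{4}=5\equiv 1\pmod 2$. By the invariance just established, $v(f)=1$ for every general position PL map $f:K_5\to\R^2$, which is the assertion of Lemma \ref{11-vankam}. \qed
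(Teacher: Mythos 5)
Your outline (show $v$ is invariant, then evaluate it on the convex pentagon of Example \ref{1-vankam}.a) is viable, but two steps do not hold up as written. First, the triple-point analysis is wrong on both counts: when three segments momentarily pass through a common point, the three pairwise crossings do \emph{not} ``each change by one'' --- the small triangle they bound degenerates and reappears on the other side, so none of the pairwise intersection numbers changes at all; and your backup claim that among any three edges of $K_5$ the number of non-adjacent pairs is never odd is false (the triple $12,34,13$ contains exactly one non-adjacent pair, namely $\{12,34\}$). So if the counts really did each jump by one, $v$ could change by $1$ at such an event and your argument would break; it is saved only because the premise is false, which means the step as reasoned is erroneous even though its conclusion is correct. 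Second, the decomposition of a generic PL homotopy into your listed codimension-one moves is the actual crux of this route, and you only wave at it; moreover the ``self-contained'' fallback you offer --- moving one vertex of $K_5$ at a time along a segment --- cannot by itself connect an arbitrary general position PL map to the pentagon map, because it never straightens the bent polygonal edges, so the edge-replacement (``finger'') move you were trying to avoid is still needed and still unjustified.

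For comparison, the paper's proof sidesteps all of this machinery. It replaces the polygonal image of a single edge $\sigma$ by a straight segment (vertex images fixed) and observes that $v(f)-v(f')$ equals, mod $2$, the number of intersections of the \emph{closed} polygonal line $f\sigma\cup f'\sigma$ with $f\Delta$, where $\Delta$ is the $3$-cycle formed by the edges of $K_5$ non-adjacent to $\sigma$ (this is where the structure of $K_5$ enters); this number is even by the Parity Lemma \ref{0-even}.b. Straightening the ten edges one at a time reduces everything to a linear general position map, whose van Kampen number is odd by Proposition \ref{0-ra2}.b --- no analysis of one-parameter families, triple points, or vertex moves is required. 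Your approach can be repaired (triple points change nothing; a vertex of $K_5$ crossing a non-incident edge changes $v$ by $2$; an interior vertex crossing a segment changes the relevant count by $0$ or $\pm2$), but you would still owe a proof of the genericity/decomposition statement, which is exactly the overhead the paper's single-jump-per-edge argument avoids.
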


\begin{proof} By Proposition \ref{0-ra2}.b it suffices to prove that $v(f)=v(f')$ for each two general position PL maps $f,f':K_5\to\R^2$ coinciding on every edge except an edge $\sigma$, and such that $f|_\sigma$ is linear (fig.~\ref{k5move}).
The edges of $K_5$ non-adjacent to $\sigma$ form a cycle (this very property of $K_5$ is necessary for the proof). Denote this cycle by $\Delta$.
Then
$$
v(f)-v(f')=|(f\sigma\cup f'\sigma)\cap f\Delta|\mod2=0.
$$
Here the second equality follows by the Parity Lemma \ref{0-even}.b.
\end{proof}

\begin{remark}\label{r:grapl-k33} (a) The analogues of Theorem \ref{grapl-nonalm} and Lemma \ref{11-vankam} for $K_5$ replaced by $K_{3,3}$ are correct, cf. Remark \ref{grapl-k33}.

(b) Non-planarity of $K_{3,3}$ also follows by Theorem \ref{grapl-nonalm} and fig.  \ref{fig-almemb} \cite{Sk03}.
\end{remark}

\begin{figure}[h]\centering
\includegraphics[scale=.7]{products-fig1.eps}
\caption{`Almost embedding' $K_5\to K_{3,3}$}\label{fig-almemb}
\end{figure}

\subsection{A polynomial algorithm for recognizing graph planarity}\label{0vkam2}

\subsubsection{Van Kampen-Hanani-Tutte planarity criterion}\label{s:grapl-vkht}

A polynomial algorithm for recognizing graph planarity is obtained using the van Kampen-Hanani-Tutte planarity criterion (Proposition~\ref{t:pl-vk} below).
In the following subsections we show how to invent and prove that criterion.
We consider a natural object (intersection cocycle) for any general position PL map from a graph to the plane (\S\ref{s:vkam2-ic}).
Then we investigate how this object depends on the map (Proposition \ref{elcob}.b below).
So we derive from this object an obstruction to planarity which is independent of the map.
Combinatorial and linear algebraic (=cohomological) interpretation of this obstruction gives the required planarity criterion.


\begin{proposition}\label{t:pl-vk} Take any ordering of the vertices of a graph.
Then the following conditions are equivalent.

(i) The graph is planar.

(ii) There are vertices $V_1,\ldots,V_s$ and edges $e_1,\ldots,e_s$ such that $V_i\not\in e_i$ for any  $i=1,\ldots,s$, and for any non-adjacent edges $\sigma,\tau$ the following numbers have the same parity:

$\bullet$ the number of those endpoints of $\sigma$ that lie between the endpoints of $\tau$ (for the above ordering; the parity of this number is one if the endpoints of edges are `intertwined' and is zero otherwise).

$\bullet$ the number of those $i=1,\ldots,s$ for which either $V_i\in\sigma$ and $e_i=\tau$, or $V_i\in\tau$ and $e_i=\sigma$.

(iii) The following system of linear equations over $\Z_2$ is solvable.
To every pair $A,e$ of a vertex and an edge such that $A\notin e$ assign a variable $x_{A,e}$.
For every non-ordered pair of non-adjacent edges $\sigma,\tau$ denote by $b_{\sigma,\tau}\in\Z_2$ the number of
those endpoints of $\sigma$ whose numbers lie between the numbers of the endpoints of $\tau$.
For every such pairs $(A,e)$ and $\{\sigma,\tau\}$ let\footnote{Example \ref{vkam2-line} and Proposition \ref{elcob}.b explain how $b_{\sigma,\tau}$ and $a_{A,e,\sigma,\tau}$ naturally appear in the proof.}
$$a_{A,e,\sigma,\tau}=\begin{cases} 1 & \text{either}\quad(A\in\sigma\text{ and }e=\tau)\quad\text{or}\quad  (A\in\tau \text{ and }e=\sigma)\\ 0 & \text{otherwise}\end{cases}.$$
For every such pair $\{\sigma,\tau\}$ take the  equation
$\sum_{A\notin e} a_{A,e,\sigma,\tau}x_{A,e}=b_{\sigma,\tau}$.
\end{proposition}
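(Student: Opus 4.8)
The plan is to prove the cyclic chain of implications $(i)\Rightarrow(ii)\Rightarrow(iii)\Rightarrow(i)$, where the equivalence $(ii)\Leftrightarrow(iii)$ is essentially a translation of the same condition between combinatorial and linear-algebraic language and should be nearly tautological, so the real content is in $(i)\Rightarrow(ii)$ and $(iii)\Rightarrow(i)$.

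\textbf{From planarity to a solution: $(i)\Rightarrow(ii)$.} Suppose the graph is planar. Fix the chosen ordering of vertices $1<2<\dots<n$. Build an explicit ``standard'' general position PL map $g\colon K_n\to\R^2$ by placing vertex $i$ at a point of the moment curve (e.g. $(i,i^2)$) and drawing edges as straight segments; for this map the number of intersection points of non-adjacent edges $\sigma,\tau$ equals $b_{\sigma,\tau}$, i.e. it is odd exactly when the endpoints are intertwined. On the other hand, planarity gives a general position PL embedding $h\colon K_n\to\R^2$ (using F\'ary's Theorem \ref{grapl-fary}, or directly the definition) for which all non-adjacent edges have disjoint images. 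Now I would connect $g$ and $h$ by a finite sequence of elementary moves, each changing the map on a single edge by a linear homotopy past a single vertex, as in the proof of Lemma \ref{11-vankam} and in Proposition \ref{elcob}.b referenced in the statement. Each such elementary move across vertex $V_i$ over edge $e_i$ changes, for every pair of non-adjacent edges $\sigma,\tau$, the parity of the intersection number by exactly $a_{V_i,e_i,\sigma,\tau}$ (by the Parity Lemma \ref{0-even}.b applied to the closed polygonal line formed by the two positions of the moved edge together with the fixed cycle of edges non-adjacent to it; this is precisely the computation in Lemma \ref{11-vankam}). Recording the moves used gives the list of pairs $(V_i,e_i)$, $i=1,\dots,s$; summing the changes along the homotopy from $g$ to $h$ yields, for each $\sigma,\tau$, that $b_{\sigma,\tau}$ plus the number of relevant $i$'s equals the intersection parity of $h$, which is $0$. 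This is exactly condition (ii), and setting $x_{V_i,e_i}$ to the multiplicity mod $2$ with which the pair $(V_i,e_i)$ occurs gives a solution of the system in (iii).

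\textbf{From a solution to planarity: $(iii)\Rightarrow(i)$.} This is the harder and more interesting direction --- it is the van Kampen--Hanani--Tutte theorem. Given a solution $(x_{A,e})$ of the $\Z_2$-system, start from the standard map $g$ above (with $v$-type intersection parities $b_{\sigma,\tau}$) and, for each pair $(A,e)$ with $x_{A,e}=1$, perform the corresponding elementary move of edge $e$ across vertex $A$. The resulting general position PL map $f\colon K_n\to\R^2$ has, by the computation above, intersection parity $b_{\sigma,\tau}+\sum x_{A,e}a_{A,e,\sigma,\tau}=b_{\sigma,\tau}+b_{\sigma,\tau}=0$ for every pair of non-adjacent edges: that is, $f$ is an \emph{almost embedding} (a PL map in which any two non-adjacent edges meet an even number of times; equivalently the van Kampen number of every pair vanishes). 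The content of the Hanani--Tutte step is then that a graph admitting an almost embedding into $\R^2$ is planar; I would invoke the standard argument (the ``Hanani--Tutte redrawing'': repeatedly cancel pairs of intersections on a single edge with a fixed edge by local finger moves near a vertex, reducing all even crossing numbers to zero simultaneously --- this can be done because the relevant algebra is over $\Z_2$ and the parity constraints are consistent). Once $f$ is redrawn to a genuine PL embedding, planarity follows by definition.

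\textbf{Main obstacle.} The delicate point is the $(iii)\Rightarrow(i)$ direction, specifically the claim that vanishing of all pairwise intersection parities (almost embeddability) implies embeddability: this is the Hanani--Tutte theorem, whose proof requires the careful ``independent local moves'' cancellation argument showing that one can remove, on each edge, all crossings with every other edge at once without creating new odd intersections. The bookkeeping that elementary moves across vertices realize precisely the coefficients $a_{A,e,\sigma,\tau}$, and that the moves can be performed in general position, also needs the setup of \S\ref{s:vkam2-ic} and Proposition \ref{elcob}.b; but that part is routine given the Parity Lemma. The $(ii)\Leftrightarrow(iii)$ equivalence is purely formal: (ii) says the linear system has a $0/1$ solution supported on a multiset $\{(V_i,e_i)\}$, while (iii) asks for a solution in $\Z_2$, and since the variables already take values in $\Z_2$ these are the same statement.
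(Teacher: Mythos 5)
Your overall architecture is close to the paper's: your $(i)\Rightarrow(ii)$ is exactly the content of Proposition \ref{elcob}.b, Lemma \ref{star} and Example \ref{vkam2-line} (i.e.\ of Proposition \ref{starpr} applied to the circle-with-chords drawing), and $(ii)\Leftrightarrow(iii)$ is indeed tautological. One secondary caveat there: you justify the invariance by asserting that any two general position PL maps are connected by a finite sequence of moves each pushing a single edge across a single vertex. That statement is stronger than what is needed; the paper's footnote to Lemma \ref{star} explicitly flags it as harder than the lemma itself, and instead Lemma \ref{star} is proved directly (via the Parity Lemma \ref{0-even}.b applied to the cycle $f\sigma\cup f'\sigma$, or via a general position PL homotopy). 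Your sketch would be fine if you replaced the appeal to a move decomposition by that computation.

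The genuine gap is in $(iii)\Rightarrow(i)$. Building a $\Z_2$-planar drawing from a solution of the system is fine, but the passage from $\Z_2$-planarity to planarity is the Hanani--Tutte Theorem \ref{vkam-z2}, and the argument you sketch for it --- ``repeatedly cancel pairs of intersections by local finger moves; this works because the algebra is over $\Z_2$'' --- is not a proof. Pushing an edge along another edge to cancel an even number of crossings between one pair typically creates new crossings with third edges, and arranging that \emph{all} pairs of non-adjacent edges become simultaneously disjoint is precisely the hard content of the theorem; the paper even records (footnote in \S\ref{s:vkam2-ic}) that Sarkaria's attempted direct deduction of planarity from $\Z_2$-planarity has gaps. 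The paper avoids this trap: Theorem \ref{vkam-z2} is obtained via the Kuratowski Theorem \ref{grapl-kur} (since $K_5$ and $K_{3,3}$, hence their subdivisions, are not $\Z_2$-planar by Lemma \ref{11-vankam}), and the implication $(ii)\Rightarrow(i)$ of the proposition is likewise proved by Kuratowski together with Assertion \ref{grapl-wives}, which is the purely combinatorial statement that condition $(ii)$ fails for $K_5$ and $K_{3,3}$. If you simply cite Theorem \ref{vkam-z2} as a known result instead of sketching a redrawing argument, your route becomes correct and is essentially the paper's, with the reduction to Kuratowski hidden inside that citation.
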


The implication $(ii)\Leftrightarrow(iii)$ is clear.
The implication $(ii)\Rightarrow(i)$ follows by the Kuratowski Theorem \ref{grapl-kur}
and Assertion \ref{grapl-wives} below.
The implication $(i)\Rightarrow(iii)$ follows by the Hanani-Tutte Theorem \ref{vkam-z2},
Example \ref{vkam2-line} and Proposition \ref{starpr} below.

For history see \cite[Remark after Theorem 1.18]{Sc13} and Remark \ref{r:histgr}.
For generalization to surfaces see the survey \cite{Sk21m} and the references therein.

\begin{pr}\jonly{[see proof in {\cite[\S1.5.1]{Sk18}}]} \label{grapl-wives}
The property (ii) above is not fulfilled for $K_5$ and for $K_{3,3}$.

Let us present a direct reformulation for $K_5$ (for $K_{3,3}$ the reformulation and the proof are analogous).

There are five musicians of different age.
Some pairs of musicians performed pieces.
Every pair was listened by some (possibly by none) of the remaining three musicians.
Then for some two disjoint pairs of musicians the sum of the following three numbers is odd:

$\bullet$ the number of musicians from the first pair whose age is between the ages of musicians from the second pair,

$\bullet$ the number of musicians from the first pair who listened to the second pair,

$\bullet$ the number of musicians from the second pair who listened to the first pair.

Here is restatement in mathematical language.

Let $A_1,\ldots,A_5$ be five collections of 2-element subsets of $\{1,2,3,4,5\}$ such that no $j\in\{1,2,3,4,5\}$ is contained in any subset from $A_j$.
Then for some four different elements $i,j,k,l\in\{1,2,3,4,5\}$ the sum of the following three numbers is odd

$\bullet$ the number of elements $s\in\{i,j\}$ lying between $k$ and $l$;

$\bullet$ the number of elements $s\in\{i,j\}$ such that $A_s\ni\{k,l\}$;

$\bullet$ the number of elements $s\in\{k,l\}$ such that $A_s\ni\{i,j\}$.
\end{pr}


\begin{proof}[Sketch of proof]  Denote by $S$ the sum of the considered sums over all 15 unordered pairs of disjoint pairs of musicians. One can check that $S$ is odd for any choice of performances. See a `geometric interpretation' in Example \ref{cobo}.b.
\end{proof}


\subsubsection{Intersection cocycle}\label{s:vkam2-ic}

{\it A linear map $f:K\to\R^2$} of a graph $K=(V,E)$ to the plane is a map $f:V\to\R^2$.
{\it The image $f(AB)$ of edge $AB$} is the segment $f(A)f(B)$.
{\bf A piecewise-linear (PL) map $f:K\to\R^2$} of a graph $K=(V,E)$ to the plane is a collection of (non-closed) polygonal lines corresponding to the edges of $K$, whose endpoints correspond to the vertices of $K$.
(A PL map of a graph $K$ to the plane is `the same' as a linear map of some graph homeomorphic to $K$.)
{\bf The image} of an edge, or  of a collection of edges, is defined analogously to the case of $K_n$ (\S\ref{0grapl}).
So a graph is planar if there exists its PL map to the plane such that the images of vertices are distinct,
the images of the edges do not have self-intersections, and no image of an edge intersects the interior of any other image of an edge.

A linear map of a graph to the plane is called {\it a general position linear map} if the images of all the vertices are in general position.
A PL map $f:K\to\R^2$ of a graph $K$ is called {\bf a general position} PL map if there exist a graph $H$ homeomorphic to $K$ and a general position linear map of $H$ to the plane such that this map `corresponds' to the map $f$.


A graph is called {\bf $\Z_2$-planar} if there exists a general position PL map of this graph to the plane such that images of any two non-adjacent edges intersect at an even number of points.


By Lemma~\ref{11-vankam} $K_5$ is not $\Z_2$-planar.
Analogously, $K_{3,3}$ is not $\Z_2$-planar (see Remark \ref{r:grapl-k33}.a).
Hence, if a graph $K$ is homeomorphic to $K_5$ or to $K_{3,3}$, then $K$ is not $\Z_2$-planar (because any PL map $K\to\R^2$ corresponds to some PL map $K_5\to\R^2$ or $K_{3,3}\to\R^2$).
Then using Kuratowski Theorem \ref{grapl-kur}
one can obtain the following result.\footnote{For a direct deduction of planarity from $\Z_2$-planarity see~\cite{Sa91}; K. Sarkaria confirms that the deduction has gaps.
For a direct deduction of $\Z_2$-planarity from non-existence of a subgraph homeomorphic to $K_5$ or $K_{3,3}$ see~\cite{Sa91}.}

\begin{theorem}[Hanani-Tutte; cf. Theorems \ref{vkam-z2-r} and \ref{t:mawa}]\label{vkam-z2}
A graph is planar if and only if it is $\Z_2$-planar.
\end{theorem}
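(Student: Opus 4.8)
\textbf{Proof proposal for the Hanani–Tutte Theorem \ref{vkam-z2}.}

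The plan is to prove the two implications separately. The easy direction is that a planar graph is $\Z_2$-planar: if a graph admits a PL embedding into $\R^2$, then after a small general-position perturbation of the vertices of the polygonal lines (which can only create a finite, and in fact even, number of intersection points between each pair of edges — this follows from the Parity Lemma \ref{0-even}.b applied to the two polygonal lines together with short arcs near their common endpoints, or simply because each pair of disjoint closed curves obtained by joining the endpoints intersects evenly) we obtain a general position PL map in which non-adjacent edges meet an even number of times. Actually the cleanest route is: an embedding trivially has zero intersections between non-adjacent edges, and a generic perturbation changes each pairwise intersection count by an even number, so the count stays even. Hence planar $\Rightarrow$ $\Z_2$-planar.

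For the hard direction, $\Z_2$-planar $\Rightarrow$ planar, I would argue by contradiction using the Kuratowski Theorem \ref{grapl-kur}. Suppose $K$ is $\Z_2$-planar but not planar. Then $K$ contains a subgraph $K'$ homeomorphic to $K_5$ or to $K_{3,3}$. First I would observe that $\Z_2$-planarity is inherited by subgraphs: restricting a general position PL map of $K$ in which non-adjacent edges meet evenly to the edges of $K'$ still has this property (non-adjacency in $K'$ implies non-adjacency in $K$). Next I would observe that $\Z_2$-planarity is a homeomorphism invariant: a PL map of $K'$ corresponds to a PL map of $K_5$ (or $K_{3,3}$) in the sense that subdividing an edge does not change the parity of intersection counts between the resulting pieces and the other edges — one just has to check that when an edge $\sigma$ is subdivided into $\sigma_1,\sigma_2$, for any edge $\tau$ non-adjacent to $\sigma$ we have $|f\sigma_1\cap f\tau|+|f\sigma_2\cap f\tau|\equiv|f\sigma\cap f\tau|\pmod 2$, while the new vertex is interior and meets nothing else. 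Hence $K_5$ (or $K_{3,3}$) would be $\Z_2$-planar, contradicting Lemma \ref{11-vankam} (and its $K_{3,3}$ analogue).

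The main obstacle is the care needed in the homeomorphism-invariance step and in making precise the phrase ``any PL map $K\to\R^2$ corresponds to some PL map $K_5\to\R^2$ or $K_{3,3}\to\R^2$'' — one must track what happens at subdivision vertices and at the branch vertices of the homeomorphism (where several subdivided edges of $K'$ may have to be concatenated), and verify that general position can be preserved throughout by a small perturbation without altering any parity. Once these bookkeeping lemmas are in place, the deduction from Kuratowski's theorem is immediate. I would present the subdivision-invariance as a short separate assertion, prove the two inheritance properties, and then conclude: a non-planar graph contains a Kuratowski subgraph, which is not $\Z_2$-planar, so by inheritance the graph is not $\Z_2$-planar either; combined with the easy direction this gives the equivalence.
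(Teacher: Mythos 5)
Your proposal is correct and takes essentially the same route as the paper: the hard direction is obtained from the Kuratowski Theorem \ref{grapl-kur} together with the non-$\Z_2$-planarity of $K_5$ and $K_{3,3}$ (Lemma \ref{11-vankam} and its $K_{3,3}$ analogue), using that $\Z_2$-planarity is inherited by subgraphs and preserved under subdivision, which is exactly the deduction sketched in the paper before the statement of Theorem \ref{vkam-z2}. The easy direction (a planar embedding, after a small general position perturbation, witnesses $\Z_2$-planarity) is the same trivial observation the paper leaves implicit.
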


\begin{example}\label{vkam2-line}
Suppose a graph and an arbitrary ordering of its vertices are given.
Put the vertices on a circle, preserving the ordering.
Take the chord for each edge.
We obtain a general position linear map of the graph to the plane.
For any non-adjacent edges $\sigma,\tau$ the number of intersection points of their images has the same parity as the number of endpoints of $\sigma$ that lie between the endpoints of $\tau$.
\end{example}

Let $f:K\to\R^2$ be a general position PL map of a graph $K$.
Take any pair of non-adjacent edges $\sigma,\tau$.
By Proposition~\ref{grapl-gp} the intersection $f\sigma\cap f\tau$ consists of a finite number of points.
Assign to the pair $\{\sigma,\tau\}$ the residue
$$|f\sigma\cap f\tau|\mod2.$$
Denote by $K^*$ the set of all unordered pairs of non-adjacent edges of the graph $K$.
The obtained map $K^*\to\Z_2$ is called {\bf the intersection cocycle} (modulo 2) of $f$ (we call it `cocycle' instead of `map' to avoid confusion with maps to the plane).
Maps $K^*\to\Z_2$ are identified with subsets of $K^*$ (consisting of pairs going to $1\in\Z_2$).\footnote{Maps $K^*\to\Z_2$ can also be seen as `partial matrices', i.e. symmetric arrangements of zeroes and ones in those cells of the $e\times e$-matrix that correspond to the pairs of non-adjacent edges, where $e$ is the number of edges of $K$.}


\begin{remark}\label{vector} (a) If a graph is $\Z_2$-planar, then the intersection cocycle is zero for some general position PL map of this graph to the plane.

(b) (cf. Example \ref{vkam2-line})
Take a linear map $f:K_n\to\R^2$ such that $f(1)f(2)\ldots f(n)$ is a convex $n$-gon.
For $n=4$ and $n=5$ the intersection cocycles correspond to the subsets $\big\{ \{13,24\} \big\}$  and
  $\big\{\{13,24\},\{24,35\},\{35,41\},\{41,52\},\{52,13\}\big\}$.
We obtain the following partial matrices (the edges are ordered lexicographically).
$$\left(  \begin{array}{cccccc}
    \text{-} & \text{-} & \text{-} & \text{-} & \text{-} & 0 \\
    \text{-} & \text{-} & \text{-} & \text{-} & 1 & \text{-} \\
    \text{-} & \text{-} & \text{-} & 0 & \text{-} & \text{-} \\
    \text{-} & \text{-} & 0 & \text{-} & \text{-} & \text{-} \\
    \text{-} & 1 & \text{-} & \text{-} & \text{-} & \text{-} \\
    0 & \text{-} & \text{-} & \text{-} & \text{-} & \text{-} \\
  \end{array}\right)
\quad\text{and}\quad
\left(  \begin{array}{cccccccccc}
    \text{-} & \text{-} & \text{-} & \text{-} & \text{-} & \text{-} & \text{-} & 0 & 0 & 0 \\
    \text{-} & \text{-} & \text{-} & \text{-} & \text{-} & 1 & 1 & \text{-} & \text{-} & 0 \\
    \text{-} & \text{-} & \text{-} & \text{-} & 0 & \text{-} & 1 & \text{-} & 1 & \text{-} \\
    \text{-} & \text{-} & \text{-} & \text{-} & 0 & 0 & \text{-} & 0 & \text{-} & \text{-} \\
    \text{-} & \text{-} & 0 & 0 & \text{-} & \text{-} & \text{-} & \text{-} & \text{-} & 0 \\
    \text{-} & 1 & \text{-} & 0 & \text{-} & \text{-} & \text{-} & \text{-} & 1 & \text{-} \\
    \text{-} & 1 & 1 & \text{-} & \text{-} & \text{-} & \text{-} & 0 & \text{-} & \text{-} \\
    0 & \text{-} & \text{-} & 0 & \text{-} & \text{-} & 0 & \text{-} & \text{-} & \text{-} \\
    0 & \text{-} & 1 & \text{-} & \text{-} & 1 & \text{-} & \text{-} & \text{-} & \text{-} \\
    0 & 0 & \text{-} & \text{-} & 0 & \text{-} & \text{-} & \text{-} & \text{-} & \text{-} \\
  \end{array}\right).$$

(c) A subset $C\subset K^*$ is called a {\bf 2-cycle} (modulo 2) if for each edge $\sigma$ and vertex $A\not\in\sigma$ there is an even number of edges $\tau$ having a vertex $A$ and such that $\{\sigma,\tau\}\in C$.
For a general position PL map $f:K\to\R^2$ define the {\it $C$-van Kampen number}
$$v_C(f):=\sum\limits_{ \{\sigma,\tau\} \in C} |f\sigma\cap f\tau| \in\Z_2.$$
Analogously to Lemma \ref{ratv-vk2} $v_C(f)$ is independent of $f$, and so depends only on $K$ and $C$.

{\it A graph $K$ is $\Z_2$-planar if and only if the its $C$-van Kampen number is zero
for any 2-cycle $C\subset K^*$.}

This can be deduced from  Proposition \ref{starpr}.
\end{remark}

\subsubsection{Intersection cocycles of different maps}\label{s:vkam2-cc}

Addition of cocycles $K^*\to\Z_2$ is componentwise, i.e. is defined by adding modulo 2 numbers corresponding to the same pair.
This corresponds to the sum modulo 2 of subsets of $K^*$.

\begin{figure}[h]\centering
\includegraphics{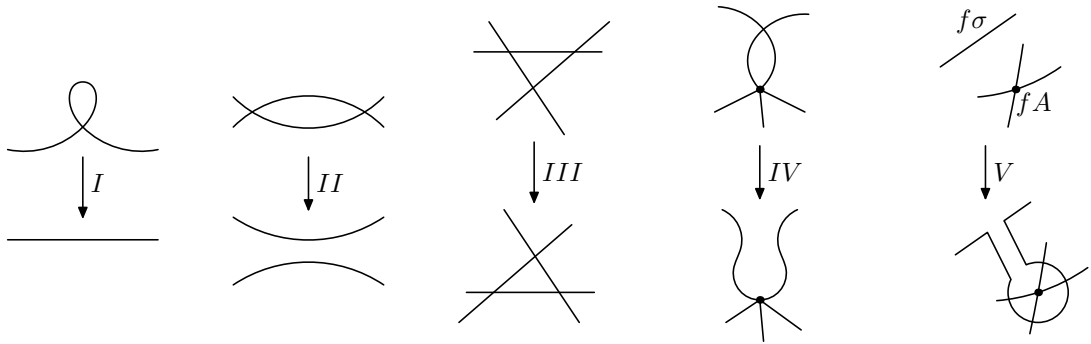}
\caption{The Reidemeister moves for graphs in the plane}
\label{reidall}
\end{figure}


\begin{proposition}[cf. Proposition \ref{elcobz}]\label{elcob}
(a) The intersection cocycle does not change under the first four Reidemeister moves in fig.~\ref{reidall}.I-IV.
(The graph drawing changes in the disk as in fig.~\ref{reidall}, while out of this disk the graph drawing remains unchanged.
In (a) no other images of edges besides the pictured ones intersect the disk.)

(b) Let $K$ be a graph and $A$ its vertex which is not the end of an edge $\sigma$.
{\bf An elementary coboundary} of the pair $(A,\sigma)$ is the subset $\delta_K(A,\sigma)\subset K^*$
consisting of all pairs $\{\sigma,\tau\}$ with $\tau\ni A$.
Under the Reidemeister move in Fig.~\ref{reidall}.V the intersection cocycle changes by adding $\delta_K(A,\sigma)$.
(In (b) other images of edges besides the pictured ones intersect the disk, but <<parallel>> segments are <<very close>> to each other.)
\end{proposition}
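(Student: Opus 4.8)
\textbf{Proof proposal for Proposition \ref{elcob}.}

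The plan is to treat each Reidemeister move as a purely local modification: outside a small disk $D$ the drawing is fixed, so for any pair of non-adjacent edges $\{\sigma,\tau\}$ only the portions of $f\sigma$ and $f\tau$ inside $D$ can change, and the parity $|f\sigma\cap f\tau|\bmod 2$ changes by the parity of the number of intersection points \emph{inside} $D$ that are created or destroyed. Thus for each of the five moves I would enumerate, up to symmetry, which pairs $\{\sigma,\tau\}$ can possibly be affected (namely those for which both edges have an arc meeting $D$), and for each such pair count the change in the number of interior intersection points modulo $2$.

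For part (a): In move I (a finger move / monogon), a single edge acquires a small loop, so no pair of \emph{distinct} non-adjacent edges changes its intersection count — the new self-crossing does not enter the cocycle, and any other edge passing through $D$ either crosses the loop twice or not at all; either way the parity is unchanged. In move II (passing one arc over another, removing or creating a bigon), the two arcs belong to two edges $\sigma,\tau$, and the move changes $|f\sigma\cap f\tau|$ by $\pm 2$, hence not at all modulo $2$; any third edge through $D$ is unaffected. In move III (the triangle move with three arcs belonging to edges $\sigma,\tau,\rho$), each of the three pairs among them has its intersection count changed by $0$ or $\pm 2$ as one arc slides across the crossing of the other two — this is exactly the content of (a version of) the Triviality/Parity bookkeeping, and one checks parity is preserved for all three pairs. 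Move IV (an arc sliding past a vertex, i.e.\ past the common endpoint of several edges incident to that vertex): if the sliding arc belongs to edge $\sigma$ and the vertex is $A$, then $\sigma$ must be \emph{adjacent} to $A$ is not required — wait, $\sigma$ is non-adjacent to all edges at $A$ only if $A\notin\sigma$; but in move IV the sliding arc is an arc of an edge passing near $A$ without $A$ being its endpoint, and it crosses the bundle of edges at $A$ an even number of times before and after (it enters and leaves the small disk around $A$), so each pair $\{\sigma,\tau\}$ with $\tau\ni A$ gets an even change. So the cocycle is unchanged.

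For part (b): move V is precisely move IV \emph{with the count of crossings through the vertex bundle changing by one} — the sliding arc of $\sigma$ is pushed from one side of the vertex $A$ to the other, so for \emph{each} edge $\tau$ incident to $A$ and non-adjacent to $\sigma$ the parity $|f\sigma\cap f\tau|\bmod 2$ flips, while all other pairs are untouched. By definition this is exactly adding $\delta_K(A,\sigma)$ to the intersection cocycle. Here I should be careful that $A\notin\sigma$ for the move to make sense (otherwise $\sigma$ and the edges at $A$ would be adjacent and not contribute to $K^*$), which matches the hypothesis of the definition of the elementary coboundary.

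The main obstacle I expect is bookkeeping rather than conceptual: making the case analysis for moves III and IV genuinely exhaustive (all cyclic orders of the arcs, all choices of which edges the arcs belong to, and the subcase where two of the arcs in move III actually lie on adjacent edges and so do not contribute to $K^*$ at all), and phrasing the locality argument cleanly enough that ``only interior intersection points change'' is justified without re-deriving the Parity Lemma. I would lean on Parity Lemma \ref{0-even}.b and Lemma \ref{l:triv} to handle the ``$\pm 2$'' counts uniformly instead of drawing every picture, and present moves I, II, IV as immediate from ``an arc enters and leaves $D$ an even number of times'' with move V as the single exception that changes that count by one.
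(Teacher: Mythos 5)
The paper contains no written proof of Proposition \ref{elcob}: it is left as a direct inspection of the local pictures in fig.~\ref{reidall}, and your disk-by-disk parity count is exactly that intended verification. In particular your treatment of move V is right: the vertex $A$ (with the germs of its incident edges) is swept across an arc of $\sigma$, so for every edge $\tau\ni A$ with $\{\sigma,\tau\}\in K^*$ the parity of $|f\sigma\cap f\tau|$ flips and no other pair is touched, which is by definition adding $\delta_K(A,\sigma)$; and since by hypothesis the drawing is unchanged outside the disk and no other edges meet it, no appeal to the Parity or Triviality Lemmas is needed.

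Two small inaccuracies in your write-up of part (a), both coming from guessing the figures. In the triangle move III the pairwise counts inside the disk are literally unchanged (each pair of the three arcs crosses once before and once after), not changed by $\pm2$. For move IV, the claim that the sliding arc ``crosses the bundle of edges at $A$ an even number of times'' is not the right invariant --- an arc entering and leaving the disk can perfectly well cross an individual germ (and hence the bundle) an odd number of times; the correct statement is that, because the arc is \emph{not} swept across $A$ itself (this is precisely what distinguishes IV from V), its mod~$2$ intersection number with each individual edge-germ at $A$ is the same before and after, so every pair changes by an even number. (If the pictured move IV is instead a purely local move near a vertex creating or removing a crossing of two \emph{adjacent} edges, the conclusion is even more immediate, since such pairs do not belong to $K^*$.) With these adjustments your argument is complete and coincides with the paper's intended one.
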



\begin{example}\label{cobo} The  subset of $\delta_K(A,\sigma)^{-1}(1)\subset K^*$ corresponding to the cocycle  $\delta_K(A,\sigma)$ is also called elementary coboundary.

(a) We have $\big\{\{13,24\}\big\}=\delta_{K_4}(1,24)=\delta_{K_4}(2,13)=\delta_{K_4}(3,24)= \delta_{K_4}(4,13)$.
So the intersection cocycle of Remark~\ref{vector}.b is an elementary coboundary for $n=4$.

(b) We have $\delta_{K_5}(3,12)=\big\{\{12,34\},\{12,35\}\big\}$.
So the intersection cocycle of Remark~\ref{vector}.b for $n=5$ is not a sum of elementary coboundaries.
Indeed, addition of an elementary coboundary does not change the parity of the number of ones above the diagonal, while initially this number is 5.
Thus we proved Assertion \ref{grapl-wives} for $K_5$.
\end{example}

Cocycles $\nu_1,\nu_2:K^*\to\Z_2$ (or $\nu_1,\nu_2\subset K^*$) are called {\bf cohomologous} if
$$
\nu_1-\nu_2=\delta_K(A_1,\sigma_1)+\ldots+\delta_K(A_k,\sigma_k)
$$
for some vertices $A_1,\ldots,A_k$ and edges $\sigma_1,\ldots,\sigma_k$ (not necessarily distinct).

Proposition~\ref{elcob}.b and the following Lemma~\ref{star} show that cohomology is the equivalence relation generated by changes of a graph drawing.

\begin{lemma}[cf. Lemmas \ref{star-zl} and \ref{star-r}]\label{star} The intersection cocycles of different general position PL maps of the same graph to the plane are cohomologous.\footnote{This lemma is implied by, but is easier than, the following fact: any general position PL map of a graph to the plane can be obtained from any other such map using Reidemeister moves in fig.~\ref{reidall}.}
\end{lemma}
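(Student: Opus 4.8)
Lemma \ref{star} asserts that the intersection cocycles of any two general position PL maps $f, g : K \to \R^2$ of the same graph are cohomologous.

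\medskip

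The plan is to reduce the general statement to a sequence of elementary moves, and then invoke Proposition \ref{elcob}. First I would recall that a general position PL map of $K$ is, by definition, a general position linear map of some graph $H$ homeomorphic to $K$. So both $f$ and $g$ are determined by finitely many points of the plane (the images of vertices of subdivisions $H_f$ and $H_g$ of $K$). After passing to a common subdivision $H$ of $H_f$ and $H_g$ (possible since homeomorphism of graphs is generated by edge subdivisions, and subdividing an edge of a general position linear map along a point of its image, then perturbing, changes the intersection cocycle only by elementary coboundaries corresponding to the new vertex — or not at all), we may assume $f$ and $g$ are both general position linear maps of the same graph $H$, and it suffices to prove the linear version: the intersection cocycles of two general position linear maps $f, g : H \to \R^2$ are cohomologous.

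\medskip

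Next I would connect $f$ to $g$ by a generic path. Think of a general position linear map of $H$ as a point in $(\R^2)^{V(H)}$; the non-general-position configurations form a finite union of hyperplanes and lower-dimensional algebraic subsets (the loci where three images of vertices become collinear, or three edge-images acquire a common interior point). Choose a generic PL path $\gamma$ in $(\R^2)^{V(H)}$ from $f$ to $g$ avoiding all codimension-$\ge 2$ strata and crossing the codimension-$1$ strata transversally at finitely many times $t_1 < \dots < t_N$. Along each subinterval the combinatorics of intersections, hence the intersection cocycle, is constant. At each crossing time $t_k$ exactly one ``elementary event'' occurs: either a single vertex image passes through the segment between two other vertex images (the event of Reidemeister move V in fig.~\ref{reidall}), or a local picture equivalent to one of the first four Reidemeister moves I–IV occurs when an edge-image sweeps across a crossing of two other edge-images, or a vertex of an edge-image passes through another edge-image. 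By Proposition \ref{elcob}.a the cocycle is unchanged under types I–IV, and by Proposition \ref{elcob}.b it changes by an elementary coboundary $\delta_K(A,\sigma)$ under the move of type V. Summing over $k = 1, \dots, N$, the cocycles of $f$ and $g$ differ by a sum of elementary coboundaries, hence are cohomologous.

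\medskip

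The main obstacle I anticipate is the ``transversality'' bookkeeping: verifying that a generic path really does decompose into the claimed list of elementary events, and that each such event changes the intersection cocycle exactly as Proposition \ref{elcob} describes (in particular that an event of a vertex-image of one edge crossing another edge-image reduces to one of the Reidemeister moves already analyzed, and that an edge-image passing through a double point of two other edge-images is move I or III performed locally). This is the standard ``discrete Morse theory of the space of drawings'' argument; it is routine in spirit but requires care in enumerating the codimension-one strata of the non-general-position locus and checking the effect of crossing each. Once this is done the conclusion is immediate. (The footnote in the statement indicates the author is content to derive the Lemma from this cruder transversality argument rather than from the stronger fact that any two drawings are connected by Reidemeister moves, which is exactly the strategy above.)
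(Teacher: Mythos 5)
Your strategy is sound and, once the bookkeeping is carried out, it does prove the lemma; but it is a genuinely different (and heavier) route than the paper's. After reducing to linear maps of a common subdivision, you in effect prove the statement of the footnote --- that the two drawings are connected by the moves of fig.~\ref{reidall} (via a generic path in $(\R^2)^{V(H)}$ crossing only codimension-one strata) --- and then apply Proposition~\ref{elcob} at each crossing. The paper avoids this. Its first proof reduces to two maps differing on the interior of a single edge $\sigma$ (the reduction to changing one vertex at a time is done by a move-V trick along a polygonal line joining $f(A)$ to $f'(A)$), and in that case exhibits the difference of the cocycles explicitly as $\sum_{B\notin\sigma}\overline{Of(B)}\cdot\delta_K(B,\sigma)$, using the Parity Lemma~\ref{0-even}.b applied to the closed curve $f\sigma\cup f'\sigma$ and an auxiliary cone point $O$; no path in configuration space and no enumeration of degenerations is needed. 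The paper's second proof is closer in spirit to yours: it takes a general position PL homotopy and shows the difference of cocycles equals $\sum\delta_K(A_i,\sigma_i)$ over the pairs $(A,\sigma)$ for which $f_t(A)\in f_t(\sigma)$ an odd number of times, via a $1$-cycle argument (cf.\ Remark~\ref{r:1cyc}.a) --- i.e.\ it records the same ``move V'' events mod $2$, but homologically, without decomposing the homotopy into local moves. Your version buys an explicit move decomposition; the paper's versions buy brevity, and the second one generalizes to higher dimensions and multiplicities.

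Two points in your sketch need tightening. First, the claim that subdividing an edge changes the intersection cocycle ``by elementary coboundaries corresponding to the new vertex'' is wrong as stated: a subdivision point is not a vertex of $K$, so there is no such coboundary of $K$; the correct (and simpler) statement is that the $K$-cocycle is literally unchanged, since it counts only intersections of images of edges of $K$, which are transversal and hence stable under subdivision followed by a small perturbation. Second, Proposition~\ref{elcob} is stated for moves supported in a disk with the rest of the drawing fixed, whereas along your generic path all vertex images move simultaneously; so either move one vertex at a time (still possible generically), or argue directly that at a generic collinearity event the only parities that can change are $|f\tau\cap f\sigma|$ for edges $\tau$ containing the crossing vertex $A$: if $A$ is a vertex of $K$ not in $\sigma$ this is exactly the addition of $\delta_K(A,\sigma)$, and if $A$ is a subdivision point interior to an edge of $K$ the intersection count of that pair of $K$-edges changes by $0$ or $\pm2$, so the cocycle is unchanged (triple-point strata change nothing at all). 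With these repairs the enumeration of codimension-one events that you flagged as the ``main obstacle'' is short, and your argument is complete.
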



The proof of Lemma \ref{star} presented below \jonly{in \cite[\S1.5]{Sk18}} is a non-trivial generalization of the proof of Lemma \ref{11-vankam}.
Lemma~\ref{star} and Proposition~\ref{elcob}.b imply the following result.

\begin{proposition}[cf. Propositions \ref{starz} and \ref{starpr-r}]\label{starpr}
A graph is $\Z_2$-planar if and only if the intersection cocycle of some (or, equivalently, of any)
general position PL map of this graph to the plane is cohomologous to the zero cocycle.
\end{proposition}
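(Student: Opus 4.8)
The plan is to prove Proposition \ref{starpr} by combining the two ingredients that are explicitly stated before it: Proposition \ref{elcob}.b (how an intersection cocycle changes under the Reidemeister move V, namely by an elementary coboundary) and Lemma \ref{star} (intersection cocycles of different general position PL maps are cohomologous). Let me sketch the argument in both directions.

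\textbf{The two directions.} For the ``only if'' direction, suppose the graph $K$ is $\Z_2$-planar. By definition there is a general position PL map $f:K\to\R^2$ whose intersection cocycle is zero, hence trivially cohomologous to the zero map; so ``some map'' has this property. Conversely, suppose the intersection cocycle $\nu$ of some general position PL map $f:K\to\R^2$ is cohomologous to the zero map, i.e.
$$\nu=\delta_K(A_1,\sigma_1)+\ldots+\delta_K(A_k,\sigma_k)$$
for some vertices $A_i$ and edges $\sigma_i$ with $A_i\notin\sigma_i$. The idea is to modify $f$ by $k$ successive Reidemeister moves of type V, one for each pair $(A_i,\sigma_i)$: by Proposition \ref{elcob}.b, performing the move associated to $(A_i,\sigma_i)$ adds $\delta_K(A_i,\sigma_i)$ to the intersection cocycle. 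After performing all $k$ of these moves we obtain a new general position PL map $f'$ whose intersection cocycle is $\nu+\sum_i\delta_K(A_i,\sigma_i)=0$, i.e. $f'$ witnesses $\Z_2$-planarity. (Each move of type V can be carried out in a small disk disjoint from the rest of the drawing after an isotopy of the drawing that brings an arc of $\sigma_i$ close to $A_i$; one should note that type I–IV moves, which by Proposition \ref{elcob}.a do not change the cocycle, may be needed to arrange this, but they are harmless.)

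\textbf{The role of Lemma \ref{star} --- the ``or, equivalently, of any'' clause.} The statement asserts that it does not matter which general position PL map we start from. This is exactly where Lemma \ref{star} enters: if $f$ and $g$ are two general position PL maps of $K$, their intersection cocycles $\nu_f$ and $\nu_g$ are cohomologous, hence $\nu_f$ is cohomologous to zero if and only if $\nu_g$ is. Combined with the equivalence established in the previous paragraph for a fixed map, this gives: $\Z_2$-planarity $\iff$ $\nu_f$ cohomologous to zero for some $f$ $\iff$ $\nu_g$ cohomologous to zero for every $g$. So the proposition is a clean formal consequence of Lemma \ref{star} and Proposition \ref{elcob}.b; no further geometry is needed.

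\textbf{Expected main obstacle.} The genuinely nontrivial content --- that cocycles of different maps are cohomologous (Lemma \ref{star}) --- is assumed, as is the behaviour of the cocycle under Reidemeister moves (Proposition \ref{elcob}). So within the scope of this proof the only subtlety is the realizability of an arbitrary elementary coboundary $\delta_K(A,\sigma)$ as a single Reidemeister-V move on a given drawing: one must check that for any general position PL map and any admissible pair $(A,\sigma)$ there is an ambient modification, supported in a small disk meeting only the relevant arcs, realizing precisely that move. This is a routine general-position argument (drag a short subarc of the image of $\sigma$ across the vertex $A$), but it is the one place where a careful, if easy, geometric verification is required; everything else is bookkeeping with the group of $\Z_2$-valued cocycles on $K^*$ modulo the subgroup generated by elementary coboundaries.
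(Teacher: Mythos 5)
Your proof is correct and follows exactly the route the paper intends: the paper derives Proposition \ref{starpr} directly from Lemma \ref{star} and Proposition \ref{elcob}.b, which is precisely your argument (zero cocycle for a witnessing map in one direction, realizing each elementary coboundary by a type V move in the other, and Lemma \ref{star} for the ``some or, equivalently, any'' clause). Your flagged subtlety about realizing $\delta_K(A,\sigma)$ by dragging an arc of $f\sigma$ across $f(A)$ is the right routine check and does not change the approach.
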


\begin{proof}[Proof of Lemma~\ref{star}]
Suppose that $K$ is a graph and $f,f':K\to\R^2$ are general position PL maps.

\noindent {\it Proof of the particular case when the maps $f$ and $f'$ differ only on  the interior of one edge~$\sigma$.}
It suffices to prove this case when $f|_\sigma$ is linear.
Take a point $O$ in the plane in general position together with all the vertices of the polygonal lines $f\alpha$ and $f'\alpha$.
Denote by $\overline X$ the residue $|(f\sigma\cup f'\sigma)\cap X| \mod 2$.
Then by the Parity Lemma~\ref{0-even}.b for every edge $PQ$ disjoint with $\sigma$ we have
$$0 = \overline{Of(P)\cup Of(Q)\cup f(PQ)} = \overline{Of(P)}+\overline{Of(Q)}+\overline{f(PQ)}\quad\Rightarrow\quad \overline{f(PQ)} = \overline{Of(P)} + \overline{Of(Q)}.$$
(Cf. Proposition \ref{ratv-chess}.ab.)
Then the difference of the intersection cocycles
of $f$ and
of $f'$ equals
$$\sum\limits_{PQ\cap\sigma=\emptyset} \overline{f(PQ)} =
\sum\limits_{PQ\cap\sigma=\emptyset}  (\overline{Of(P)}+\overline{Of(Q)})=
\sum\limits_{B\not\in \sigma} \overline{Of(B)}\cdot\delta(B,\sigma).$$
(This number is equal $\delta(B_1,\sigma)+\ldots+\delta(B_k,\sigma)$, where $B_1,\ldots,B_k$ all the vertices $B\not\in\sigma$ for which the segment $Of(B)$ intersect the cycle $f\sigma\cup f'\sigma$ at an odd number of points; the set $B_1,\ldots,B_k$ depends on the choice of the point $O$, but the equality holds for every choice.)

\begin{figure}[h]\centering
\includegraphics{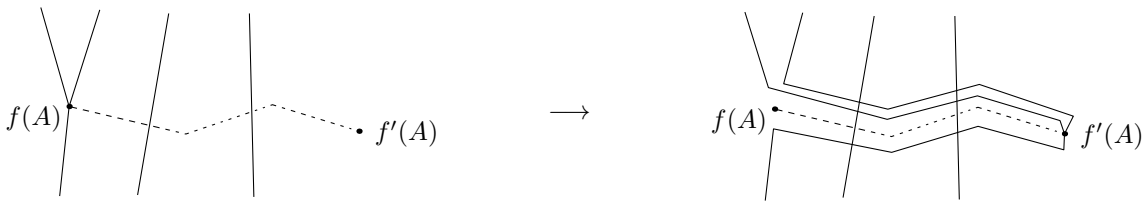}
\caption{Transformation of a general position PL map}
\label{f:11-vankam}
\end{figure}

\noindent {\it Deduction of the general case from the particular case (suggested by R. Karasev).}
It suffices to prove the lemma for map $f$ that differs from $f'$ only on the set of edges issuing out of some vertex $A$.
Join $f(A)$ to $f'(A)$ by a general position polygonal line.
Change the maps $f$ and $f'$ on the interiors of the edges so that this polygonal line does not intersect the $f$- and $f'$-images of the edges issuing out of $A$.
By the above particular case, the intersection cocycles will changes to cohomologous ones.
Then take a map $f''$ obtained from $f$ by `moving the neighborhood of $f(A)$ to the $f'(A)$ along the polygonal line' (fig.~\ref{f:11-vankam} which is a version of fig. \ref{reidall}.V).
The intersection cocycles $\nu(f)$ and $\nu(f'')$ are cohomologous.
By the above particular case, the intersection cocycles $\nu(f')$ and $\nu(f'')$ are cohomologous.
Therefore $\nu(f)$ and $\nu(f')$ are cohomologous.
\end{proof}

\begin{proof}[Another proof of Lemma~\ref{star}]
(This proof can be generalized to higher dimensions and perhaps to higher multiplicity, see Problem \ref{p:boundary}.)
Let $K$ be the graph.
Take a {\it general position PL homotopy} $f_t:K\to\R^2$, $t\in[0,1]$, between given two general position PL maps $f_0$ and $f_1$.
Let $\{A_1,\sigma_1\},\ldots,\{A_k,\sigma_k\}$ be all pairs $(A,\sigma)$ of a vertex and an edge not containing this vertex such that the number of $t\in[0,1]$ with $f_tA\in f_t\sigma$ is odd.
It suffices to prove that the difference of the intersection cocycles of $f_0$ and of $f_1$ equals
$$\delta_K(A_1,\sigma_1)+\ldots+\delta_K(A_k,\sigma_k).$$
\begin{figure}[h]\centering
\includegraphics{7-3-plane.eps}
\caption{A homotopy}
\label{f:homotopy}
\end{figure}

Let us prove this equality.
The homotopy $f_t$ can be seen (fig. \ref{f:homotopy}) as a `general position' PL map $F:K\times[0,1]\to\R^2\times[0,1]$ such that
$$F(K\times t)\subset\R^2\times t\quad\text{for each}\quad t\in[0,1]
\quad\text{and}\quad  f(x,t)=f_t(x)\quad\text{for each}\quad t=0,1.$$
By general position for each two disjoint edges $\sigma,\tau$ of $K$ the intersection
$C:=F(\sigma\times[0,1])\cap F(\tau\times[0,1])$ is a finite union of non-degenerate segments.
Now the required equality follows because
$$
0 \underset2\equiv |\partial C| \underset2\equiv |f_0\sigma\cap f_0\tau| + |f_1\sigma\cap f_1\tau| + |F(\partial\sigma\times[0,1])\cap F(\tau\times[0,1])| + |F(\sigma\times[0,1])\cap F(\partial\tau\times[0,1])|.
$$
\end{proof}

Denote the set of cocycles $K^*\to\Z_2$ up to cohomology by $H^2(K^*;\Z_2)$.
(This is called {\it two-dimensional cohomology group of $K^*$ with coefficients in $\Z_2$}.)
A cohomology class of the intersection cocycle of some (or, equivalently, of any) general position PL map $f:K\to\R^2$ is called {\it the van Kampen obstruction modulo 2} $v(K)\in H^2(K^*;\Z_2)$.
Lemma~\ref{star} and Proposition~\ref{starpr} are then reformulated as follows:

$\bullet$ the class $v(K)$ is well-defined, i.e. it does not depend of the choice of the map $f$.

$\bullet$ a graph $K$ is $\Z_2$-planar if and only if $v(K)=0$.

\subsubsection{Intersections with signs}\label{0vkamz}

Here we generalize previous constructions from residues modulo 2 to integers.
This generalization is not formally used later.
However, it is useful to make this simple generalization (and possibly to make
Remark \ref{vkam-line}) before more complicated generalizations in \S\ref{s:triple}, \S\ref{s:tvkam}.
Also, integer analogues are required for higher dimensions (namely, for  proofs of Theorems \ref{t:rec}, \ref{t:algal}).

Suppose that $P$ and $Q$ are oriented polygonal lines in the plane whose vertices are in general position.
Define the {\bf algebraic intersection number} $P\cdot Q$ of $P$ and $Q$ as the sum of the signs of the intersection points of $P$ and $Q$.
See fig. \ref{f:gl2}.

\begin{pr}\label{starzch}
(a) We have $P\cdot Q=-Q\cdot P$.

(b) If we change the orientation of $P$, then the sign of $P\cdot Q$ will change.


(c) If we change the orientation of the plane, i.e. if we make axial symmetry, then
the sign of $P\cdot Q$ will change.
\end{pr}

Let $K$ be a graph and $f:K\to\R^2$ a general position PL map.
Orient the edges of $K$.
Assign to every ordered pair $(\sigma,\tau)$ of non-adjacent edges the algebraic intersection number  $f\sigma\cdot f\tau$.
Denote by $\widetilde  K$ the set of all ordered pairs of non-adjacent edges of $K$.
The obtained cocycle (=map) $\cdot:\widetilde  K\to\Z$ is called {\bf the integral intersection cocycle} of $f$ (for given orientations).

\begin{proposition}\label{elcobz}
Analogue of Proposition \ref{elcob} is true for the integral intersection cocycle, with the following definition.
Let $K$ be an oriented graph and $A$ a vertex which is not the end of an edge $\sigma$.
{\bf An elementary skew-symmetric coboundary} of the pair $(A,\sigma)$
is the cocycle $\delta_{K,\Z}(A,\sigma):\widetilde  K\to\Z$ that assigns

$\bullet$ $+1$ to any pair $(\sigma,\tau)$ with $\tau$ issuing out of $A$ and any pair $(\tau,\sigma)$ with $\tau$ going to $A$,

$\bullet$ $-1$ to any pair $(\sigma,\tau)$ with $\tau$ going to $A$ and any pair $(\tau,\sigma)$ with $\tau$ issuing out~of~$A$,

$\bullet$ $0$ to any other pair.
\end{proposition}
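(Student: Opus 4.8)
The plan is to mirror the proof of Proposition \ref{elcob}, carrying signs through at each point where the $\Z_2$ argument only tracked parities. Specifically, there are three things to verify for the integral intersection cocycle $\cdot:\widetilde K\to\Z$: invariance under Reidemeister moves I--IV (the analogue of Proposition \ref{elcob}.a), the formula for the change under move V (the analogue of Proposition \ref{elcob}.b, with $\delta_K$ replaced by $\delta_{K,\Z}$), and the checks that $\delta_{K,\Z}(A,\sigma)$ is well-defined and skew-symmetric as a function on $\widetilde K$.

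First I would recall that the sign of an intersection point of two oriented segments is a local quantity, and that both Reidemeister moves and move V are supported in a small disk meeting only the pictured edges. For moves I--IV one checks directly that in each case the two pictured arcs of $f\sigma,f\tau$ either gain/lose intersection points in pairs with \emph{opposite} signs (this is exactly the content of the Triviality Lemma \ref{l:triv} applied to the small closed loop formed by the old and new arcs together with the unchanged remainder), so $f\sigma\cdot f\tau$ is unchanged; and no other pair of edges is affected. For move V a vertex $A$ is dragged across an edge $\sigma$ (as in fig.~\ref{reidall}.V); the only pairs $(\sigma,\tau)$ or $(\tau,\sigma)$ that change are those with $\tau$ incident to $A$, and the change in $f\sigma\cdot f\tau$ is precisely $\pm1$ according to whether $\tau$ issues out of or goes into $A$, with the opposite sign for the reversed pair $(\tau,\sigma)$ by Assertion \ref{starzch}.a. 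Matching these signs against the three bullet-point cases in the definition of $\delta_{K,\Z}(A,\sigma)$ gives exactly the asserted increment. The well-definedness of $\delta_{K,\Z}(A,\sigma)$ as a map $\widetilde K\to\Z$ is immediate from its case definition, and skew-symmetry $\delta_{K,\Z}(A,\sigma)(\sigma,\tau)=-\delta_{K,\Z}(A,\sigma)(\tau,\sigma)$ is visible by inspecting the first two bullets; one also checks $\delta_{K,\Z}(A,\sigma)$ reduces mod $2$ to $\delta_K(A,\sigma)$, which is the consistency one wants.

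The main obstacle, and the place where care is genuinely needed, is the bookkeeping of signs in the Reidemeister moves III and IV, where three arcs or a self-tangency are involved and the cancellation is not as transparent as in the $\Z_2$ case: one must set up orientations on all the arcs entering the disk and verify that the net change of $\sum(\text{signs})$ over each affected ordered pair of edges vanishes. The clean way to organize this is to observe that across any such move the union (old arc) $\cup$ (reversed new arc) bounds, so the Triviality Lemma \ref{l:triv} forces the signed count of crossings with any fixed third oriented polygonal line to be zero; applying this with the third line being $f\tau$ for each relevant $\tau$ gives all the needed cancellations at once and avoids a lengthy case analysis. With these pieces in place the proposition follows formally, exactly as Proposition \ref{elcob} did, and in particular the analogue of the cohomology-invariance statement (Lemma \ref{star}, via the two proofs given there) goes through verbatim with $\delta_K$ replaced by $\delta_{K,\Z}$, since both proofs only used the Parity Lemma, whose signed refinement is precisely Lemma \ref{l:triv}.
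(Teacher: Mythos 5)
Your proof is correct and is essentially the argument the paper intends: Propositions \ref{elcob} and \ref{elcobz} are left as local verifications, and the paper's only hint for the integral statements (given before Lemma \ref{star-zl}) is exactly your device of replacing the Parity Lemma \ref{0-even} by the Triviality Lemma \ref{l:triv} in the sign bookkeeping, with moves I--IV creating or destroying crossings in cancelling pairs and move V contributing $\pm1$ on the pairs $(\sigma,\tau)$ and $(\tau,\sigma)$ with $\tau\ni A$ according to whether $\tau$ is oriented out of or into $A$, which matches the three cases defining $\delta_{K,\Z}(A,\sigma)$ up to the immaterial overall sign determined by the direction of the move. One small precision: Lemma \ref{l:triv} is stated for two \emph{closed} polygonal lines, so where you apply it with the non-closed curve $f\tau$ you should either close $f\tau$ up outside the disk of the move or invoke Proposition \ref{ratv-totv2}.c together with the fact that both endpoints of $f\tau$ lie outside that disk (so both winding numbers vanish); this is cosmetic and does not affect your argument.
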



Two cocycles $N_1,N_2:\widetilde  K\to\Z$ are called {\bf skew-symmetrically cohomologous}, if
$$
N_1-N_2=m_1\delta_{K,\Z}(A_1,\sigma_1)+\ldots+m_k\delta_{K,\Z}(A_k,\sigma_k)
$$
for some vertices $A_1,\ldots,A_k$, edges $\sigma_1,\ldots,\sigma_k$ and integers $m_1,\ldots,m_k$ (not necessarily distinct).

The following integral analogue of Lemma \ref{star} is proved analogously using the Triviality Lemma \ref{l:triv}.

\begin{lemma}\label{star-zl} The integer intersection cocycles of different maps of the same graph to the plane are skew-symmetrically cohomologous.
\end{lemma}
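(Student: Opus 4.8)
The plan is to mirror the proof of Lemma \ref{star} (the $\Z_2$ version), replacing "parity of the number of intersection points" by "algebraic intersection number with signs" throughout, and replacing the Parity Lemma \ref{0-even}.b by the Triviality Lemma \ref{l:triv}. As with the $\Z_2$ case, the statement reduces to two subcases: first, when the two general position PL maps $f,f':K\to\R^2$ differ only on the interior of a single edge $\sigma$ (and we may assume $f|_\sigma$ is linear, since any PL map of $K$ is a linear map of a subdivision); second, the general case, which is reduced to the first by a homotopy/``finger move'' argument as in the deduction suggested by R. Karasev.

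For the single-edge subcase, I would fix a point $O$ in the plane in general position together with all vertices of the polygonal lines $f\alpha,f'\alpha$, and for an oriented polygonal line $X$ let $\langle X\rangle$ denote the algebraic intersection number $(f\sigma - f'\sigma)\cdot X$, where $f\sigma - f'\sigma$ is read as the oriented closed polygonal line obtained by traversing $f\sigma$ forward and $f'\sigma$ backward (this is a $1$-cycle since $f$ and $f'$ agree at the endpoints of $\sigma$). For each edge $PQ$ disjoint from $\sigma$, oriented from $P$ to $Q$, the concatenation $\overrightarrow{Of(P)} + \overrightarrow{f(P)f(Q)} + \overrightarrow{f(Q)O}$ is a closed polygonal line, so the Triviality Lemma \ref{l:triv} gives $\langle \overrightarrow{Of(P)}\rangle + \langle \overrightarrow{f(P)f(Q)}\rangle + \langle \overrightarrow{f(Q)O}\rangle = 0$, i.e.\ $\langle f(PQ)\rangle = \langle \overrightarrow{Of(Q)}\rangle - \langle \overrightarrow{Of(P)}\rangle$. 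Summing the difference $N(f) - N(f')$ of the two integer intersection cocycles over all edges $PQ$ disjoint from $\sigma$, each term $(\sigma,PQ)$ contributes $f\sigma\cdot f(PQ) - f'\sigma\cdot f(PQ) = \langle f(PQ)\rangle = \langle\overrightarrow{Of(Q)}\rangle - \langle\overrightarrow{Of(P)}\rangle$, and (tracking also the pairs $(PQ,\sigma)$, which contribute the negatives by skew-symmetry) the telescoping collects, for each vertex $B\notin\sigma$, a coefficient $m_B := \langle\overrightarrow{Of(B)}\rangle\in\Z$ times exactly the elementary skew-symmetric coboundary $\delta_{K,\Z}(B,\sigma)$. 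Hence $N(f)-N(f') = \sum_{B\notin\sigma} m_B\,\delta_{K,\Z}(B,\sigma)$, which is skew-symmetrically cohomologous to zero; one must check that the signs in the bookkeeping match the case definition of $\delta_{K,\Z}(A,\sigma)$ in Proposition \ref{elcobz} (whether $\tau$ issues out of $A$ or goes to $A$ determines the sign), but this is exactly the orientation data recorded by the sign convention for intersection points.

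For the general case I would follow the same reduction as in Lemma \ref{star}: it suffices to treat $f,f'$ differing only on the edges issuing out of a single vertex $A$; join $f(A)$ to $f'(A)$ by a general position polygonal line, adjust $f$ and $f'$ on edge interiors so this arc avoids the images of the edges at $A$ (changing both integer cocycles to skew-symmetrically cohomologous ones by the single-edge subcase), then slide a small neighborhood of $f(A)$ along the arc to $f'(A)$, obtaining an intermediate map $f''$; the cocycle of $f''$ is skew-symmetrically cohomologous to that of $f$ (finger-move, a version of Reidemeister move V, covered by the integral analogue of Proposition \ref{elcob}.b, i.e.\ Proposition \ref{elcobz}), and $f''$ differs from $f'$ only on edges at $A$ with compatible arc, so finitely many single-edge moves finish it. The main obstacle is purely bookkeeping: keeping the signs consistent across the three cases of $\delta_{K,\Z}(A,\sigma)$ and verifying that the auxiliary-point quantity $m_B$ is independent of nothing more than is claimed (it genuinely depends on $O$, but the resulting element of the coboundary subgroup does not) — there is no new geometric idea beyond swapping in the Triviality Lemma for the Parity Lemma.
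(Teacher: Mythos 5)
Your proof is correct and is exactly the route the paper intends: the paper proves Lemma \ref{star-zl} by saying it ``is proved analogously [to Lemma \ref{star}] using the Triviality Lemma \ref{l:triv}'', and you carry out precisely that analogy (single-edge case via an auxiliary point $O$ and signed telescoping, then the Karasev-style reduction of the general case). The only loose end you flag, the sign matching with $\delta_{K,\Z}(B,\sigma)$, is harmless, since the coefficients $m_B$ in the definition of skew-symmetric cohomology are arbitrary integers.
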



\begin{proposition}[cf. Proposition \ref{starz-r}]\label{starz}
Twice the integral intersection cocycle of any general position PL map of a graph in the plane is skew-symmetrically cohomologous to the zero cocycle.
\end{proposition}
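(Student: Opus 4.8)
The plan is to exploit the behaviour of the algebraic intersection number under axial symmetry, which forces the cohomology class of the integral intersection cocycle to coincide with its own negative, making twice it cohomologous to zero.

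First I would fix once and for all an orientation of every edge of the graph $K$, and let $f:K\to\R^2$ be an arbitrary general position PL map, with integral intersection cocycle $\nu(f):\widetilde K\to\Z$. By Lemma \ref{star-zl} the integral intersection cocycles of any two general position PL maps of $K$ (for the fixed edge orientations) are skew-symmetrically cohomologous, so it suffices to exhibit \emph{one} general position PL map $g$ of $K$ with $\nu(g)=-\nu(f)$; then $\nu(f)-\nu(g)=2\nu(f)$, and Lemma \ref{star-zl} applied to $f$ and $g$ says exactly that $2\nu(f)=m_1\delta_{K,\Z}(A_1,\sigma_1)+\ldots+m_k\delta_{K,\Z}(A_k,\sigma_k)$ for suitable vertices, edges and integers, i.e. $2\nu(f)$ is skew-symmetrically cohomologous to the zero map.

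Second, I would take $g:=s\circ f$, where $s:\R^2\to\R^2$ is an axial symmetry (a reflection). Since $s$ is an affine isomorphism, it carries general position point configurations to general position point configurations and polygonal lines to polygonal lines, so $g$ is again a general position PL map of $K$, and I equip each edge $g\sigma$ with the $s$-image of the chosen orientation of $f\sigma$. For a pair $(\sigma,\tau)\in\widetilde K$ the intersection points of $g\sigma=s(f\sigma)$ and $g\tau=s(f\tau)$ are precisely the $s$-images of the intersection points of $f\sigma$ and $f\tau$, and by Assertion \ref{starzch}.c each such point contributes the opposite sign; hence $g\sigma\cdot g\tau=-(f\sigma\cdot f\tau)$ for every $(\sigma,\tau)$, that is $\nu(g)=-\nu(f)$, and combining this with the previous paragraph finishes the proof.

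The only point that needs care is the middle step — that an axial symmetry reverses the sign of every transversal intersection point of two oriented polygonal lines in general position. This is exactly the content of Assertion \ref{starzch}.c; it can also be seen directly from the definition of the sign via the clockwise/counterclockwise orientation of the triangle $ABC$, which is reversed by a reflection. An alternative, more computational route would be to evaluate $\nu$ on the circle-and-chords model of Example \ref{vkam2-line} and check by hand that twice that cocycle is an explicit integral combination of elementary skew-symmetric coboundaries; but the reflection argument is cleaner, and is the approach I would take.
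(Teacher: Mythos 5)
Your proof is correct and is essentially the paper's own argument: the paper derives Proposition \ref{starz} from exactly the same two ingredients, Assertion \ref{starzch}.c (a reflection reverses every intersection sign, so the reflected map has cocycle $-\nu(f)$) and Lemma \ref{star-zl} (cocycles of different maps are skew-symmetrically cohomologous), giving $2\nu(f)$ as a sum of elementary skew-symmetric coboundaries.
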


This follows by Assertion \ref{starzch}.c and Lemma \ref{star-zl}.

Denote the set of skew-symmetric (cf. Assertion \ref{starzch}.a) cocycles $\widetilde  K\to\Z$ up to skew-symmetric cohomology by $H^2_{ss}(\widetilde  K;\Z)$.
A skew-symmetric cohomology class of the integer intersection cocycle of some (or, equivalently, of any) general position PL map $f:K\to\R^2$ is called {\it the van Kampen obstruction} $V(K)\in H^2_{ss}(\widetilde  K;\Z)$, see
Remark
\ref{r:me-fkt}.

\subsection{Appendix: some details to \S\ref{0-gra}}\label{s:appgr}

See an alternative proof of Proposition \ref{0-ra2}.ab in \cite[\S2]{Sk14}.
Proposition \ref{0-ra2}.c is proved analogously.

\begin{proof}[Proof of the General Position Theorem \ref{0-gp3}]
Choose three points in 3-space that do not belong to one line.
Suppose that we have $n\geq3$ points in general position.
Then there is a finite number of planes containing triples of these $n$ points.
Hence there is a point that does not lie on any of these planes.
Add this point to our set of $n$ points.
Since  the `new' point is not in one plane with any three of the `old' $n$ points, the obtained set of $n+1$ points is in general position.
Thus {\it for each $n$ there exist $n$ points in 3-space that are in general position.}

Take such $n$ points.
Denote by $A$ the set of all segments joining pairs of these points.
If some two segments from $A$ with different endpoints intersect, then four endpoints of these two segments lie in one plane.
If some two segments from $A$ with common endpoint intersect not only at their common endpoint, then the three endpoints of these two segments are on one line.
So we obtain a contradiction.
\end{proof}


\begin{proof}[Proof of Proposition \ref{1-k5-1}]
Let us prove part (a), other parts are proved analogously.

Any five points can be transformed into five points in general position leaving the required properties unchanged.
By hypothesis, the number of intersection points of segment 12 and the boundary of triangle 345 equals to the number of intersection points of interiors of segments joining the points.
This number is odd by Proposition \ref{0-ra2}.b.
\end{proof}

Analogously to Proposition \ref{1-k5-1}.a one proves the following proposition.
(One can also state and prove PL analogues of Propositions \ref{1-k5-1}.bcd.
For the proof instead of Proposition \ref{0-ra2}.b one would need Lemma \ref{11-vankam}.)

A {\it PL embedding} (or PL realization) of a graph in the plane is a set of points and polygonal lines
from the definition of planarity such that no isolated vertex lies on any of the polygonal lines.
The points are called the images of vertices, and the polygonal lines are called the images of edges.

\begin{proposition}\label{grapl-ram} Remove the edge joining the vertices 1 and 2 form the graph $K_5$.
Then for every PL embedding of the obtained graph in the plane any polygonal line joining the images of the vertices 1 and 2 intersects the image of the cycle 345 (i.e. the images of the vertices 1 and 2 are {\it separated} by the cycle 345).
\end{proposition}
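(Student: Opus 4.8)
The plan is to copy the proof of Proposition~\ref{1-k5-1}.a, using Lemma~\ref{11-vankam} in place of Proposition~\ref{0-ra2}.b. Fix a PL embedding of $K_5$ with the edge $12$ removed: the vertices go to distinct points $f(1),\dots,f(5)$ and the nine edges $\sigma\ne 12$ go to polygonal lines $f\sigma$, so that the images of non-adjacent edges are disjoint and the images of adjacent edges meet only at the common vertex. Let $J:=f(34)\cup f(45)\cup f(35)$ be the image of the cycle $345$; by the embedding conditions $J$ is a simple closed polygonal curve, and $f(1),f(2)\notin J$ (an endpoint of $f(13)$, being distinct from $f(3)$, cannot lie on $f(34)$, since $f(13)\cap f(34)=\{f(3)\}$; similarly for the other edges). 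Let $\gamma$ be a polygonal line joining $f(1)$ to $f(2)$, and suppose for contradiction that $\gamma\cap J=\emptyset$.

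Adjoining $\gamma$ as the image of $12$ gives a PL map $f\colon K_5\to\R^2$. First I would replace it by a general position PL map: a sufficiently small generic perturbation of all vertices of all the polygonal lines (including those of $\gamma$) produces a general position PL map $f'\colon K_5\to\R^2$ with the following two properties. (i) For every pair of non-adjacent edges of $K_5$ not among $\{12,34\},\{12,35\},\{12,45\}$, the images $f'\sigma$ and $f'\tau$ are still disjoint --- in the embedding they are disjoint compact sets, hence at positive distance. (ii) The perturbed image $f'(12)$ of $\gamma$ is still disjoint from each of $f'(34),f'(45),f'(35)$, since $\gamma$ was at positive distance from $J$. (This is the type of statement made precise by an Approximation Lemma; cf. the comment after Theorem~\ref{grapl-nonalm}.) By (i), the only possibly nonzero contributions to the van Kampen number $v(f')$ come from the three pairs $\{12,34\},\{12,35\},\{12,45\}$; by (ii), each of these three contributions is $0$ as well. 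Hence $v(f')=0$, contradicting Lemma~\ref{11-vankam}, which gives $v(f')=1$.

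Therefore every polygonal line joining $f(1)$ and $f(2)$ meets $J$. Since $J$ is a Jordan curve with $f(1),f(2)\notin J$, and since any two points in the same component of $\R^2\setminus J$ can be joined by a polygonal line contained in that component, the points $f(1)$ and $f(2)$ must lie in different components of $\R^2\setminus J$; that is, the cycle $345$ separates $1$ and $2$. The only content beyond Lemma~\ref{11-vankam} is the perturbation bookkeeping of the second paragraph; and --- exactly as in the proof of Lemma~\ref{11-vankam} --- the reason the argument runs is that the edges of $K_5$ non-adjacent to $12$ form a single cycle, whose image is the curve $J$. This structural feature of $K_5$, rather than any new idea, is the main (mild) obstacle.
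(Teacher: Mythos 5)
Your proof is correct and takes essentially the same route as the paper, which proves Proposition~\ref{grapl-ram} ``analogously to Proposition~\ref{1-k5-1}.a'' with Lemma~\ref{11-vankam} in place of Proposition~\ref{0-ra2}.b: adjoin a polygonal line as the image of the missing edge $12$, pass to a general position PL map by a small perturbation, and observe that every contribution to the van Kampen number except from the pairs $\{12,34\},\{12,35\},\{12,45\}$ vanishes, so these intersections must be odd in number. Your explicit perturbation bookkeeping and the closing Jordan-curve remark about components are fine but only spell out what the paper's sketch leaves implicit.
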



\begin{proof}[Sketch of proof of Proposition \ref{1-alg}]
The proof is based on the following notion of isotopy and lemma.
Two general position subsets $M$ and $M'$ in
of the plane are called {\it isotopic} if there is a bijection $f:M\to M'$ such that for any $A,B,C,D\in M$
the segment $AB$ intersects the line $CD$
if only if the segment $f(A)f(B)$ intersects the line $f(C)f(D)$.




{\it Lemma.} For any $n$ there is a finite number of $n$-element general position subsets of the plane such that every $n$-element general position subset of the plane is isotopic to one of them.

The proof is based on the following fact: $n$ lines split the plane into $(n^2+n+2)/2$ parts.
\end{proof}

\begin{proof}[An alternative proof of the Parity Lemma \ref{0-even}.b]
This proof uses {\it singular cone} idea which formalizes in a short way the {\it motion-to-infinity} idea \cite[\S5]{BE82}.
This proof generalizes to higher dimensions.

\begin{figure}[h]\centering
\includegraphics[scale=1.4]{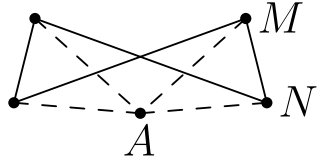}
\caption{Singular cone idea}
\label{f:sincon}
\end{figure}

First assume that one of the polygonal lines $b$ is the boundary of a triangle.
Denote another polygonal line by $a$.
Take a point $A$ such that $\partial(AMN)$ and $b$
are in general position for each edge $MN$ of the polygonal line $a$.
Denote by $\partial T$ the boundary of a triangle $T$.
Then (see fig. \ref{f:sincon})
$$
|a\cap b|= \sum\limits_{MN}|MN\cap b|\underset2\equiv\sum\limits_{MN}|\partial(AMN)\cap b|\underset2\equiv 0.
$$
Here the summation is over edges $MN$ of $a$, and the last congruence follows by the particular case of triangles.

{\it The general case} is reduced to the above particular case as in the previous proof.
\end{proof}

\begin{proof}[Proof of Assertion \ref{pla-kon}.a] (This is analogous to the above alternative proof of the Parity Lemma \ref{0-even}.b.)
Denote the yellow points by $A_1,\ldots,A_7$ and the red points by $B_1,\ldots,B_7$.
Take two points $C$ and $D$ in the plane such that all the 16 points are in general position.
Then
$$
0 \underset2\equiv \sum_{i<j,\,k<l}|\partial(CA_iA_j)\cap\partial(DB_kB_l)| \underset2\equiv \sum_{i<j,\,k<l}|A_iA_j\cap B_kB_l|.
$$
Here the first equality follows from the fact in the hint, and the second one holds because each of the edges $CA_i$ and $DB_j$ belongs to six triangles, so the intersection points lying on these edges appear in the first sum an even number of times.
\end{proof}



\begin{proof}[Proof of Assertion \ref{pla-kon}.b]
Analogously to (a) this is deduced from the fact that for $3+3$ points the flow of the red current through the yellow one equals to zero.
(This fact follows because the current is constant on edges of each triangle, and the signs of intersection points alternate, therefore the sum equals zero.)

Denote by {\it red current} (resp. yellow) assignment of currents
for red (resp. yellow) segments conforming to the Kirchhoff law.
Note that if we consider two red currents and one yellow current, then
the flow of the sum of the red currents through the yellow one equals the sum of the flows.
Analogously for one red current and two yellow currents the sum of the
flows equals the flow of the sum (that is, the flow is {\it biadditive}).

Add a point $C$ to the yellow points and a point $D$ to the red points so
that all 16 points are in general position, as in (a).
Denote the yellow points by $A_1,\ldots,A_7$ and the red points by $B_1,\ldots,B_7$.
Assume that the currents on the segments $CA_i$ and $DB_j$ equal zero.
For each segment $A_iA_j$ consider the current flowing through the triangle $CA_iA_j$ equal
the initial yellow current through $A_iA_j$ (and equal zero out of the triangle $CA_iA_j$).
Then the sum of these $7\choose2$ currents equals the initial yellow current.
Analogously decompose the red current into the sum of $7\choose2$ currents flowing through triangles $DB_kB_l$.
The required statement follows from the biadditivity and analogue for 3+3 points.
\end{proof}

\begin{remark}\label{grapl-k33} {\it The van Kampen number} of a general position PL map $f:K\to\R^2$ of a graph $K$ is defined analogously to the case $K=K_n$ (\S\ref{0grapl}).

(a) Clearly, if a graph $K$ is planar then $v(f)=0$ for some general position PL map $f:K\to\R^2$.

(b) Take two segments with a common interior point in the plane.
This forms a planar graph $K$ and a general position PL map $f:K\to\R^2$ such that $v(f)\ne0$.

(c) If $K$ is a disjoint union of two triangles, then by the Parity Lemma \ref{0-even}.b $v(f)=0$ for every general position PL map $f:K\to\R^2$.

(d) For any general position PL map $f:K\to\R^2$ of a graph $K$ the van Kampen number is the sum of values of the intersection cocycle over all non-ordered pairs of disjoint edges of $K$.
\end{remark}

\begin{remark}\label{vkam-z}
A graph is called {\it $\Z$-planar} if there exists a general position PL map of this graph to the plane such that
for images of any two non-adjacent edges the sum of the signs of their intersection points is zero for some
orientations on the edges, see fig. \ref{f:gl2}.
The sign of this sum depends on the order of the edges and on an arbitrary choice of orientations on the edges, but the condition that the some is zero does not.
One can prove analogously to Theorem \ref{vkam-z2}, or deduce from it, that {\it a graph is planar if and only if it is $\Z$-planar.}
Integral analogue of Proposition \ref{starpr} is correct and follows  analogously by Proposition \ref{elcobz} and Lemma \ref{star-zl}.
\end{remark}

\begin{remark}\label{r:me-fkt}
If in \S\ref{0vkamz} we assume that cells $\sigma\times\tau$ and $\tau\times\sigma$ of $\widetilde  K$ (considered as a cell complex) are oriented coherently with the involution $(x,y)\overset{t}\leftrightarrow(y,x)$ (and so not necessarily oriented as the products), and define the intersection cochain by assigning the number
$f\sigma\cdot f\tau$ to the cell $\sigma\times\tau$ oriented as the product (and so not necessarily positively oriented), then we obtain symmetric cochains / coboundaries / cohomology and the van Kampen obstruction in the group $H^2_s(\widetilde  K;\Z)\cong H^2(K^*;\Z)$.
We have $H^2_s(\widetilde  K;\Z)\cong H^2_{ss}(\widetilde  K;\Z)$.
The two van Kampen obstructions go one to the other under this isomorphism.
Analogous remark holds for the van Kampen obstruction for embedding of $n$-complexes in $\R^{2n}$  \cite[\S3]{Sh57}, \cite[\S4.4]{Sk06}.

I am grateful to S. Melikhov for indicating that in \cite[\S2.3]{FKT} the signs are not accurate
\cite[beginning of \S1]{Me06}.
The sign error is in the fact that for $n:=\dim K$ odd and $o_f$ the integer intersection cocycle
{\it both} equalities $o_f(\sigma\times\tau)=f\sigma\cdot f\tau$ \cite[\S2.3, line 7]{FKT} and $t(\sigma\times\tau)=\tau\times\sigma$ \cite[p. 168, line -4]{FKT} for each $\sigma,\tau$ cannot be true.
If cells $\sigma\times\tau$ are oriented as the products (as in \cite[\S3]{Sh57}, \cite[\S4.4]{Sk06}), then $o_f(\sigma\times\tau)=f\sigma\cdot f\tau$ but  $t(\sigma\times\tau)=(-1)^n\tau\times\sigma$.
If cells $\sigma\times\tau$ are oriented coherently with the involution $t$ (as in \cite[\S2, Equivariant cohomology and Smith sequences]{Me06}), then
$t(\sigma\times\tau)=\tau\times\sigma$ but either $o_f(\sigma\times\tau)=-f\sigma\cdot f\tau$ or $o_f(\tau\times\sigma)=-f\tau\cdot f\sigma$.
(The orientation assumption is not explicitly introduced in \cite[\S2]{FKT}.)%
\footnote{I am grateful to V. Krushkal for helping me to locate
the sign error in \cite[\S2.3]{FKT}.
The more so because the explanation in \cite[\S3, footnote 6]{Me06} of the sign error  is confusing.
Indeed, in  \cite[\S2]{Me06} the `coherent' orientation is fixed, and without change of the orientation convention in \cite[\S3, Geometric definition of $\vartheta(X)$]{Me06} the `product' orientation is used (otherwise the formula $t(\sigma\times\tau)=(-1)^n\tau\times\sigma$ is incorrect for $n$ odd).
The sign error appears exactly because of difference between these orientation conventions.}


Definitions of the van Kampen obstruction in \S\ref{0vkamz} (and in \cite[Appendix D]{MTW}) use the product orientation on $\sigma\times\tau$ and do not mention the wrong (for $n$ odd and this orientation convention) formula $t(\sigma\times\tau)=\tau\times\sigma$.
So they do not have the sign error.
\end{remark}

\begin{remark}[Obstruction to `$\Z_2$-linearity']\label{vkam-line}
Let $K$ be a graph.
Let $f:K\to\R$ a general position PL map, i.e. a map which maps vertices to distinct points different from `return' points of edges.

(a) Clearly, for any continuous (or PL) map of a triangle to the line the image of certain vertex belongs to the image of the opposite edge.
(This is the Topological Radon theorem for the line, cf. the Topological Radon Theorems \ref{ratv-totv} and \ref{t:tr}.)
Analogous assertion holds for the triod $K_{1,3}$ instead of the triangle, and hence for any connected graph distinct from the path.
This means that the only connected `$\Z_2$-linear' graphs are paths.
However, the following discussions of a combinatorial (=cohomological) obstruction to `$\Z_2$-linearity' of a graph are interesting because they illustrate more complicated generalizations of \S\ref{s:tvkam}, and also
show how product of cochains (and of cohomology classes) appears in studies of a geometric problem, cf. \cite{Sk17d}.


(b) For any distinct points $x,y,z,t\in\R$ the following number is even:
$$|x\cap[z,t]|+|y\cap[z,t]|+|[x,y]\cap z|+|[x,y]\cap t|.$$


(c) The map assigning the number $|f(A)\cap f(BC)|$ to any pair $A,BC$ consisting of a vertex $A$ and an edge $BC$ such that $A\ne B,C$ is called {\it the intersection cocycle} of $f$.
One can define analogues of the Reidemeister moves in Fig.~\ref{reidall} for maps of a graphs to the line.
One can check how the intersection cocycle change for the analogous moves.
Then one arrives to definitions in (d) and (e) below.

(d) Define graph $K^{*(1)}$ as follows.
The vertices of $K^{*(1)}$ are unordered pairs $\{A,B\}$ of different vertices of a graph $K$.
For each pair $A,BC$ consisting of a vertex $A$ and an edge $BC$ of $K$ such that $A\ne B,C$ connect vertex $\{A,B\}$ to $\{A,C\}$ with an edge in graph $K^{*(1)}$.
Denote this edge by $A\times BC=BC\times A$.
E.g. if $K$ is the cycle $K_3$ with 3 vertices or the triod $K_{1,3}$, then $K^{*(1)}$ is the cycle with 3 or 6 vertices, respectively.


(e) For a vertex $B$ of a graph $G$ define {\it an elementary coboundary} $\delta_G B$ as a map from the set $E(G)$ of the edges of $G$ to the set $\{0,1\}$ which assigns 1 to every edge containing this vertex, and 0 to every other edge.
(In other words, $\delta_G B$ corresponds to the set of all edges containing $B$.)
Maps $\omega_1,\omega_2:E(K^{*(1)})\to \Z_2$ are called {\it cohomologous} if $\omega_1-\omega_2$
is the sum of some elementary coboundaries $\delta_{K^{*(1)}}\{A,B\}$.

There is a general position PL map $f:K\to\R$ of a graph $K$ such that $f(A)\not\in f(\sigma)$ for each vertex $A$ and edge $\sigma\not\ni A$ (i.e. $K$ is almost embeddable in the line) if and only if the intersection cocycle of some general position PL map $K\to\R$ is cohomologous to zero.

(f) {\it A cocycle } is a map $E(K^{*(1)})\to\Z_2$ such that the sum of images of edges $A\times CD$, $B\times CD$, $C\times AB$, $D\times AB$ is even for any non-adjacent edges $AB,CD$ of graph $K$, see (b).
Then $\delta_{K^{*(1)}}\{A,B\}$ is a cocycle.

(g) For each cocycle $\nu$ assign to any unordered pair $\{AB,CD\}$ of disjoint edges of graph $K$ the sum of two numbers on `opposite' edges $A\times CD$ and $B\times CD$ of the `square' $AB\times CD$.
That is, define a map $\Sq^1\nu:K^*\to\Z_2$ by the formula
$$
\Sq\phantom{}^1\nu\{AB,CD\}:=\nu(A\times CD)+\nu(B\times CD)=\nu(AB\times C)+\nu(AB\times D).
$$
Then $\Sq^1(\mu+\nu)=\Sq^1\mu+\Sq^1\nu$ and
$\Sq^1\delta_{K^{*(1)}}\{A,B\}=\sum\limits_{\sigma\ni B}\delta(A,\sigma)=:\delta(A\times\delta_K B)$.

(h) Let $H^1(K^*;\Z_2)$ be the group of cohomology classes of cocycles.
Define {\it the van Kampen obstruction} $v_1(K)\in H^1(K^*;\Z_2)$ to $\Z_2$-embeddability of $K$ into the line as the cohomology class of the intersection cocycle of some general position PL map $f:K\to\R$.
This is well-defined analogously to Lemma \ref{star}.
By (g) {\it the Bockstein-Steenrod square} $\Sq^1:H^1(K^*;\Z_2)\to H^2(K^*;\Z_2)$ is well-defined by $\Sq^1[\nu]:=[\nu^2]$.
We have $v(K)=\Sq^1v_1(K)$.

(i) Analogously to (g,h) one can define bilinear {\it Kolmogorov-Alexander product}
\linebreak
$\smile:H^1(K^*;\Z_2)^2\to H^2(K^*;\Z_2)$ for which $\Sq^1 x=x\smile x$.
\end{remark}

\begin{remark}[historical]\label{r:histgr}
In topology (and possibly in other branches of pure mathematics) there is a tradition of unmotivated and artificially complicated exposition (even in textbooks, not only in research papers).
Thus bright results and useful methods of algebraic topology are hard to access to mathematicians working in computer science and in graph theory (see specific remarks below).
Books \cite{BE82, Ma03, Lo13, Sk20, Sk} and this survey attempt to overcome this problem.
(Books \cite{Ma03, Lo13} assume some knowledge of algebraic topology, while books \cite{BE82, Sk20, Sk} do not.)


The Mohar criterion \cite[Theorem 3.1]{Mo89}, cf. \cite[\S2.8]{Sk20} is an easy application of the {\it intersection form on the homology of a manifold} \cite{IF} known in topology since the beginning of the 20th century.

Proof of the `if' part of Proposition~\ref{starpr} (this is $v+w\in X(G)$ from \cite[Theorem 1.18]{Sc13}) is straightforward by the van Kampen finger move \cite{vK32}, fig. \ref{reidall}.V or \cite[Figure 4]{Sc13}.
(So although \cite{vK32} does not have explicit statement for lower dimensions, `Van Kampen ... could only prove the other direction for higher dimensions' is incorrect.)
Hence this result should be attributed to \cite{vK32} as well as to later references mentioned in
\cite[the paragraph after Theorem 1.18]{Sc13} and containing explicit statement for lower dimensions.

Already the correction to van Kampen paper \cite{vK32} mentions orientations
(equivalent to counting intersection points with signs).
The papers \cite{Sh57, Wu58} explicitly count intersection points with signs,
as opposed to `it's first explicitly worked out by Tutte' \cite[Theorem 1.19 and the paragraph before]{Sc13} (referring to the 1970 paper [49]).
Still, Tutte is the more influential reference within graph drawing theory, as opposed to algebraic topology.


Proposition \ref{starz} is attributed in \cite[the paragraph after Theorem 1.19]{Sc13} to Tutte, referring to the 1970 paper [49].
Although I do not know an earlier reference,
note that it was known in topology before 1950
that the van Kampen obstruction is a twisted Euler class of some bundle, and that
analogous characteristic classes have order two, cf. \cite[Problem 11.10]{Pr07}.
This is another instance of the algebraic topology tradition and the graph theory tradition not overlapping, and many results and insights only being passed on orally within the different groups.
\end{remark}

\section{Multiple intersections in combinatorial geometry}\label{s:mucoge}

\subsection{Radon and Tverberg theorems in the plane}\label{s:ratvpl}

\jonly{The reader can find more complete exposition and illustrative examples e.g. in \cite[\S2.1]{Sk18}.}


\begin{theorem}[Radon theorem in the plane] \label{0-radpl}
For any $4$ points in the plane either one of them belongs to the triangle with vertices at the others, or they can be decomposed into two pairs such that the segment joining the points of the first pair intersects the segment joining the points of the second pair.
\end{theorem}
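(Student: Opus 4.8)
The plan is to argue by a case analysis of the convex hull of the four points, exactly in the spirit of the remark following Proposition \ref{0-ra2}. Let $1,2,3,4$ be the four given points in the plane. I first dispose of degenerate configurations: if three of the points are collinear, say $1,2,3$ lie on a line with $2$ between $1$ and $3$, then $2$ belongs to the (possibly degenerate) triangle $134$, and we are done; and if all four points coincide in fewer than three distinct positions the statement is trivial or vacuous. So I may assume the four points are in ``general enough'' position, in particular that no three are collinear, and then the convex hull is either a triangle or a quadrilateral.

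\emph{Case 1: the convex hull is a triangle.} Then one of the four points, say $4$, lies in the interior of the triangle formed by the other three, i.e. $4$ belongs to the triangle with vertices $1,2,3$. This is the first alternative in the conclusion.

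\emph{Case 2: the convex hull is a (convex) quadrilateral.} Label the vertices so that $1,2,3,4$ occur in this cyclic order around the boundary of the hull. Then the diagonals of this convex quadrilateral are the segments $13$ and $24$, and they intersect (a convex quadrilateral's diagonals meet in its interior). Thus the decomposition into the pairs $\{1,3\}$ and $\{2,4\}$ gives two crossing segments, which is the second alternative in the conclusion.

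The only real content is the justification that a four-point set in general position has its convex hull equal to either a triangle or a convex quadrilateral, and that in the quadrilateral case the two diagonals cross — both of which are standard facts about convex position in the plane, provable by elementary means (e.g. via the fact that a point is in the convex hull of others iff it is a convex combination of them, or via separating lines). The main obstacle, such as it is, is simply being careful with the degenerate/collinear configurations so that the ``triangle with vertices at the others'' in the first alternative is allowed to be degenerate; once that bookkeeping is handled, the rest is immediate from the convex-hull dichotomy. This is the same argument that proves Proposition \ref{0-ra2}.a, now organized to produce the Radon-type conclusion (either ``a point inside a triangle'' or ``two crossing segments'') rather than only the crossing-segments conclusion.
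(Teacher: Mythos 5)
Your proof is correct and follows essentially the same route as the paper: the paper disposes of the collinear case directly (a point on a segment lies in the triangle spanned by the other three) and reduces the general-position case to Proposition \ref{ratv-vk2l}, which is proved by exactly your convex-hull dichotomy (triangle versus quadrilateral with crossing diagonals). The only cosmetic difference is that the paper packages the general-position case as a separate proposition (which moreover records uniqueness of the partition), while you spell the case analysis out inline.
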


Cf. Proposition \ref{0-ra2}.a and Theorems \ref{ratv-totv}, \ref{t:lr}.

The following simple examples show that this result is `best possible':

$\bullet$ In the plane consider vertices of a triangle and a point inside it.
For any partition of these $4$ points into two pairs the segment joining the points of the first pair does not intersect the segment joining the points of the second pair.

$\bullet$ In the plane consider vertices of a square.
None of these $4$ points belongs to the triangle with vertices at the others.

The {\bf convex hull} $\left<X\right>$ of a finite subset $X\subset\R^2$ is the smallest convex polygon which contains $X$.

Radon theorem in the plane can be reformulated as follows: {\it any $4$ points in the plane can be decomposed into two disjoint sets whose convex hulls intersect.}
This reformulation has the following stronger `quantitative' form.

\begin{proposition}[see proof in \jonly{\cite[\S2.5]{Sk18}} \S\ref{s:apptve};
cf. Proposition \ref{0-ra2}.b and Lemma \ref{ratv-vk2}] \label{ratv-vk2l}
If no $3$ of $4$ points in the plane belong to a line, then there exists a unique partition of these $4$ points into two sets whose convex hulls intersect.
\end{proposition}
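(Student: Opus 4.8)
\textbf{Proof plan for Proposition \ref{ratv-vk2l}.}

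The plan is to analyze the position of the four points according to the shape of their convex hull, exactly as suggested for Proposition \ref{0-ra2}. First I would invoke general position: since no $3$ of the $4$ points lie on a line, the convex hull $\langle\{1,2,3,4\}\rangle$ is either a triangle or a (convex, non-degenerate) quadrilateral, and there are no other cases. I would then treat these two cases separately and in each exhibit the partition and prove its uniqueness.

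\emph{Case 1: the convex hull is a quadrilateral.} Label the points $1,2,3,4$ in cyclic order around the quadrilateral. Then the diagonals $13$ and $24$ meet at an interior point, so the partition $\{1,3\}\sqcup\{2,4\}$ works. For uniqueness I would enumerate the remaining ways to split $4$ points into two nonempty disjoint sets whose union is all four points: the partitions of type $1+3$ (a single vertex versus the triangle of the other three) and the other two partitions of type $2+2$, namely $\{1,2\}\sqcup\{3,4\}$ and $\{1,4\}\sqcup\{2,3\}$. For a $1+3$ partition the convex hulls are disjoint because each vertex of a convex quadrilateral lies strictly outside the triangle spanned by the other three (it is a vertex of the convex hull, hence an extreme point, hence not in the convex hull of the rest). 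For the two "wrong" $2+2$ partitions the two segments are a pair of opposite sides of the convex quadrilateral, which are disjoint. Hence the diagonal partition is the only one.

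\emph{Case 2: the convex hull is a triangle.} Say $4$ is the point inside triangle $123$. Then $\{4\}\sqcup\{1,2,3\}$ is a partition whose convex hulls intersect (the point $4$ lies in the triangle). For uniqueness I would rule out all other partitions: the three other $1+3$ partitions are disjoint because $1$, $2$, $3$ are vertices of the convex hull (extreme points), so none lies in the triangle spanned by the remaining three points (which in each case is a subset of — indeed equals, after including $4$ — a region not containing that extreme vertex; more carefully, $1\notin\langle\{2,3,4\}\rangle$ since $\langle\{2,3,4\}\rangle\subset\langle\{1,2,3\}\rangle$ and $1$ is a vertex of the latter). Each $2+2$ partition splits into a segment joining two of $\{1,2,3\}$ and the segment from $4$ to the third vertex; since $4$ is interior and by general position not on any side, a short convexity argument (or the already-proved Radon theorem \ref{0-radpl}, which in this configuration must produce the $1+3$ partition, not a $2+2$ one) shows these two segments are disjoint. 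So again the partition is unique.

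The existence half in both cases is essentially Radon's theorem in the plane \ref{0-radpl}, so the only real content is uniqueness, and the main obstacle is being careful in Case 2 to check that the segment $4x$ (for $x$ a vertex of the triangle) does not meet the opposite side: this needs the hypothesis that no three points are collinear, since otherwise $4$ could lie on a side and a degenerate coincidence could occur. I would handle this by noting that $4\in\Int\langle\{1,2,3\}\rangle$ together with general position forces $4$ to lie strictly inside, so the segment from $4$ to any vertex stays in the interior except at its vertex endpoint and therefore misses the opposite (closed) side entirely. An alternative, cleaner route to uniqueness in both cases is a counting/parity argument in the spirit of Proposition \ref{0-ra2}.b and Lemma \ref{ratv-vk2}: show that the total number of "Radon partitions" (partitions with intersecting convex hulls), counted appropriately, is odd, hence at least one exists, and then a direct case check shows it cannot be $\geq 3$; but for four points the explicit two-case analysis above is the most transparent.
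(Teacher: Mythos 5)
Your proof is correct and follows essentially the same route as the paper's: a case analysis on whether the convex hull of the four points is a triangle or a quadrilateral, identifying in each case the unique partition with intersecting convex hulls. You simply spell out the uniqueness checks (extreme points for the $1+3$ partitions, opposite sides resp. interior-segment arguments for the $2+2$ partitions) in more detail than the paper does.
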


Now consider partitions of subsets of the plane into {\it more than two} disjoint sets.

\begin{example}\label{ratv-9ex}
(a) In the plane take a pair of points at each vertex of a triangle (or a `similar' set of distinct points).
For any decomposition of these six points into three disjoint sets the convex hulls of these sets do not have a common point.

(b) In the plane take $r-1$ points at each vertex of a triangle (or a `similar' set of distinct points).
For any decomposition of these $3r-3$ points into $r$ disjoint sets the convex hulls of these sets do not have a common point.

(c) In the plane take vertices of a convex 7-gon.
For any numbering of these $7$ points from 1 to 7, point $1$ does not belong to any of the (2-dimensional) triangles $234$ and $567$.

(d) In the plane take an equilateral triangle $ABC$ and its center $O$.
Define points $A_1, B_1, C_1$ as the images of points $A, B, C$ under homothetic transformation
with the center $O$ and the ratio $1/2$.
For any numbering of these $7$ points from 1 to 7 the intersection point of the segments $12$ and $34$
does not belong to the (2-dimensional) triangle $567$ (see proof in \S\ref{s:apptve}).
\end{example}


\begin{pr}[see proof in \S\ref{s:apptve}\jonly{\cite[\S2.5]{Sk18}}]\label{ratv-9}
(a) The vertices of any convex octagon can be decomposed into three disjoint sets whose  convex hulls have a common point.

(b) Any $11$ points in the plane can be decomposed into three disjoint sets whose  convex hulls have a common point.

(c) For any $r$ there exist $N$ such that any $N$ points in the plane can be decomposed into $r$ disjoint sets
whose convex hulls have a common point.
\end{pr}

By part (b), in part (c) for $r=3$ we can take $N=11$.
By Example~\ref{ratv-9ex}.a every such $N$ is greater than $6$.
It turns out that the minimal $N$ is $7$; this fact is nontrivial.
The following theorem shows that for general $r$ the minimal $N$ is just one above the number of Example~\ref{ratv-9ex}.b.

\begin{figure}[h]
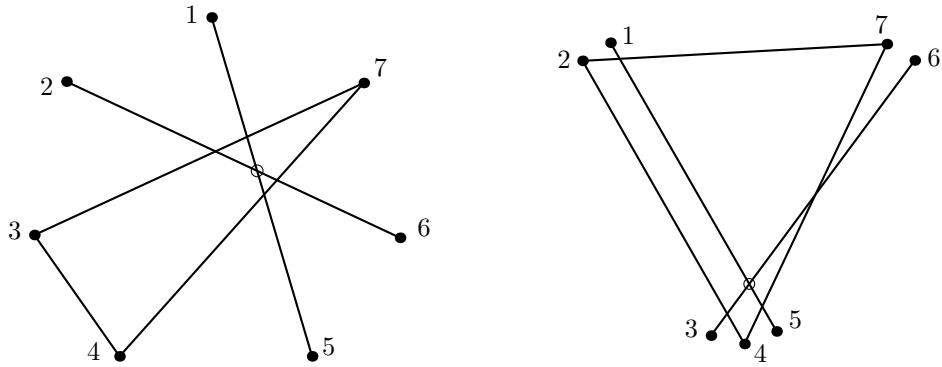
\centering
\includegraphics{tverbpoints7.eps}\qquad\qquad \includegraphics{tverbpoints4.eps}
\caption{A common point of convex hulls as in Theorem~\ref{ratv-tv1}}\label{tverbpoints}
\end{figure}


\begin{theorem}[Tverberg theorem in the plane, cf. Theorem \ref{t:lt}] \label{ratv-tv1}
For any $r$ every $3r-2$ points in the plane can be decomposed into $r$ disjoint sets whose convex hulls have a common point.
\end{theorem}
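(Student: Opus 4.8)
\textbf{Proof plan for the Tverberg theorem in the plane (Theorem \ref{ratv-tv1}).}
The plan is to prove the general (non-planar) Tverberg theorem by induction on $r$, then specialise to the plane; but since the excerpt only asks for the planar case, I would aim at a self-contained induction on $r$ using only the case $d=2$ and the Radon theorem in the plane (Theorem \ref{0-radpl}). The base case $r=1$ is trivial: one point is its own singleton partition. For the inductive step, assume that every $3(r-1)-2 = 3r-5$ points in the plane can be split into $r-1$ disjoint sets with a common point in the intersection of their convex hulls. Given $3r-2$ points, I want to ``trade'' a Radon partition of a suitable subset for a reduction of the count by $3$.

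The key combinatorial idea (Tverberg's original trick, cf. the exposition in \cite{Ma03}) is a counting/extremal argument on partitions. First I would set up the finitely many candidate partitions: consider all ways to distribute the $3r-2$ points into $r$ groups $A_1,\dots,A_r$. Among all such partitions choose one minimising the total ``defect'', e.g.\ the sum over $i$ of the distances from a fixed auxiliary point to the convex hull $\left<A_i\right>$, or more standardly choose a partition for which $\sum_i \mathrm{dist}(z,\left<A_i\right>)$ is minimised where $z$ ranges appropriately; the cleanest route is to argue by contradiction: suppose no Tverberg partition into $r$ parts exists, pick a partition minimising $\sum_{i=1}^r \mathrm{dist}(p,\left<A_i\right>)$ over all partitions and all points $p$, and derive a contradiction by a local exchange move. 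Because $|A_1|+\dots+|A_r| = 3r-2 > 2r = r(2+1)-r$, some part has at least $3$ points (in fact at least one part has $\geq 3$ elements, or we can force the structure we need), and by the Radon theorem in the plane \ref{0-radpl} that part's points — or a cleverly chosen $4$-point subset drawn from the relevant parts — admit a Radon partition; swapping points across this Radon partition strictly decreases the objective function, contradicting minimality. I expect the bookkeeping of exactly which $4$ points to apply Radon to, and checking that the exchange strictly decreases the defect, to be the main obstacle — this is the technical heart and the place where the dimension-specific bound $3r-2 = (2+1)(r-1)+1$ enters.

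Alternatively, and perhaps more in the spirit of this survey, I would reduce to the \emph{topological} Radon-Tverberg machinery already flagged in the text: the planar Tverberg theorem for all $r$ that are prime powers follows from the topological Tverberg statements \ref{ratv-totv}, \ref{ratv-tvpl} applied to the affine map $\Delta_{3r-3}\to\R^2$ sending vertices to the given points, together with the fact that a Tverberg partition of the vertex set corresponds to $r$ pairwise disjoint faces whose images intersect. For composite $r$ one then uses the standard product/iteration argument: if Tverberg holds for $r_1$ and $r_2$ in dimension $d$ then it holds for $r_1 r_2$, which reduces everything to the prime case. The hard part in this second route is exactly the input theorems \ref{ratv-totv}, \ref{ratv-tvpl}, which the excerpt lets me assume, so this route is short modulo those citations; I would present the elementary inductive proof as the primary argument and mention the topological route as a remark, since the former keeps the exposition self-contained and only invokes Theorem \ref{0-radpl}.
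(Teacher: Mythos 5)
Your proposal does not match any proof contained in the paper, because the paper gives none: Theorem \ref{ratv-tv1} is stated with a pointer to \cite{Ma03}, where the linear Tverberg theorem is proved by a genuinely different mechanism (Sarkaria's reduction to the colorful Carath\'eodory theorem via a lifting/tensor trick, resp.\ the topological argument for prime $r$), not by either of your two routes. Judged on its own, your first route has a real gap. Induction on $r$ in the naive form does not get off the ground: applying the inductive hypothesis to $3r-5$ of the points produces a common point of $r-1$ convex hulls that has no relation to the three removed points, and there is no way to append an $r$-th part containing that point; so everything rests on your ``minimize $\sum_i \mathrm{dist}(p,\left<A_i\right>)$ and swap across a Radon partition of a well-chosen $4$-point subset'' step, which you do not carry out and which you yourself flag as the main obstacle. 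That step is exactly where all known proofs require a substantial new idea: in Sarkaria's proof the minimize-and-swap argument (the proof of colorful Carath\'eodory) is performed not in the plane but in a lifted space of dimension $(d+1)(r-1)$, and in Tverberg's and Birch's elementary arguments the case analysis is far from a one-line exchange. As written, the ``technical heart'' is simply missing.

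Your second route is also not a proof of the stated theorem. Deducing the linear statement for prime powers $r$ from Theorem \ref{ratv-tvpl} is fine (an affine map is continuous, and leftover vertices may be added to any part without losing the common point), but the claimed ``standard product/iteration argument'' for composite $r$ does not exist with the tight bound. The natural composition (split the points into $3r_1-2$ groups of $3r_2-2$ points, take a Tverberg point of each group, apply Tverberg for $r_1$ to these Tverberg points and merge parts) needs roughly $(3r_1-2)(3r_2-2)$ points, far more than $3r_1r_2-2$; no reduction of the tight Tverberg theorem --- linear or topological --- from composite $r$ to the prime(-power) case is known. Indeed, the paper itself records that the planar topological Tverberg statement for non-prime-power $r$ is open (Conjecture \ref{ratv-tvplc}) and that the higher-dimensional topological analogue is false for such $r$ (Theorem \ref{t:ozmawaco}), which rules out any composition argument at the topological level; so your route 2 only yields the theorem for prime power $r$, while the theorem is asserted for all $r$. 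To cover all $r$ you need one of the genuinely linear proofs (Birch's planar argument, Tverberg's proofs, or Sarkaria's proof as in \cite{Ma03}).
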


For a motivated exposition of the well-known proof see \cite{RRS}.

\begin{example}\label{ex:tverbpoints}
(a) For the vertices of regular heptagon the number of partitions from Theorem~\ref{ratv-tv1} is $7$.
Every such partition looks like rotated partition of fig.~\ref{tverbpoints}, left.

(b) For the points in fig.~\ref{tverbpoints}, right, the number of partitions from Theorem~\ref{ratv-tv1} is $4$. This follows because for every such partition one of the convex hulls is a triangle with one vertex $4$, another vertex $1$ or $2$, and the third vertex $6$ or $7$.

(c) (cf. Propositions \ref{0-ra2} and \ref{ratv-vk2l})
The sum
$$\sum\limits_{\{R_1,R_2,R_3\}\ :\ M_i=R_1\sqcup R_2\sqcup R_3} |\left<R_1\right>\cap\left<R_2\right>\cap\left<R_3\right>|$$
has different parity for the two sets $M_1,M_2$ of (a), (b). \jonly{fig.~\ref{tverbpoints}.}
\end{example}


\subsection{Topological Radon theorem in the plane}\label{0-ratvtopl}

\begin{proposition}[see proof in \jonly{{\cite[\S2.5]{Sk18}}} \S\ref{s:apptve}]\label{ratv-chess}
Take a closed polygonal line $L$ in the plane whose vertices are in general position.

(a) The complement to $L$ has a chess-board coloring (so that the adjacent domains have different colors, see fig.~\ref{f:tohu}).


(b) (cf. Proposition \ref{ratv-totv2}.c)
The ends of a polygonal line $P$ whose vertices together with the vertices of $L$ are in general position have the same color if and only if $|P\cap L|$ is even.
\end{proposition}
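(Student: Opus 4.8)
The plan is to build the chess-board coloring from a parity-of-intersection count and to reduce everything to the Parity Lemma \ref{0-even}.b. Fix once and for all a point $y\in\R^2\setminus L$, and for $x\in\R^2\setminus L$ call a polygonal line from $x$ to $y$ \emph{admissible} if its vertices together with the vertices of $L$ are in general position (so it meets $L$ in finitely many points by Proposition \ref{grapl-gp}). I would set the color $c(x)\in\Z_2$ of $x$ to be $|P\cap L|\bmod 2$ for some admissible polygonal line $P$ from $x$ to $y$, and then prove, in this order: (1) $c(x)$ does not depend on the choice of $P$; (2) $c$ is constant on each connected component of $\R^2\setminus L$; (3) $c$ takes different values on the two domains along any edge of $L$ (which gives part (a)); (4) the mod $2$ intersection number reads off the change of $c$ along any admissible path (which gives part (b)).

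Step (1) is the heart of the matter. Given two admissible polygonal lines $P,P'$ from $x$ to $y$, I would first perturb $P'$ slightly, keeping its endpoints fixed, so that it is still admissible, meets $P$ only at $x$ and $y$, has $|P'\cap L|$ unchanged modulo $2$, and so that the vertices of $P$ and $P'$ together with the vertices of $L$ are in general position; then the concatenation $P\cup P'$ (out along $P$, back along $P'$) is a closed polygonal line whose vertices together with those of $L$ are in general position, and $(P\cup P')\cap L$ is the disjoint union of $P\cap L$ and $P'\cap L$. The Parity Lemma \ref{0-even}.b then gives that $|P\cap L|+|P'\cap L|$ is even, so $c$ is well defined. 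Step (2) is immediate (append to an admissible $P$ a polygonal line lying in a component of $\R^2\setminus L$). For step (3), choose an interior point $p$ of an edge $e$ of $L$ which is not a self-intersection point of $L$, together with points $x_{-},x_{+}$ near $p$ on the two sides of $e$ such that $[x_-,x_+]$ meets $L$ exactly once; appending $[x_-,x_+]$ to an admissible path from $x_+$ to $y$ exhibits $c(x_-)=c(x_+)+1$. For step (4): given an admissible polygonal line $P$ with ends $a,b$, concatenating $P$ with an admissible path $Q$ from $b$ to $y$ (perturbed to meet $P$ only at $b$) gives $c(a)\equiv|P\cap L|+c(b)\pmod 2$, which is precisely statement (b); alternatively, by general position an edge of $P$ cannot pass through a self-crossing of $L$ (that would be three segments through one interior point), so $P$ crosses $L$ only at interior non-crossing points of edges and (b) also follows by walking along $P$ and flipping the color at each crossing via step (3).

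The only genuine obstacle is the general-position bookkeeping in step (1) (and, more mildly, in step (4)): one must check that a generic small perturbation of a polygonal line, fixing its endpoints, simultaneously keeps it admissible, keeps the mod $2$ count of its intersections with $L$ fixed, puts all the vertices involved in general position, and makes it disjoint from the other path away from the common endpoints. All of this is routine (intersections of admissible polygonal lines with $L$ are transverse and finite, hence stable under small perturbations, and general position is an open dense condition), but it is where care is needed; the rest is a direct application of the Parity Lemma \ref{0-even}.b.
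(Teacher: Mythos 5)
Your proposal is correct and is essentially the paper's own argument: the paper also colors a point by the parity of its intersection with a reference polygonal line to a fixed base point, invokes the Parity Lemma \ref{0-even}.b (applied to the closed line formed by two such reference paths) for well-definedness and for the color change across an edge, and proves (b) exactly as in your step (4) by closing up $P$ with two paths $S_1,S_2$ to the base point. The only cosmetic difference is that the paper colors non-generic points via a small triangle disjoint from $L$ rather than by perturbing the path, which is the same routine general-position bookkeeping you defer.
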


\begin{figure}[h]
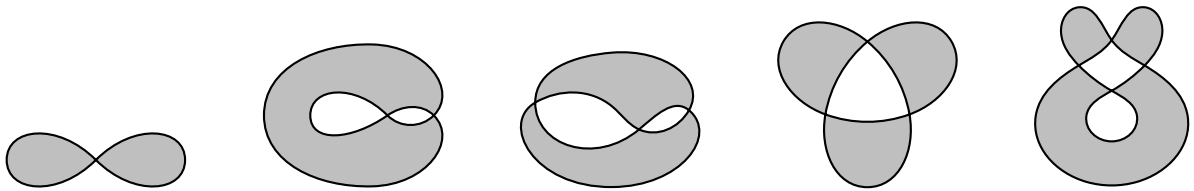
\centering
\includegraphics[scale=.8]{c1.eps} \qquad \includegraphics[scale=.8]{c2.eps} \qquad \includegraphics[scale=.8]{c3.eps} \qquad \includegraphics[scale=.8]{c4.eps} \qquad \includegraphics[scale=.8]{c5.eps}
\caption{The modulo two interiors of some closed polygonal lines}\label{f:tohu}
\end{figure}

The {\bf modulo two interior} of a closed polygonal line in the plane whose vertices are in general position is the union of black domains for a  chess-board coloring (provided the infinite domain is white).

Piecewise-linear (PL) and general position PL maps $K_n\to\R^2$ are defined in \S\ref{0grapl}.

\begin{theorem}[Topological Radon theorem in the plane \cite{BB}, cf. Theorems \ref{grapl-nonalm}, \ref{0-radpl}, \ref{t:tr}] \label{ratv-totv}

(a) For any general position PL map $f:K_4\to\R^2$ either

$\bullet$ the images of some non-adjacent edges intersect, or

$\bullet$ the image of some vertex belongs to the interior modulo 2
of the image of the cycle formed by those three edges that do not contain this vertex.

(b) For any PL (or continuous) map of a tetrahedron to the plane\footnote{See definition e.g. in \cite[\S3]{Sk20}.}
either

$\bullet$ the images of some opposite edges intersect, or

$\bullet$ the image of some vertex belongs to the image of the opposite face.
\end{theorem}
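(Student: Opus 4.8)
\textbf{Proof proposal for Theorem \ref{ratv-totv}.}
The plan is to prove the combinatorial/PL statement (a) first, since it carries the essential content, and then to deduce the topological statement (b) from it by approximation. For part (a), the strategy mirrors the proof of Lemma \ref{11-vankam}: I would introduce a $\Z_2$-valued invariant of a general position PL map $f\colon K_4\to\R^2$ that records, simultaneously, the parities of the intersection numbers of the three pairs of non-adjacent edges and the parities with which each vertex-image lies in the modulo two interior of the opposite triangle. Concretely, for each of the four vertices $i$, let $\Delta_i$ be the cycle (triangle) formed by the three edges missing $i$, and set $r_i(f):=[f(i)\in\text{int}_2 f\Delta_i]\in\Z_2$; for each of the three pairs $\{\sigma,\tau\}$ of non-adjacent edges set $v_{\sigma\tau}(f):=|f\sigma\cap f\tau|\bmod 2$. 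Define $R(f):=\sum_i r_i(f)+\sum_{\{\sigma,\tau\}} v_{\sigma\tau}(f)\in\Z_2$. The claim I want is that $R(f)=1$ for every general position PL map, which immediately gives (a): not all six summands can vanish.

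The first step is to check $R(f)=1$ on one explicit example — for instance a triangle $234$ with the point $1$ inside it, realized linearly: here $r_1=1$, all other $r_i=0$, and all three non-adjacent-edge intersection numbers are $0$ (it is a linear embedding of the planar graph $K_4$ minus nothing — actually $K_4$ is planar, so a suitable drawing has no crossings), so $R=1$. The second, and main, step is invariance: $R(f)=R(f')$ whenever $f,f'$ are general position PL maps differing only on one edge $\sigma=jk$, with $f|_\sigma$ linear. Write $C:=f\sigma\cup f'\sigma$ for the resulting closed polygonal line. Changing $\sigma$ affects only: (i) the two pairs $\{\sigma,\tau\}$ with $\tau$ non-adjacent to $\sigma$ — there are exactly two such edges, say $\tau_1,\tau_2$, and together $\tau_1\cup\tau_2$ form exactly one of the triangles, namely the triangle $\Delta$ avoiding both $j$ and $k$, which in $K_4$ is a single edge path... more precisely the two edges non-adjacent to $jk$ in $K_4$ are $\ell m$ and... since $K_4$ has vertices $\{j,k,\ell,m\}$, the only edge disjoint from $jk$ is $\ell m$, so actually there is just \emph{one} pair $\{\sigma,\tau\}$ affected, with $\tau=\ell m$; and (ii) the two quantities $r_\ell(f)$ and $r_m(f)$, since $\ell$ and $m$ are the two vertices whose opposite triangle contains the edge $\sigma$. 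By the Parity Lemma \ref{0-even}.b (or rather its chess-board reformulation, Proposition \ref{ratv-chess}.b) one computes $v_{\sigma\tau}(f)-v_{\sigma\tau}(f')=|C\cap f\tau|\bmod 2$, while $r_\ell(f)-r_\ell(f')$ equals the change in the color of $f(\ell)$ relative to... here I must be careful: $f\ell$ is \emph{fixed}, but the triangle $\Delta_\ell$ changes (it contains $\sigma$). Re-express: $r_\ell(f)=[f(\ell)\in\text{int}_2 f\Delta_\ell]$, and $\Delta_\ell$ differs from $\Delta'_\ell$ only in the edge $\sigma$; so $r_\ell(f)+r_\ell(f')=|Of(\ell)\cap C|\bmod 2$ is not quite right — rather, using that $f\Delta_\ell$ and $f\Delta'_\ell$ are two closed curves differing by $C$, the symmetric difference of their interiors is controlled by $C$, so the parity change of membership of the fixed point $f(\ell)$ equals the number of times a generic ray from $f(\ell)$ crosses $C$, i.e. the winding-type count of $C$ around $f(\ell)$ mod $2$. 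The upshot is that the total change $\sum$ telescopes: the three affected terms sum, mod $2$, to $|C\cap f\tau|+w(C,f\ell)+w(C,f m)$, and because $\tau=\ell m$ is the segment joining $f\ell$ and $f m$ (when $f|_\tau$ is linear — if not, subdivide), the Parity Lemma applied to the closed curve $C$ against the path $f\tau$ together with the two rays gives exactly $0$. This is the step I expect to be the main obstacle: bookkeeping the signs/parities and phrasing ``change of modulo two interior under changing one edge'' cleanly, which is where Proposition \ref{ratv-chess} does the real work.

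Having proved $R$ is invariant under a single linear edge change, I would then, exactly as in the proof of Lemma \ref{11-vankam} (using that any two general position PL maps of a fixed graph are connected by a sequence of such moves, subdividing edges as needed), conclude $R(f)\equiv 1$ for all $f$, proving (a). For part (b), given a continuous (or PL) map $g$ of the tetrahedron (the 2-complex, not just its 1-skeleton) to $\R^2$, restrict to the 1-skeleton $K_4$ and PL-approximate by a general position PL map $f$ with $f$ close to $g|_{K_4}$; by (a) some non-adjacent edge images meet, or some $f(i)$ lies in $\text{int}_2 f\Delta_i$. In the first case closeness gives intersecting images of opposite edges of the tetrahedron under $g$ (opposite edges of the tetrahedron are exactly the non-adjacent edges of $K_4$). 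In the second case, $f(i)\in\text{int}_2 f\Delta_i$ forces — since $g$ maps the 2-face opposite $i$ into $\R^2$ with boundary $g\Delta_i$ close to $f\Delta_i$ — that $g(i)$ lies in the image of that opposite face (a point in the modulo two interior of a curve that bounds a disk mapped in must be covered by the disk's image; this is the elementary ``a loop that is null-homotopic in its image sweeps its modulo two interior'' fact, which one can take as known or prove via the same chess-board argument). This deduction is routine once (a) is in hand; the only care needed is choosing the approximation fine enough and invoking the standard fact that a singular disk covers the modulo two interior of its boundary, which follows from Remark \ref{r:1cyc}.b. Alternatively, one can cite the relevant Approximation Lemma in the spirit of the reduction of Theorem \ref{grapl-nonalm} to Lemma \ref{11-vankam}.
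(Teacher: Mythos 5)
Your proposal is correct and follows essentially the paper's own route: your invariant $R(f)$ is exactly the Radon number $\rho(f)$, your invariance-under-a-single-edge-change step is precisely the proof of Lemma \ref{ratv-vk2} (with Proposition \ref{ratv-chess}.b doing the same work there), and your deduction of part (b) via approximation plus the fact that the image of a face covers the modulo 2 interior of the image of its boundary is also the paper's argument. The only cosmetic difference is the base case: you compute one explicit drawing and invoke connectivity by edge moves, whereas the paper uses Proposition \ref{ratv-vk2l}, which gives $\rho=1$ for every general position linear map.
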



Part (a) follows from its `quantitative version' Lemma~\ref{ratv-vk2} below using a version of
\cite[Approximation Lemma 1.4.6]{Sk20}, cf. Remark \ref{r:ae}.c.


Part (b) for PL general position maps follows from part (a) because the image $f\Delta$ of a face $\Delta$ contains the interior modulo 2 of the image of the boundary $\partial\Delta$ of this face.
(This fact follows because for a {\it general position} map $f:\Delta\to\R^2$ a {\it general position} point from the interior modulo 2 of $f(\partial\Delta)$ has an odd number of $f$-preimages.)
Part (b) follows from part (b) for general position PL maps using a version of \cite[Approximation Lemma 1.4.6]{Sk20}, cf. Remark \ref{r:ae}.c.


Also, the standard formulation (b) is equivalent to (a) by \cite{Sc04, SZ}.

For any general position PL map $f:K_4\to\R^2$ let {\bf the Radon number} $\rho(f)\in\Z_2$ be
the sum of the parities of

$\bullet$ the number of intersections points of the images of non-adjacent edges, and

$\bullet$ the number of vertices whose images belong to the interior modulo 2 of the image of the cycle formed by the three edges not containing the vertex.\footnote{For a general position PL map $g$ of a tetrahedron to the plane one can define the {\it van Kampen number} $v(g)\in\Z_2$ \cite[\S4.2]{Sk16} so that $v(g)=\rho(g|_{K_4})$.}

\begin{lemma}[cf.\ Lemma~\ref{11-vankam}, Proposition \ref{ratv-vk2l} and \cite{Sc04, SZ}]\label{ratv-vk2}
For every general position PL map $f:K_4\to\R^2$ the Radon number $\rho(f)$ is odd.
\end{lemma}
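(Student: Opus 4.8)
The plan is to mimic the proof of Lemma~\ref{11-vankam} (the $K_5$ case) combined with the base case given by Proposition~\ref{ratv-vk2l}. First I would reduce, exactly as in Lemma~\ref{11-vankam}, to checking that the Radon number $\rho(f)$ is invariant under an elementary move: if $f,f':K_4\to\R^2$ are general position PL maps that coincide on every edge and on every vertex except that they differ on a single edge $\sigma$ (with $f|_\sigma$ linear), or that coincide everywhere except at the position of a single vertex $A$ (moved linearly), then $\rho(f)=\rho(f')$. Since any two general position PL maps of $K_4$ can be connected by a sequence of such moves, and since Proposition~\ref{ratv-vk2l} provides one explicit map — e.g. the linear map onto four points in general position — with $\rho=1$ (the unique Radon partition contributes exactly one intersection point or one vertex-in-triangle incidence, and one checks the parity is odd), invariance gives $\rho(f)$ odd for all $f$.

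The key computation is the edge move. Write $f\Delta_v$ for the image of the $3$-cycle of edges of $K_4$ not containing vertex $v$; this is a closed polygonal line. When we push the edge $\sigma = v_1v_2$ across the plane, the two contributions to $\rho$ that can change are: (i) the intersections of $f\sigma$ with the edges $v_3v_4$ (the unique edge disjoint from $\sigma$), and (ii) whether $f(v_1)$ or $f(v_2)$ lies in the mod $2$ interior of $f\Delta_{v_1}$ or $f\Delta_{v_2}$ — but these cycles contain $\sigma$, so moving $\sigma$ changes these cycles too. The clean way to organize this is: $\rho(f) - \rho(f')$ equals, modulo $2$,
$$
|(f\sigma\cup f'\sigma)\cap f(v_3v_4)| + \big(\text{change in }[f(v_3)\in\text{int}_2 f\Delta_{v_3}]\big) + \big(\text{change in }[f(v_4)\in\text{int}_2 f\Delta_{v_4}]\big),
$$
because the closed polygonal line $f\sigma\cup f'\sigma$ (a cycle, since $f$ and $f'$ agree at the endpoints of $\sigma$) has well-defined mod $2$ intersection numbers with the other data. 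Using Proposition~\ref{ratv-chess}.b, the change in whether $f(v_3)$ is in $\text{int}_2 f\Delta_{v_3}$ — where $\Delta_{v_3}$ is the cycle on $\{v_1,v_2,v_4\}$, which contains $\sigma$ — is exactly $|(f\sigma\cup f'\sigma)\cap (\text{segment }f(v_3)f(\text{anything}))|$ type terms; more precisely it is governed by how the moving edge sweeps past the fixed vertex $f(v_3)$. Assembling all three terms, the $K_4$ analogue of the cycle structure (the edge $v_3v_4$ together with the paths through $v_3$ and $v_4$) forms precisely a closed polygonal line, so by the Parity Lemma~\ref{0-even}.b the total is $0 \bmod 2$.

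The main obstacle I anticipate is bookkeeping the interior-mod-$2$ terms correctly: unlike the $K_5$ case, where the edges disjoint from $\sigma$ form a single cycle and a bare application of the Parity Lemma suffices, here the "disjoint edges" datum is only one edge, and the burden of the invariant is carried by the vertex/interior incidences. One must verify via Proposition~\ref{ratv-chess}.b that each such incidence changes by exactly the mod $2$ count of intersections of the swept region $f\sigma\cup f'\sigma$ with an appropriate polygonal path, and then check that the union of $f(v_3v_4)$ with these two paths is genuinely a closed $1$-cycle (it is: it is the image of the boundary of the $2$-cell dual to $\sigma$ in $K_4^*$, or concretely the $4$-cycle $v_3 v_4$ plus $v_4 v_1, v_1 v_3$ or similar). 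I would also need to separately handle the vertex move, where $A=f(v_1)$ is dragged linearly; there the changing contributions are the intersections of the three edges at $A$ with the opposite triangle, plus whether $A$ itself enters or leaves $\text{int}_2 f\Delta_A$, and the same Parity Lemma argument — applied to the triangle $f\Delta_A$ and the swept edges — closes it. Once both elementary moves are shown to preserve $\rho \bmod 2$, and the base value is pinned down by Proposition~\ref{ratv-vk2l}, the lemma follows.
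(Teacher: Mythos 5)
You follow essentially the paper's own route: reduce via Proposition~\ref{ratv-vk2l} to the invariance of $\rho$ under changing the map on a single edge $\sigma$, and your displayed formula for the change is exactly the paper's identity $\rho(f)-\rho(f')\equiv|\partial S\cap f\tau|+|S\cap f(\partial\tau)|\pmod 2$, where $\partial S:=f\sigma\cup f'\sigma$, $S$ is its modulo~2 interior and $\tau$ is the edge disjoint from $\sigma$. Two repairs are needed, though. First, the sentence before your display contradicts the display: when $\sigma=v_1v_2$ is moved, the vertex--interior terms that can change are those of $v_3$ and $v_4$ (their opposite cycles $\Delta_{v_3},\Delta_{v_4}$ contain $\sigma$), not those of $v_1,v_2$, whose opposite cycles avoid $\sigma$ and whose images do not move; your display has the right terms, so only the prose needs fixing. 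The precise justification that the change of the term for $v_3$ equals the indicator of $f(v_3)\in S$ is additivity of modulo~2 interiors: $f\Delta_{v_3}$ and $f'\Delta_{v_3}$ differ, as closed polygonal lines, exactly by $\partial S$, so by the path-to-infinity criterion of Proposition~\ref{ratv-chess}.b their modulo~2 interiors differ exactly by $S$ at the general position point $f(v_3)$.

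Second, your closing step fails as written. The closed polygonal line you name (the image of the cycle $v_3v_4,v_4v_1,v_1v_3$) does not encode the two point-in-region terms: applying the Parity Lemma~\ref{0-even}.b to it and $\partial S$ only relates the crossing parities of the edges $v_3v_4,v_4v_1,v_1v_3$ with $\partial S$ (and the last two edges do not even occur in $\rho$), and says nothing about whether $f(v_3)$ or $f(v_4)$ lies in $S$. The clean finish is a single application of Proposition~\ref{ratv-chess}.b with $L=\partial S$ and $P=f\tau$: the endpoints $f(v_3),f(v_4)$ have the same color in the chess-board coloring of the complement of $\partial S$ if and only if $|f\tau\cap\partial S|$ is even, which is precisely $|\partial S\cap f\tau|+|S\cap f(\partial\tau)|\equiv 0\pmod 2$. (If you insist on using only the Parity Lemma, you must first convert the two membership terms into crossing parities of auxiliary polygonal lines from $f(v_3)$ and $f(v_4)$ to a far-away point, i.e.\ reprove Proposition~\ref{ratv-chess}.b.) Finally, the vertex move is superfluous: Proposition~\ref{ratv-vk2l} applies to \emph{every} linear general position map, so it suffices to straighten the edges of an arbitrary general position PL map one at a time, exactly as in Lemma~\ref{11-vankam}; this is fortunate, because your sketch of the vertex move also misses that moving $A=v_1$ changes the cycles opposite to $v_2,v_3,v_4$ and hence their membership terms, not only the terms you list.
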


\begin{proof} By Proposition \ref{ratv-vk2l} it suffices to prove that $\rho(f)=\rho(f')$ for each two general position PL maps $f,f':K_4\to\R^2$ coinciding on every edge except an edge $\sigma$, and such that $f|_\sigma$ is linear.
Denote by $\tau$ the edge of $K_4$ non-adjacent to $\sigma$, by $S$ the modulo 2 interior of
$\partial S:=f\sigma\cup f'\sigma$.
Then
$$
\rho(f)-\rho(f')=\left(|\partial S\cap f\tau|+|S\cap f(\partial\tau)|\right)\mod2=0.
$$
Here the second equality follows by Proposition~\ref{ratv-chess}.b.\footnote{There is a direct proof that the van Kampen number of a map $K_5\to\R^2$ equals the Radon number of its restriction to $K_4$ \cite[\S4.2]{Sk16}.
So Lemmas \ref{ratv-vk2} and \ref{11-vankam} are not only proved analogously, but can be deduced from each other.}
\end{proof}

\subsection{Topological Tverberg theorem in the plane}\label{s:ttp}

\subsubsection{Statement}\label{s:tvtopl}

The topological Tverberg theorem in the plane \ref{ratv-tvpl} generalizes both the Tverberg Theorem in the plane~\ref{ratv-tv1} and the Topological Radon Theorem in the plane \ref{ratv-totv}.
For statement we need a definition.
{\bf The winding number} of a closed oriented polygonal line $A_1\ldots A_n$ in the plane around a point $O$ that does not belong to the polygonal line is the following sum of the oriented angles divided by $2\pi$
$$A_1\ldots A_n\cdot O :=
(\angle A_1OA_2+\angle A_2OA_3+\ldots+\angle A_{n-1}OA_n+\angle A_nOA_1)/2\pi$$


\begin{proposition}\label{ratv-totv2}
(a) The winding number of any polygon (without self-intersections and oriented counterclockwise) around
any point in its exterior (interior) is 0 (respectively 1).

(b) The interior modulo 2 (fig.~\ref{f:tohu}) of any closed polygonal line is the set of points for which the winding number is odd.


(c) (cf. Proposition \ref{ratv-chess}.b)
Take a closed and a non-closed oriented polygonal lines $L$ and $P$ in the plane, all whose vertices are in general position.
Let $P_0$ and $P_1$ be the starting point and the endpoint of $P$.
Then $L\cdot P=L\cdot\partial P:=L\cdot P_1-L\cdot P_0$.\footnote{The number $L\cdot P$ is defined in \S\ref{0vkamz}.
\newline
This version of the Stokes theorem shows that the complement to $L$ has a {\it M\"obius-Alexander numbering}, i.e. a `chess-board coloring by integers' (so that the colors of the adjacent domains are different by $\pm1$ depending on the orientations;
the ends of a polygonal line $P$ have the same color if and only if $L\cdot P=0$).
\newline
See more in \cite{Wn}.}
\end{proposition}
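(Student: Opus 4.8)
\textbf{Proof proposal for Proposition \ref{ratv-totv2}.}

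The plan is to prove the three parts in the order (a), (b), (c), since each builds on the previous one, and then to note that (c) is the real substance while (a) and (b) are essentially bookkeeping about the winding number.

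For part (a), I would first check that the winding number $A_1\dots A_n\cdot O$ is an integer: each summand $\angle A_iOA_{i+1}$ is the signed angle of the rotation carrying the ray $OA_i$ to the ray $OA_{i+1}$, chosen in $(-\pi,\pi)$, so the total sum is the net signed rotation of the ray $OA_1$ around $O$ as one traverses the closed polygonal line, hence a multiple of $2\pi$. For a convex polygon (or any simple polygon) oriented counterclockwise, I would argue that if $O$ lies outside then the ray $OA_1$ never completes a full turn — one can enclose the polygon in a half-plane with $O$ on its boundary, so the direction of $OA_i$ stays within an arc of length $<\pi$ and the net rotation is $0$ — whereas if $O$ lies inside, the direction of $OA_i$ makes exactly one counterclockwise turn, giving winding number $1$. (This is the standard continuity/monotonicity argument; I would state it rather than grind it out.)

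For part (b), I would use the fact (Proposition \ref{ratv-chess}.a) that the complement of a closed general position polygonal line $L$ is chess-board colorable, together with the observation that the winding number $L\cdot O$ changes by exactly $\pm1$ when $O$ crosses a single segment of $L$ transversally (this is again the monotonicity-of-angle computation, localized). Since the winding number is $0$ in the unbounded (white) region, the parity of $L\cdot O$ is constant on each region and alternates across edges, exactly as the black/white coloring does; hence $O$ lies in the interior modulo $2$ iff $L\cdot O$ is odd.

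For part (c), the Stokes-type identity $L\cdot P = L\cdot P_1 - L\cdot P_0$, I would argue as follows. Fix the closed polygonal line $L$ and move the endpoint; both sides are unchanged when $P$ is modified without crossing $L$, and both sides change by the same amount (namely by $\pm$ the sign of the crossing point, using the sign convention of \S\ref{0vkamz}) when $P$ is pushed across one segment of $L$ transversally. Concretely: $L\cdot P$ is the sum of signs of intersection points of $L$ with $P$, and pushing $P$ across an edge of $L$ changes this by the sign of that crossing; simultaneously this push moves $P_1$ (or $P_0$) from one M\"obius--Alexander region to an adjacent one, changing $L\cdot P_1$ (resp.\ $L\cdot P_0$) by the same sign, as in the local computation behind (b) but now keeping track of orientation rather than just parity. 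Since any general position $P$ with given endpoints can be brought to a straight segment by such moves, and the identity is trivially true when $P$ is a single segment disjoint from $L$ after shrinking, the general case follows. The main obstacle is making the per-crossing bookkeeping of \emph{signs} (not just parities) consistent on both sides of the equation simultaneously; everything else is the same angle-monotonicity argument used for (a) and (b), so I would set up the sign conventions carefully once and then invoke them uniformly.
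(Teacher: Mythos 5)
The paper itself does not prove Proposition \ref{ratv-totv2}: it is listed among the ``trivial or standard results'' in the historical notes, and only its modulo-2 relative, Proposition \ref{ratv-chess}, is proved (in the appendix, via the Parity Lemma \ref{0-even}.b). So the comparison can only be with that intended standard argument. Your parts (b) and (c) are fine in approach and are exactly the integer refinements of that treatment: the local lemma that the winding number of $L$ around a moving point jumps by the sign of the crossing when the point crosses an edge of $L$, combined with the chess-board structure for (b) and with a deformation (or, more cleanly, a telescoping sum $L\cdot P=\sum_i L\cdot[Q_iQ_{i+1}]$ over the segments of $P$, which reduces (c) to the case of a single segment and avoids having to justify that any $P$ can be moved to a trivial one) for (c); the per-crossing sign bookkeeping you defer is indeed the whole content, playing the role that the Triviality Lemma \ref{l:triv} plays for the analogous integral statements elsewhere in the paper.

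Part (a), however, has a genuine gap as written. Your exterior-point argument (``enclose the polygon in a half-plane with $O$ on its boundary'') only works when $O$ lies outside the \emph{convex hull} of the polygon; for a non-convex simple polygon an exterior point can sit in a pocket inside the convex hull, and then no such half-plane exists. Likewise, for an interior point of a non-convex polygon the direction of $OA_i$ is not monotone, so ``makes exactly one counterclockwise turn'' is a restatement of the claim rather than an argument; the ``standard continuity/monotonicity argument'' you invoke is only valid for convex (or star-shaped from $O$) polygons. A correct route is to prove the local jump lemma first (your (b)/(c) machinery) and then deduce (a): connect an exterior point to a far-away point by a path inside the exterior (winding number $0$ there, since at a distant point all angles are small), and pass from exterior to interior by crossing a single edge transversally, which changes the winding number by exactly $+1$ for the counterclockwise orientation; this uses that the complement of a simple closed polygon has exactly two components. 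Alternatively, (a) can be proved by induction on the number of vertices by cutting off an ear. Either way, (a) should come after, not before, the local crossing analysis.
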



\begin{figure}[h]\centering
\includegraphics{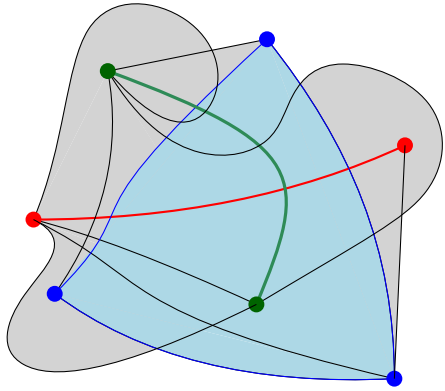}
\caption{Topological Tverberg theorem in the plane, $r=3$}\label{f:lt}
\end{figure}

\begin{theorem}[Topological Tverberg theorem in the plane, \cite{BSS, Oz, vo96}] \label{ratv-tvpl}
If $r$ is a power of a prime, then for any general position PL map $f:K_{3r-2}\to\R^2$
either $r-1$ triangles wind around one vertex or $r-2$ triangles wind around the intersection of two edges, where the triangles, edges and vertices are disjoint.
More precisely, the vertices can be numbered by $1,\ldots,3r-2$ so that either

$\bullet$ the winding number of each of the images $f(3t-1,3t,3t+1)$, $t=2,3,\ldots,r-1$, around some point of $f(12)\cap f(34)$ is nonzero, or

$\bullet$ the winding number of each of the images $f(3t-1,3t,3t+1)$, $t=1,2,3,\ldots,r-1$, around $f(1)$ is nonzero.

(The condition `winding number is nonzero' does not depend on orientation of $f(ijk)$.)
\end{theorem}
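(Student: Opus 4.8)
The plan is to mimic the proofs of Lemma~\ref{11-vankam} and Lemma~\ref{ratv-vk2}: first establish a ``quantitative'' mod-$2$ statement counting the relevant configurations (winding numbers around intersection points and around vertices), then invoke invariance under moving one edge at a time, reducing to the linear Tverberg theorem in the plane~\ref{ratv-tv1} (or rather to a mod-$2$ refinement of it) as a base case. Since $r$ is restricted to prime powers, the key algebraic input is that $\Z_r$-valued (rather than just $\Z_2$-valued) self-intersection obstructions behave well; this is the analogue, for multiplicity $r$, of the van Kampen obstruction theory developed in \S\ref{0grapl} and \S\ref{0vkamz}. Concretely, I would set up an $r$-fold intersection cocycle on the appropriate deleted-product-type complex built from $K_{3r-2}$, where a cell corresponds to an $r$-tuple of pairwise disjoint faces (a vertex plus edges plus triangles totalling the right dimension), and assign to it the winding numbers of the triangles' images around a common point of the lower-dimensional faces' images.

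First I would fix a general position PL map $f:K_{3r-2}\to\R^2$ and define, for each unordered choice of disjoint faces realizing one of the two combinatorial types in the statement (``two edges $+$ $(r-2)$ triangles'' or ``one vertex $+$ $(r-1)$ triangles''), a $\Z_r$-valued quantity: the product over the triangles of their winding numbers around the relevant point, summed appropriately, reduced mod the prime $p$ (writing $r=p^k$ and using that $\Z_{p^k}$-coefficients suffice by an \"Ozaydin-type argument in the plane, cf.\ Theorem~\ref{vankamz-oz}). Call the total $T(f)\in\Z_p$ (or $\Z_r$). Second, I would show $T(f)$ does not depend on $f$: it suffices to check $T(f)=T(f')$ when $f,f'$ agree except on one edge $\sigma$, with $f|_\sigma$ linear. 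The edges/triangles disjoint from $\sigma$ in $K_{3r-2}$ form, after fixing the other $3r-4$ vertices, a configuration on $K_{3r-3}$ for which the Tverberg-type count is already controlled; the change $T(f)-T(f')$ is expressed as an intersection/winding computation of the ``swept'' region $f\sigma\cup f'\sigma$ against the relevant cycles, which vanishes by the Parity Lemma~\ref{0-even}.b together with Proposition~\ref{ratv-totv2}.c (the Stokes-type formula $L\cdot P=L\cdot\partial P$). This is exactly how $v(f)-v(f')=|(f\sigma\cup f'\sigma)\cap f\Delta|=0$ worked in Lemma~\ref{11-vankam}, and how $\rho(f)-\rho(f')=0$ worked in Lemma~\ref{ratv-vk2}.

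Third, having shown $T$ is a constant, I would evaluate it on one convenient linear map — e.g.\ the $3r-2$ points arranged as in fig.~\ref{tverbpoints} (for $r=3$) and its natural generalization (clusters of $r-1$ points near the vertices of a triangle, plus two extra points) — and verify $T\neq 0$ there. This nonvanishing is precisely the content of Example~\ref{ex:tverbpoints}.c for $r=3$ and the prime-power case of the linear Tverberg theorem~\ref{ratv-tv1} in general, combined with the observation that for prime powers the number of Tverberg partitions (counted with the right signs/winding data) is a nonzero residue mod $p$ — the combinatorial heart being a count analogous to the classical proof that $K_5$ gives an odd van Kampen number. Since $T(f)=T(f_{\mathrm{lin}})\neq 0$, for every general position PL $f$ at least one summand is nonzero, which yields a numbering of the vertices realizing one of the two alternatives; the continuous case then follows by approximation (a version of \cite[Approximation Lemma 1.4.6]{Sk20}, cf.\ Remark~\ref{r:ae}.c), exactly as Theorem~\ref{grapl-nonalm} was deduced from Lemma~\ref{11-vankam}.

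The main obstacle is the prime-power hypothesis: it enters in exactly one place, namely the claim that the $\Z_r$- (really $\Z_{p^k}$-) valued obstruction/count is nonzero for the model linear configuration, equivalently that no ``coboundary'' in the $r$-fold analogue of \S\ref{0vkamz} can kill it. For $r$ not a prime power this genuinely fails (as flagged in Remark~\ref{r:five}, fourth direction), so the argument must use $p$-group representation theory / a transfer or Smith-sequence argument (\"Ozaydin's method, see Theorem~\ref{vankamz-oz}) at precisely this step — in the elementary language here, this is the statement that the relevant system of $\Z_p$-linear equations (the $r$-fold analogue of Proposition~\ref{t:pl-vk}(iii)) has the model intersection cocycle outside its image. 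Verifying the invariance in step two is routine once the right deleted-product complex and the right orientation/winding conventions are in place (cf.\ Remark~\ref{r:me-fkt} for the sign subtleties), and the approximation step is standard; so the real work is the base-case nonvanishing together with correctly setting up the $\Z_r$-valued intersection cocycle so that the Parity Lemma and Stokes-type formula apply cleanly.
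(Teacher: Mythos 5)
Your plan is essentially the ``idea of an elementary proof'' that the paper itself sketches in \S\ref{s:triple}, and the crucial step is left unproven both by you and (deliberately) by the paper: it is exactly Problem~\ref{p:boundary}. To make your invariant $T(f)$ work you must exhibit a sign function on the family of partitions such that (i) the signed sum is unchanged when $f$ is altered on a single edge and (ii) the sum is nonzero modulo $p$ for a model configuration. Point (i) is not ``routine'': in Lemma~\ref{11-vankam} the cancellation works because the edges of $K_5$ disjoint from $\sigma$ form a single closed cycle, whereas for the $r$-fold count the boundary contributions of the swept region must cancel across \emph{many} partitions containing a fixed face, and this forces both the restriction to spherical partitions (whose union is a sphere, Assertion~\ref{cont-hom} and Lemma~\ref{cont-homeq}) and a coherent choice of signs; the paper states this as an open problem and explicitly warns that carrying it out ``might be not so easy'' (cf.\ the second half of \S8 of \cite{MTW12}). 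So your step two and step three together are precisely the missing content, not consequences of the Parity Lemma~\ref{0-even}.b, Proposition~\ref{ratv-totv2}.c and Example~\ref{ex:tverbpoints}.c. The paper's actual proof (\S\ref{s:rainbow}) is different in form: it argues by contradiction, builds the map $\pi^*\circ(f*f*f)$ on $|\mathrm{S}_r|\cong S^{3r-4}$, uses Lemma~\ref{cont-homeq} to make it equivariant for the cyclic permutation, observes that it extends over the cone $D^{3r-3}$, and contradicts the $r$-fold Borsuk--Ulam Theorem~\ref{t:3bu}; the counting you propose is only described there as ``the same proof in a different language,'' with the quantitative version unresolved.

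A second gap concerns the prime-power hypothesis. A mod-$p$ counting argument (or Theorem~\ref{t:3bu}, which is stated for prime $r$) at best proves the theorem for \emph{prime} $r$; the paper itself says its \S\ref{s:triple} and \S\ref{s:rainbow} arguments cover prime $r$ and refers to \cite{BSS, Oz, vo96} for prime powers. Your appeal to ``an \"Ozaydin-type argument'' points the wrong way: Theorem~\ref{vankamz-oz} asserts \emph{triviality} of the $r$-fold obstruction when $r$ is not a prime power, and gives no nonvanishing statement for $r=p^k$; the prime-power case requires the equivariant obstruction or index arguments of \"Ozaydin and Volovikov, which are not a routine extension of counting modulo $p$. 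So as written your proposal proves neither the invariance/nonvanishing base step nor the full prime-power statement.
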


Cf. Theorems \ref{ratv-tv1}, \ref{ratv-totv} and Conjecture \ref{c:tt}.

By \cite{Sc04, SZ} Theorem \ref{ratv-tvpl} is equivalent to the following standard formulation:
{\it If $r$ is a power of a prime, then for any PL (or continuous) map of the $(3r-3)$-simplex to the plane there exist $r$ pairwise disjoint faces whose images have a common point.}
Proofs of Theorem \ref{ratv-tvpl} can be found e.g. in \S\ref{s:triple}, \S\ref{s:rainbow} for a prime $r$, and in the surveys cited in `historical notes' of the Introduction for a prime power $r$.


\begin{conjecture}[Topological Tverberg conjecture in the plane]\label{ratv-tvplc}
The analogue of the previous theorem remains correct if $r$ is not a power of a prime.
\end{conjecture}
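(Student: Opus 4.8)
The plan is to imitate, for arbitrary $r$, the elementary ``quantitative'' scheme behind Theorems \ref{ratv-tv1}, \ref{ratv-totv} and \ref{ratv-tvpl}: attach to a general position PL map $f\colon K_{3r-2}\to\R^2$ a numerical self-coincidence invariant $T_r(f)$, prove it does not depend on $f$, and then compute it on one explicit map where it is nonzero. Concretely, $T_r(f)$ should be the $r$-fold analogue of the Radon number before Lemma \ref{ratv-vk2} and of the sum in Example \ref{ex:tverbpoints}.c: a count (signed, with values in $\Z$, or unsigned with values in $\Z_r$), over all partitions of $[3r-2]$ into $r$ parts realizing a Tverberg pattern, of products of winding numbers at the $r$-fold coincidence points, together with lower-dimensional correction terms for vertices lying in the interiors modulo $2$ of triangles and for edge--edge intersection points lying in triangles. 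The conjecture follows once one knows $T_r(f)\not\equiv 0\pmod r$ for every such $f$.

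Two of the three ingredients go through for every $r$. First, the model computation: the \emph{linear} Tverberg theorem in the plane \ref{ratv-tv1} holds for all $r$, so one may take $f$ linear with the points in a configuration generalizing Figure \ref{tverbpoints} (a regular $(3r-2)$-gon, or $r-1$ points clustered near each vertex of a triangle plus one extra point), enumerate the Tverberg partitions by hand as in Example \ref{ex:tverbpoints}.ab, and verify the residue is nonzero. Second, invariance of $T_r(f)$ under perturbations that do not drag a face across a disjoint vertex or face (``Reidemeister moves I--IV'') is automatic, exactly as in Proposition \ref{elcob}.a. So the entire difficulty is concentrated in invariance of $T_r(f)\bmod r$ under the single move: pushing the interior of one edge or one triangle across a disjoint vertex or a disjoint face. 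Here the change $T_r(f)-T_r(f')$ is, up to sign, an $(r-1)$-fold ``linking'' count of the track of the moving face against the remaining faces, generalizing the right-hand side of the displays in the proofs of Lemmas \ref{11-vankam} and \ref{ratv-vk2}.

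The main obstacle is precisely this invariance step, and it is exactly where the hypothesis ``$r$ a prime power'' enters in \S\ref{s:triple}, \S\ref{s:tvkam} and in \cite{BSS, Oz, vo96}. For $r=p^k$ the move-$V$ difference is killed modulo $p$ by a Parity-Lemma-\ref{0-even}-type cancellation reflecting a transitive $p$-subgroup of the symmetric group $S_r$; for $r$ with two distinct prime factors no such cancellation is available, which is the mechanism behind \"Ozaydin's equivariant map and the recent counterexamples to the Topological Tverberg Conjecture \ref{c:tt}. Those counterexamples, however, require ambient dimension growing with $r$, whereas in the plane the $r$-fold configuration space of the $2$-complex $K_{3r-2}$ sits far below the range where such an equivariant map can be realized by an actual PL map --- so the invariant may well still be well-defined. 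Proving this seems to demand genuinely new input. I see three routes: (i) a direct low-dimensional combinatorial argument showing that the move-$V$ obstruction, a $1$-cycle modulo $r$ living in a $2$-dimensional configuration space, must be null-homologous for dimension reasons (a ``no room'' argument in the spirit of the second proof of Lemma \ref{star} and Remark \ref{r:1cyc}); (ii) ruling out a planar \"Ozaydin-type equivariant map by an explicit obstruction computation over $\Z_r$ for the specific spaces occurring when $d=2$, i.e.\ an $r$-fold analogue of the \"Ozaydin theorem in the plane \ref{vankamz-oz}; or (iii) a ``constraint method'' reduction of the composite case $r=pq$ in the plane to prime-power cases (possibly of van Kampen-Flores type \ref{c:vkfg}) in the plane --- but all presently known constraint reductions run in the unhelpful direction. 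My expectation is that route (i) is the most promising: the failure of Tverberg-type statements for composite $r$ is genuinely a high-dimensional phenomenon, so the real task is to show that in dimension $2$ there is simply not enough room for it to happen.
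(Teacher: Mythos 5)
There is no proof in the paper to compare against: Statement \ref{ratv-tvplc} is an \emph{open conjecture}, and the paper says so explicitly (it is open for $r$ not a prime power and $d\le 2r$, e.g.\ $d=2$, $r=6$; see the discussion after Conjecture \ref{c:tt}). Your text is likewise not a proof, and you concede as much: the entire content of the argument is the claim that $T_r(f)\bmod r$ is invariant under the move that drags a face across a disjoint vertex or face, and you state that establishing this "seems to demand genuinely new input." That is precisely the step where the prime-power hypothesis is used in \S\ref{s:triple}, \S\ref{s:rainbow} and in \cite{BSS, Oz, vo96}, so what you have written is a restatement of the known strategy together with an acknowledgement that its key lemma is missing for composite $r$.

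Worse, the specific shape of invariant you propose is provably unable to settle the composite case. Any quantity of the form "signed sum of $r$-fold intersection/winding data over a fixed family of partitions" is a pairing of the $r$-fold intersection cocycle of \S\ref{s:tvescz} against fixed coefficients (cf.\ Remark \ref{r:p-boundary}); its independence of $f$ is exactly the statement that coboundaries pair to zero. But the \"Ozaydin Theorem in the plane \ref{vankamz-oz} says that for $r$ not a prime power the $r$-fold intersection cocycle of \emph{every} general position PL map $K^{(1)}\to\R^2$ is cohomologically trivial; hence any such pairing that is independent of $f$ vanishes identically and cannot be "nonzero mod $r$" on a model configuration. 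Your route (ii) is closed for the same reason: the planar \"Ozaydin-type equivariant map \emph{does} exist for every non-prime-power $r$ (that is what Theorem \ref{vankamz-oz} encodes); the conjecture stays open only because, in the plane, existence of the equivariant map is not known to produce an actual counterexample (the Mabillard--Wagner machinery of Theorem \ref{t:mawa} requires $k\ge2$, $k+r\ge5$). So a proof, if one exists, must go beyond the primary cohomological obstruction — your routes (i) and (iii) are at present only hopes, and nothing in your text carries them out.
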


Let us state a refinement of Theorems~\ref{ratv-tv1} and \ref{ratv-tvpl}.
The following definition is motivated by the refinement and Lemma \ref{cont-homeq}.
An ordered partition $(R_1,R_2,R_3)$ of $M=R_1\sqcup R_2\sqcup R_3\subset[7]$ into three sets (possibly empty) is called {\bf spherical} if
no set $R_1,R_2,R_3$
contains any of the subsets $\{1,2\},\{3,4\},\{5,6\}$.
Generally, an ordered partition $(R_1,\ldots,R_r)$ of $M=R_1\sqcup\ldots\sqcup R_r\subset[3r-2]$ into $r$ sets (possibly empty) is called {\bf spherical} if for every $j=1,\ldots,\lfloor3r-2\rfloor/2$ if $2j-1\in R_s$, then $2j\in R_{s-1}\cup R_{s+1}$, where the $r$ sets are numbered modulo $r$.
Or, less formally, if consecutive odd and even integers are contained in consecutive sets.
Spherical partitions appeared implicitly
in \cite{VZ93} and in \cite[\S6.5, pp. 166-167]{Ma03}.
Cf. \jonly{\cite[Remark 2.5.2]{Sk18}.} Remark \ref{r:rasp}.


\begin{example}\label{ratv-count}
(a) There are $6^3=216$
spherical partitions of $[6]$ into three sets.
Indeed, each of the pairs $\{1, 2\}, \{3, 4\}, \{5, 6\}$ can be distributed in 6 ways.

(b) A spherical partition $(\{1\},\{2,4,5\},\{3\})$ of $[5]$ extends to two spherical partitions $(\{1,6\},\{2,4,5\},\{3\})$ and $(\{1\},\{2,4,5\},\{3,6\})$ of $[6]$.
The extension $(\{1\},\{2,4,5,6\},\{3\})$ is not spherical because $\{2,4,5,6\}\supset\{5,6\}$.
\end{example}

\begin{theorem}\label{ratv-tv7}
(a) For any prime $r$ any $3r-2$ points $1,\ldots,3r-2$ in the plane can be spherically partitioned into $r$ sets 
whose convex hulls have a common point.



(b) For any prime $r$ and general position PL map $f:K_{3r-2}\to\R^2$ either the images of $r-1$ triangles wind around the images of the remaining vertex, or $r-2$ triangles wind around the intersection of two edges.
Moreover, the triangles and the vertex, or the triangles and the edges, respectively, form a spherical partition of $[3r-2]=V(K_{3r-2})$.
\end{theorem}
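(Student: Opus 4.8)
The plan is to follow the pattern of Lemmas \ref{11-vankam}, \ref{ratv-vk2} and their integral/$\Z_r$-analogues, proving the two parts together via a "quantitative" invariant that counts spherical partitions with appropriate multiplicities modulo $r$. First I would set up, for a prime $r$, a $\Z_r$-valued refinement of the van Kampen--Radon number: given a general position PL map $f\colon K_{3r-2}\to\R^2$, and given a spherical ordered partition $\pi=(R_1,\dots,R_r)$ of $[3r-2]$ into the "shapes" allowed by Theorem \ref{ratv-tvpl} (namely $r-1$ triangles plus one singleton, or $r-2$ triangles plus two edges), I would assign to $\pi$ the product over the $r$ pieces of the relevant winding/intersection numbers (winding number of a triangle's image around the candidate common point, sign of an edge--edge intersection), reduced mod $r$. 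Summing these over all spherical $\pi$ of the given shape gives $\Phi(f)\in\Z_r$. Part (b) is the assertion $\Phi(f)\ne 0$ for all $f$ (which forces some summand, hence some spherical partition, to be nonzero); part (a) is the linear special case, reduced to part (b) exactly as Proposition \ref{0-ra2}.b feeds Lemma \ref{11-vankam}, or extracted directly from Theorem \ref{ratv-tv1} together with the combinatorial count in Example \ref{ratv-count}.

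The core is then a two-step argument. Step one: $\Phi(f)$ is independent of $f$ — this is the analogue of the "residue $v(f)$ is independent of $f$" lemma. As in the proof of Lemma \ref{ratv-vk2}, it suffices to check invariance under a single linear finger move of one edge $\sigma$ across a fixed point, keeping all other edges fixed. The change in $\Phi$ should telescope: the relevant Parity/Triviality-type identities (Proposition \ref{ratv-chess}.b for the Radon-type terms, the Stokes-type Proposition \ref{ratv-totv2}.c for winding numbers, the Triviality Lemma \ref{l:triv} for signed edge intersections) force the net change over all spherical partitions to vanish mod $r$, because the sphericity condition ensures the "boundary" contributions pair up — this is precisely the point where the combinatorial structure of spherical partitions (consecutive odd/even integers in consecutive blocks) does its work, playing the role that "the non-adjacent edges of $K_5$ form a cycle" plays in Lemma \ref{11-vankam}. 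Step two: evaluate $\Phi$ on one convenient configuration — the regular $(3r-2)$-gon, or the symmetric configuration of fig.~\ref{tverbpoints} — and show $\Phi\ne 0$ there, using Example \ref{ex:tverbpoints}(c) as the $r=3$ model and the symmetry of the $\Z_r$-action (rotation by $2\pi/(3r-2)$) to pin the value down; this is where primality of $r$ enters, since it guarantees that a symmetric nonzero count does not cancel mod $r$.

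The main obstacle I expect is the bookkeeping in step one: getting the signs and multiplicities right so that the finger-move variation of the product-over-blocks quantity, summed over all spherical partitions, genuinely telescopes to $0\bmod r$ rather than to some nonzero correction. The subtlety is that moving $\sigma$ affects several blocks of several partitions simultaneously, and one must organize the sum so that each "elementary" crossing event (the image of $\sigma$ sweeping past a vertex image, or past an edge image, or a triangle's winding number jumping) contributes to a matched pair of spherical partitions differing by moving one integer between two consecutive blocks — the sphericity condition is exactly what makes such a move stay within the class of spherical partitions, so that the two terms cancel. Verifying that this pairing is a bijection with opposite signs, and that the edge cases (pieces of size $1$ or $2$, partitions where $\sigma$ belongs to a triangle versus an edge) behave, is the delicate part; the rest is a routine transcription of the plane-curve lemmas already established in \S\ref{0thint}, \S\ref{0-ratvtopl} and \S\ref{0vkamz}.
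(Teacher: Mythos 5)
Your plan is not a proof: it is essentially the ``idea of an elementary proof'' that the paper itself sketches in \S\ref{s:triple} and deliberately leaves open as Problem \ref{p:boundary}, even in the smallest case $r=3$. The entire content of the theorem is hidden in your Step one and Step two, which you describe but do not carry out. Step one requires constructing a sign function $\sgn$ on spherical partitions for which the finger-move variation of the sum of products of winding/intersection numbers telescopes to $0$ modulo $r$; this is exactly the assertion of Problem \ref{p:boundary}, and the consistency of such signs is tied to the nontrivial fact that the union of cells indexed by spherical partitions is an orientable pseudomanifold (a sphere, Assertion \ref{cont-hom} and Lemma \ref{cont-homeq}), not a routine transcription of the Parity and Triviality Lemmas. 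The paper explicitly warns that this computation ``might be not so easy'' (cf. the second half of \S 8 of \cite{MTW12}), and nothing in your write-up resolves it; likewise Step two (nonvanishing on a symmetric configuration, where primality must enter) is only announced. In addition, your fallback for part (a) --- extracting it ``directly from Theorem \ref{ratv-tv1} together with the combinatorial count in Example \ref{ratv-count}'' --- does not work: Tverberg's theorem in the plane produces some partition, with no control ensuring it is spherical, and the count of spherical partitions of $[6]$ gives no such control either. The only sound reduction is (a) from (b), which is what the paper states.

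The paper's actual route to Theorem \ref{ratv-tv7}.b is different: it is the standard argument of \cite{VZ93} and \cite[\S6.5]{Ma03} presented in \S\ref{s:rainbow}, where one passes to the join $|\mathrm{S}_r|\cong S^{3r-4}$ of cells indexed by spherical partitions, composes $f*\ldots*f$ with the projection $\pi^*$ to $S^{3r-4}_{\Sigma_r}$, uses the equivariant homeomorphism of Lemma \ref{cont-homeq}, and derives a contradiction with the $r$-fold Borsuk--Ulam Theorem \ref{t:3bu}; the restriction of the conclusion to spherical partitions comes for free because the whole construction lives on $|\mathrm{S}_r|$. If you want an elementary proof along the lines you propose, you must actually solve Problem \ref{p:boundary} (define $\sgn$, prove the invariance under a single move, and compute the invariant for one explicit configuration); as it stands, your proposal restates the problem rather than proving the theorem.
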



Part (a) follows from (b).
Part (b) is essentially proved in \cite{VZ93}, \cite[\S6.5]{Ma03}.
This formulation is from \cite[Theorem 3.3.1]{Sc04}, \cite[Theorem 5.8]{SZ}.
(The latter two papers instead of giving a direct proof following \cite[\S6.5]{Ma03}, \cite{VZ93}, see \S\ref{s:rainbow}, deduce Theorem \ref{ratv-tv7}.b in \cite[Proposition 1.6]{SZ} from the 3-dimensional analogue of Theorem \ref{ratv-tv7}.b, which they prove using the same method of \cite[\S6.5]{Ma03}.)


\subsubsection{Ideas of proofs}\label{s:cont-int}

In the following subsubsections we present a standard proof, and an idea of an elementary proof, of the topological Tverberg Theorem for the plane~\ref{ratv-tvpl} (in fact of Theorem \ref{ratv-tv7}).

The idea of an elementary proof presented in \S\ref{s:triple} generalizes the proofs of Lemmas \ref{11-vankam}, \ref{star}, \ref{ratv-vk2} and \cite[Lemmas 6 and 7]{ST17}.
Instead of counting {\it double} intersection points we count {\it $r$-tuple} intersection points.
Instead of counting points {\it modulo 2} we have to count points {\it with signs}, see Example~\ref{ex:tverbpoints}.c.
It is also more convenient (because of Lemma \ref{cont-homeq})
instead of summing over {\it all} partitions to sum over {\it spherical} partitions.
This is formalized by Problem \ref{p:boundary} below which is a `quantitative version' of Theorems~\ref{ratv-tvpl} and~\ref{ratv-tv7}.
Formally, Theorems~\ref{ratv-tvpl} and~\ref{ratv-tv7} for $r=3$ follow by resolution of Problem \ref{p:boundary}.
Proofs of Theorems~\ref{ratv-tvpl},~\ref{ratv-tv7} and of Conjecture \ref{c:tt} for (arbitrary $d$ and) prime $r$ would perhaps be analogous.

Similar proofs of Theorems~\ref{ratv-tvpl}, \ref{ratv-tv7}, and of Conjecture \ref{c:tt} for prime $r$,
are given in \cite{BMZ15, MTW12}, see \jonly{\cite[Remark 2.5.2]{Sk18}} Remark \ref{r:rasp}.
Those papers use more complicated language not necessary for these results (Sarkaria-Onn transform in \cite{MTW12},
homology and equivariant maps between configuration spaces in \cite{BMZ15}).


The standard proof of Theorems~\ref{ratv-tvpl} is presented in \S\ref{s:rainbow} following \cite{VZ93}, \cite[\S6]{Ma03}, cf. \cite{BSS}, \cite[\S2]{Sk16}.
This proof also yields Theorem~\ref{ratv-tv7} and generalizes to Conjecture \ref{c:tt} for prime $r$.
Theorem~\ref{ratv-tvpl} is deduced from the $r$-fold Borsuk-Ulam Theorem \ref{t:3bu} using Lemma \ref{cont-homeq}, both below.
Proof of the Borsuk-Ulam Theorem \ref{ratvto-bu} via its `quantitative version' Lemma \ref{l:buqu}
generalizes to a proof of the $r$-fold Borsuk-Ulam Theorem \ref{t:3bu}.
So this deduction of Theorem~\ref{ratv-tvpl} is not a proof essentially different from the idea of \S\ref{s:triple} but rather the same proof in a different language.
(Therefore it is not quite correct that the main idea of the proof of the topological Tverberg Theorem is to apply the $r$-fold Borsuk-Ulam Theorem for configuration spaces.)

The case $r=3$ gives a non-trivial generalization of the case $r=2$; the generalization to arbitrary $r$ (prime for some results below) is trivial.

\subsubsection{Triple self-intersection invariant for graph drawings}\label{s:triple}


Suppose that every object of $P_1,\ldots,P_r$ is either a point, or an oriented non-closed polygonal line,
or an oriented closed polygonal line, in the plane, all of whose vertices are in general position.
Define {\bf the $r$-tuple algebraic intersection number} $P_1\cdot\ldots\cdot P_r$ to be

(A) $\sum\limits_{X\in P_i\cap P_j} \sgn X \left(\prod\limits_{s\ne i,j} (P_s\cdot X)\right)$, if $P_i,P_j$ are non-closed polygonal lines for some $i<j$,
and the other $P_s$ are closed polygonal lines;

(B) $\prod\limits_{s\ne i} (P_s\cdot P_i)$, if $P_i$ is a point and the other $P_s$ are closed polygonal lines.

Here $\sgn X$ and $\cdot$ are defined in \S\S \ref{0thint}, \ref{0vkamz} and \ref{s:tvtopl};
the number $P_1\cdot\ldots\cdot P_r$ is only defined in cases (A) and (B).\footnote{This is an elementary interpretation in the spirit of \cite{Sc04, SZ} of the $r$-tuple algebraic intersection number $fD^{n_1}\cdot\ldots\cdot fD^{n_r}$ of a general position map $f:D^{n_1}\sqcup\ldots\sqcup D^{n_r}\to\R^2$, where $n_1,\ldots,n_r\subset\{0,1,2\}$ and $n_1+\ldots+n_r=2r-2$ \cite[\S2.2]{MW15}.
This agrees with \cite[\S2.2]{MW15} by \cite[Lemma 27.b]{MW15}.
For a degree interpretation see Assertion \ref{a:3redmap}.\jonly{\cite[Assertion 2.5.4]{Sk18}.}}


\begin{example}\label{e:gsgn}
Assume that $R_1,R_2,R_3$ are either

(A) two vectors and an oriented triangle, or \quad (B) two oriented triangles and a point,

in the plane.
Assume that the vertices of $R_1,R_2,R_3$ are pairwise disjoint subsets of the plane and their union is in general position.
Then $|R_1\cdot R_2 \cdot R_3|=\left<R_1\right>\cap \left<R_2\right>\cap\left<R_3\right>$.

When the intersection is non-empty, $R_1\cdot R_2\cdot R_3=+1$ if and only if up to a  permutation of $(R_1,R_2,R_3)$ not switching the order of vectors, the triangle $R_1$ has the same orientation as the triangle

(A) $A_2B_2B_3$, where $R_i=\overrightarrow{A_iB_i}$ for each $i=2,3$, \quad (B) $R_2$.
\end{example}

\begin{problem}\label{p:boundary}
Let $\mathrm{S}$ be the set of all spherical partitions $(T_1,T_2,T_3)$ of $[7]$ such that $7\in T_3$.
Define a map $\sgn:\mathrm{S}\to\{+1,-1\}$ so that for any general position PL map $f:K_7\to\R^2$
the following (`triple van Kampen') number is not divisible by 3:
$$
V(f):=\sum\limits_{T=(T_1,T_2,T_3)\in \mathrm{S}} \sgn T(fT_1\cdot fT_2\cdot fT_3).
$$
Here $fT_s$ is the $f$-image of either a vertex $T_s$, or an oriented edge $T_s=ab$, $a<b$, or an oriented cycle $T_s=abc$, $a<b<c$.
\end{problem}

Analogously to Lemmas~\ref{11-vankam}, \ref{star}, \ref{ratv-vk2}  and to \cite[Lemmas 6 and 7]{ST17},
the non-divisibility in Problem \ref{p:boundary} could possibly be proved
by calculating $V(f)$ for a specific $f$ and showing that $V(f)$ modulo 3 is independent of $f$.
This might be not so easy, cf. \cite[second half of \S8]{MTW12}.

\subsubsection{An approach via Borsuk-Ulam theorem}\label{s:rainbow}

A map $f:S^n\to\R^m$ is called {\it odd}, or {\it equivariant}, or {\it antipodal} if $f(-x)=-f(x)$ for any $x\in S^n$.
We consider only continuous maps and omit `continuous'.

\begin{theorem}[Borsuk-Ulam]\label{ratvto-bu}
(a) For any map $f:S^d\to\R^d$ there exists $x\in S^d$ such that $f(x)=f(-x)$.

\textup{(a')}~For any equivariant maps $f\colon S^d\to\R^d$ there exists $x\in S^d$ such that $f(x)=0$.

\textup{(b)}~There are no equivariant maps $S^d\to S^{d-1}$.

\textup{(b')}~No equivariant map $S^{d-1}\to S^{d-1}$ extends to $D^d$.

\textup{(c)}~If $S^d$ is the union of $d+1$ closed sets (or $d+1$ open sets), then one of the sets contains opposite points.
\end{theorem}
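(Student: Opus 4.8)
The plan is to reduce all five assertions to one another and then prove a single `quantitative' form by an elementary counting argument in the spirit of \S\ref{0thint}.

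\emph{Step 1: the five assertions are equivalent, and \textup{(a)} implies \textup{(c)}.} Setting $g(x):=f(x)-f(-x)$ (which is equivariant, with $g(x)=0\Leftrightarrow f(x)=f(-x)$) gives \textup{(a)}$\Leftrightarrow$\textup{(a')}, since for equivariant $f$ a point with $f(x)=f(-x)=-f(x)$ has $f(x)=0$. Normalizing a hypothetical nowhere-zero equivariant map $S^d\to\R^d$ to a map $S^d\to S^{d-1}$ (and noting that an equivariant map $S^d\to S^{d-1}$ is a nowhere-zero equivariant map to $\R^d$) gives \textup{(a')}$\Leftrightarrow$\textup{(b)}. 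Writing $S^d$ as two closed hemispheres interchanged by the antipode and meeting along an equatorial $S^{d-1}$, and gluing an equivariant extension $\bar h$ of an equivariant $h\colon S^{d-1}\to S^{d-1}$ over one hemisphere with the map $y\mapsto-\bar h(-y)$ over the other (these agree on the equator by equivariance of $h$), gives \textup{(b)}$\Leftrightarrow$\textup{(b')}. Finally, given closed sets $A_1,\dots,A_{d+1}$ covering $S^d$, apply \textup{(a)} to $x\mapsto(\dist(x,A_1),\dots,\dist(x,A_d))$: the resulting $x$ has $x,-x$ in a common $A_i$ --- for some $i\le d$ if the $i$-th coordinate vanishes there, otherwise in $A_{d+1}$ since then $x,-x\notin A_1\cup\dots\cup A_d$; the open case follows by first shrinking to a closed cover (compactness). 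So it suffices to prove \textup{(a')}.

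\emph{Step 2: a quantitative version of \textup{(a')}, by a $1$-manifold bordism count.} Since $\inf_{S^d}|f|>0$ when $f$ has no zero, a routine piecewise-linear approximation (as in the deduction of Theorem \ref{grapl-nonalm} from Lemma \ref{11-vankam}, using \cite[Approximation Lemma 1.4.6]{Sk20}) reduces \textup{(a')} to the following claim for a general position equivariant PL map $f\colon S^d\to\R^d$: the set $f^{-1}(0)$ is finite, antipodally symmetric, and consists of an \emph{odd} number of antipodal pairs. Let $f_0\colon S^d\to\R^d$, $f_0(x_0,x_1,\dots,x_d):=(x_1,\dots,x_d)$, be the standard equivariant map, for which $f_0^{-1}(0)=\{(\pm1,0,\dots,0)\}$ is a single antipodal pair. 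Equivariant maps $S^d\to\R^d$ form a convex set, so the straight-line homotopy from $f$ to $f_0$ is equivariant; perturb it, rel endpoints and equivariantly (the antipodal action on $S^d\times[0,1]$ is free, so this is ordinary general position), to a general position equivariant PL map $H\colon S^d\times[0,1]\to\R^d$. Then $\Gamma:=H^{-1}(0)$ is a compact PL $1$-manifold, antipodally invariant, with $\partial\Gamma=\big(f^{-1}(0)\times\{0\}\big)\sqcup\big(f_0^{-1}(0)\times\{1\}\big)$. No arc component of $\Gamma$ is antipodally invariant, because a free involution of a closed interval does not exist whereas the antipodal action is free; hence the arc components of $\Gamma$ fall into antipodal pairs and their number is even. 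Since the antipode acts without fixed points on $S^d\times\{0,1\}$, the number of antipodal pairs in $\partial\Gamma$ equals the number of arc components of $\Gamma$, hence is even; as $\partial\Gamma$ is the disjoint union of $f^{-1}(0)$ and the single antipodal pair $f_0^{-1}(0)$, the number of antipodal pairs in $f^{-1}(0)$ is odd.

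\emph{Main obstacle.} The only genuinely technical point is the equivariant general-position/approximation step: realizing $f$ and the homotopy $H$ so that $H^{-1}(0)$ is an honest PL $1$-manifold with exactly the stated boundary, and passing from the PL statement back to the continuous one. This is handled exactly as for the maps $K_n\to\R^2$ in \S\ref{0grapl} and \S\ref{0vkam2}, the free action meaning that equivariant transversality is just transversality downstairs. The parity bookkeeping itself --- a compact $1$-manifold has an even number of boundary points, cf.\ the Parity Lemma \ref{0-even} --- is elementary, and the closed-versus-open wrinkle in \textup{(c)} is routine.
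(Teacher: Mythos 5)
Your proposal is correct and follows essentially the same route as the paper: it reduces everything to (a') via the simple equivalences and then proves the `quantitative version' (your ``odd number of antipodal pairs'' is exactly the statement $|f^{-1}(0)|\equiv2\pmod4$ of Lemma \ref{l:buqu}) by computing the count for a specific map and showing its invariance under equivariant homotopy. The only difference is one of detail: the paper delegates the technical realization of that invariance to \cite[\S2.2]{Ma03}, while you spell out the equivariant general-position homotopy and the $1$-manifold boundary-parity bookkeeping explicitly.
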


For $n=2$ part (a) means that at any moment there are two antipodal points on the Earth at
which the temperature and the pressure coincide.


The equivalence of these assertions is simple.
Part (a') is deduced from its following `quantitative version'.

\begin{lemma}\label{l:buqu} If  $0\in\R^d$ is a regular point of a (PL or smooth) equivariant map $f:S^d\to\R^d$, then $|f^{-1}(0)|\equiv2\mod4$.
\end{lemma}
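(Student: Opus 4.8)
The plan is to prove that the residue $|f^{-1}(0)|\bmod 4$ is the same for all equivariant maps $f\colon S^d\to\R^d$ having $0$ as a regular value, and then to evaluate it on one explicit map.

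First I would record the elementary facts. Since $0$ is a regular value and $\dim S^d=d$, the set $f^{-1}(0)$ is a compact $0$-manifold, hence finite. Equivariance gives that $f^{-1}(0)$ is invariant under the antipodal involution $x\mapsto -x$ of $S^d$, and this involution is fixed-point-free (the origin of $\R^{d+1}$ does not lie on $S^d$). Hence $f^{-1}(0)$ is a disjoint union of antipodal pairs, so $|f^{-1}(0)|$ is even; write $|f^{-1}(0)|=2k(f)$, and the claim is that $k(f)$ is odd.

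The core step is an equivariant cobordism argument. Given equivariant $f_0,f_1\colon S^d\to\R^d$ with $0$ a regular value of each, the set of equivariant maps $S^d\to\R^d$ is convex (the condition $f(-x)=-f(x)$ is linear in $f$), so $F(x,t):=(1-t)f_0(x)+tf_1(x)$ is an equivariant homotopy between them; I would then perturb $F$ equivariantly, keeping it fixed on $S^d\times\{0,1\}$, so that $0$ becomes a regular value of $F$ as well (perform the perturbation on a fundamental domain of the free antipodal action on $S^d\times[0,1]$ and extend antipodally). Now $F^{-1}(0)$ is a compact $1$-manifold properly embedded in $S^d\times[0,1]$ with
$$\partial\bigl(F^{-1}(0)\bigr)=\bigl(f_0^{-1}(0)\times\{0\}\bigr)\sqcup\bigl(f_1^{-1}(0)\times\{1\}\bigr),$$
and it carries the free involution $(x,t)\mapsto(-x,t)$. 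No arc component can be invariant, because a free involution of a closed interval is impossible (every self-involution of $[0,1]$ has a fixed point); hence the arc components come in antipodal pairs, their number is even, and the total number of endpoints $|f_0^{-1}(0)|+|f_1^{-1}(0)|$ is divisible by $4$. Since each summand is even, this forces $k(f_0)\equiv k(f_1)\pmod 2$. It then remains to exhibit one admissible map with $k$ odd: take $p\colon S^d\to\R^d$, the restriction of the coordinate projection $\R^{d+1}\to\R^d$ that forgets the last coordinate; it is linear, hence equivariant, and $p^{-1}(0)$ consists of the two poles $(0,\dots,0,\pm1)$, near each of which $p$ is a local homeomorphism, so $0$ is a regular value and $k(p)=1$. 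By the core step $k(f)$ is odd for every admissible $f$, i.e.\ $|f^{-1}(0)|\equiv 2\pmod 4$.

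The main obstacle is the equivariant transversality in the core step: one must arrange $F$ to be simultaneously equivariant, equal to the prescribed maps at the two ends, and transverse to $0$, and one must be careful that no arc component of $F^{-1}(0)$ is carried to itself by the antipodal involution. Note also that Borsuk--Ulam (Theorem \ref{ratvto-bu}) must not be invoked here, since it is to be deduced from this lemma; the argument above uses only elementary (PL or smooth) transversality, and in the PL case ``regular value'' is read as ``$f^{-1}(0)$ finite and $f$ a local homeomorphism near each of its points'', with the cobordism argument unchanged.
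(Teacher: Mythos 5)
Your proof is correct and takes essentially the same route as the paper, which only sketches the argument (prove that $|f^{-1}(0)|\bmod 4$ is independent of the equivariant map $f$ and then compute it for one specific $f$) and refers to \cite[\S2.2]{Ma03} for the technical realization. Your equivariant-cobordism count of arc endpoints in $F^{-1}(0)$, combined with the evaluation on the coordinate projection, is precisely the standard way that sketch is carried out, with the equivariant general-position step being the technical point the paper also defers.
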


See the definition of a regular point e.g. in \cite[\S8.3]{Sk20}.
Proof of Lemma \ref{l:buqu} is analogous to Lemmas~\ref{11-vankam} and \ref{star} (cf. Problem \ref{p:boundary}):
calculate $|f^{-1}(0)|$  for a specific $f$ and prove that $|f^{-1}(0)|$ modulo 4 is independent of $f$.
Realization of this simple idea is technical, see \cite[\S2.2]{Ma03}.
For other proofs of Theorem \ref{ratvto-bu} see \cite{Ma03}
and the references therein.

Assume that subsets $U,V\subset\R^d$
lie in skew affine subspaces.
For $U,V\ne\emptyset$ define the (geometric) {\bf join}
$$U*V:=\{tx+(1-t)y\in\R^d\ :\ x\in U,\ y\in V,\ t\in[0,1]\}.$$
Define $U*\emptyset:=U$ and $\emptyset*V:=V$. 

The {\it topological join} $U*V$ is the topological space obtained from $U\times V\times[0,1]$ by identifications $(x,y,0)\sim(x',y,0)$ and $(x,y,1)\sim(x,y',1)$ for each $x,x'\in U$, $y,y'\in V$. 
(If you do not know the quotient construction for topological spaces, then regard this as an informal interpretation.)


\begin{pr}\label{p:ext}
If $U$ and $V$ are unions of faces of some simplex $\Delta_n$
and are disjoint, then $U*V$ is the union of all faces of $\Delta_n$ that correspond to subsets
$\sigma\sqcup\tau$, where $\sigma,\tau\subset[n+1]$ correspond to faces of $U,V$, respectively.
\end{pr}

Complexes $K$ and $L$ are called {\it isomorphic} if there is a bijection~$f\colon V(K)\to V(L)$ such that \textit{a subset $A\subset V(K)$ is a face if and only if the subset $f(A)\subset V(L)$ is a face}. 
Notation: $K\sim L$.


For complexes $K$ and $L$ take a complex $L'$ isomorphic to $L$ such that $V(K)\cap V(L')=\emptyset$.
Then the (simplicial) {\bf join} $K*L$ is the complex with the set $V(K)\sqcup V(L')$ of vertices and the set
$\{\sigma\sqcup\tau\ :\ \sigma\in F(K),\ \tau\in F(L')\}$ of faces.
Clearly,
$$\con K\sim K*[1],\quad K=K*\emptyset\subset K*L,\quad K*L\sim L*K,\quad\text{and}\quad
(K_1*K_2)*K_3\sim K_1*(K_2*K_3).$$
\algor{For subsets $U_1,\ldots,U_r\subset\R^d$ lying in skew affine subspaces the (geometric) $r$-tuple join is
$$U_1*\ldots *U_r:=\{\ t_1x_1+\ldots+t_rx_r\in\R^d\ :\ x_j\in U_j,\ t_j\in[0,1],\ t_1+\ldots+t_r=1\ \}.$$
Analogously one interprets the (simplicial) $r$-tuple join of $r$ complexes.
The $k$-tuple join of $k$ copies of a complex $K$ is denoted by $K^{*k}$.


\begin{pr}\label{joinass}
(a) $[1]^{*k}\sim D^{k-1}$; \quad (b) $D^k*D^l\sim D^{k+l+1}$.
\end{pr}

}

For more discussions of the geometric, topological, and combinatorial join see \cite[\S4.2]{Ma03}.

Assertion \ref{p:ext} implies that to every ordered partition of $[3r-3]$ into $r$ sets there corresponds a $(3r-4)$-simplex of $\Delta_{3r-4}^{*r}:=\Delta_{3r-4}*\ldots*\Delta_{3r-4}$ ($r$ `factors').
Denote by $|\mathrm{S}_r|$ the union of $(3r-4)$-simplices of $\Delta_{3r-4}^{*r}$
corresponding to spherical partitions of $[3r-3]$ into $r$ sets.


\begin{pr}\label{cont-hom}
The union $|\mathrm{S}_r|$ is PL homeomorphic to $S^{3r-4}$.
\end{pr}

This assertion and the following lemma are easily deduced from $(S^1)^{*r}\cong S^{2r-1}$ \cite[\S4.2]{Ma03}, \cite[\S5.5 `PL homeomorphism of complexes']{Sk}, see details in \cite[pp. 166-167]{Ma03}.

Denote by $\Sigma_r$ the permutation group of $r$ elements.
The group $\Sigma_r$ acts on the set of real $3\times r$-matrices by permuting the columns.
Denote by $S^{3r-4}_{\Sigma_r}$ the set formed by all those of such matrices, for which the sum in every row is zero, and the sum of squares of the matrix elements is 1.
This set is homeomorphic to the sphere of dimension $3r-4$.
Take a triangulation of this set given by some such homeomorphism.
This set is invariant under the action of $\Sigma_r$.
The cyclic permutation $\omega:S^{3r-4}_{\Sigma_r}\to S^{3r-4}_{\Sigma_r}$ of the $r$ columns has no fixed points and $\omega^r=\id S^{3r-4}_{\Sigma_r}$.

\begin{lemma}\label{cont-homeq} There is a PL homeomorphism $h:|\mathrm{S}_r|\to S^{3r-4}_{\Sigma_r}$ such that $h(R_2,\ldots,R_r,R_1)$ is obtained from $h(R_1,R_2,\ldots,R_r)$ by cyclic permutation of the $r$ columns.
\end{lemma}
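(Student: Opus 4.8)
The plan is to construct the homeomorphism $h$ explicitly by writing down, for each spherical partition $(R_1,\ldots,R_r)$ of $[3r-3]$, the affine map on the corresponding simplex of $|\mathrm S_r|$ and checking that these maps glue together and intertwine the two cyclic actions. Recall from Assertion \ref{p:ext}.a that a point of the simplex of $\Delta_{3r-4}^{*r}$ indexed by $(R_1,\ldots,R_r)$ is a tuple $(t_1x_1,\ldots,t_rx_r)$ with $x_j$ a convex combination supported on $R_j\subset[3r-3]$ and $t_1+\ldots+t_r=1$; equivalently it is an $r$-tuple of (non-negative, mass-$t_j$) measures on $[3r-3]$ with disjoint supports summing to a probability measure. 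First I would send such a point to the $3\times r$ matrix whose $j$-th column records the three partial sums of the coordinates of $t_jx_j$ over the three pairs $\{1,2\},\{3,4\},\{5,6\}$ — more precisely, since the indexing set is $[3r-3]$ rather than $[6]$, I would reduce the coordinates modulo the natural $\Z_3$-like bookkeeping used in \cite[pp.\ 166--167]{Ma03}: group $[3r-3]$ into the $(r-1)$ consecutive triples and, within each triple, read off which of the three "slots'' the odd/even integers occupy. This produces a column vector in $\R^3$ for each $j$; subtracting the row averages makes the row sums zero, and normalizing the sum of squares to $1$ lands in $S^{3r-4}_{\Sigma_r}$. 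Composing with the fixed triangulating homeomorphism of that sphere gives $h$ on each top simplex.

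Next I would verify the three things that make this a well-defined PL homeomorphism. (i) \emph{Compatibility on overlaps}: two spherical partitions share a face of $|\mathrm S_r|$ exactly when one is obtained from the other by moving some vertices to empty coordinate slots, and the formula above only depends on the partial sums, so the two affine descriptions agree on the common face. (ii) \emph{Bijectivity}: by Assertion \ref{cont-hom} the source $|\mathrm S_r|$ is PL homeomorphic to $S^{3r-4}$ and the target $S^{3r-4}_{\Sigma_r}$ is a PL $(3r-4)$-sphere by construction; since $h$ is a simplicial (hence PL) map between spheres of the same dimension which is injective on each closed top simplex and whose image covers the target, it is a homeomorphism — the covering claim is where one uses that the \emph{spherical} partitions (and no others) are exactly those whose associated matrices avoid the degeneracy "two integers of one pair in the same slot'', matching the defining constraint of $S^{3r-4}_{\Sigma_r}$. (iii) \emph{Equivariance}: the $\Z_r$-action on $|\mathrm S_r|$ is $(R_1,\ldots,R_r)\mapsto(R_2,\ldots,R_r,R_1)$, which by inspection cyclically shifts the columns of the matrix, i.e.\ equals $\omega$ on $S^{3r-4}_{\Sigma_r}$; one just has to check the normalization (row-centering and scaling) commutes with a column permutation, which it does since those operations act within rows.

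The main obstacle I expect is bookkeeping step (ii), the covering/surjectivity claim: one must set up the correspondence between spherical partitions of $[3r-3]$ and faces of $S^{3r-4}_{\Sigma_r}$ carefully enough that "spherical'' translates precisely into the combinatorial condition cutting out the triangulated sphere, and verify that the resulting simplicial map is nondegenerate on every top face (no collapsing). Everything else — the affine formulas, the gluing, the equivariance — is routine once the indexing convention for $[3r-3]$ is pinned down, and in fact for the application we only need $r=3$, where $[3r-3]=[6]$ and the construction of the previous paragraphs is literally the partial-sum map $(R_1,R_2,R_3)\mapsto$ the $3\times 3$ matrix of masses, so the general case is a notational thickening of a transparent small case. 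I would therefore present the $r=3$ construction in full and indicate that the general case follows the same pattern, citing \cite[pp.\ 166--167]{Ma03} for the routine verification, exactly as the surrounding text already does.
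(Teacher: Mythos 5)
Your construction breaks down at your step (ii), and more severely than the ``bookkeeping'' you anticipate. The matrix of partial sums records, for each pair $\{2j-1,2j\}$ and each part, only the \emph{total mass} that the pair places in that part; it forgets which of the two twins carries the mass. Already on vertices this is not injective: for $r=3$ the vertices ``$1\in R_1$'' and ``$2\in R_1$'' of $|\mathrm{S}_3|$ (both occur in spherical partitions of $[6]$, e.g. $(\{1,3,5\},\{2,4,6\},\emptyset)$ and $(\{2,4,6\},\{1,3,5\},\emptyset)$) are sent to the same elementary matrix, and in general the $2^{3(r-1)/2}$ distinct top simplices obtained from a fixed spherical partition by swapping $2j-1\leftrightarrow 2j$ between their two cyclically adjacent parts are all spherical and have identical image; so your map is a $2^{3(r-1)/2}$-to-one folding, not a bijection, even though it is affine and injective on each closed top simplex and equivariant. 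Moreover the principle you invoke to conclude (ii) --- a simplicial map between PL spheres of equal dimension that is injective on every closed top simplex and surjective is a homeomorphism --- is false: the simplicial model of $z\mapsto z^2$ from a hexagon onto a triangle (or any simplicial branched double cover $S^2\to S^2$) satisfies both hypotheses. So even after repairing the collapsing you would still owe a genuine global injectivity (or local-homeomorphism-plus-degree-$\pm1$) argument. Finally, for $r>3$ your formula is not even defined: there are $3(r-1)/2$ pairs, not three, so there is no canonical $3\times r$ matrix of partial sums, and the ``consecutive triples'' you propose are unrelated to the pairs $\{2j-1,2j\}$ that enter the definition of sphericity.

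The argument the paper points to (Assertion \ref{p:ext}.c and \cite[pp.~166--167]{Ma03}) is structurally different and coordinate-free. Sphericity is imposed pair by pair, so $|\mathrm{S}_r|$ is the \emph{join}, over the $3(r-1)/2$ pairs $\{2j-1,2j\}$, of the complexes of admissible placements of a single pair; each such complex is a $2r$-gon, whence $|\mathrm{S}_r|$ is a PL $(3r-4)$-sphere (Assertion \ref{cont-hom}), and the shift $(R_1,\ldots,R_r)\mapsto(R_2,\ldots,R_r,R_1)$ preserves each join factor and rotates it. On the other side, $S^{3r-4}_{\Sigma_r}$ is the unit sphere of the sum of the three rows' zero-sum representations of $\Z_r$, hence a join of circles on which the column shift acts by rotations. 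The homeomorphism $h$ is assembled as a join of equivariant circle homeomorphisms, factor by factor; matching the rotations of the $2r$-gons with those of the representation circles is the one delicate point of this route, and for $r=3$ --- the only case the paper carries out in detail --- it is immediate, since each hexagon is rotated by two vertices and each circle of $S^5_{\Sigma_3}$ by $2\pi/3$. Your partial-sum matrix discards exactly this pair/join structure, which is what the equivariant identification is built from.
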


\begin{theorem}[$r$-fold Borsuk-Ulam Theorem]\label{t:3bu} Let $r$ be a prime and $\omega:S^k\to S^k$ a PL map without fixed points such that $\omega^r=\id S^k$.
Then no map $g:S^k\to S^k$ commuting with $\omega$ (i.e.  such that $g\circ\omega=\omega\circ g$) extends to $D^{k+1}$.
\end{theorem}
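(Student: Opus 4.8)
The plan is to imitate the classical deduction (the case $r=2$ being Theorem \ref{ratvto-bu}.b'), but replacing the antipodal $\Z_2$-action by the free $\Z_r$-action generated by $\omega$, and using that $r$ is prime. First I would set up the contradiction: suppose $g\colon S^k\to S^k$ commutes with $\omega$ and extends to a map $G\colon D^{k+1}\to S^k$. The goal is to derive an impossibility purely from equivariant-topological bookkeeping, without invoking heavy machinery — so I would reduce everything to counting preimages modulo $r$, in the same spirit as Lemma \ref{l:buqu} (which handles $r=2$ via a count modulo $4$).

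The key steps, in order, are as follows. First, perturb $G$ to a PL general position map that is still $\omega$-equivariant on $\partial D^{k+1}=S^k$; this is possible because one can perturb on a fundamental domain of the free $\Z_r$-action on $S^k$ and then propagate by $\omega$. Second, choose a generic point $p\in S^k$ whose $\Z_r$-orbit $\{p,\omega p,\ldots,\omega^{r-1}p\}$ consists of $r$ regular values of both $g$ and $G$. Third — the heart of the argument — compute $|g^{-1}(p)|$ in two ways. On one hand, since $g$ extends to $G$ and $p$ is a regular value, $g^{-1}(p)=\partial\bigl(G^{-1}(p)\bigr)$ is the boundary of a compact $1$-manifold (a disjoint union of arcs and circles in $D^{k+1}$), hence $|g^{-1}(p)|$ is even; more importantly, by summing over the whole orbit and using equivariance of $g$ on $S^k$, the total $\sum_{i=0}^{r-1}|g^{-1}(\omega^i p)|$ is $r$ times a single value $|g^{-1}(p)|$, while simultaneously $G^{-1}$ of the orbit is a $1$-manifold on which $\omega$ acts — but $\omega$ need not act freely on $D^{k+1}$, so one must instead argue on $S^k$ directly. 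On the other hand, a direct model computation: compare $g$ with the identity map, using that $g$ and $\mathrm{id}_{S^k}$ are equivariantly homotopic as maps $S^k\to S^k$? That fails in general. The correct classical route is: the equivariant degree of $g$ is computed mod $r$ by the preimage count, and the existence of the extension $G$ forces this count to be $\equiv 0\pmod r$; but a separate equivariant computation (the analogue of "$\mathrm{deg}$ of the antipodal map is $(-1)^{k+1}$") shows any $\omega$-equivariant self-map of $S^k$ has preimage count $\not\equiv 0\pmod r$ when $r$ is prime. Contradiction.

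Concretely, I would realize the lower bound via a standard equivariant obstruction: take the linear model $S^k=S^{k}_{\Sigma_r}$-type sphere on which $\omega$ acts by the free representation, let $g_0$ be a specific equivariant map (e.g.\ one built from a generator of $H^k$ of the quotient lens space / generalized lens complex), verify $|g_0^{-1}(p)|\equiv 1\pmod r$ by inspection of that explicit model, and then show $|g^{-1}(p)|\bmod r$ is a homotopy invariant of equivariant maps by a cobordism/generic-homotopy argument (a $\Z_r$-equivariant PL homotopy $g_t$ gives a $1$-manifold whose boundary count mod $r$ is constant, since interior crossings come in full $\Z_r$-orbits of size $r$ as the action is free on $S^k$). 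Hence every $\omega$-equivariant $g\colon S^k\to S^k$ has $|g^{-1}(p)|\equiv 1\not\equiv 0\pmod r$, contradicting that an extension $G$ to $D^{k+1}$ makes $g^{-1}(p)=\partial G^{-1}(p)$, whose count is $\equiv 0\pmod r$ (interior crossings of the $1$-manifold $G^{-1}(\text{orbit})$ in $D^{k+1}$, summed over the orbit, again come in $\Z_r$-orbits of size $r$, and the boundary lies only over $S^k$).

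The main obstacle — and where primality is essential — is the mod $r$ invariance of the preimage count under equivariant homotopy, and the nonvanishing $|g_0^{-1}(p)|\equiv 1\pmod r$ for the model map. For $r=2$ this is Lemma \ref{l:buqu} (with the sharper mod $4$ statement); for general prime $r$ one needs that the relevant equivariant cohomology group $H^k_{\Z_r}(S^k)$ (equivalently $H^k$ of the lens-type space $S^k/\Z_r$) has no $r$-torsion issues obstructing the count, which is exactly where $r$ prime is used — for composite $r$ the generalized lens spaces have extra torsion and the argument (as with the topological Tverberg conjecture itself, see Conjecture \ref{ratv-tvplc}) breaks down. So the plan is: (1) equivariant general position perturbation; (2) define the mod $r$ preimage count and prove its invariance under $\Z_r$-equivariant homotopy using freeness of the action on $S^k$; (3) compute it for an explicit model map and get $1$; (4) observe an extension forces $0$; conclude.
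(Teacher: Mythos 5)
Your target statement is the right classical one (every $\omega$-equivariant self-map of $S^k$ has degree $\not\equiv 0 \bmod r$, while a map extending to $D^{k+1}$ is null-homotopic and so has degree $0$), but the step you present as the heart of the argument has a genuine gap — in fact two. First, your mod-$r$ invariance mechanism does not work: for a point $p\in S^k$ the preimage $g^{-1}(p)$ is not $\Z_r$-invariant, since equivariance gives $\omega(g^{-1}(p))=g^{-1}(\omega p)$; so neither the preimages nor the ``interior crossings'' of the trace of a homotopy come in orbits of size $r$. What is invariant is the preimage of the whole orbit of $p$, and quotienting that $1$-manifold by the free action only yields that $|g_0^{-1}(p)|+|g_1^{-1}(p)|$ is even — mod $2$ information, never mod $r$. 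Likewise there is no $\Z_r$-action on $D^{k+1}$ making $G$ equivariant (you note this, but then use it anyway), so the claim that $G^{-1}(\mathrm{orbit})$ contributes in orbits of size $r$ is unfounded; all an extension really gives is degree $0$ (signed) or evenness of $|g^{-1}(p)|$ (unsigned). Second, and more fundamentally: even granting invariance of some count under equivariant homotopy, you cannot compare an arbitrary equivariant $g$ with your chosen model $g_0$, because equivariant maps $S^k\to S^k$ are in general not equivariantly homotopic to one another (their degrees differ by multiples of $r$). The assertion you actually need — that \emph{all} equivariant self-maps have degree $\equiv 1\pmod r$ — is precisely a comparison of non-homotopic equivariant maps, and requires a different tool (cohomology of the quotient lens-type space or a transfer argument, or a cell-by-cell equivariant obstruction argument over an equivariant cell structure); your cobordism/generic-homotopy argument never reaches it.

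This is exactly why the paper does not argue with maps $S^k\to S^k$: it first reformulates Theorem \ref{t:3bu} as the statement that every map $S^k*\Z_r\to\R^{k+1}$ commuting with $\omega$ (extended to the join) and with an action $\omega_0$ on $\R^{k+1}$ whose only fixed point is $0$ must hit $0$. In that setting both defects disappear: the zero set of an equivariant map \emph{is} $\Z_r$-invariant, hence a union of free orbits, and any two equivariant maps into $\R^{k+1}$ are joined by the straight-line equivariant homotopy, so a quantitative count of zeros ``analogous to Lemma \ref{l:buqu}'' — compute the count for one specific map and show its residue is independent of the map — becomes meaningful and proves the theorem; for the details the paper refers to \cite{BSS} and \cite[\S6]{Ma03}. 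If you prefer to keep the $S^k\to S^k$ formulation, you must replace your steps (2)--(3) by one of the standard proofs that equivariant self-maps of free $\Z_r$-spheres, $r$ prime, have degree $\not\equiv 0 \bmod r$.
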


\begin{proof}[Comments on the proof]
Clearly, the theorem is equivalent to the following result.

{\it Extend $\omega$ to $S^k*\Z_3$ by $\omega(ts\oplus(1-t)m):=t\omega(s)\oplus(1-t)(m+1)$.
Let $\omega_0:\R^{k+1}\to \R^{k+1}$ be a map whose only fixed point is 0 and such that $\omega_0^r=\id \R^{k+1}$.
Then for any map $g:S^k*\Z_3\to \R^{k+1}$ commuting with $\omega,\omega_0$ (i.e. such that $g\circ\omega=\omega_0\circ g$)
there is $x\in S^k*\Z_3$ such that $g(x)=0$.}

This result is deduced from its `quantitative version' analogous to Lemma \ref{l:buqu}.

For a standard proof see \cite{BSS}, \cite[\S6]{Ma03}.
\end{proof}

\begin{proof}[Proof of Theorem~\ref{ratv-tvpl}]
Consider the case $r=3$, the general case is analogous.
We use the standard for\-mu\-la\-tion of Theorem~\ref{ratv-tvpl} given after the statement.
Suppose to the contrary that $f:\Delta_6\to\R^2$ is a continuous map and there are no 3 pairwise disjoint faces whose images have a common point.

For $x\in \R^2$ let $x^*:=(1,x)\in\R^3$.
For $x_1,x_2,x_3\in\R^2$ and $t_1,t_2,t_3\in[0,1]$ such that $t_1+t_2+t_3=1$ and pairs $(x_1,t_1),(x_2,t_2),(x_3,t_3)$ are not all equal define
$$
S^*:=t_1x_1^*+t_2x_2^*+t_3x_3^*,\quad \pi^{*'}:=
\left(t_1x_1^*-\frac{S^*}3,t_2x_2^*-\frac{S^*}3,t_3x_3^*-\frac{S^*}3\right)
\quad\text{and}\quad \pi^*:=\frac{\pi^{*'}}{|\pi^{*'}|}.
$$
This defines a map
$$
\pi^*:\R^2*\R^2*\R^2-\diag\phantom{}^*\to S^5_{\Sigma_3},\quad\text{where}\quad
\diag\phantom{}^*:=\left\{\left(\frac13x\oplus\frac13x\oplus\frac13x\right)\right\}.
$$
So we obtain the map $\pi^*\circ(f*f*f):|\mathrm{S}_3|\to S^5_{\Sigma_3}$.
This map extends to the union of 6-simplices of $\Delta_6^{*3}$ corresponding to spherical partitions $(T_1,T_2,T_3)$ of $[7]$ into 3 sets such that $7\in T_3$.
The union is PL homeomorphic to $\con|\mathrm{S}_3|\cong D^6$.
The map $\pi^*\circ(f*f*f)$ commutes with the cyclic permutations of the three sets in $|\mathrm{S}_3|$ and of the three columns in $S^5_{\Sigma_3}$.
Take any PL homeomorphism $h$ of Lemma \ref{cont-homeq}.
The composition $g:=\pi^*\circ(f*f*f)\circ h^{-1}:S^5_{\Sigma_3}\to S^5_{\Sigma_3}$ commutes with the cyclic permutation of the three columns and extends to $D^6$.
A contradiction to Theorem \ref{t:3bu}.\footnote{Here instead of using Lemma \ref{cont-homeq} we can use that the map $\pi^*\circ(f*f*f):(\Delta_6)^{*3}_{\Delta}\to S^5_{\Sigma_3}$ is well-defined on {\it the triple deleted join} $(\Delta_6)^{*3}_{\Delta}\cong (\Z_3)^{*7}$, prove that $(\Z_3)^{*7}$ is 5-connected, and then construct a $\Z_3$-equivariant map $S^5_{\Sigma_3}*\Z_3\to (\Delta_6)^{*3}_{\Delta}$.}
\end{proof}



\subsection{Mapping complexes in the plane and the \"Ozaydin theorem}\label{s:tvkam}

Formulation of the \"Ozaydin Theorem \ref{vankamz-oz} uses the definition of a {\it multiple ($r$-fold) intersection cocycle}.
We preface the definition by simplified analogues.
In \S\ref{0vkam2} we have defined double ($2$-fold) intersection cocycle for graphs.
In \S\ref{s:radc2} we define double intersection cocycle modulo 2 for complexes (or hypergraphs).
In \S\ref{s:radcz} we generalize that definition from residues modulo 2 to integers.
In \S\ref{s:tvescz} we generalize definition of \S\ref{s:radcz} from $r=2$ to arbitrary $r$.

\subsubsection{A polynomial algorithm for recognizing planarity of complexes}\label{s:radc2}

\UseRawInputEncoding

A {\bf $k$-hypergraph} (more precisely, $k$-dimensional, or $(k+1)$-uniform, hypergraph) $(V,F)$ is a finite set $V$ together with a collection $F\subset{V\choose k+1}$ of $(k+1)$-element subsets of $V$.

In topology it is more traditional (because sometimes more convenient) to work not with hypergraphs but with
{\it complexes} (we shall not use longer name `abstract finite simplicial complexes').
The following results are stated for complexes, although some of them are correct for hypergraphs.

A {\bf complex} $K=(V,F)$ is a finite set $V=V(K)$ together with a collection $F=F(K)\subset 2^V$ of subsets of $V$ such that if a subset $\sigma$ is in the collection, then each subset of $\sigma$ is in the collection.
(Hence $F\ni\emptyset$.)
In an equivalent geometric language, a complex is a collection of closed faces (=subsimplices) of some simplex.
A {\bf $k$-complex} is a complex containing at most $(k+1)$-element subsets, i.e. at most $k$-dimensional simplices.

Elements of $V$ and of $F$ are called {\bf vertices} and {\bf faces}.
\algor{{\it Ребром}  называется двухэлементная (т.е. одномерная) грань.}
\invadraw{An {\it edge} is a 2-element (t.e., 1-dimensional) face.}


{\bf The complete $k$-complex on $n$ vertices} (or the $k$-skeleton of the $(n-1)$-simplex)
$\Delta_{n-1}^k:=([n],{[n]\choose \le k+1})$ is the collection of all at most $(k+1)$-element subsets of an $n$-element set.
For $k=0$ we denote this complex by $[n]$, for $n=k+1$ by $D^k$ ($k$-simplex or $k$-disk), and for $n=k+2$ by $S^k$ ($k$-sphere).








Let $K=(V,F)$ be 2-complex.
The graph $(V,E)$ formed by vertices and edges of $K$ is denoted by $K^{(1)}$.
E.g. $(\Delta_n^2)^{(1)}=K_{n+1}$.
For a PL map $f:K^{(1)}\to\R^2$ and a face $R=\{A,B,C\}$ the closed polygonal line $f(AB)\cup f(BC)\cup f(CA)$ is denoted by $f(\partial R)$.

A 2-complex $K$ is called {\bf planar} (or PL embeddable into the plane) if there exists a PL embedding $f:K^{(1)}\to\R^2$ such that for any vertex $A$ and face $R\not\ni A$ the image $fA$ does not lie inside the polygon $f(\partial R)$.
Cf. \cite[Definition 3.0.6 for $q=2$]{Sc04}.

For illustration observe that the complete 2-complex $\Delta_3^2$ on $4$ vertices (i.e. the boundary of a tetrahedron) is not planar (and not even $\Z_2$-planar, see below) by the Topological Radon Theorem in the plane \ref{ratv-totv}.\footnote{More generally, any 2-complex that is the boundary of a convex polyhedron in $\R^3$ is not $\Z_2$-planar \cite{LS98}.
Even more generally, if every edge of a 2-complex $K$ is contained in a positive even number of faces, then $K$ is not $\Z_2$-planar.
This can be proved analogously to Theorem \ref{ratv-totv} or using the Halin--Jung criterion \cite[Appendix A]{MTW}.}


\begin{theorem}[\cite{GR79}; cf. Proposition \ref{grapl-ea} and Theorem \ref{t:rec}]\label{grapl-ea-r}
There is a polynomial algorithm for recognition planarity of 2-complexes.
\end{theorem}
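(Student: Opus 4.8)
\medskip
\noindent\textbf{Proof proposal.} The plan is to generalize to $2$-hypergraphs the van Kampen--Hanani--Tutte machinery of \S\ref{0vkam2}, exactly as the polynomial algorithm of Proposition \ref{grapl-ea}.b was obtained from the planarity criterion of Proposition \ref{t:pl-vk}. Fix an ordering of $V$. For a general position PL map $f\colon K^{(1)}\to\R^2$ (recall $K^{(1)}$ is a graph, so such maps and the notation $f(\partial R)$ are already available) introduce its \emph{intersection cocycle}: a $\Z_2$-valued function whose arguments are, on one hand, the unordered pairs $\{\sigma,\tau\}$ of disjoint edges of $K^{(1)}$, with value $|f\sigma\cap f\tau|\bmod2$, and, on the other hand, the pairs $(A,R)$ of a vertex $A$ and a face $R\not\ni A$, with value $1$ if $fA$ lies in the modulo-two interior of $f(\partial R)$ and $0$ otherwise. (This is the van Kampen obstruction cochain of the restriction to the $1$-skeleton of a map of the $2$-hypergraph; its face part is the ``Radon'' summand appearing in Lemma \ref{ratv-vk2}.) By definition $K$ is $\Z_2$-planar iff some general position PL map $K^{(1)}\to\R^2$ has zero intersection cocycle.

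First I would record how this cocycle changes under elementary moves of $f$, generalizing Proposition \ref{elcob}. The only move that changes it is the Reidemeister move of Fig.~\ref{reidall}.V, pushing the image of a vertex $A$ across the image of an edge $\sigma\not\ni A$: besides toggling the edge--edge entry $\{\tau,\sigma\}$ for every $\tau\ni A$ (as in Proposition \ref{elcob}.b), it toggles the face entry $(A,R)$ for every face $R$ with $\sigma\subset\partial R$ and $A\notin R$, since crossing one side of the triangle $f(\partial R)$ switches being inside or outside its modulo-two interior. Call the resulting cochain the \emph{elementary coboundary} $\delta_K(A,\sigma)$. Next, generalizing Lemma \ref{star}, the intersection cocycles of any two general position PL maps $K^{(1)}\to\R^2$ differ by a sum of elementary coboundaries; the argument copies that of Lemma \ref{star}, using the Parity Lemma \ref{0-even}.b for the edge--edge entries and the chess-board coloring of Proposition \ref{ratv-chess}.b (exactly as in the proof of Lemma \ref{ratv-vk2}) for the face entries. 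Hence $K$ is $\Z_2$-planar iff the intersection cocycle of one fixed, explicitly computable map --- say the chord map of Example \ref{vkam2-line}, whose entries are read off directly from the circular order of $V$ --- is a sum of elementary coboundaries. The latter condition is a system of linear equations over $\Z_2$: one unknown $x_{A,\sigma}$ per pair of a vertex and a non-incident edge, one equation per unordered pair of disjoint edges and one per pair $(A,R)$, entirely analogous to the system in part (iii) of Proposition \ref{t:pl-vk}.

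To conclude, the number of unknowns is at most $|V|\binom{|V|}{2}$ and the number of equations at most $\binom{|V|}{2}^2+|V|\cdot|F|$, so both are polynomial in the input size; since solvability of a $\Z_2$-linear system with $N$ equations in $N$ unknowns is decidable in $\mathrm{poly}(N)$ time by Gauss elimination (as used in Proposition \ref{grapl-ea}.b), $\Z_2$-planarity is recognizable in polynomial time. It remains to invoke the $2$-hypergraph (plane) analogue of the Hanani--Tutte Theorem \ref{vkam-z2}, namely that a $2$-hypergraph is planar iff it is $\Z_2$-planar; this yields the algorithm of \cite{GR79}.

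The main obstacle is precisely this last equivalence. For graphs it follows cleanly from Kuratowski's Theorem \ref{grapl-kur} together with the non-$\Z_2$-planarity of $K_5$ and $K_{3,3}$. For $2$-hypergraphs one would need a Kuratowski-type finite list of ``forbidden'' subcomplexes --- each shown, e.g.\ by the edge-in-even-many-faces criterion already used for $\Delta_3^2$, to be non-$\Z_2$-planar --- together with the completeness of the van Kampen $\Z_2$-obstruction in this very low-dimensional range, where such completeness is not automatic. One might instead avoid Hanani--Tutte altogether: first test planarity of $K^{(1)}$, and if it is planar, exploit the bounded-branching structure of its planar embeddings (an SPQR-type decomposition) to check whether all faces of $K$ can be realized simultaneously; this is purely combinatorial but considerably fiddlier.
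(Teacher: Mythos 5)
Your proposal is correct and follows essentially the same route as the paper: the intersection cocycle on $K^*$ with edge--edge and vertex--face entries, the elementary coboundaries $\delta_K(A,\sigma)$ from the Reidemeister move V, the invariance statement (Lemma~\ref{star-r}) combined with Proposition~\ref{starpr-r}, the chord map of Example~\ref{vkam2-line} supplying the explicitly computable right-hand sides, and Gauss elimination over $\Z_2$. The step you single out as the main obstacle is exactly Theorem~\ref{vkam-z2-r} in the paper, which is not reproved there but settled by the Kuratowski-type Halin--Jung criterion (see \cite[Appendix~A]{MTW11}), i.e.\ precisely the route you anticipated.
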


In \cite[Appendix A]{MTW} it is explained that this result (even with linear algorithm) follows from the Kuratowski-type Halin-Jung planarity criterion for 2-complexes (stated there).
We present a different proof similar to proof of Proposition~\ref{grapl-ea}.b (\S\ref{0vkam2}).
This proof illustrates the idea required for elementary formulation of the \"Ozaydin Theorem \ref{vankamz-oz}.

A 2-complex $K$ is called {\bf $\Z_2$-planar} if there exists a general position PL map $f:K^{(1)}\to\R^2$ such that the images of any two non-adjacent edges
intersect at an even number of points and for any vertex $A$ and face $R\not\ni A$ the image $fA$ does not lie in the interior modulo 2 of $f(\partial R)$.

\begin{theorem}[cf. Theorems \ref{vkam-z2} and \ref{t:mawa}]\label{vkam-z2-r}
A 2-complex is planar if and only if it is $\Z_2$-planar.
\end{theorem}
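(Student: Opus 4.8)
The implication \emph{planar $\Rightarrow$ $\Z_2$-planar} is routine. A PL embedding $f\colon K^{(1)}\to\R^2$ witnessing planarity of $K$ can be put in general position by a small perturbation of the vertices (after first subdividing the edges if necessary): injectivity, the disjointness of non-adjacent edge images, and each condition ``$fA$ lies outside the bounded region of $f(\partial R)$'' are all open, hence survive a sufficiently small move, while ``no three collinear, no three edges concurrent'' is generic. Such an embedding has all relevant intersection numbers equal to $0$ — non-adjacent edge images are disjoint, and no $fA$ lies in the \emph{actual} interior of $f(\partial R)$, \emph{a fortiori} not in its interior modulo $2$ — so $K$ is $\Z_2$-planar by definition. (Equivalently: its intersection cocycle, in the sense of the $2$-hypergraph analogue of \S\ref{s:vkam2-ic}, is zero, cf. the graph case of Proposition \ref{starpr}.)

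For the converse I would prove the contrapositive: a non-planar $2$-hypergraph is not $\Z_2$-planar, following exactly the pattern by which Theorem \ref{vkam-z2} is obtained from Lemma \ref{11-vankam} and Kuratowski's Theorem \ref{grapl-kur}, with Kuratowski replaced by the Halin--Jung criterion. The first ingredient is the Kuratowski-type Halin--Jung planarity criterion for $2$-hypergraphs stated in \cite[Appendix A]{MTW11}: there is a finite explicit list $\mathcal L$ of $2$-hypergraphs such that a $2$-hypergraph is planar if and only if it contains no sub-hypergraph that is a subdivision of a member of $\mathcal L$. The second ingredient is that $\Z_2$-planarity is inherited by sub-hypergraphs and is invariant under subdivision: restricting a witnessing general position PL map $f$ to a sub-hypergraph $K'\subset K$ still satisfies all the parity conditions, since non-adjacency of edges and the incidences $A\notin R$ are inherited; and if $K'$ is a subdivision of $L$, then for disjoint edges of $L$ the intersection number splits additively over the sub-edges (which stay pairwise disjoint, using fresh subdivision vertices), and for a vertex $A$ and face $R$ of $L$ with $A\notin R$ the winding number of $f(\partial R)$ about $fA$ splits additively over the faces subdividing $R$ (none of which contain $A$) — so all parities for $L$ are even whenever they are for $K'$. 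Hence it suffices to show that every $L\in\mathcal L$ fails to be $\Z_2$-planar.

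That last check is the substance of the argument. The boundary of the tetrahedron $\Delta^2_3$ (whose $1$-skeleton is $(\Delta^2_3)^{(1)}=K_4$) is not $\Z_2$-planar by Lemma \ref{ratv-vk2}: a general position PL map $K_4\to\R^2$ witnessing $\Z_2$-planarity would have Radon number $\rho(f)=0$, contradicting the oddness in Lemma \ref{ratv-vk2}. For the remaining members of $\mathcal L$ I would argue case by case by one of the following. (i) If the $1$-skeleton of $L$ contains a subdivision of $K_5$ or $K_{3,3}$, then $L^{(1)}$ — hence $L$ — is already not $\Z_2$-planar, by the graph Hanani--Tutte Theorem \ref{vkam-z2} (equivalently, by Lemma \ref{11-vankam} and its $K_{3,3}$ analogue). (ii) The ``even faces'' criterion of the footnote after the definition of planarity: if every edge of $L$ lies in a positive even number of faces, then $L$ is not $\Z_2$-planar; this is proved by the same summation-over-a-generic-ray argument that proves the Topological Radon Theorem \ref{ratv-totv}/Lemma \ref{ratv-vk2}, each edge contribution being counted an even number of times. (iii) If neither applies, a direct computation of the van Kampen number of a convenient general position PL map of $L$, together with the ``cocycle is cohomologous to a fixed class'' device (the $2$-hypergraph analogue of Proposition \ref{elcob}.b and Lemma \ref{star}), shows that the relevant obstruction class of $L$ in $H^2$ of the appropriate deleted quotient is nonzero.

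\textbf{Main obstacle.} The hard part is step (iii) together with having the precise Halin--Jung list in hand: one must verify non-$\Z_2$-planarity for every configuration on it, ideally through the uniform reasons (i) and (ii) rather than by ad hoc van Kampen number computations, and a complete proof requires confirming that no forbidden configuration escapes all of them. I would note that the purely obstruction-theoretic route — developing, as in \S\ref{0vkam2}, the $2$-hypergraph intersection cocycle, its behavior under Reidemeister-type moves (analogue of Proposition \ref{elcob}), and the statement that $\Z_2$-planarity is equivalent to vanishing of the van Kampen obstruction (analogue of Proposition \ref{starpr}) — does \emph{not} bypass this obstacle: embedding $2$-complexes in the plane lies far below the metastable range, so vanishing of the obstruction does not by itself yield an embedding, and a Kuratowski/Halin--Jung-type input is still needed to close the loop.
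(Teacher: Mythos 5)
Your easy direction (perturb an embedding to general position; all parities are then zero) and your two heredity steps (restriction to sub-hypergraphs, and mod-2 additivity of edge intersections and of winding parities over a subdivided edge or face) are correct, and at the top level you follow the same route as the paper: its entire proof of Theorem \ref{vkam-z2-r} is the remark that the statement ``is proved using the Halin--Jung criterion'' of \cite[Appendix A]{MTW11}, i.e.\ exactly the Kuratowski-style reduction you set up, in parallel with how Theorem \ref{vkam-z2} is deduced from Lemma \ref{11-vankam} and Kuratowski.

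The genuine gap is located exactly where you put the ``main obstacle'': the finite verification that every forbidden configuration of the Halin--Jung list fails to be $\Z_2$-planar is not carried out, and your uniform devices (i) and (ii) demonstrably do not cover the list. They do handle more than you might expect --- the cone over $K_4$ has $1$-skeleton $K_5$ and the cone over $K_{2,3}$ contains $K_{3,3}$ in its $1$-skeleton, so (i) applies, and subcomplexes triangulating $S^2$ (needed because Halin--Jung is naturally a criterion for $S^2$, not the plane) are handled by (ii), cf.\ Lemma \ref{ratv-vk2} for $\partial\Delta_3$. But whatever the precise list, it contains the complex of three $2$-simplices sharing a common edge: its $1$-skeleton is a planar graph, and its edges lie in one or three faces, so neither (i) nor (ii) applies, and this is exactly the kind of case for which your item (iii) --- ``a direct computation shows the obstruction class is nonzero'' --- is a placeholder rather than an argument. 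A Hanani--Tutte/Radon-type parity identity for such small $2$-complexes (analogous to Lemma \ref{ratv-vk2}, e.g.\ a statement about two triangles sharing an edge summed over the three pairs of pages) is genuinely needed there and is not supplied; in addition, you should verify that the forbidden-subdivision formulation you quote, for the plane rather than for $S^2$, is really what \cite[Appendix A]{MTW11} provides. Until those finitely many non-$\Z_2$-planarity checks are actually proved, the hard implication of Theorem \ref{vkam-z2-r} remains open in your write-up.
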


This is proved using the Halin--Jung criterion \cite[Appendix A]{MTW}.


Let $K=(V,F)$ be a 2-complex and $f:K^{(1)}\to\R^2$ a general position PL map.
Assign to any pair $\{\sigma,\tau\}$ of non-adjacent edges the residue
$$|f\sigma\cap f\tau|\mod2.$$
Assign to any pair of a vertex $A$ and a face $R\not\ni A$ the residue
$$|fA\cap\mathrm{int}_2f(\partial R)|\mod2,$$
where $\mathrm{int}_2f(\partial R)$ is the interior modulo 2 of $f(\partial R)$.

Denote by $K^*$ the set of unordered pairs $\{R_1,R_2\}$ of disjoint subsets $R_1,R_2\in V\sqcup E\sqcup F$ such that $|R_1|+|R_2|=4$, where $E$ is the set of edges.
Then either both $R_1$ and $R_2$ are edges, or one of $R_1,R_2$ is a face and the other one is a vertex.
The obtained map $K^*\to\Z_2$ is called {\bf the} (double) {\bf intersection cocycle} (modulo 2) of $f$ for $K$.
Note that $K^*\supset (K^{(1)})^*$ and the intersection cocycle of $f$ for $K^{(1)}$ is the restriction of the intersection cocycle of $f$ for $K$.
The intersection cocycle for $K$ of the map $f:K^{(1)}\to\R^2$ from Example \ref{vkam2-line} is the extension to $K^*$ by zeroes of the intersection cocycle for $K^{(1)}$ described there.

Comparing the definitions of the Radon number and the intersection cocycle we see that for every general position PL map $f:K_4=(\Delta_3^2)^{(1)}\to\R^2$ the Radon number $\rho(f)$ equals to the sum of the values of the intersection cocycle for $\Delta_3^2$.

By Proposition \ref{ratv-totv2}.c for any disjoint edge $\sigma$ and face $R$ we have
$$
\sum\limits_{A\in\sigma}|fA\cap \mathrm{int}_2f(\partial R)| =
\sum\limits_{\tau\subset R}|f\tau\cap f\sigma|.
$$
Analogue of Proposition \ref{elcob} is true for the intersection cocycle for 2-complex, with the following definition.
Let $K$ be a 2-complex and $A$ its vertex which is not the end of an edge $\sigma$.
{\bf An elementary coboundary} of the pair $(A,\sigma)$ is the map $\delta_K(A,\sigma):K^*\to\Z_2$  that assigns $1$ to the pair $\{R_1,R_2\}$ if $R_i\supset A$ and $R_j\supset\sigma$ for some $i\ne j$, and $0$ to any other pair.

The  subset of $\delta_K(A,\sigma)^{-1}(1)\subset K^*$ corresponding to the map $\delta_K(A,\sigma)$ is also called elementary coboundary.
So $\delta_{\Delta_3^2}(1,23)=\big\{\{14,23\},\{1,234\}\big\}$,
cf. Example \ref{cobo}.a.

\begin{proposition}\label{elcob-r} Under the Reidemeister move in fig.~\ref{reidall}.V
(or a move in fig.~\ref{f:11-vankam}) the intersection cocycle changes by adding $\delta_K(A,\sigma)$.
\end{proposition}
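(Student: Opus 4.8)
The plan is to mimic the proof of Proposition~\ref{elcob}.b for graphs, which established exactly the analogous statement for the graph intersection cocycle, and to upgrade it to handle the extra type of cell in $K^*$ coming from a (vertex, face) pair. Recall that in the Reidemeister move V of fig.~\ref{reidall}, the image $f\sigma$ of the edge $\sigma$ is pushed across the image $fA$ of the vertex $A$, while the image of every other edge (and hence of every other face boundary) is unchanged, and no image of an edge other than the pictured ones meets the small disk where the move takes place. So I only need to compute, cell by cell, how the intersection cocycle of $f$ differs from that of the modified map $f'$.

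First I would treat the cells $\{\sigma',\tau'\}\in(K^{(1)})^*\subset K^*$ consisting of two edges. For such a cell, the value of the intersection cocycle is $|f\sigma'\cap f\tau'|\bmod 2$, and by Proposition~\ref{elcob}.b (applied to the underlying graph $K^{(1)}$) this changes exactly by $\delta_{K^{(1)}}(A,\sigma)(\{\sigma',\tau'\})$ — i.e.\ it changes iff one of $\sigma',\tau'$ is $\sigma$ and the other contains $A$. This is precisely the restriction of $\delta_K(A,\sigma)$ to $(K^{(1)})^*$, by the definition of $\delta_K(A,\sigma)$ given just before the statement. Second, and this is the new part, I would treat the cells $\{B,R\}$ with $B$ a vertex and $R\not\ni B$ a face; here the cocycle value is $|fB\cap\mathrm{int}_2 f(\partial R)|\bmod 2$. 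If $\sigma$ is not an edge of $R$, then $f(\partial R)$ is unaffected by the move (no image of an edge of $R$ meets the disk), so this value does not change, consistently with $\delta_K(A,\sigma)(\{B,R\})=0$ in that case. If $\sigma\subset R$ (so $R=\sigma\cup\{c\}$ for some vertex $c$, and $B\ne c$, $B\notin\sigma$), then $f(\partial R)$ changes only on the arc $f\sigma$, which sweeps across $fB$ exactly when the move pushes $f\sigma$ past $fB$; by Proposition~\ref{ratv-chess}.b (equivalently Proposition~\ref{ratv-totv2}.c, the chess-board / M\"obius-Alexander numbering), the parity of $|fB\cap\mathrm{int}_2 f(\partial R)|$ flips iff $fB$ crosses the closed curve $f\sigma\cup f'\sigma$ an odd number of times, which by the local picture of move V happens iff $B=A$. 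So this value changes iff $B=A$ and $\sigma\subset R$ — again exactly $\delta_K(A,\sigma)(\{\sigma\cup\{c\},A\})$. Collecting the two cases shows the intersection cocycle of $f$ minus that of $f'$ equals $\delta_K(A,\sigma)$.

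The main obstacle I anticipate is the bookkeeping for the face-type cells: one must be careful that the interior $\mathrm{int}_2 f(\partial R)$ is well defined throughout the move (the vertices stay in general position, which is part of the hypothesis that both $f$ and $f'$ are general position PL maps) and that the local model of move V really does push $f\sigma$ across $fB$ an odd number of times only for $B=A$. The cleanest way to see the latter is the identity $\sum_{B\in\sigma}|fB\cap\mathrm{int}_2 f(\partial R)|\equiv\sum_{\tau\subset R}|f\tau\cap f\sigma|\pmod 2$ displayed just before the statement (itself a consequence of Proposition~\ref{ratv-totv2}.c): it reduces the change in the face-type cell $\{B,R\}$ to changes in edge-type cells, which we have already controlled in the first step. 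Everything else is the same routine local analysis as in the graph case, so I would simply say ``the proof is completely analogous to that of Proposition~\ref{elcob}.b, using in addition Proposition~\ref{ratv-chess}.b (or Proposition~\ref{ratv-totv2}.c) to handle the cells of the form (vertex, face)'' and leave the case check to the reader.
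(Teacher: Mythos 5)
Your proof is correct and follows essentially the route the paper intends: Proposition~\ref{elcob-r} is left as the direct analogue of Proposition~\ref{elcob}.b, with the edge--edge cells treated exactly as in the graph case and the new vertex--face cells handled, as you do, via the chess-board coloring (Proposition~\ref{ratv-chess}.b, equivalently Proposition~\ref{ratv-totv2}.c) applied to the loop $f\sigma\cup f'\sigma$ around $fA$. Two cosmetic quibbles: the parity at a cell $\{B,R\}$ flips iff $fB$ lies in the mod~2 interior of that loop (not ``crosses it an odd number of times''), and the displayed identity cannot be invoked literally in the relevant case $\sigma\subset R$ (it requires a disjoint edge--face pair), so your direct local argument is the one to keep.
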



Two maps $\nu_1,\nu_2:K^*\to\Z_2$ are called {\bf cohomologous} if
$$
\nu_1-\nu_2=\delta_K(A_1,\sigma_1)+\ldots+\delta_K(A_k,\sigma_k)
$$
for some vertices $A_1,\ldots,A_k$ and edges $\sigma_1,\ldots,\sigma_k$ (not necessarily distinct).

\begin{lemma}[cf. Lemmas \ref{star} and \ref{ratv-vk2}]\label{star-r}
For any 2-complex $K$ the intersection cocycles of different general position PL maps $K^{(1)}\to\R^2$ are cohomologous.
\end{lemma}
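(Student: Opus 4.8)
The plan is to imitate the proof of Lemma~\ref{star} for graphs, carrying the argument through both types of cells of $K^*$ (the edge-edge pairs and the vertex-face pairs) simultaneously. As there, it suffices to treat two special cases: first, when the two general position PL maps $f,f'\colon K^{(1)}\to\R^2$ differ only on the interior of a single edge $\sigma$, with $f|_\sigma$ linear; second, the reduction of the general case to this one via R.~Karasev's trick of moving a neighbourhood of a single vertex-image along a polygonal arc. The second reduction is formally identical to the one in the proof of Lemma~\ref{star}, so I would cite it and concentrate on the first case.

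For the first case, choose a point $O$ in general position with respect to all vertices of the polygonal lines $f\alpha$ and $f'\alpha$, and write $\overline X:=|(f\sigma\cup f'\sigma)\cap X|\bmod 2$. The closed polygonal line $c:=f\sigma\cup f'\sigma$ has vertices in general position, so the Parity Lemma~\ref{0-even}.b applies to $c$ against any closed polygonal line whose vertices are in general position together with those of $c$. For each edge $PQ$ of $K$ disjoint from $\sigma$, applying the Parity Lemma to $c$ and the triangle $O f(P) f(Q)$ gives, exactly as in Lemma~\ref{star},
$$
\overline{f(PQ)}=\overline{Of(P)}+\overline{Of(Q)}.
$$
Summing $\overline{f(PQ)}$ over all edges $PQ$ disjoint from $\sigma$ reproduces, on the edge-edge part of $K^*$, the coboundary contribution $\sum_{B\notin\sigma}\overline{Of(B)}\,\delta_K(B,\sigma)$, whose edge-edge component is the known graph computation. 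The new point is the vertex-face part: for a face $R=\{P,Q,S\}$ disjoint from $\sigma$, the value of the two intersection cocycles on $\{\sigma,?\}$ is never affected directly (no vertex of $\sigma$ equals $R$, no edge equals $R$), so one must instead track how $|fA\cap \mathrm{int}_2 f(\partial R)|$ changes when $A$ is an endpoint of $\sigma$ and $R$ is a face disjoint from $\sigma$ — but in our setup $f$ and $f'$ agree on all edges of $\partial R$ and on all vertices, so this value is literally unchanged, while $\overline{f(\partial R)}=\overline{f(PQ)}+\overline{f(QS)}+\overline{f(SP)}$ reduces by the displayed identity to $\overline{Of(P)}+\overline{Of(Q)}+\overline{Of(S)}$ telescoping to $0$ (each $\overline{Of(\cdot)}$ appears twice). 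Hence on the vertex-face cells $\{A,R\}$ with $A\in\sigma$ the difference of the two intersection cocycles agrees with the vertex-face component of $\sum_{B\notin\sigma}\overline{Of(B)}\,\delta_K(B,\sigma)$, since $\delta_K(B,\sigma)$ assigns $1$ to $\{A',R'\}$ precisely when $A'\in\sigma$, $R'\ni B$ (or symmetrically), which is the only way a vertex-face cell can receive a nonzero coboundary value. Combining the two parts, $\nu(f)-\nu(f')=\sum_{B\notin\sigma}\overline{Of(B)}\,\delta_K(B,\sigma)$, a sum of elementary coboundaries, which is what cohomologous means; and the set of $B$ with $\overline{Of(B)}=1$ depends on $O$ but the resulting class does not.

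The main obstacle I anticipate is purely bookkeeping: making sure that the general position hypotheses are arranged so that the Parity Lemma and Proposition~\ref{ratv-totv2}.c / Proposition~\ref{ratv-chess}.b can legitimately be invoked for every pair encountered (in particular that $O$ can be chosen once for all the finitely many triangles and polygonal lines involved), and checking that the vertex-face cells of $K^*$ receive exactly the right coboundary contributions — i.e.\ that the identity $\sum_{A\in\sigma}|fA\cap\mathrm{int}_2 f(\partial R)| = \sum_{\tau\subset R}|f\tau\cap f\sigma|$ from the text, together with the above telescoping, glues the edge-edge and vertex-face computations into a single coboundary. No genuinely new idea beyond Lemma~\ref{star} is needed; the content is verifying that the extra cells behave consistently.
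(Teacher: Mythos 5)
Your overall strategy (reduce to two maps differing only on the interior of one edge $\sigma$, then Karasev's trick) is the intended one, and the edge--edge part is the same computation as in Lemma \ref{star}. The gap is in the vertex--face part, which is exactly the new content of Lemma \ref{star-r}, and it is twofold. First, you analyze the wrong cells: when $f$ is changed only on $\sigma$, the values $|fA\cap\mathrm{int}_2 f(\partial R)|$ with $A\in\sigma$ and $R$ disjoint from $\sigma$ are indeed literally unchanged, but the vertex--face cells that \emph{do} change are the pairs $\{A,R\}$ with $\sigma\subset R$ (i.e.\ $\sigma$ an edge of the face $R$) and $A\notin R$, because for these $f(\partial R)\ne f'(\partial R)$; such cells never appear in your argument. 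Second, you misread the elementary coboundary for $2$-hypergraphs: by its definition, $\delta_K(B,\sigma)$ is nonzero on a vertex--face pair $\{A',R'\}$ only when $A'=B$ and $R'\supset\sigma$ --- your case ``$A'\in\sigma$ and $R'\ni B$'' contributes nothing, since a one-element set cannot contain the edge $\sigma$; compare the paper's example $\delta_{\Delta_3^2}(1,23)=\big\{\{14,23\},\{1,234\}\big\}$. So the vertex--face support of $\sum_{B\notin\sigma}\overline{Of(B)}\,\delta_K(B,\sigma)$ sits precisely on the cells you ignored, and your ``agreement'' on the cells with $A\in\sigma$ is an agreement of two zeros (and even that is argued from the wrong support, via the telescoping of $\overline{f(\partial R)}$), which says nothing about the cells that matter.

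The missing verification is: for $\sigma\subset R$ and $A\notin R$, the change of $|fA\cap\mathrm{int}_2 f(\partial R)|\bmod 2$ equals $\overline{Of(A)}$. This follows from Proposition \ref{ratv-chess}.b: choose $O$ in the unbounded component of the complement of $f\sigma\cup f'\sigma$ (e.g.\ far away); then $|fA\cap\mathrm{int}_2 f(\partial R)|\equiv|Of(A)\cap f(\partial R)|$ and likewise for $f'$, and subtracting, only the edge $\sigma$ of $R$ contributes, giving $\overline{Of(A)}$. (For an arbitrary general position $O$ the two sides differ by the constant $1$ on all cells $\{A,R\}$ with $\sigma\subset R$, which is exactly $\sum_{B\notin\sigma}\delta_K(B,\sigma)$ and hence does not affect the cohomology class.) With this in place the difference of the two intersection cocycles equals $\sum_{B\notin\sigma}\overline{Of(B)}\,\delta_K(B,\sigma)$ on all of $K^*$, and the reduction of the general case goes through using Proposition \ref{elcob-r}. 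So the needed statement is true and your approach is repairable, but as written the proof omits exactly the computation that distinguishes the hypergraph case from the graph case.
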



\begin{proposition}[cf. Proposition \ref{starpr}]\label{starpr-r}
A 2-complex $K$ is $\Z_2$-planar if and only if the intersection cocycle modulo 2 of some
(or, equivalently, of any) general position PL map $K^{(1)}\to\R^2$ is cohomologous to the zero map.
\end{proposition}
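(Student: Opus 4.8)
The plan is to mirror the proof of Proposition~\ref{starpr} verbatim, with the $2$-hypergraph analogues Lemma~\ref{star-r} and Proposition~\ref{elcob-r} playing the roles of Lemma~\ref{star} and Proposition~\ref{elcob}. The first step is to unwind definitions and record the (purely formal) equivalence: a $2$-hypergraph $K$ is $\Z_2$-planar if and only if there is a general position PL map $f\colon K^{(1)}\to\R^2$ whose intersection cocycle $\nu(f)\colon K^*\to\Z_2$ is the zero map. Indeed, ``the images of any two non-adjacent edges intersect at an even number of points'' is the vanishing of $\nu(f)$ on edge--edge pairs, and ``$fA$ does not lie in $\mathrm{int}_2 f(\partial R)$ for $A\notin R$'' is the vanishing of $\nu(f)$ on vertex--face pairs; these are exactly the two families of elements of $K^*$.

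For the ``only if'' direction (which also yields the equivalence of ``some'' and ``any'' in the statement), suppose $K$ is $\Z_2$-planar and $f_0$ is a witnessing map, so $\nu(f_0)=0$. The zero map is trivially cohomologous to zero, and by Lemma~\ref{star-r} the intersection cocycle $\nu(f)$ of every general position PL map $f\colon K^{(1)}\to\R^2$ is cohomologous to $\nu(f_0)=0$. Hence the intersection cocycle of any such map is cohomologous to the zero map.

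For the ``if'' direction, suppose $f\colon K^{(1)}\to\R^2$ is a general position PL map with $\nu(f)$ cohomologous to zero, say
$$\nu(f)=\delta_K(A_1,\sigma_1)+\ldots+\delta_K(A_k,\sigma_k)$$
for vertex--edge pairs with $A_i\notin\sigma_i$. Starting from $f$ I would apply, successively, the Reidemeister move of fig.~\ref{reidall}.V at $(A_1,\sigma_1),\ldots,(A_k,\sigma_k)$; each such move is realizable on an arbitrary general position PL map of $K^{(1)}$ by pushing a finger of the relevant edge across the relevant vertex (and, modulo auxiliary moves of types I--IV, which by the hypergraph analogue of Proposition~\ref{elcob}.a do not change the cocycle, this finger-push changes $\nu$ by exactly $\delta_K(A_i,\sigma_i)$, by Proposition~\ref{elcob-r}). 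The resulting map $f'$ then has
$$\nu(f')=\nu(f)+\sum_{i=1}^{k}\delta_K(A_i,\sigma_i)=\nu(f)+\nu(f)=0,$$
so $f'$ witnesses that $K$ is $\Z_2$-planar, completing the proof.

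Given Lemma~\ref{star-r} and Proposition~\ref{elcob-r}, this argument is essentially bookkeeping; the substantive content is already absorbed into those two results (in particular Lemma~\ref{star-r}, whose hypergraph proof extends the finger-move computation of Lemma~\ref{star} and uses the Stokes-type identity $\sum_{A\in\sigma}|fA\cap\mathrm{int}_2 f(\partial R)|=\sum_{\tau\subset R}|f\tau\cap f\sigma|$ from Proposition~\ref{ratv-totv2}.c to handle the vertex--face part). Within the present proof the only point I expect to require care is the realizability and ``locality'' of the type-V move at an \emph{arbitrary} admissible pair $(A,\sigma)$ on a \emph{given} map: one must check that the incidental extra crossings created while routing the finger cancel in pairs, so that the net effect on $\nu$ -- including on the vertex--face part of $K^*$ -- is precisely $\delta_K(A,\sigma)$, and that performing the moves in sequence simply adds the corresponding elementary coboundaries.
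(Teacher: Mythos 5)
Your proposal is correct and follows exactly the route the paper intends: the paper proves Proposition~\ref{starpr-r} by citing Lemma~\ref{star-r} together with Proposition~\ref{elcob-r}, which is precisely your two-direction bookkeeping (zero cocycle $\Leftrightarrow$ $\Z_2$-planarity of the witness map by definition, Lemma~\ref{star-r} for the ``some/any'' and ``only if'' part, finger moves realizing prescribed elementary coboundaries for the ``if'' part). Your closing remark about verifying that the type-V finger move is realizable at an arbitrary pair $(A,\sigma)$ and changes the cocycle, including its vertex--face part, by exactly $\delta_K(A,\sigma)$ is exactly the content absorbed into Proposition~\ref{elcob-r}, so there is no gap.
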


This proposition follows by Lemma~\ref{star-r} and Proposition~\ref{elcob-r}.


\begin{proof}[Proof of Theorem \ref{grapl-ea-r}]
Take a 2-complex $K$.
To every pair $A,\sigma$ of a vertex and an edge such that $A\notin\sigma$ assign a variable $x_{A,\sigma}$.
For every $\{R,R'\}\in K^*$ denote by $b_{R,R'}\in\Z_2$ the value of the extension to $K^*$ by zeroes of the intersection cocycle for $K^{(1)}$ described in Example~\ref{vkam2-line}.
For every such pairs $(A,\sigma)$ and $\{R,R'\}$ let
$$a_{A,\sigma,R,R'}=
\begin{cases}
1 & \text{either ($R\ni A$ and $R'\supset\sigma$) or ($R'\ni A$ and $R\supset\sigma$)}\\
0 & \text{otherwise}
\end{cases}.$$
For every pair $\{R,R'\}\in K^*$ consider the linear equation
$\sum_{A\notin\sigma} a_{A,\sigma,R,R'}x_{A,\sigma}=b_{R,R'}$ over $\Z_2$.
By Theorem \ref{vkam-z2-r} and Proposition \ref{starpr-r} planarity of $K$ is equivalent to solvability of this system of equations.
This can be checked in polynomial time, see details in \cite{CLR, Vi02}.
\end{proof}



\subsubsection{Intersections with signs for $2$-complexes}\label{s:radcz}

Let $K$ be a 2-complex and $f:K^{(1)}\to\R^2$ a general position PL map.
Orient the edges and faces of $K$, i.e. choose some cyclic orderings on the subsets that are edges and faces.
Assign to every ordered pair

$\bullet$ $(\sigma,\tau)$ of non-adjacent edges the algebraic intersection number $f\sigma\cdot f\tau$ (defined in \S\ref{0vkamz}).

$\bullet$ $(A,R)$ or $(R,A)$ of a vertex $A$ and a face $R\not\ni A$ minus the winding number  $-fA\cdot f(\partial R)$ of $f(\partial R)$ around $A$ (defined in \S\ref{s:tvtopl}).

Denote by $\t K$ the set of ordered pairs $(R_1,R_2)$ of disjoint subsets $R_1,R_2\in V\sqcup E\sqcup F$
such that $|R_1|+|R_2|=4$.
The obtained map $\cdot:\t K\to\Z$ is called {\bf the integral intersection cocycle} of $f$ for $K$ (and for given orientations).

For oriented 2-element set $AB$ denote $[AB:B]=1$ and $[AB:A]=-1$.
($AB$ \emph{goes} to $B$ and \emph{issues} out of $A$.)
For oriented 3-element set $ABC$ denote $[ABC:BA]=[ABC:CB]=[ABC:AC]=1$ and $[ABC:AB]=[ABC:BC]=[ABC:CA]=-1$.
For other oriented sets $R,R'\in V\sqcup E\sqcup F$ define $[R:R']=0$.


By Proposition \ref{ratv-chess}.b for any disjoint edge $\sigma$ and face $R$ we have
$$
\sum\limits_{A\in\sigma}[\sigma:A](f(\partial R)\cdot fA) =
\sum\limits_{\tau\subset R}[R:\tau](f\sigma\cdot f\tau).
$$
Analogue of Proposition \ref{elcob} is true for the integral intersection cocycle for 2-complex, with the following definition.
Let $K$ be a 2-complex whose edges and faces are oriented, and $A$ a vertex which is not the end of an edge $\sigma$.
{\bf An elementary super-symmetric coboundary} of the pair $(A,\sigma)$ is the map $\delta_K(A,\sigma):\t K\to\Z$ that assigns
$$-[\tau:A]\text{ to }(\sigma,\tau),\quad [\tau:A]\text{ to }(\tau,\sigma)\quad\text{and}\quad [R:\sigma]\text{ both to }(A,R)\text{ and }(R,A).$$
In other words,
$$\delta_K(A,\sigma)(R_1,R_2) := [R_1:A][R_2:\sigma]+(-1)^{(|R_1|-1)(|R_2|-1)}[R_2:A][R_1:\sigma].$$




\begin{proposition}\label{elcobz-r} Under the Reidemeister move in fig.~\ref{reidall}.V
the integer intersection cocycle changes by adding $\delta_K(A,\sigma)$.
\end{proposition}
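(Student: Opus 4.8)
\emph{The plan} is to localize the move, reduce it modulo $2$ to the already established Proposition~\ref{elcob-r}, and then carry out the signs. Since $-[\tau:A]\equiv[R:\sigma]\equiv1\pmod2$, reducing the asserted equality modulo~$2$ is precisely Proposition~\ref{elcob-r}; so the only new content is to compute, for each ordered pair of $\t K$ on which $\delta_K(A,\sigma)$ is nonzero, the \emph{integer} (not just its residue) by which the integral intersection cocycle changes under the move, and to match it with the stated value. I would first choose a small disk $D$ with $f(A)\in\Int D$ such that $f\sigma$ meets $D$ in a single straight chord running through $\Int D$ away from the endpoints of $f\sigma$, and such that the only other images of edges meeting $D$ are polygonal initial pieces of the edges $\tau_1,\dots,\tau_m$ issuing out of $A$. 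The move of fig.~\ref{reidall}.V is then: leave everything outside $D$ unchanged and inside $D$ carry $f(A)$ to a nearby point $q$ along a polygonal path $P_A$ meeting the chord $f\sigma$ transversally in one point, dragging each $\tau_i$ along (fixed on $\partial D$ and outside $D$). Reversing the direction of $P_A$ negates both the change of the intersection cocycle and $\delta_K(A,\sigma)$, so it is enough to treat one direction.

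For a pair of disjoint edges $\{\sigma,\tau\}$ with $\tau\ni A$, only $f\tau$ changes, and only inside $D$, where its arc runs from $f(A)$ (resp.\ from $q$) to the fixed exit point $e$ of $\tau$ on $\partial D$. The elementary input is that a closed polygonal line has algebraic intersection number $0$ with any polygonal line it meets only near an interior point of one of its segments (extend that segment to a full line; a closed polygonal line crosses a line algebraically zero times); this is the signed analogue of the Parity Lemma~\ref{0-even}.b used already in \S\ref{0vkamz}. Applying it to the small triangle $f(A)\,e\,q$ and cancelling the unchanged part outside $D$ should give $\Delta(f\sigma\cdot f\tau)=(f\sigma\cdot P_A)\,[\tau:A]$, and then $\Delta(f\tau\cdot f\sigma)=-(f\sigma\cdot P_A)\,[\tau:A]$ by Assertion~\ref{starzch}.a. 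For the direction of the move in which $f\sigma\cdot P_A=-1$ these are $-[\tau:A]$ and $[\tau:A]$, which are the values $\delta_K(A,\sigma)$ assigns to $(\sigma,\tau)$ and $(\tau,\sigma)$.

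For a pair $\{A,R\}$ with $A\notin R$ and $\sigma\subset R$, the loop $f(\partial R)$ does not move, and inside $D$ it coincides with the chord $f\sigma$ (carrying the orientation induced by $R$), so $P_A$ crosses $f(\partial R)$ exactly once. By Proposition~\ref{ratv-totv2}.c — the change of the winding number of a fixed loop around a moving point equals the algebraic intersection number of the loop with the point's path — the winding number $f(\partial R)\cdot f(A)$ changes by $f(\partial R)\cdot P_A=[R:\sigma]\,(f\sigma\cdot P_A)$, hence each of the two entries $-f(A)\cdot f(\partial R)$ recorded on $(A,R)$ and $(R,A)$ changes by $-[R:\sigma]\,(f\sigma\cdot P_A)=[R:\sigma]$, once more the value of $\delta_K(A,\sigma)$. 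Finally, no other entry of $\t K$ changes: inside $D$ the only images besides the $f\tau_i$ are $f(A)$ and the chord $f\sigma$; the modifications of the $f\tau_i$ are small loops supported in $D$, which have zero winding number around points outside $D$ and zero algebraic intersection with any polygonal line disjoint from $D$; and $\t K$ contains no (edge, face) pair and no pair of two edges through $A$. This exhausts the support of $\delta_K(A,\sigma)$.

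The step I expect to be the main obstacle is the sign bookkeeping — checking that the purely combinatorial incidences $[\,\cdot:\cdot\,]$ together with the super-symmetry factor $(-1)^{(|R_1|-1)(|R_2|-1)}$ in the definition of $\delta_K(A,\sigma)$ reproduce the geometric signs \emph{consistently for both types of pair with one and the same direction of the move}. This factor is in fact forced by the shape of the problem: it makes $\delta_K(A,\sigma)$ skew-symmetric on pairs of edges and symmetric on vertex--face pairs, matching the skew-symmetry of $f\sigma\cdot f\tau$ (Assertion~\ref{starzch}.a) and the symmetry of $-f(A)\cdot f(\partial R)$; making the two computations agree on a single direction of the move (equivalently, fixing $f\sigma\cdot P_A=-1$) is the one delicate point.
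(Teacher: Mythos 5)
Your overall plan is the right one, and since the paper states Proposition~\ref{elcobz-r} without proof it is essentially the intended argument: localize the move in a disk, observe that modulo~2 everything reduces to Proposition~\ref{elcob-r}, compute the integer change on the two kinds of pairs (a closed-loop argument in the spirit of the Triviality Lemma~\ref{l:triv} for edge--edge pairs, Proposition~\ref{ratv-totv2}.c for vertex--face pairs), and check that no other entry of $\t K$ moves. Your edge--edge computation is correct: $\Delta(f\sigma\cdot f\tau)=[\tau:A]\,(f\sigma\cdot P_A)$, which matches the values $-[\tau:A]$ and $[\tau:A]$ on $(\sigma,\tau)$ and $(\tau,\sigma)$ exactly when $f\sigma\cdot P_A=-1$.

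The gap is at the very point you flag as ``the one delicate point'' and then do not carry out: the identity $f(\partial R)\cdot P_A=[R:\sigma]\,(f\sigma\cdot P_A)$ is asserted, not derived, and with the paper's conventions it comes out with the opposite sign. By the stated definition, $[R:\sigma]=-1$ precisely when the directed edge $\sigma$ agrees with the cyclic order of $R$ (e.g.\ $[XYZ:XY]=-1$), i.e.\ precisely when the traversal of $f(\partial R)$ runs along the chord in $\sigma$'s own direction; since the single crossing of $P_A$ with $f(\partial R)$ lies on that chord, one gets $f(\partial R)\cdot P_A=-[R:\sigma]\,(f\sigma\cdot P_A)$. (This is the same sign reckoning that makes the identity $\sum_{A\in\sigma}[\sigma:A](f(\partial R)\cdot fA)=\sum_{\tau\subset R}[R:\tau](f\sigma\cdot f\tau)$ displayed in \S\ref{s:radcz} true, and it can be confirmed on a concrete triangle.) Consequently the entry $-fA\cdot f(\partial R)$ changes by $+[R:\sigma]\,(f\sigma\cdot P_A)$, so for the direction $f\sigma\cdot P_A=-1$ that makes your edge--edge entries match $\delta_K(A,\sigma)$, the vertex--face entries change by $-[R:\sigma]$ rather than $[R:\sigma]$: the two halves of your verification require opposite directions of the move. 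You must therefore either redo this sign computation and show it really agrees with the stated $\delta_K(A,\sigma)$, or observe explicitly that the statement holds only after a sign adjustment in the conventions (e.g.\ taking $+fA\cdot f(\partial R)$ in the definition of the integral cocycle, or $-[R:\sigma]$ on the vertex--face entries of $\delta_K(A,\sigma)$, after which everything you wrote closes). As written, the one piece of content genuinely beyond Proposition~\ref{elcob-r} --- the relative sign between the two blocks --- is not established by your argument.
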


Maps $\nu_1,\nu_2:\t K\to\Z$ are called {\bf  super-symmetrically cohomologous} if
$$
\nu_1-\nu_2=m_1\delta_K(A_1,\sigma_1)+\ldots+m_k\delta_K(A_k,\sigma_k)
$$
for some vertices $A_1,\ldots,A_k$, edges $\sigma_1,\ldots,\sigma_k$ and integer numbers $m_1,\ldots,m_k$ (not necessarily distinct).

The integral analogues of Lemma \ref{star-r} and Proposition \ref{starpr-r} are correct, cf. Lemma \ref{star-zl}.


\begin{proposition}[cf. Propositions \ref{starz}, \ref{vankamz-oz3}]\label{starz-r}
For any 2-complex $K$ twice the integral intersection cocycle of any general position PL map $K^{(1)}\to\R^2$ is  super-symmetrically cohomologous to the zero map.
\end{proposition}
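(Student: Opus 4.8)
The plan is to imitate the deduction of Proposition~\ref{starz} from Assertion~\ref{starzch}.c and Lemma~\ref{star-zl}. I would fix once and for all orientations on the edges and faces of $K$, let $f\colon K^{(1)}\to\R^2$ be any general position PL map, and denote by $c_f\colon\t K\to\Z$ its integral intersection cocycle. Then I would take an axial symmetry $s\colon\R^2\to\R^2$ and set $\bar f:=s\circ f$; since an affine map preserves collinearity of points and concurrency of segments, $\bar f$ is again a general position PL map. The first claim to establish is that $c_{\bar f}=-c_f$. Granting this, the integral analogue of Lemma~\ref{star-r} (asserted in the text immediately above the statement) says that $c_f$ and $c_{\bar f}$ are super-symmetrically cohomologous, i.e.
$$c_f-c_{\bar f}=m_1\delta_K(A_1,\sigma_1)+\ldots+m_k\delta_K(A_k,\sigma_k)$$
for suitable vertices $A_i$, edges $\sigma_i\not\ni A_i$ and integers $m_i$; but $c_f-c_{\bar f}=c_f-(-c_f)=2c_f$, which is exactly what we want to prove.

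Next I would check $c_{\bar f}=-c_f$ entry by entry. On a pair $(\sigma,\tau)$ of non-adjacent edges the value of $c_f$ is $f\sigma\cdot f\tau$, and under an axial symmetry this changes sign by Assertion~\ref{starzch}.c, so $\bar f\sigma\cdot\bar f\tau=-f\sigma\cdot f\tau$. On a pair $(A,R)$ (or $(R,A)$) of a vertex and a disjoint face the value of $c_f$ is $-fA\cdot f(\partial R)$, which is, up to sign, the winding number of the oriented polygonal line $f(\partial R)$ around $fA$; by the defining formula in \S\ref{s:tvtopl} this winding number is a sum of oriented angles, and each oriented angle reverses sign under an axial symmetry, so this entry of the cocycle changes sign as well. (Throughout, the combinatorial orientations of the edges and faces of $K$ stay fixed; only the orientation of the ambient plane is reversed.) All remaining entries of $c_f$ vanish by definition. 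Hence $c_{\bar f}=-c_f$, which completes the argument.

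The routine content is exactly the two sign computations above, which are the face-by-face analogues of Assertion~\ref{starzch}.c and present no real difficulty. The one point that deserves a little care is that the grading sign $(-1)^{(|R_1|-1)(|R_2|-1)}$ built into the elementary super-symmetric coboundaries $\delta_K(A,\sigma)$ is the correct bookkeeping: it equals $-1$ on an edge--edge pair (recovering the skew-symmetric situation for graphs) and $+1$ on a face--vertex pair, which is precisely why the equivalence relation is called \emph{super-symmetric} rather than \emph{skew-symmetric} here. Since the integral analogue of Lemma~\ref{star-r} is taken as known, nothing more is needed; the only genuine obstacle would arise if one insisted on reproving that analogue from scratch, in which case one would rerun the proof of Lemma~\ref{star} (or of Lemma~\ref{star-zl}), using the Triviality Lemma~\ref{l:triv} and Proposition~\ref{ratv-totv2}.c in place of the Parity Lemma.
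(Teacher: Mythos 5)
Your proposal is correct and is exactly the paper's argument: the paper derives Proposition \ref{starz-r} from the integral analogue of Lemma \ref{star-r} together with the 2-hypergraph analogue of Assertion \ref{starzch}.c (reflection reverses both the edge--edge intersection numbers and the vertex--face winding numbers, so the mirrored map has cocycle $-c_f$, whence $2c_f=c_f-c_{\bar f}$ is super-symmetrically cohomologous to zero). Your entry-by-entry sign check simply spells out what the paper leaves implicit.
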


This follows by the integral analogue of Lemma \ref{star-r} and the analogue of Assertion \ref{starzch}.c for 2-complexes.

\subsubsection{Elementary formulation of the \"Ozaydin theorem}\label{s:tvescz}

Let $K=(V,F)$ be a 2-complex and $f:K^{(1)}\to\R^2$ a general position PL map.
Denote by $E$ the set of edges.
Orient the edges and faces of $K$, i.e. choose some cyclic orderings on the subsets that are edges and faces.
Denote by $K^{\underline{r}}$ the set of ordered $r$-tuples $(R_1,\ldots,R_r)$ of pairwise disjoint sets from $V\sqcup E\sqcup F$ such that either

(A) two of the sets $R_1,\ldots,R_r$ are edges and the other are faces, or

(B) one of the sets $R_1,\ldots,R_r$ is a vertex and the other are faces.\footnote{This is the $d(r-1)$-skeleton of the {\it simplicial $r$-fold deleted product} of $K$.
Cf. \cite[\S1.4]{Sk16}.}

Clearly,

$\bullet$  if $|V|<3r-2$, then $K^{\underline{r}}=\emptyset$.

$\bullet$ $(\Delta^2_{3r-3})^{\underline{r}}$ is the set of ordered
partitions of $[3r-2]$ into $r$ non-empty subsets, every subset having at most 3 elements.

{\bf The $r$-fold intersection cocycle} of $f$ for $K$ (and for given orientations) is a map $K^{\underline{r}}\to\Z$ that assigns to $r$-tuple $(R_1,\ldots,R_r)$ the number $fR_1\cdot\ldots\cdot fR_r$ or $-fR_1\cdot\ldots\cdot fR_r$ in cases (A) or (B) above, respectively.\footnote{This agrees up to sign with the definition of \cite[Lemma 41.b]{MW15} because by
\cite[(13) in p. 17]{MW15} $\varepsilon_{2,2,\ldots,2,0}$ is even and $\varepsilon_{2,2,\ldots,2,1,1}$ is odd.
\newline
The $r$-fold intersection cocycle depends on an arbitrary choice of orientations, but the triviality condition defined below does not.}


{\it Super-symmetric} $r$-fold {\it elementary coboundary} and {\it cohomology} are defined analogously to the case $r=2$  considered in \S\ref{s:radcz}.
The $r$-fold analogue of Lemma \ref{star-r} is correct with a similar proof.

\begin{remark}\label{r:anar-starpr-r}
It would be interesting to know if the $r$-fold analogue of Proposition \ref{starpr-r} is correct.
\end{remark}

A map $K^{\underline{r}}\to\Z$ is called (super-symmetrically cohomologically) {\bf trivial} if it is super-symmetrically cohomologous to the zero map.

Proofs of the Topological Tverberg Theorem in the plane \ref{ratv-tvpl} (mentioned after the statement) show that {\it if $r$ is a prime power, then for the complete 2-complex $K=\Delta^2_{3r-3}$ the $r$-fold intersection cocycle of every general position PL map $K^{(1)}\to\R^2$ is non-trivial} .

\begin{remark}\label{r:p-boundary} The number from Problem~\ref{p:boundary}
is the sum of some values of the threefold intersection cocycle (with certain coefficients).
\end{remark}

\begin{theorem}[\"Ozaydin]\label{vankamz-oz}
If $r$ is not a prime power, then for every 2-complex $K$ the $r$-fold intersection cocycle of any general position PL map $K^{(1)}\to\R^2$ is trivial.
\end{theorem}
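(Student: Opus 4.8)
The plan is to recognise the $r$-fold intersection cocycle as an obstruction cocycle and then to quote the planar case of \"Ozaydin's Theorem~\ref{t:ozmawa}, giving its proof along the way.

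\textbf{Step 1 (obstruction-theoretic reinterpretation).} Given a $2$-hypergraph $K=(V,F)$, let $Y=Y(K)$ be the $2(r-1)$-skeleton of the simplicial $r$-fold deleted product of $K$: its cells are products $\sigma_1\times\dots\times\sigma_r$ of pairwise disjoint nonempty faces with $\sum_i\dim\sigma_i\le 2(r-1)$, its $2(r-1)$-cells are exactly $K^{\underline r}$, and $\Sigma_r$ acts freely on $Y$ by permuting factors. Let $W=\{x\in\R^r:\sum x_i=0\}$ be the standard $\Sigma_r$-representation; then $(\R^2)^r$ minus its thin diagonal $\Sigma_r$-equivariantly deformation retracts onto $S(W^{\oplus2})\cong S^{2r-3}$, and $\Sigma_r$ acts on this sphere preserving orientation (a permutation acts on $W^{\oplus2}$ with determinant a square), so $\pi_{2r-3}(S(W^{\oplus2}))\cong\Z$ carries the trivial action. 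For a general position PL map $f\colon K^{(1)}\to\R^2$ the composite of $f^{\times r}$ with the retraction is defined on the $(2r-3)$-skeleton of $Y$, and the $r$-fold intersection cocycle of $f$ is (up to the sign conventions of \S\ref{s:radcz}--\S\ref{s:tvescz}, cf.\ the footnotes referring to \cite{MW15}) precisely its obstruction cocycle for extending over $K^{\underline r}$; super-symmetric $r$-fold cochains and elementary coboundaries are exactly equivariant cellular cochains and coboundaries. Hence, by the $r$-fold analogue of Lemma~\ref{star-r} granted in the excerpt, there is a well-defined class
$$\mathfrak{o}_r(K)\in H^{2(r-1)}_{\Sigma_r}(Y;\Z)=H^{2(r-1)}(Y/\Sigma_r;\Z),$$
independent of $f$, and the $r$-fold intersection cocycle is trivial if and only if $\mathfrak{o}_r(K)=0$.

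\textbf{Step 2 (reduction to the complete hypergraph).} Every PL map $K^{(1)}\to\R^2$ extends to a general position PL map $K_{|V|}^{(1)}\to\R^2$; under the inclusion $Y(K)\hookrightarrow Y(\Delta^2_{|V|-1})$ the intersection cocycle of $f$ is the restriction of that of the extension, and restrictions of elementary super-symmetric coboundaries are elementary super-symmetric coboundaries or zero. So $\mathfrak{o}_r$ is natural in $K$ (and $Y=\varnothing$ when $|V|<3r-2$), and it suffices to prove $\mathfrak{o}_r(\Delta^2_N)=0$ for all $N$ whenever $r$ is not a prime power --- which is exactly the planar case of \"Ozaydin's Theorem~\ref{t:ozmawa}.

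\textbf{Step 3 (vanishing via transfer and fixed points).} Since $\Sigma_r$ acts freely on the finite complex $Y$, the group $A:=H^{2(r-1)}_{\Sigma_r}(Y;\Z)$ is finitely generated, so $\mathfrak{o}_r(K)=0$ as soon as its image in $A\otimes\Z_{(p)}$ vanishes for every prime $p$. Fix $p$ and a Sylow $p$-subgroup $P\le\Sigma_r$; as $[\Sigma_r:P]$ is a unit in $\Z_{(p)}$, restriction $A\otimes\Z_{(p)}\to H^{2(r-1)}_P(Y;\Z)\otimes\Z_{(p)}$ is split injective, so it is enough to produce a $P$-equivariant map $Y\to S(W^{\oplus2})$ (whose existence forces the well-defined primary obstruction $\mathfrak{o}_r(K)|_P$ to vanish, since $S(W^{\oplus2})$ is $(2r-4)$-connected and $\dim Y=2r-2$). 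Here is where ``$r$ not a prime power'' is used: then $r$ is a power of no prime, so the base-$p$ digit sum $s_p(r)$ is $\ge2$, and taking $P$ to be the product of Sylow $p$-subgroups of the symmetric groups on the corresponding blocks (of sizes powers of $p$ summing to $r$), $P$ acts on $\{1,\dots,r\}$ with $s_p(r)\ge2$ orbits; hence $W^{P}\cong\R^{\,s_p(r)-1}\ne0$, so $S(W^{\oplus2})$ has a $P$-fixed point $z$, and the constant map $Y\to\{z\}$ is $P$-equivariant. Thus $\mathfrak{o}_r(K)$ dies in $A\otimes\Z_{(p)}$ for every $p$, so $\mathfrak{o}_r(K)=0$ and the $r$-fold intersection cocycle is trivial.

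\textbf{Expected main obstacle.} The only genuinely non-formal point is Step~1: matching the sign conventions in the definition of the super-symmetric $r$-fold intersection cocycle and coboundary (the factors $[R:R']$ and $(-1)^{(|R_1|-1)(|R_2|-1)}$ of \S\ref{s:radcz}--\S\ref{s:tvescz}) with the orientation conventions on the product cells $\sigma_1\times\dots\times\sigma_r$ and with the trivial action on $\pi_{2r-3}(S(W^{\oplus2}))$, so that ``super-symmetrically cohomologous to zero'' is literally ``represents $0$ in $H^{2(r-1)}_{\Sigma_r}(Y;\Z)$''; the $r$-fold analogue of Lemma~\ref{star-r} is already granted, but this identification is exactly the delicacy flagged by the \cite{MW15} footnotes. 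Once it is in place, Step~2 is pure functoriality and Step~3 is the standard transfer-plus-representation-sphere argument, the sole essential input being \"Ozaydin's observation that a non-prime-power $r$ makes a Sylow $p$-subgroup of $\Sigma_r$ intransitive on $r$ points, hence fix a point of $S(W^{\oplus2})$.
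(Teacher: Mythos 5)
Your argument is correct and is essentially the paper's: the paper deduces the theorem from Proposition~\ref{vankamz-oz3}.b together with the coprimality of the numbers $r!/p^{\alpha_{r,p}}$, and Proposition~\ref{vankamz-oz3}.b is (via the identification of the $r$-fold intersection cocycle with the equivariant obstruction cocycle of \cite{MW15}, which is your Step~1) exactly the Sylow-subgroup transfer/fixed-point argument you carry out in Step~3, stated integrally rather than after localization at each prime $p$. Your Step~2 is an unneeded detour, and Theorem~\ref{t:ozmawa} as stated requires $k\ge2$ so it cannot literally be quoted for the plane, but since you prove the needed vanishing directly this changes nothing.
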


This is implied by the following Proposition~\ref{vankamz-oz3}.b because when $r$ is not a prime power, the numbers $r!/p^{\alpha_{r,p}}$, for all primes $p<r$, have no common multiple.
Here $\alpha_{r,p}=\sum_{k=1}^\infty \left\lfloor \dfrac{r}{p^k}\right\rfloor$ is the power of $p$ in the prime factorisation of $r!$.

\begin{proposition}[cf. Proposition \ref{starz-r}]\label{vankamz-oz3}
Let $K$ be a 2-complex and $f:K^{(1)}\to\R^2$ a general position PL map.

(a) Threefold intersection cocycle of $f$ multiplied by $3$ is trivial.

(b) If $r$ is not a power of a prime $p$, then the $r$-fold intersection
cocycle of $f$ multiplied by $r!/p^{\alpha_{r,p}}$ is trivial.
\end{proposition}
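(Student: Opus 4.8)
The plan is to deduce both parts from a \emph{transfer} argument over a Sylow $p$-subgroup of the symmetric group $\Sigma_r$, the geometric heart being a single observation: if $r$ is not a power of $p$, then a Sylow $p$-subgroup $P\le\Sigma_r$ cannot act transitively on $[r]$ (otherwise $r$ would divide $|P|=p^{\alpha_{r,p}}$), so the $P$-fixed subspace of $W_r:=\{x\in\R^r:x_1+\ldots+x_r=0\}$ — the block-constant sum-zero vectors — is nonzero, whence the sphere $S(W_r\otimes\R^2)=S^{2r-3}$ has a $P$-fixed point. Part (a) will be the special case $r=3$, $p=2$: here $P=\langle(12)\rangle$, the $P$-fixed circle of $S^3$ is visible directly, and $[\Sigma_3:P]=3=3!/2^{\alpha_{3,2}}$.

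First I would recast super-symmetric $r$-fold cochains as equivariant cochains. The set $K^{\underline r}$ carries a \emph{free} $\Sigma_r$-action permuting the entries of a tuple (free since the sets $R_1,\ldots,R_r$ are nonempty and pairwise disjoint, hence distinct), so a super-symmetric cochain is exactly a $\Sigma_r$-equivariant cochain, with values in the relevant orientation module $\t\Z$, on the $2(r-1)$-skeleton $\Pi_K$ of the $r$-fold deleted product of $K$; super-symmetric coboundaries are exactly equivariant coboundaries. The $r$-fold intersection cocycle of $f$ is such a cocycle, and by the $r$-fold analogue of Lemma~\ref{star-r} (quoted in \S\ref{s:tvescz}) its class $[\nu_f]\in H^{2(r-1)}_{\Sigma_r}(\Pi_K;\t\Z)$ is independent of $f$. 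This is the equivariant van Kampen obstruction: it is well defined and the only obstruction because $\dim\Pi_K=2(r-1)$ while $S^{2r-3}$ is $(2r-4)$-connected, so any two maps of the $(2r-3)$-skeleton agree and the obstruction to extending over the top cells is the canonical test map's obstruction cocycle $\nu_f$ (up to sign; cf.\ the standard proof of \S\ref{s:rainbow} and the references to \cite{MW15} in \S\ref{s:tvescz}). In particular $[\nu_f]$ vanishes if and only if $\Pi_K$ admits a $\Sigma_r$-equivariant map to $S(W_r\otimes\R^2)$, and triviality of $c\nu_f$ is precisely the equality $c\,[\nu_f]=0$.

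Now apply transfer. With $P\le\Sigma_r$ a Sylow $p$-subgroup, $p^{\alpha_{r,p}}$ is $|P|$ by Legendre's formula, so $[\Sigma_r:P]=r!/p^{\alpha_{r,p}}$; and on $H^{*}_{\Sigma_r}(\Pi_K;\t\Z)$ one has $\mathrm{cor}^{\Sigma_r}_P\circ\mathrm{res}^{\Sigma_r}_P=[\Sigma_r:P]\cdot\id$, concretely: a $P$-equivariant cochain $b$ with $\delta b=\nu_f$ yields the super-symmetric cochain $\sum_{gP\in\Sigma_r/P}g\cdot b$ with coboundary $[\Sigma_r:P]\,\nu_f$. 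Hence it suffices to show $\mathrm{res}^{\Sigma_r}_P[\nu_f]=0$, i.e.\ that $\Pi_K$ admits a $P$-equivariant map to $S(W_r\otimes\R^2)$. But $r$ is not a power of $p$, so by the observation above $S(W_r\otimes\R^2)$ has a $P$-fixed point, and the constant map to it is $P$-equivariant; therefore $\mathrm{res}^{\Sigma_r}_P[\nu_f]=0$, so $(r!/p^{\alpha_{r,p}})\,\nu_f$ is trivial, proving (b); part (a) is the case $r=3$, $p=2$. (For $r$ a prime power this breaks down exactly as it must: the Sylow $p$-subgroup is transitive, $(W_r)^P=0$, and indeed no $P$-equivariant map exists.)

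The main obstacle is to keep this genuinely elementary, as the surrounding exposition requires. Two points need care without invoking Bredon cohomology: (i) that $[\nu_f]$ really is the (primary, and only) obstruction — that the $r$-fold intersection cocycle equals, up to sign, the obstruction cocycle to extending the canonical test map over the top cells of $\Pi_K$, which amounts to a careful but routine bookkeeping with the definitions in \S\ref{s:triple} and \S\ref{s:tvescz}; and (ii) producing the $P$-equivariant primitive $b$ with $\delta b=\nu_f$ \emph{over $\Z$}. A fully self-contained treatment would build $b$ by hand: from the $P$-invariant decomposition $[r]=O_1\sqcup\cdots\sqcup O_\ell$ into $P$-orbits ($\ell\ge2$) one constructs a $P$-equivariant ``thinning'' cochain from the barycenters of the $f$-images within each block, in the spirit of the elementary proofs of Lemmas~\ref{11-vankam},~\ref{star} and~\ref{ratv-vk2} and of \cite[Lemmas 6 and 7]{ST17}; checking that this $b$ satisfies $\delta b=\nu_f$ integrally (not merely modulo $p$), using the Triviality Lemma~\ref{l:triv} and Proposition~\ref{ratv-totv2}.c, is where the real computation lies, cf.\ \cite[second half of \S8]{MTW12}.
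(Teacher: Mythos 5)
Your argument is correct in outline, and it is essentially the route the paper itself indicates: the paper gives no self-contained proof of Proposition \ref{vankamz-oz3}, but declares it equivalent --- via the identification of the $r$-fold intersection cocycle with the equivariant obstruction cocycle \cite[Lemma 41.b and Theorem 40]{MW15} --- to the intermediate step of the standard proof of the \"Ozaydin theorem \cite{Oz}, \cite[\S3.2]{Sk16}, which is precisely your Sylow-transfer argument combined with the observation that a Sylow $p$-subgroup is non-transitive when $r$ is not a $p$-power and hence fixes a point of $S(W_r\otimes\R^2)$. The only caveat is that the paper explicitly asks for a \emph{direct} proof in the spirit of Propositions \ref{starz} and \ref{starz-r}, i.e.\ one avoiding equivariant obstruction theory and the configuration-space dictionary; your closing paragraph gestures at such a construction but, as you acknowledge, does not supply it --- that is the paper's open request rather than a gap in your proof of the stated proposition.
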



Part (a) is a special case of part (b) for $r=p+1=3$.

The usual form of the \"Ozaydin theorem \cite[Theorem 3.3]{Sk16} states the existence of certain {\it equivariant maps}.
Theorem \ref{vankamz-oz} is equivalent to that statement because the $r$-fold intersection cocycle equals to
the {\it obstruction cocycle} \cite[Lemma 41.b]{MW15} which is a complete obstruction to the existence
of certain equivariant map \cite[Theorem 40]{MW15}.
Analogously Proposition~\ref{vankamz-oz3} is equivalent to the corresponding intermediate result
from the proof of  the `usual' \"Ozaydin theorem.
See simplified exposition in survey \cite[\S3.2]{Sk16}.

It would be interesting to obtain a direct proof of Proposition \ref{vankamz-oz3}, cf. the above direct proofs of Propositions \ref{starz} and  \ref{starz-r}.

\comment

\begin{pr}\label{csgn-r}
(a) The sum of the values of $\nu_f$ over partitions $(R_1,R_2)\in K_S^*$ such that $1\in R_1$ is congruent to the Radon number $\rho(f)$ modulo $2$.

(b REPLACE ???) Define the function $\csgn:K_S^*\to\{+1,-1\}$, $\csgn(R_1,R_2)=+1$ if and only if $1\in R_1$.
Then $\sum\limits_{(R_1,R_2)\in K_S^*} \nu_f(R_1,R_2)\cdot\csgn(R_1,R_2)\underset4\equiv 2$.
\end{pr}


is the map $\nu_f:K_S^*\to\Z$ such that
$$\nu_f(R_1,R_2)\ =\
\begin{cases}
\deg_{f(R_j)}f(R_i), & \text{if}\ |R_i|=1,\ |R_j|=3\ \text{for some }i\ne j;\\
\sum\limits_{X\in fR_1\cap fR_2} \sgn X, & \text{if}\ |R_1|=|R_2|=2.
\end{cases}$$
Here if $R_i=\{A,B\}, A<B$ then $f(R_i)$ is the image of the edge $R_i$ oriented from $f(A)$ to $f(B)$,
and if $R_i=\{A,B,C\},A<B<C$, then $f(R_i)$ is the image of the cycle $R_i$ oriented as $f(A)f(B)f(C)f(A)$.

For a vertex $A$ and an edge $BC,\ B<C,\ B,C\ne A$ define {\bf an elementary skew-symmetric coboundary} of the pair $(A,BC)$ as the map $\delta(A,BC):K_S^*\to\Z$ which assigns

$\bullet$ $1$ to any pair of edges $(R_1,R_2)$ with $R_1=\{B,C\}$ and $R_2$ issuing out of $A$ or with $R_2=\{B,C\}$ and $R_1$ going to $A$,

$\bullet$ $-1$ to any pair of edges $(R_1,R_2)$ with $R_1=\{B,C\}$ and $R_2$ going to $A$ or with $R_2=\{B,C\}$ and $R_1$ issuing out of $A$,

$\bullet$ $1$ to any pair $(R_1,R_2)$ such that $R_i=\{A\}$ and $R_j=\{B,C,D\}$, where either $D<B$, or $C<D$,

$\bullet$ $-1$ to any pair $(R_1,R_2)$ such that $R_i=\{A\}$ and $R_j=\{B,C,D\}$, where $B<D<C$,

$\bullet$ $0$ to any other pair.


(R_1,R_2,R_3)$ for every spherical  partition $(R_1,R_2,R_3)\in \mathrm{S}$


Given definitions, presumably part (a) would be proved by taking $f$ a regular heptagon in the plane with vertices ordered counterclockwise and close to vertices of a triangle,
In the plane take a (non-degenerate) a triangle $ABC$ and seven points $1,\ldots,7$, of which points $1,6$
are close to $A$, points $2,5$ are close to $B$, points $3,4,7$ are close to $C$ (cf. Example \ref{ratv-2r}).

\begin{lemma}[triple intersection invariant]
Take a chessboard coloring of spherical partitions $R$ of $[7]$ such that $7\in T_3$, and denote by $\csgn(R)\in\{+1,-1\}$ the color of $R$ (see Lemma \ref{ratv-ext}.c).
Take a . Then

It is also possible to prove Lemma \ref{p:boundary} differently, using more abstract result, 3-fold Borsuk-Ulam Theorem \ref{t:3bu} below.
Then it turns out that Theorem~\ref{ratv-tvpl} can be proved (see below) without use of Lemma \ref{p:boundary}.
(Der Mohr hat seine Arbeit getan, der Mohr kann gehen.)%
\footnote{Note that use of a degree in 3-fold Borsuk-Ulam Theorem \ref{t:3bu} requires independence of a general position point chosen in the definition of degree, which corresponds to a proof that $V(f)$ modulo 3 is independent of $f$ in the other proof of Lemma \ref{p:boundary} mentioned above.}

, and $fT_s$ is defined analogously.

[cf. alternative proof after Proposition \ref{vkam2-deg2}]

Unlike a direct generalization of Lemmas \ref{11-vankam} and \ref{ratv-vk2} (analogous to  \cite{BMZ15}) we do not bother about the exact values of $\varepsilon_7$, which simplifies the proof.
However, the exact values of $\varepsilon_7$ give

There is a map $\varepsilon_7:\mathrm{S}\to\{+1,-1\}$ such that
$$\sum\limits_{R=(R_1,R_2,R_3)\in \mathrm{S},\atop |R_1|+1=|R_2|=|R_3|-1=2}
\varepsilon(R)\overline{fR_1}_+\cdot fR_2\cdot fR_3+
\sum\limits_{R=(R_1,R_2,R_3)\in \mathrm{S},\atop |R_1|=|R_2|=|R_3|=2}
\varepsilon(R) fR_1\cdot(fR_2\circ fR_3)\underset3\equiv$$


\begin{proof}[Proof of (a)]
Take any $T=(T_1,T_2,T_3)\in \mathrm{S}$ and denote by the same letters $T_1,T_2,T_3$ the corresponding simplices of $\Delta_5$.
Consider the equality for $fT_1\cdot fT_2\cdot fT_3$ given by Proposition \ref{a:3red}.ce.
Sum up all the equalities with signs $\varepsilon_7$ to obtain the right-hand term of the congruence.
By Lemma \ref{ratv-ext}.c and Assertion \ref{p:ext}??? for some choice of the map $\varepsilon_7$ all the summands corresponding to spherical partitions $(R_1,R_2,R_3)$ of $7-\{j\}$ with $j\ne7$ will cancel.\footnote{In fact, the map $\varepsilon_7$ comes from Lemmas \ref{ratv-ext}.c and is independent of $f$.
Instead of Lemma \ref{ratv-ext}.c and Assertion \ref{p:ext}??? we can use Lemmas \ref{cont-hom},\ref{cont-homeq}.}
The remaining sum is the left-hand term of the congruence.
\end{proof}






For every $j\in[6]$ and spherical partition $R$ of $[6]$ let $[R:j]=-1$ if $j$ is the second
value of some set from $R$ (w.r.t. increasing of values), and let $[R:j]=+1$ otherwise.
There is a unique map
$$\sgn:\mathrm{S_6}\to\{+1,-1\}$$
from the set $\mathrm{S_6}$ of all spherical partitions of $[6]$ such that

$\bullet$ $\sgn(\{1,2,3\},\{4,5,6\},\emptyset)=+1$.

$\bullet$ for every $j\in[6]$ and every spherical partition $G$ of $[6]-\{j\}$ take
the two spherical partitions $R,R'$ of $[6]$ extending $G$ given by (a).
Then $[R:j]\sgn R+[R':j]\sgn R'=0$.
\footnote{Take orientation $ab$ on every 2-element subset $\{a,b\}\subset[7]$, $a<b$.
Take orientation $abc$ on every 2-element subset $\{a,b,c\}\subset[7]$, $a<b<c$.
Consider the {\it join orientation on 5-cells of $\Delta_5*\Delta_5*\Delta_5$} corresponding to spherical partitions.
Then these cells $R$ with coefficients $\sgn R$ form an {\it integer 5-cycle}.
It would be interesting to know if the map $\sgn$ coincides with the combinatorial sign $\csgn$ of  \cite[\S5]{MTW12}, \cite{BMZ15}, or is the second bullet point condition equivalent to $\csgn R+\csgn R'=0$.}

\begin{lemma}\label{ratv-csgn}
(a) For every general position PL map $f:K_7\to\R^2$ and spherical partition $(R_1,R_2,R_3)$ we have
$\sgn(R_1,R_2,R_3) (fR_1\cdot fR_2\cdot fR_3) = \sgn(R_2,R_1,R_3) (fR_2\cdot fR_1\cdot fR_3)$.

(b) The triple van Kampen number does not change under the Reidemeister moves in fig.~\ref{reidall}.I-V.
\end{lemma}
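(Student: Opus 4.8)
The plan is to follow the proofs of Lemmas \ref{11-vankam}, \ref{star} and \ref{ratv-vk2} and of Propositions \ref{elcob}, \ref{elcobz-r}, with double intersection numbers replaced by the triple intersection number of \S\ref{s:triple}. For part (a) the point is that both sides are controlled by one sign: the triple algebraic intersection number is \emph{super-symmetric} under transposing its first two entries, and so is $\sgn$. First I would check, by a short case analysis on the profile $(|R_1|,|R_2|,|R_3|)$ -- which, for the intersection number to be defined, is a permutation of $(2,2,3)$ or of $(1,3,3)$ -- that
\[ fR_2\cdot fR_1\cdot fR_3\ =\ (-1)^{(|R_1|-1)(|R_2|-1)}\,fR_1\cdot fR_2\cdot fR_3 . \]
Indeed, if $R_1$ and $R_2$ are both edges, then in the defining sum (case (A) of \S\ref{s:triple}) the two non-closed polygonal lines swap roles at each intersection point, so by Assertion \ref{starzch}.a every summand changes sign; in every other case the unordered pair of non-closed lines (resp.\ the point) is unchanged and reordering the remaining factors only permutes a product of integers, so the sum is literally unchanged -- and in all cases the exponent $(|R_1|-1)(|R_2|-1)$ is odd precisely when both $R_1,R_2$ are edges. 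On the other hand, since $\sgn$, like the cochains of \S\ref{s:radcz}, is super-symmetric, $\sgn(R_2,R_1,R_3)=(-1)^{(|R_1|-1)(|R_2|-1)}\sgn(R_1,R_2,R_3)$ (this is built into the join-orientation convention for $\sgn$, or follows from its normalization and the extension relation by a short induction). Multiplying the two identities gives (a).

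For part (b) I would argue as in Proposition \ref{elcob} and Proposition \ref{elcobz-r}. By their threefold analogue, under the moves \ref{reidall}.I--IV the integral triple intersection cocycle $(R_1,R_2,R_3)\mapsto fR_1\cdot fR_2\cdot fR_3$ is unchanged -- such a move takes place in a disk missing all vertices and alters neither a double intersection number nor a winding number entering the definition -- hence $V(f)$, being a fixed linear combination of cocycle values, is unchanged. It remains to treat move \ref{reidall}.V, pushing a vertex $A$ across an edge $\sigma$; by the same analogue this changes the triple cocycle by an elementary super-symmetric threefold coboundary $\delta_{K_7}(A,\sigma)$ (supported on triples in which $\sigma$ is one entry and $A$ lies in a second), so
\[ V(f)-V(f')\ =\ \sum_{T\in\mathrm{S}}\sgn T\cdot\delta_{K_7}(A,\sigma)(T) , \]
and I must show this vanishes, at least modulo $3$ (all Problem \ref{p:boundary} needs). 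Equivalently, following the one-line computation in Lemma \ref{11-vankam}, one may take $f'$ to differ from $f$ only on a single linear edge $\sigma$; then $V(f)-V(f')$ becomes a signed, weighted count of triple incidences involving the bigon $f\sigma\cup f'\sigma$, the triangles formed by the other edges, and the relevant winding numbers. Its vanishing modulo $3$ should be exactly the defining (``cocycle'') property of $\sgn$: by part (a) and the extension relation $[R:j]\sgn R+[R':j]\sgn R'=0$, the signed sum $\sum_{T:\,7\in T_3}\sgn T\cdot[T]$ is a relative $6$-cycle of $\con|\mathrm{S}_3|$, and a coboundary pairs trivially with it once the boundary contributions living over $[6]$ cancel in pairs by that same relation.

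The main obstacle is this last vanishing for move \ref{reidall}.V: matching the combinatorics of the threefold elementary coboundary $\delta_{K_7}(A,\sigma)$ against the restriction to spherical partitions with $7\in T_3$ and checking that all contributions -- the ``interior'' ones and the ``boundary'' ones over $[6]$ -- cancel modulo $3$. This is the triple analogue of ``$|(f\sigma\cup f'\sigma)\cap f\Delta|\equiv 0\pmod 2$'' from the proof of Lemma \ref{11-vankam}, but it now rests on the nontrivial existence of a consistent sign function $\sgn$ on spherical partitions (equivalently, on the orientability of $|\mathrm{S}_3|\cong S^5$ and on $\sum\sgn R\cdot[R]$ being a genuine cycle); cf.\ the remark after Problem \ref{p:boundary} that such a computation ``might be not so easy''.
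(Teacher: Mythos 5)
You should know at the outset that the paper contains no proof of this lemma: it occurs only in a commented-out draft fragment, without argument, and in the live text the corresponding claim survives only as Problem \ref{p:boundary}, together with the explicit warning that showing $V(f)$ modulo $3$ to be independent of $f$ ``might be not so easy''. So there is no paper proof to compare with, and your attempt must stand on its own. For part (a) it stands only halfway: your derivation of the transposition rule for the triple intersection number from the definition in \S\ref{s:triple} (sign change exactly when the two transposed parts are both edges) is correct, but the matching rule $\sgn(R_2,R_1,R_3)=(-1)^{(|R_1|-1)(|R_2|-1)}\sgn(R_1,R_2,R_3)$ is the other half of the assertion, not something ``built into'' the definition: $\sgn$ is pinned down by a normalization plus the extension relation, and its behaviour under transposing the first two parts is a statement of the same nature as the orientation facts behind Lemma \ref{cont-homeq}; it requires an actual induction along the extension relation, which you only gesture at.

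The genuine gap is in part (b), at the step you treat as free. It is false that the triple intersection cocycle is unchanged under moves \ref{reidall}.I--IV. In move III an arc of an edge $e$ slides across the crossing point $X$ of two other edges $\sigma,\tau$; for every triangle $T$ with $e\subset T$ and $T$ disjoint from $\sigma\cup\tau$ the winding number $fT\cdot X$ jumps by $\pm1$, so the value $f\sigma\cdot f\tau\cdot fT$ changes by $\pm\sgn X$, and symmetrically for the other two pairs of pictured arcs (a concrete instance in $K_7$: arcs of the edges $13$, $25$, $46$ and the spherical partition $(\{1,3\},\{2,5\},\{4,6,7\})\in\mathrm{S}$). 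Unlike the double cocycle of Proposition \ref{elcob}.a, whose value depends only on the two objects pictured in the disk, a triple value involves a third object, and its position relative to the pictured crossing is exactly what move III alters; an analogous effect occurs for the vertex-type values $fT\cdot fA$ when an arc passes a vertex image. Hence already for move III the invariance of $V(f)$ modulo $3$ is a nontrivial cancellation among the several affected spherical partitions, governed by the very sign properties of $\sgn$ whose verification you postpone to the move-V case -- and that move-V computation is itself only sketched. In sum, the proposal correctly identifies the symmetry of the intersection number, but it reduces both (a) and (b) back to the unresolved combinatorial content of Problem \ref{p:boundary} rather than proving the lemma.
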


In other words, assign to a spherical partition $(R_1, R_2, R_3)$ the spherical partition $(R_2, R_1, R_3)$.
This correspondence is a bijection.
The corresponding summands of the Triple Intersection Invariant Lemma are equal.
Then the number of nonzero summands in the formula is even, so $v(f) = 0$.

(c) For each distinct $a,b,c,d,e\in[7]$ such that $7\not\in\{a+b,c+d,d+e,e+c\}$ we have
$$\varepsilon_{ab,cde}=\varepsilon_{b,cde}=-\varepsilon_{a,cde}\quad\text{where}\quad \varepsilon_{(S,T)}=\sgn(R_1,R_2,R_3)$$
for $\{R_1,R_2,R_3\}=\{S,T,[7]-S-T\}$ and $\max R_1<\max R_2<\max R_3$.

\begin{proof}[???Sketch of deduction of Problem~\ref{p:boundary} from known results]
Orient every 2-element subset $\{a,b\}\subset[7]$, $a<b$, as $(ab)$.
Orient every 3-element subset $\{a,b,c\}\subset[7]$, $a<b<c$, as $(abc)$.
Take any general position PL map $K_7\to\R^2$.
The value of the reduction $V_{r,f}$ modulo 3 of the `join' threefold intersection cocycle on a spherical partition $(R_1,R_2,R_3)$ of $[7]$ is $V_{r,f}(R_1,R_2,R_3)=(-1)^{|R_1|\cdot|R_2|\cdot|R_3|}(fR_1\cdot fR_2\cdot fR_3)$.
Denote by $X\subset\Delta_6*\Delta_6*\Delta_6*$ the union of simplices corresponding to spherical partitions $(R_1,R_2,R_3)$ of $[7]$ (not necessarily $7\in R_3$).
In proofs of \cite{VZ93}, \cite[\S6.5]{Ma03} it is shown that the restriction $V_{3,f}:X\to\Z_3$ is non-trivial.
A slight improvement of Proposition \ref{ratv-ext} shows that $H_6(X/Z_3;\Z_3)\cong\Z_3$ is generated by the of the 6-chain $\sum\limits_{R=(R_1,R_2,R_3)\in X}\csgn R(R_1*R_2*R_3)$.
Hence the scalar product of this 6-chain with $V_{3,f}$ is non-zero.
This scalar product is the number of the lemma.
The signs have to be checked.
\end{proof}



\footnote{Denote $R:=(\emptyset,[3],[7]-[3])$.
For a permutation $\sigma\in\Sigma_3$
denote by $R_\sigma$ the partition obtained from $R$ by permuting the three subsets $\emptyset,[3],[7]-[3]$ according to $\sigma$, and, if 7 is not in the third set after permutation, moving 7 to the (new) third set.
In \cite{MTW12} (`Sarkaria-Onn') maps $S_\sigma:K^{\underline{3}}\to\Z_3$
were constructed so that $\sum_{\sigma\in\Sigma_3} S_\sigma\cdot V_{r,f}\ne0\in\Z_3$ for every $f$. NO!}



\begin{example}\label{n}
(a) Suppose a general position PL map $f:K_7\to\R^2$ is given.
Denote by $\t\Delta_6^{(4)}$ the set of ordered partitions $[7]=R_1\sqcup R_2\sqcup R_3$ (into oriented sets)
such that either $|R_1|=|R_2|=2$, or $|R_i|=1,|R_2|=3$ for some $i\ne j$.
Then the threefold intersection cocycle is a map $\t\Delta_6^{(4)}\to\Z$, whose value on the partition $(R_1,R_2,R_3)$ is
$$\begin{cases}
(fR_j\cdot fR_i)(fR_k\cdot fR_i) &  |R_i|=1,\ |R_j|=|R_k|=3\text{ for some }\{i,j,k\}=\{1,2,3\} \\
-\sum\limits_{X\in f(R_i)\cap f(R_j)} \sgn X (fR_k\cdot X) & |R_i|=|R_j|=2,\ |R_k|=3 \text{ for some }\{i,j,k\}=\{1,2,3\}
\end{cases}.$$
\end{example}

Part (b) is essentially proved in \cite{Oz}, .
The plan of the proof is following.
Every PL map $K^{(1)}\to\R^2$ that have no Radon points corresponds to some {\it equivariant map}
$\t K^r\to S^{2r-3}$ with respect to action of the symmetric group $\mathfrak{S}_r$.
Such an equivariant map exists if and only if {\it the $r$-obstruction cocycle of $K$},
that is a certain element of $H^{2r-2}(K)$, is zero.
If $r$ is not a power of a prime $p$, then there is a $G_p$-equvariant map $\t K^r\to S^{2r-3}$,
where $G_p\subset \mathfrak{S}_r$ is a Sylow $p$-subgroup that has index $r!/p^{\alpha_{r,p}}$.
Equivariant obstruction theory allows to deduce that the $r$-obstruction cocycle
multiplied by $r!/p^{\alpha_{r,p}}$ is null-cohomologous.
Finally, it turns out that the $r$-obstruction cocycle equals to the $r$-fold intersection cocycle,
see \cite[\S3.3]{Sk16}.

For a finite subset $X\subset[7]$ denote by $X^-$ the sequence obtained from $X$ by deleting the minimal element and ordered by increasing.
For a spherical partition $R=(R_1,R_2,R_3)$ of $[7]$ denote by $R^-$ the sequence obtained by writing the sequences $R_1'^-,R_2'^-,R_3'^-$ in this order.
E.g. $(\{1,2,3\},\{4,5,6\},7)^-=(2,3,5,6)$.

\begin{pr}\label{ratv-csgn}
(a) For every $j\in[6]$ and every spherical partition $S$ of $[7]-\{j\}$ take
the two spherical partitions $R,T$ of $[7]$ extending $S$ (see assertion \ref{ratv-ext}).
Then there are $k,l\in[4]$ such that the sequence obtained from $R^-$ by deleting $k$-th element is the same as the sequence obtained from $T^-$ by deleting $l$-th element.

(b)* 
There is a unique map $\csgn:\mathrm{S}\to \{+1,-1\}$ from the set $\mathrm{S}$ of all spherical partitions of $[7]$ such that

$\bullet$ $\csgn(\{1,2,3\},\{4,5,6\},\{7\})=+1$.

$\bullet$ For every $j\in[6]$ and every spherical partition $S$ of $[7]-\{j\}$ take
the two spherical partitions $R,T$ of $[7]$ extending $S$ (see assertion \ref{ratv-ext}).
Then $\csgn R\cdot\csgn T=(-1)^{k-l}$, where $k,l$ are defined in (a).

(c)* $\csgn(R_2,R_1,R_3)=\begin{cases} -\csgn(R_1,R_2,R_3) & |R_1|=|R_2|=2 \\
\csgn(R_1,R_2,R_3) & \text{otherwise}\end{cases}$.

\end{pr}

For every partition $R=(R_1,R_2,R_3)$ of the ordered set $(1,\ldots,7)$ of general position points in the (oriented) plane define the {\bf triple intersection sign} $\gsgn R$ as follows:

$\bullet$ $\gsgn R=+1$ if, up to a permutation of $(R_1,R_2,R_3)$ which does not change the order of two-element sets,%
\footnote{Observe that a permutation of a spherical partition need not be a spherical partition.}

\quad $\circ$ either $R_1=\{a,b,c\}$, $R_2=\{d,e,f\}$, $a<b<c$, $d<e<f$, and plane triangles $abc$, $def$ have the same orientation;

\quad $\circ$ or $R_1=\{a,b,c\}$, $R_2=\{d,e\}$, $R_3=\{f,g\}$, $a<b<c$, $d<e$, $f<g$ and plane triangles $abc$, $deg$ have the opposite orientations;

$\bullet$ $\gsgn R=-1$ otherwise.

\begin{pr}\label{ratv-csgn1}
$\gsgn(R_2,R_1,R_3)=\begin{cases} -\gsgn(R_1,R_2,R_3) & |R_1|=|R_2|=2 \\
\gsgn(R_1,R_2,R_3) & \text{otherwise}\end{cases}$.
\end{pr}

\endcomment

\subsection{Appendix: some details to \S\ref{s:mucoge}}\label{s:apptve}

{\bf \ref{0-radpl}.}
If some three of these points lie on a line, then one of the points lies in the segment with vertices at the two other points.
If no three of these points lie on a line, then use Proposition \ref{ratv-vk2l}.


\smallskip
{\bf \ref{ratv-vk2l}.}
The convex hull of 4 points in general position is either a triangle or a quadrilateral.
In the case of a triangle, 
only partition $\{\text{the vertices of the triangle},\ \text{the remaining point}\}$ has a nonempty intersection of the convex hulls of parts.
In the case of a quadrilateral, 
only partition $\{\text{the ends of the first diagonal},\ \text{the ends of the second diagonal}\}$ has a nonempty intersection of the convex hulls of parts.

\smallskip
{\bf \ref{ratv-9ex}.} (d) Let us show that these points satisfy the required condition.
Observe that for each pair of intersecting segments there exists an isometry mapping from this pair to one of the following pairs: $\{AB_1, BA_1\}, \{AO, A_1B_1\}, \{AO, BA_1\}$.
Also $A_1 = AO \cap A_1B_1 = AO \cap A_1B$ belongs neither to $\bigtriangleup BC_1C$ nor to $\bigtriangleup B_1C_1C$, and $AB_1 \cap BA_1$ does not belong to $\bigtriangleup$ $OC_1C$.

\begin{figure}[h]\centering
\includegraphics[scale=0.9]{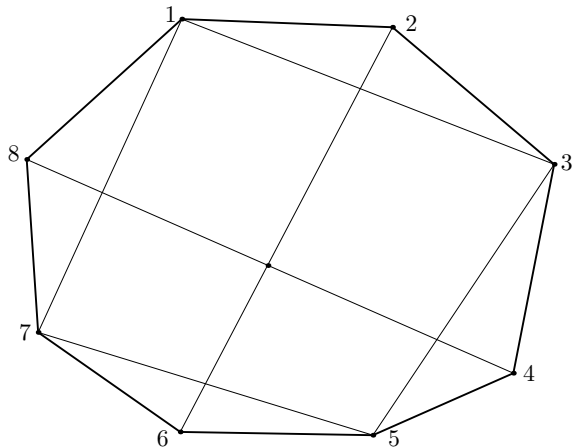}
\caption{The convex octagon}
\label{octagon}
\end{figure}

\smallskip
{\bf \ref{ratv-9}.}
(a) Denote the vertices of the octagon by $1\ldots 8$ (in clockwise order, see fig.~\ref{octagon}).
Decompose the vertices into three sets $\{1,3,5,7\}$, $\{2,6\}$ and $\{4,8\}$.
The convex hulls of these sets are quadrilateral $1357$ and segments $26$, $48$, respectively.
Clearly, segments $26$ and $48$ have an intersection point, say $A$.
Clearly, $A$ does not belong to any of the triangles $123$, $345$, $567$, $781$.
Hence the three convex hulls have a common point.


(b) If the convex hull $Z$ of given 11 points has at least eight vertices, then we can decompose these points into three sets using (a).
Otherwise, if $Z$ has less than eight vertices, then let the first set $S_1$ of our partition be the set of these vertices.
Since at least 4 points are left, we can decompose them into two sets with intersecting convex hulls.
The intersection point also belongs to the convex hull of $S_1$.

(c) {\it (Written by I. Bogdanov.)}
Let $N=9r+1$.
Consider convex hulls for each $6r+1$ of required points.
Any three of these convex hulls intersect because they have a common point.
Therefore by the Helly theorem they all have a common point (not necessary one of the preceding points).
Denote by $O$ this common point.
Let us prove that $O$ a common point of convex hulls of some $r$ disjoint subsets.

(The following elegant property of $O$ we will not use: on both sides of any line passing through $O$ there are at least $3r+1$ of the given points.)

Point $O$ belongs to the convex hull of any $6r+1$ of these given points.
In other words, from any $6r+1$ of the given points we can choose three points such that the triangle with vertices at this points contains point $O$.
Let us choose this triangle, remove it, choose new triangle, etc.
This can be done at least $r$ times.


\smallskip
\textbf{\ref{ratv-chess}.} (a) Take any point $A\not\in L$ in general position with the vertices of $L$ and paint it black.
For any point $B\not\in L$ in general position with $A$ and the vertices of $L$
take a polygonal line $S$ in general position with $L$ joining $B$ to $A$.
Paint $B$ black if the number $|S\cap L|$ is even and white otherwise.

If $B$ is not in general position with $A$ and the vertices of $L$,
take a small (2-dimensional) triangle disjoint from $L$ and whose interior contains $B$.
The triangle contains some points which was colored in previous paragraph.
Obviously, their colors coincide.
Paint $B$ in the same color.

By the Parity Lemma \ref{0-even}.b this coloring is well-defined, i.e. does not depend on the choice of $S$ and of the disk.

When the point $B$ passes to the adjacent domain, the number $|S\cap L|$ changes parity.
So the adjacent domains have different colors.


(b) If we replace $A$ by a close point, the coloring does not change.
So we may assume that $A$ and the vertices of $L,P$  are in general position.
Join $A$ to the ends of $P$ by polygonal lines $S_1$ and $S_2$ such that the vertices of $L,P,S_1,S_2$ are in general position.
By the Parity Lemma \ref{0-even}.b the number $|L\cap(S_1\cup P\cup S_2)|$ is even.
Hence $|S_1\cap L|+|S_2\cap L|\underset2\equiv |P\cap L|$.
So we have the required statement.



\begin{proposition}\label{vkam2-deg2}
In the plane take non-closed polygonal lines $P$ and $Q$ whose vertices are in general position.


(a) For any pair $(p,q)\in\partial(P\times Q):=(\partial P\times Q)\cup(P\times \partial Q)$ we have $p\ne q$
because vertices of $P$ and $Q$ are in general position.

(b) Take corresponding PL maps $p,q:[0,1]\to\R^2$.
Then the number $|P\cap Q|$ has the same parity as the number of rotations of the vector $p(x)-q(y)$ while $(x,y)$ goes along the boundary $\partial([0,1]^2)$ of the square $[0,1]^2$, and the number $P\cdot Q$ equals to that  number of rotations.
\end{proposition}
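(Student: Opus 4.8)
Part (a) is immediate from the definition of general position. If $p\in\partial P$, then $p$ is a vertex of $P$, while the point $q\in Q$ lies on an edge of $Q$, hence on a segment joining two vertices of $Q$ (or is itself a vertex of $Q$). Were $p=q$, then either a vertex of $P$ would coincide with a vertex of $Q$, or a vertex of $P$ would lie in the relative interior of an edge of $Q$; in the latter case that vertex of $P$ and the two endpoints of the edge would be three collinear points among the vertices of $P$ and $Q$. Both possibilities contradict general position (which in particular forces the vertices to be pairwise distinct). The case $q\in\partial Q$, $p\in P$ is symmetric.

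For part (b) the plan is to read everything off elementary degree theory for the piecewise-linear map
$$G\colon[0,1]^2\to\R^2,\qquad G(x,y):=p(x)-q(y).$$
First I would note that $G$ is affine on each rectangle of the product subdivision of $[0,1]^2$ determined by the vertex-parameters of $p$ and of $q$, and that by part (a) $G$ does not vanish on $\partial([0,1]^2)$; hence $g:=G/|G|\colon\partial([0,1]^2)\to S^1$ is defined, and the "number of rotations" in the statement is by definition the degree $d$ of $g$. Next I would analyse the zero set of $G$: since $G(x,y)=0$ means exactly $p(x)=q(y)\in P\cap Q$, general position guarantees that each point $X\in P\cap Q$ lies in the relative interior of exactly one edge of $P$ and of exactly one edge of $Q$, that these two edges cross transversally there, and that $X$ has a unique parameter preimage $(x_0,y_0)$, which lies in the open square (if three of the segments involved shared an interior point, or a vertex lay on another segment, general position would fail). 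Thus $G$ has exactly $|P\cap Q|$ zeros, all interior; near such a zero the maps $p,q$ are linear with non-parallel velocity vectors $\vec u,\vec v$, so (using $p(x_0)=q(y_0)$) $G(x_0+s,y_0+t)=s\vec u-t\vec v$ is an affine isomorphism whose local index is $\sgn\det(\vec u,-\vec v)$, which I would then compare with the sign of the intersection point $X$ as defined in \S\ref{0thint}.

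Finally I would invoke the standard fact — provable here by subdividing $[0,1]^2$ into rectangles so small that each contains at most one zero of $G$ in its interior and telescoping the degrees of $g$ over the resulting grid (interior edges are traversed twice with opposite orientations), exactly in the spirit of the proofs of the Parity Lemma \ref{0-even} and the Triviality Lemma \ref{l:triv} — that $d$ equals the sum of the local indices of $G$ over its interior zeros. Reducing modulo $2$ already gives $d\equiv|P\cap Q|$, since each local index is $\pm1$; keeping track of signs gives $d=\sum_{X\in P\cap Q}\sgn X=P\cdot Q$ (the number $P\cdot Q$ being defined in \S\ref{0vkamz}). The main obstacle is this last sign bookkeeping: matching the local index of $G$ at a zero with the intersection sign of \S\ref{0thint} requires a careful check of orientation conventions, in particular of the direction in which $\partial([0,1]^2)$ is traversed — the same delicate kind of issue flagged in Remark \ref{r:me-fkt}. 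The parity statement, by contrast, is insensitive to all of these choices.
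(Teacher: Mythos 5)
Your argument is correct, and it is at least as complete as the paper's own sketch. Part (a) matches the paper (which simply declares it obvious); your observation that a coincidence $p=q$ would force either two coinciding vertices or three collinear vertices is exactly the right justification. For part (b) the paper takes a slightly different route: it sweeps the parameter $t$ along $P$, observes that the ``number of rotations'' for the restriction $p|_{[0,t]}$ is defined (as a real number) whenever $p(t)\notin Q$, and that it jumps by $\pm1$, with the sign of the corresponding intersection point, each time $p(t)$ crosses $Q$; summing the jumps over $t\in[0,1]$ gives both the parity statement and $P\cdot Q$. You instead work two-dimensionally with $G(x,y)=p(x)-q(y)$, identify the zeros of $G$ with the points of $P\cap Q$ (your general-position analysis of why the zeros are interior, simple and nondegenerate is correct), and obtain the rotation number on $\partial([0,1]^2)$ as the sum of local indices by subdividing the square and telescoping. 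The two proofs hinge on the same local computation --- the $\pm1$ contribution of a single transversal crossing and its match with the intersection sign of \S\ref{0thint} (note the paper's clockwise convention there, so this check, which you defer, does need to be done once, together with fixing the traversal direction of $\partial([0,1]^2)$, which the statement leaves implicit) --- but the global bookkeeping differs: the paper's one-parameter sweep needs only a formal definition of the rotation number along a family of loops, whereas your subdivision argument is symmetric in $P$ and $Q$, makes the ``degree equals sum of local indices'' mechanism explicit in the style of the proofs of Lemmas \ref{0-even} and \ref{l:triv}, and is the version that generalizes most directly to the $r$-fold and higher-dimensional settings of \S\ref{s:triple} and Assertion \ref{a:3redmap}.
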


\begin{proof}[Sketch of a proof] Part (a) is obvious.
If $p(t)\notin Q$, then one can define (not as an integer) the number of rotations for the restriction $p|_{[0,t]}$.
If $p(t)\in Q$, then these numbers of rotations for restrictions $p|_{[0,t-\varepsilon]}$ and $p|_{[0,t+\varepsilon]}$ differ by $\pm1$ (depending on the sign of the intersection point, see  fig.~\ref{f:sign}).
This argument for (b) becomes rigorous after formally defining `the number of rotations'.
\end{proof}



\begin{remark}\label{r:rasp}
An ordered partition $(R_1,\ldots,R_r)$ of $[3r-2]=R_1\sqcup\ldots\sqcup R_r$ into $r$ sets (possibly empty) is called {\it rainbow} if for every $j=1,\ldots,r$ the set $R_j$ intersects each of the three sets $\{1,\ldots,r-1\},\{r,\ldots,2r-2\},\{2r-1,\ldots,3r-3\}$ by at most one element.
Theorem \ref{ratv-tv7} is true for `spherical' replaced by `rainbow' \cite[Theorem 2.2]{BMZ15}, \cite[Theorem 2]{MTW12}.
Observe that `spherical' is the same as `rainbow'  for $r=3$ but is different for $r>3$.
So the proofs of the topological Tverberg Theorem \ref{ratv-tvpl} from \S\ref{s:ttp} (based on \cite{VZ93}) and from \cite{BMZ15, MTW12} are different not only in exposition, they give different improvements.
For a generalization potentially containing yet another alternative proof of Theorems \ref{ratv-tvpl}, \ref{ratv-tv7} see~\cite{BMZ11}.
\end{remark}


Proofs of the following assertions are not hard and are omitted.

\begin{pr}\label{a:3red}
In the plane take an oriented line $l$, non-closed polygonal lines $P,Q$, point $R$ and the modulo 2 interiors  $S,T$ of closed polygonal lines $\partial S,\partial S$, so that no line joining two vertices of the polygonal lines or $R$ is parallel to $l$.
Denote by $\overline X$ the line parallel to $l$ passing through a point $X$, and by $\overline X_\pm$
the two rays of $\overline X$ starting at $X$.

(a) We have
$$|R\cap S| \underset2\equiv |\overline R_+\cap\partial S| \underset2\equiv |\overline R_-\cap\partial S|
\quad\text{and}\quad |P\cap Q| \underset2\equiv |\overline{\partial P}_+\cap Q|+|P\cap \overline{\partial Q}_-|.$$

(b) We have
$$(R\cdot\partial S)(R\cdot\partial T)\underset2\equiv
|\overline R_+\cap S\cap\partial T|+|\overline R_+\cap \partial S\cap T|.$$

(c) If both $\partial S$ and $\partial T$ are oriented, then
$$(R\cdot\partial S)(R\cdot\partial T)=\overline R_+\cdot\partial S\cdot T+\overline R_+\cdot\partial T\cdot S,$$
where $\partial S$ and $\partial T$ are considered as non-closed polygonal lines.


(d) We have
$$P\cdot Q\cdot S\underset2\equiv|Q\cap(P\circ\partial S)|+|\partial Q\cap(P\circ S)|+|Q\cap(\partial P\circ S)|,
$$
where for subsets $X,Y\subset\R^2$ we denote
$$X\circ Y:=\{(x+y)/2\ :\ x\in X,\ y\in Y,\ y-x\uparrow\uparrow l\}.$$

(e) If $P,Q$ and $\partial S$ are oriented, then $P\circ S$ is a closed naturally oriented polygonal line and
$\partial P\circ S$ is a union of two such polygonal lines.
We have
$$\pm P\cdot Q\cdot S = \partial Q\cdot(P\circ S) + Q\cdot(P\circ\partial S) + Q\cdot(\partial P\circ S),
$$
where we write $\partial S$ to emphasize that $P\circ\partial S$ is considered as a non-closed polygonal line.
\end{pr}

\begin{figure}[h]
\centerline{\includegraphics[width=6cm]{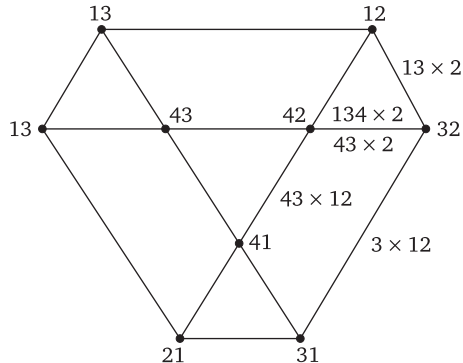}}
\caption{Configuration space}\label{f:del3}
\end{figure}

\begin{proof}[An alternative proof of (a version of) Lemma \ref{ratv-vk2}]
(There is analogous proof of Lemma \ref{11-vankam}.)
Take an oriented line $l$ in the plane and a general position PL map $f:K_4\to\R^2$ such that no line joining two vertices of the polygonal lines forming $f$ is parallel to $l$.
Denote by $ijk\times q$ and $ij\times kq$ the equalities of Assertion \ref{a:3red}.a corresponding to
$f(ijk)\cap f(q)$ and to $f(ij)\cap f(kq)$, respectively.
Sum up the 7 equalities
$$124\times3,\quad 4\times123,\quad 134\times2,\quad 234\times1,\quad
43\times12,\quad 24\times13,\quad 14\times23$$
(cf. figure \ref{f:del3}).
We obtain
$$\rho(f)\underset2\equiv|\overline{f(1)}\cap f(23)|+|\overline{f(2)}\cap f(13)|+|\overline{f(3)}\cap f(12)|.$$
The parity of the right-hand number clearly depends
only on the order of the projections of $f(1),f(2),f(3)$ along $l$ to some line.
Then clearly $\rho(f)=1\in\Z_2$.
(Cf. Remark \ref{vkam-line}.)
\end{proof}

The definition of a degree can be found e.g. in \cite[\S2.4]{Ma03}, \cite[\S8]{Sk20}.

\begin{pr}\label{a:3redmap} Use the notation of Assertion \ref{a:3red}.
Assume that $(n_1,n_2,n_3)$ is either $(0,2,2)$ or $(1,1,2)$.

If $n_1=0$, then define $f_1:D^0\to\R^2$ by setting $f(0):=R$.

If $n_1=1$, then take a general position PL map $f_1:D^1\to\R^2$ whose image is $P$.

If $n_2=1$, then take a general position PL map $f_2:D^1\to\R^2$ whose image is $Q$.

If $n_2=2$, then take a general position PL map $f_2:D^2\to\R^2$ such that $f_2(\partial D^2)=\partial S$ (but possibly $f_2(D^2)\ne S$).

If $n_3=2$, then take a general position PL map $f_3:D^2\to\R^2$ such that $f_3(\partial D^2)=\partial T$.

(a) Then for each $(t_1,t_2,t_3)\in D:=D^{n_1}\times D^{n_2}\times D^{n_3}\cong D^4$ the points $f_1t_1,f_2t_2,f_3t_3\in\R^2$ are not all equal (although two of them could coincide).

(b) For $x_1,x_2,x_3\in\R^2$ which are not all equal define
$$
S:=x_1+x_2+x_3,\quad \pi':=\left(x_1-\frac S3,x_2-\frac S3,x_3-\frac S3\right)
\quad\text{and}\quad \pi:=\frac{\pi'}{|\pi'|}.
$$
This defines a map
$$
\pi:(\R^2)^3-\diag\to S^3_{\Sigma_3},\quad\text{where}\quad
\diag:=\{(x,x,x)\in(\R^2)^3\ :\ x\in\R^2\}.
$$
So we obtain the map
$$
\pi\circ(f_1\times f_2\times f_3):\partial D\to S^3_{\Sigma_3}.
$$
The degree of this map (for the boundary of the product orientation of $\partial D\cong S^3$) equals either to $R\cdot S\cdot T$ or to $P\cdot Q\cdot S$ when $(n_1,n_2,n_3)$ is either $(0,2,2)$ or $(1,1,2)$, respectively.

(c) Let $D^*:=D^{n_1}*D^{n_2}*D^{n_3}\cong D^6$.
The degree of the map $\pi^*\circ(f_1*f_2*f_3):\partial D^*\to S^5_{\Sigma_3}$ defined after Theorem \ref{t:3bu}
(for the boundary of the join orientation of $\partial D^*\cong S^5$) equals either to $R\cdot S\cdot T$ or to $P\cdot Q\cdot S$ when $(n_1,n_2,n_3)$ is either $(0,2,2)$ or $(1,1,2)$, respectively.
\end{pr}

\begin{pr}\label{ratv-ext} (a) For every $j\in[6]$ and spherical partition $G$ of $[6]-\{j\}$ there are exactly two spherical partitions of $[6]$ extending $G$.

(b) The set of all spherical partitions of $[6]$ admits a chessboard coloring, i.e. a coloring in two colors such that for every $j\in[6]$ and spherical partition $G$ of $[6]-\{j\}$ the two spherical partitions of $[6]$ extending $G$ have different colors.



(c) A 5-simplex of $\Delta_5*\Delta_5*\Delta_5$ corresponding to a partition $R$ of $[6]$
contains a 4-simplex of $\Delta_5*\Delta_5*\Delta_5$ corresponding to a partition $G$ of $[6]-\{j\}$
if and only if $R$ extends $G$.

(d) Take the 5-simplex of $\Delta_5*\Delta_5*\Delta_5$ corresponding to a spherical partition $(R_1,R_2,R_3)$ of $[6]$.
If $j\in R_i$, then denote by $j_i$ the corresponding vertex of the 5-simplex.
Orient the 5-simplex as $(1_{i_1},2_{i_2},\ldots,6_{i_6})$, where $j\in R_{i_j}$.
Then such orientations of two 5-simplices having a common 4-simplex disagree along this 4-simplex.


(e) Assume that a triangulation of an $n$-manifold and a collection of orientations on $n$-faces is given, so that these orientations disagree along every $(n-1)$-face.
Assume further that the faces admit a chessboard coloring.
Then the manifold is orientable.
\end{pr}

\begin{proof}[Proof of (a)] The number $j$ can be added to two among three sets in the partition $G$ because exactly one of the sets contains the `twin' $7-j$ of $j$ which cannot appear together with $j$.%
\footnote{The proof of \cite[Lemma 8]{MTW12} essentially proves parts (a,b) (for (a) the chessboard terminology is not required, see above).
The first two sentences of \cite[Lemma 8]{MTW12} is a statement similar to parts (a,b)  in more sophisticated terminology involving Sarkaria-Onn transform (and in more generality).
The parenthetical remark of \cite[Lemma 8]{MTW12} `(In topological terminology, this is the orientable pseudomanifold property)' does not follow from the first two sentences.
However, this remark is really a remark not part of the formal statement of that lemma, and is not formally used later in \cite{MTW12}.
Also, this remark does follow from the first two sentences of \cite[Lemma 8]{MTW12} together with
(generalizations of) Sarkaria-Onn versions of (d,e).
Note that the orientable pseudomanifold of \cite[Lemma 8]{MTW12} in the case corresponding to the Topological Tverberg Theorem \ref{ratv-tvpl} (i.e. to $d=2$, $C_1=\{1,\ldots,r-1\}$, $C_2=\{r,\ldots,2r-2\}$,
$C_3=\{2r-1,\ldots,3r-3\}$, $C_4=\{3r-2\}$) is  a join of three orientable pseudomanifolds, each of them being the {\it chessboard complex} $\Delta_{r,r-1}$ with parameters $r-1,r$.}
\end{proof}


\section{Conclusion: higher-dimensional generalizations}\label{s:high}

\subsection{Radon, Tverberg and van Kampen--Flores theorems}\label{s:hirtvkf}

A subset of $\R^d$ is {\it convex},
if for any two points from this subset the segment joining these two points is in this subset.
The {\it convex hull} of a subset $X\subset\R^d$ is the minimal convex set that contains $X$.

\begin{theorem}[Radon, cf. Theorem \ref{0-radpl}]\label{t:lr}
For every integer $d>0$ any $d+2$ points in $\R^d$ can be decomposed into two groups whose convex hulls intersect.
\end{theorem}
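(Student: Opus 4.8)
The plan is to reduce the statement to elementary linear algebra, in the spirit of the linear-algebraic criteria elsewhere in the paper. Let the $d+2$ points be $x_1,\ldots,x_{d+2}\in\R^d$. First I would write down the homogeneous linear system in the $d+2$ unknowns $\lambda_1,\ldots,\lambda_{d+2}$ consisting of the $d$ coordinate equations $\sum_{i=1}^{d+2}\lambda_i x_i=0$ together with the single equation $\sum_{i=1}^{d+2}\lambda_i=0$. This is a system of $d+1$ homogeneous linear equations in $d+2$ unknowns, so it has a nontrivial solution $(\lambda_1,\ldots,\lambda_{d+2})\ne 0$. (Geometrically this is just the fact that $d+2$ points in $\R^d$ are affinely dependent.)

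Next I would split the index set by the sign of the coefficients: put $I:=\{i:\lambda_i>0\}$ and $J:=\{i:\lambda_i<0\}$. Since the $\lambda_i$ are not all zero and sum to zero, both $I$ and $J$ are non-empty. Set $S:=\sum_{i\in I}\lambda_i=-\sum_{j\in J}\lambda_j>0$. Then
$$\sum_{i\in I}\frac{\lambda_i}{S}\,x_i=\frac1S\sum_{i\in I}\lambda_i x_i=-\frac1S\sum_{j\in J}\lambda_j x_j=\sum_{j\in J}\frac{-\lambda_j}{S}\,x_j,$$
and the coefficients in each of these two convex combinations are non-negative and sum to $1$. Hence this common point lies in $\conv\{x_i:i\in I\}\cap\conv\{x_j:j\in J\}$.

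Finally, to obtain an honest decomposition of \emph{all} $d+2$ points into two groups, assign each index with $\lambda_i=0$ arbitrarily, say to $I$; this only enlarges $\conv\{x_i:i\in I\}$, so the intersection point is still in the convex hull of the first group and the second group is $J$, which is non-empty. This completes the argument. There is no real obstacle here: the only point that needs a moment's care is checking that both sign classes are non-empty (which is immediate from $\sum\lambda_i=0$ and nontriviality) and handling the zero coefficients, as just indicated.
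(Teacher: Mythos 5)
Your proof is correct and complete: the affine-dependence argument (a nontrivial solution of $\sum_i\lambda_i x_i=0$, $\sum_i\lambda_i=0$, split by the sign of the $\lambda_i$) is the standard proof of Radon's theorem, and you handle the two points that need care (both sign classes non-empty, indices with $\lambda_i=0$) correctly. Note, however, that the paper itself gives no proof of this theorem: it is cited as classical, with references deferred to the surveys. What the paper does prove is the planar case (Theorem \ref{0-radpl} via Proposition \ref{ratv-vk2l}), and there the route is genuinely different from yours: an elementary case analysis of the convex hull of the $4$ points (triangle containing the fourth point versus quadrilateral with crossing diagonals), which in addition yields the ``quantitative'' uniqueness statement used for Lemma \ref{ratv-vk2}. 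That geometric case analysis does not scale to higher dimensions, whereas your linear-algebraic argument works uniformly for all $d$ (and is the usual starting point for Tverberg-type generalizations, Theorem \ref{t:lt}); on the other hand it only gives existence of a Radon partition, not the uniqueness/parity information that the paper extracts in the plane.
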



\begin{theorem}[Linear van Kampen--Flores, cf. Proposition \ref{0-ra2}.a]\label{t:lvkf}
For every integer $k>0$ from any $2k+3$ points in $\R^{2k}$ one can choose two disjoint $(k+1)$-tuples whose convex hulls intersect.
\end{theorem}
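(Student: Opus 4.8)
The plan is to reduce the statement to an elementary combinatorial argument about affine dependences, carried out via the Gale transform. First I would reduce to the case where the $2k+3$ points are in general position. For each fixed pair of disjoint index sets $I,J\subset[2k+3]$ of size $k+1$, the set of configurations $(p_1,\dots,p_{2k+3})$ with $\conv\{p_i:i\in I\}\cap\conv\{p_j:j\in J\}\ne\emptyset$ is closed; since there are only finitely many such pairs, a configuration satisfying the conclusion is a limit of such configurations iff one of these pairs recurs infinitely often, so the conclusion is a closed condition. Hence it suffices to prove it for configurations in general position, which are obtained from an arbitrary one by an arbitrarily small perturbation.

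So let $p_1,\dots,p_{2k+3}\in\R^{2k}$ be in general position. Then the affine hull of the points is all of $\R^{2k}$, so the space of affine dependences $\{\lambda\in\R^{2k+3}:\sum_i\lambda_i p_i=0,\ \sum_i\lambda_i=0\}$ has dimension exactly $2$. Fixing a basis $(a,b)$ of it and setting $c_i:=(a_i,b_i)\in\R^2$, I obtain $2k+3$ vectors with $\sum_i c_i=0$; by the standard Gale-duality correspondence, general position of the $p_i$ translates into general position of the $c_i$ in $\R^2$, i.e. they are pairwise non-proportional and none is zero (and in particular they span $\R^2$). For a direction $u\in\R^2$ not orthogonal to any $c_i$, put $P_u:=\{i:\langle c_i,u\rangle>0\}$ and $N_u:=\{i:\langle c_i,u\rangle<0\}$. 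Then $u_1a+u_2b$ is an affine dependence whose $i$-th entry is $\langle c_i,u\rangle$ and which sums to $0$; both $P_u$ and $N_u$ are non-empty because the $c_i$ span $\R^2$, and normalising the positive part of this dependence exhibits a point of $\conv\{p_i:i\in P_u\}\cap\conv\{p_i:i\in N_u\}$. So it only remains to find $u_0$ with $|P_{u_0}|=|N_{u_0}|=k+1$.

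To do this I would rotate $u$ once around the circle of directions. Away from the $2(2k+3)$ ``event'' directions $\pm c_i^{\perp}$, the partition $(P_u,N_u)$ is locally constant with $P_u\cup N_u=[2k+3]$, so $h(u):=|P_u|-|N_u|$ is odd on each sector; at an event exactly one index leaves $P_u\cup N_u$, so $h$ takes an even value there, differing by $1$ from its values in the two adjacent sectors. Since $u\mapsto-u$ swaps $P_u$ and $N_u$, we have $h(-u)=-h(u)$; so over a half-turn (which passes through exactly $2k+3$ events) the value of $h$ travels from some odd number $D$ — with $D\ge1$ after possibly replacing $a$ by $-a$ — to $-D$ in steps of $\pm1$, and hence must hit $0$. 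It can only hit $0$ at an event direction $u_0$, where then $|P_{u_0}|=|N_{u_0}|$ while $|P_{u_0}|+|N_{u_0}|=2k+2$, so $|P_{u_0}|=|N_{u_0}|=k+1$. Applying the previous paragraph to $u_0$ produces the two disjoint $(k+1)$-tuples with intersecting convex hulls.

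The rotation/parity step is really just a discrete intermediate-value argument (and for $k=1$ it recovers the convex-hull analysis behind Proposition \ref{0-ra2}.a), so I expect the main obstacle to be making the reduction to general position fully rigorous: verifying that general position of the $p_i$ genuinely forces both the $2$-dimensionality of the dependence space and the general position of the Gale dual vectors $c_i$, and that the limiting argument closing the gap from general position back to an arbitrary configuration is legitimate because ``some two disjoint $(k+1)$-tuples have intersecting convex hulls'' is a closed condition on the configuration.
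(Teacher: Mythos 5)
Your proof is correct, and it is a genuinely different route from the paper's. The survey does not prove Theorem \ref{t:lvkf} directly: it states it as a classical fact, proves only the case $k=1$ elementarily (Proposition \ref{0-ra2}.a, via convex-hull analysis), and otherwise treats it as the affine special case of the topological van Kampen--Flores Theorem \ref{t:tvkf}, whose proofs in the cited literature go through Borsuk--Ulam-type arguments or the parity/self-intersection invariants that the survey develops (cf. Lemma \ref{11-vankam} and \cite{Sk16}). You instead give a self-contained linear-algebraic argument: the reduction to general position via closedness of the conclusion is legitimate as you state it; the dictionary ``points in general position $\Leftrightarrow$ Gale vectors pairwise non-proportional and nonzero'' is a genuine but two-line check (a zero $c_i$ would force $2k+2$ points onto a hyperplane, and $c_i\parallel c_j$ would produce a dependence supported on the remaining $2k+1$ points); and the rotating-direction count is a sound discrete intermediate-value argument, since exactly one index changes sides at each event and $h(-u)=-h(u)$. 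Two cosmetic remarks: replacing $a$ by $-a$ is unnecessary (just traverse the semicircle starting from whichever of $\pm u$ gives $D\ge 1$), and when you apply the Radon splitting at the event direction $u_0$ you should note that the hypothesis ``$u$ orthogonal to no $c_i$'' fails there but is not needed --- all that is used is that both sides of the dependence are nonempty, which holds since $|P_{u_0}|=|N_{u_0}|=k+1$. What each approach buys: yours is purely affine and elementary, in the spirit of Gale-duality/Sarkaria-type proofs of linear Tverberg-type results, but it cannot give the topological statement (it lives entirely in the $2$-dimensional space of affine dependences, which has no analogue for continuous maps); the paper's route costs Borsuk--Ulam or van Kampen-number machinery but yields the stronger Theorem \ref{t:tvkf} and the quantitative mod-$2$ refinements that are the survey's main theme.
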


This implies simplicial non-embeddability in $\R^{2k}$ of the complete
$k$-complex on $2k+3$ vertices.

\begin{theorem}[Tverberg; cf. Theorem \ref{ratv-tv1}]\label{t:lt}
For every integers $d,r>0$ any $(d+1)(r-1)+1$ points in $\R^d$ can be decomposed into $r$ groups
whose $r$ convex hulls have a common point.
\end{theorem}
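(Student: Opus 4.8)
The plan is to give the Sarkaria tensor-product proof (see \cite[\S8.3]{Ma03}), which deduces Tverberg's theorem from the colorful Carath\'eodory theorem. First I would reformulate. Set $N:=(d+1)(r-1)+1$ and let $a_1,\dots,a_N\in\R^d$ be the given points. Lifting each to $\hat a_i:=(a_i,1)\in\R^{d+1}$, it suffices to find a partition $[N]=A_1\sqcup\cdots\sqcup A_r$ into nonempty blocks together with reals $\alpha_i\ge0$, $\sum_{i\in A_j}\alpha_i=1$ for each $j$, such that the vector $\sum_{i\in A_j}\alpha_i\hat a_i\in\R^{d+1}$ does not depend on $j$: its last coordinate is then automatically $1$, so its projection to $\R^d$ is a point common to all the convex hulls $\conv\{a_i:i\in A_j\}$.

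Second, the tensor trick. I would choose $v_1,\dots,v_r\in\R^{r-1}$ with $v_1+\cdots+v_r=0$ and any $r-1$ of them linearly independent (e.g.\ the orthogonal projections of the standard basis of $\R^r$ to the hyperplane $\{x:\sum_i x_i=0\}$, i.e.\ the vertices of a regular $(r-1)$-simplex centered at the origin). Put $D:=(d+1)(r-1)$ and, identifying $\R^{d+1}\otimes\R^{r-1}$ with $\R^D$, define for each $i$ the ``color class'' $Q_i:=\{\hat a_i\otimes v_1,\dots,\hat a_i\otimes v_r\}\subset\R^D$. Since $\tfrac1r\sum_j\hat a_i\otimes v_j=\hat a_i\otimes\bigl(\tfrac1r\sum_j v_j\bigr)=0$, we have $0\in\conv Q_i$ for every $i=1,\dots,N$. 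Now I would apply the colorful Carath\'eodory theorem of B\'ar\'any: if $Q_1,\dots,Q_{D+1}$ are finite subsets of $\R^D$ with $0\in\conv Q_i$ for each $i$, then there is a rainbow choice $x_i\in Q_i$ with $0\in\conv\{x_1,\dots,x_{D+1}\}$; here $N=D+1$ is exactly our number of points, which is where the bound $(d+1)(r-1)+1$ enters. Its short proof, which I would include, is a nearest-point argument: among the finitely many rainbow simplices pick one whose convex hull $T$ is closest to $0$, let $y\in T$ be the point of $T$ nearest $0$, and suppose $y\ne0$; because $0\in\conv Q_i$, the set $Q_i$ cannot lie entirely in the open half-space $\{z:\langle z,y\rangle>\langle y,y\rangle\}$, so some $x_i'\in Q_i$ has $\langle x_i',y\rangle\le0$, and replacing $x_i$ by $x_i'$ yields a rainbow simplex whose convex hull contains a point strictly nearer $0$ than $y$ --- contradicting minimality, so $y=0$.

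Third, I would decode the rainbow point. It has the form $x_i=\hat a_i\otimes v_{\pi(i)}$ for a map $\pi\colon[N]\to[r]$, together with $\alpha_i\ge0$, $\sum_i\alpha_i=1$, and $\sum_i\alpha_i(\hat a_i\otimes v_{\pi(i)})=0$. Regrouping by the value of $\pi$ gives $\sum_{j=1}^r w_j\otimes v_j=0$ with $w_j:=\sum_{i:\pi(i)=j}\alpha_i\hat a_i\in\R^{d+1}$; substituting $v_r=-(v_1+\cdots+v_{r-1})$ yields $\sum_{j=1}^{r-1}(w_j-w_r)\otimes v_j=0$, and linear independence of $v_1,\dots,v_{r-1}$ forces $w_1=\cdots=w_r=:w$. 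Reading off the last coordinate, $\sum_{i:\pi(i)=j}\alpha_i=w_{d+1}$ is the same for all $j$, and summing over $j$ gives $r\,w_{d+1}=1$, so $w_{d+1}=1/r>0$; in particular every block $A_j:=\pi^{-1}(j)$ is nonempty. Rescaling the coefficients by $r$ exhibits $rw$ as a convex combination of $\{\hat a_i:i\in A_j\}$ simultaneously for every $j$, and its projection to $\R^d$ is the required common point.

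The main obstacle is the colorful Carath\'eodory theorem: it is the one step carrying genuine content (and the exact count $(d+1)(r-1)+1$ lives there), though its proof is the compact nearest-point swap above; the tensor trick and the final decoding are pure linear algebra. A secondary point to handle carefully is verifying that any $r-1$ of the $v_j$ are linearly independent, which is immediate for the explicit choice above.
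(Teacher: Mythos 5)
The paper itself contains no proof of Theorem \ref{t:lt}: it defers to the cited book \cite{Ma03}. What you supply is Sarkaria's tensor-trick deduction of Tverberg's theorem from B\'ar\'any's colorful Carath\'eodory theorem (the proof found in Matou\v sek's discrete-geometry textbook, not in the cited Borsuk--Ulam book, whose topological method only covers prime powers $r$). Your reduction is correct and complete: the lift to $\R^{d+1}$, the choice of $v_1,\dots,v_r$ with zero sum and any $r-1$ of them independent, the observation $0\in\conv Q_i$, the count $N=D+1$ with $D=(d+1)(r-1)$, and the decoding step (from $\sum_{j=1}^{r-1}(w_j-w_r)\otimes v_j=0$ and independence you get $w_1=\dots=w_r$, the last coordinate gives each block total weight $1/r>0$, hence nonempty blocks and, after rescaling by $r$, a common point of the $r$ convex hulls) are all in order. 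This is a legitimate elementary route, valid for every $r$, and arguably more self-contained than the reference the paper points to.

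The one genuine weak spot is inside your sketch of colorful Carath\'eodory --- exactly the step you flagged as carrying the content. A small slip first: from ``$Q_i$ does not lie entirely in $\{z:\langle z,y\rangle>\langle y,y\rangle\}$'' you only get some $x_i'$ with $\langle x_i',y\rangle\le|y|^2$; the inequality $\langle x_i',y\rangle\le 0$ that you actually use follows instead directly from $0\in\conv Q_i$, since a linear functional attains on $Q_i$ a value at most its value at the convex combination $0$. More seriously, you cannot swap an arbitrary $x_i$: if $x_i$ appears with positive coefficient in every representation of the nearest point $y$, the new rainbow hull need not contain $y$ at all, and nothing forces its distance to the origin to decrease. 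The standard repair is to write $y=\sum_j\lambda_jx_j$ with inclusion-minimal support; then the points with $\lambda_j>0$ are affinely independent, and if all $D+1$ coefficients were positive, $y$ would be interior to a full-dimensional simplex inside $T$, contradicting that a nearest point at positive distance cannot be interior to $T$. Hence some color $i$ has $\lambda_i=0$; swapping that color's point for $x_i'$ keeps $y$ in the new hull, and the segment $[y,x_i']$ dips strictly inside the ball of radius $|y|$ because $\langle x_i',y\rangle\le0<|y|^2$. With this adjustment (or by simply quoting colorful Carath\'eodory as a known theorem), your proof is complete.
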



For a motivated exposition of the well-known proof see \cite{RRS}.

Here the number $(d+1)(r-1)+1$ could be remembered by remembering the following simple examples showing that this number is the least possible\jonly{ \cite[Excercise 2 to \S6.4]{Ma03}}.
Clearly, every $(d+1)(r-1)$ points in general position in $\R^d$ (or vertices of a $d$-dimensional simplex taken with multiplicity $r-1$, cf. Example \ref{ratv-9ex}.b) do not satisfy the property of the Tverberg Theorem \ref{t:lt}.
Analogous remark holds for Theorems \ref{t:lr}, \ref{t:lvkf}, \ref{t:tr}, \ref{t:tvkf} and
(the proved case when $r$ is a power of a prime) of Conjecture \ref{c:tt}.

\begin{conjecture}[Linear $r$-fold van Kampen--Flores]\label{c:lvkfg}
For every integers $k,r>0$ from any $(r-1)(kr+2)+1$ points in $\R^{kr}$ one can choose $r$ pairwise disjoint $(k(r-1)+1)$-tuples whose $r$ convex hulls have a common point.
\end{conjecture}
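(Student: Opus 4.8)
The plan is to deduce this statement from the topological Tverberg conjecture \ref{c:tt} by the \emph{constraint method}, i.e.\ the connection between Conjectures \ref{c:vkfg} and \ref{c:tt} mentioned in the Introduction: this gives an \emph{unconditional} proof whenever $r$ is a prime power (the case in which the topological Tverberg theorem is available), and a reduction to the topological $r$-fold van Kampen--Flores conjecture \ref{c:vkfg} for general $r$ (an affine map being in particular continuous). Write $d:=kr$, $m:=k(r-1)$ and $N:=(r-1)(d+2)=(r-1)(kr+2)$; then the hypothesis supplies $N+1$ points in $\R^d$, which I regard as the vertex images of an affine map $f:\Delta_N\to\R^d$, and the desired tuples are to have size $m+1$.

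First I would append one auxiliary coordinate, setting
$$F:\Delta_N\to\R^{d+1},\qquad F(x):=\bigl(f(x),\ \dist(x,\Delta_N^{(m)})\bigr),$$
where $\Delta_N^{(m)}$ is the $m$-skeleton and $\dist$ is Euclidean distance in the standardly embedded $\Delta_N$. The count is exactly right: topological Tverberg in $\R^{d+1}$ for $r$ parts applies to $\Delta_{(d+2)(r-1)}=\Delta_N$, i.e.\ to $N+1$ vertices. So (for $r$ a prime power, or assuming Conjecture \ref{c:tt} in general) there are pairwise disjoint faces $\sigma_1,\dots,\sigma_r$ of $\Delta_N$ and points $x_j\in\sigma_j$ with $F(x_1)=\dots=F(x_r)=:(y,t)$; in particular $f(x_j)=y$ and $\dist(x_j,\Delta_N^{(m)})=t$ for every $j$.

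Now the combinatorial heart. A dimension count, $\sum_{j=1}^r(\dim\sigma_j+1)\le N+1=r(m+1)+(r-1)<r(m+2)$, forces some face $\sigma_{j_0}$ to have $\dim\sigma_{j_0}\le m$, hence $\sigma_{j_0}\subset\Delta_N^{(m)}$, hence $x_{j_0}\in\Delta_N^{(m)}$, and therefore $t=\dist(x_{j_0},\Delta_N^{(m)})=0$. So in fact $x_j\in\Delta_N^{(m)}$ for \emph{every} $j$. Replacing each $\sigma_j$ by the carrier $c_j$ of $x_j$ (the smallest face of $\Delta_N$ containing $x_j$), the sets $c_1,\dots,c_r$ remain pairwise disjoint, each has $\dim c_j\le m$, and $y=f(x_j)\in\conv f(V(c_j))$ because $x_j$ is a positive convex combination of the vertices $V(c_j)$ of $c_j$. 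Thus $V(c_1),\dots,V(c_r)$ are $r$ pairwise disjoint sets of size at most $m+1$ whose $f$-convex hulls contain $y$; one then pads each set to exactly $m+1=k(r-1)+1$ vertices using points not yet chosen (possible since $N+1=r(m+1)+(r-1)\ge r(m+1)$). This proves the conjecture for $r$ a prime power and reduces the general case to Conjecture \ref{c:vkfg}.

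The main obstacle is the case of $r$ \emph{not} a prime power. There the topological Tverberg theorem fails (the ``recent resolution'' of Conjecture \ref{c:tt}), so the step above is unavailable, and one cannot replace it by the \emph{linear} Tverberg Theorem \ref{t:lt}: the auxiliary coordinate $\dist(\,\cdot\,,\Delta_N^{(m)})$ is not affine, and the pigeonhole step genuinely needs it to vanish on the whole $m$-skeleton, which no affine function does. A fundamentally different, presumably oriented-matroid or Gale-transform style, combinatorial argument seems to be required — and in view of the failure of topological Tverberg it is not even clear that Conjecture \ref{c:vkfg} itself holds for non-prime-power $r$.
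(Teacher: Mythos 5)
You have not proved the statement, and you could not have matched a proof in the paper: Conjecture \ref{c:lvkfg} is stated there as an open conjecture, with no proof given --- the paper only records that it holds for prime power $r$ (citing \cite{Vo96'}) and is open otherwise (citing \cite{Fr17}). What you wrote is the standard constraint-method deduction, and as far as it goes it is correct: the arithmetic $N+1=(r-1)(kr+2)+1=r(m+1)+(r-1)<r(m+2)$ does force some $\sigma_{j_0}$ into the $m$-skeleton, the auxiliary coordinate $\dist(\,\cdot\,,\Delta_N^{(m)})$ then vanishes for all $j$, and passing to carriers and padding with unused vertices is fine since $f$ is affine. This is exactly the implication $(TT_{r,kr+1})\Rightarrow(VKF_{r,k})$ recorded in Remark \ref{r:ae}.a, restricted to affine maps, so for prime power $r$ you recover the known case --- nothing beyond what the paper already attributes to Volovikov and the constraint method.

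The gap is that the conjecture quantifies over \emph{all} $r,k>0$, and for $r$ not a prime power your argument gives nothing, while your fallback ``reduction to Conjecture \ref{c:vkfg}'' is in fact a dead end rather than a reduction to an open problem: by Theorem \ref{t:ozmawaco}.a, Conjecture \ref{c:vkfg} is \emph{false} for $r$ not a prime power and $k\ge2$ (and Conjecture \ref{c:tt} in the relevant dimension $d+1=kr+1\ge 2r+1$ is likewise false for $k\ge2$), whereas the linear statement remains open. So for $k\ge2$ and non-prime-power $r$ no argument that forgets linearity and only uses continuity of $F$ can possibly work; a proof, if one exists, must exploit the affine structure essentially, which is precisely the open problem of \cite{Fr17}. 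Your final paragraph acknowledges this, but it means the proposal should be presented as a proof of the prime-power case only, not of the conjecture as stated.
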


This is true for a prime power $r$ \cite{Vo96v} and is an open problem for other $r$ \cite[beginning of \S2]{Fr17}.

Here the number $(r-1)(kr+2)+1$ could be remembered by remembering the following simple examples showing that
this number is the least possible.
Take in $\R^{kr}$ the vertices of a $kr$-dimensional simplex and its center.
Either take every of these $kr+2$ points with multiplicity $r-1$ or for every point take close $r-1$ points in general position.
We obtain $(r-1)(kr+2)$ points in $\R^{kr}$ such that for any $r$ pairwise disjoint $(k(r-1)+1)$-tuples all the $r$ convex hulls of the tuples do not have a common point.
(For $r=3k=3$ cf. \cite[Example 6.7.4]{Ma03}: `It is not known whether such triangles can always be found for 9 points in $\R^3$'.)


Denote by $\Delta_N$ the $N$-dimensional simplex.

\begin{theorem}[Topological Radon theorem, \cite{BB}, cf. Theorem \ref{ratv-totv}]\label{t:tr}
For any continuous map $\Delta_{d+1}\to\R^d$ there are two disjoint faces whose images intersect.
\end{theorem}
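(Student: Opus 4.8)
The plan is to follow the pattern already used for Theorem \ref{ratv-totv} (the case $d=2$): first reduce the continuous statement to one about general position PL maps, then establish a quantitative version by a parity argument, exactly as Lemma \ref{ratv-vk2} supports Theorem \ref{ratv-totv}. Write $[d+2]$ for the vertex set of $\Delta_{d+1}$ and, for $S\subseteq[d+2]$, let $\sigma_S$ be the face spanned by $S$. First I would observe that it suffices to find a \emph{partition} $A\sqcup B=[d+2]$ with $f(\sigma_A)\cap f(\sigma_B)\ne\emptyset$: if two disjoint faces with non‑complementary vertex sets had intersecting images, one could enlarge one of them by an unused vertex (enlarging a face only enlarges its image), keeping the images intersecting, until the vertex sets partition $[d+2]$. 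For a partition $\dim\sigma_A+\dim\sigma_B=(|A|-1)+(|B|-1)=d$, so for a general position PL map $f\colon\Delta_{d+1}\to\R^d$ the intersection $f(\sigma_A)\cap f(\sigma_B)$ is a finite set of points (the $d$‑dimensional analogue of Proposition \ref{grapl-gp}). Define the \emph{Radon number}
$$\rho(f):=\sum_{\{A,B\}\,:\,A\sqcup B=[d+2]}\,|f(\sigma_A)\cap f(\sigma_B)|\ \in\ \Z_2 .$$
The goal is then: $\rho(f)=1$ for every general position PL map $f$. This immediately gives the theorem for PL maps (if no two disjoint faces had intersecting images, every term, hence $\rho(f)$, would be $0$), and then for continuous $f$: the finitely many disjoint pairs of faces would have images at positive pairwise distance, so a sufficiently fine general position PL approximation $g$ (as in \cite[Approximation Lemma 1.4.6]{Sk20}, cf.\ the deduction of Theorem \ref{grapl-nonalm} from Lemma \ref{11-vankam}) would still have $g(\sigma_A)\cap g(\sigma_B)=\emptyset$ for all partitions, contradicting $\rho(g)=1$.

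Second, I would compute $\rho$ on one map. Take an affine $f$ sending $v_1,\dots,v_{d+1}$ to the vertices of a geometric $d$‑simplex $T\subset\R^d$ and $v_{d+2}$ to its barycenter $c$, and then pass to a general position PL map close to it (so that the combinatorics of which face‑images intersect, and with what multiplicities, is preserved). The space of affine dependences of these $d+2$ points is one‑dimensional, so by the linear Radon theorem \ref{t:lr} (cf.\ the uniqueness in Proposition \ref{ratv-vk2l}) the only partition with intersecting convex hulls is $\{[d+1],\{d+2\}\}$; for it the term is the number of preimages of a point near $c$ under a PL map $\Delta_d\to\R^d$ near the affine homeomorphism onto $T$, which is $1$, while every other partition contributes $0$. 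Hence $\rho(f)=1$.

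Third, invariance of $\rho$ under change of $f$. Take a general position PL homotopy $f\colon\Delta_{d+1}\times[0,1]\to\R^d\times[0,1]$ (level‑preserving, agreeing with $f_0,f_1$ on the ends) between two general position PL maps. For each partition $\{A,B\}$ the set $I_{A,B}:=f(\sigma_A\times[0,1])\cap f(\sigma_B\times[0,1])$ is a compact PL $1$‑manifold, so $|\partial I_{A,B}|$ is even; its boundary consists of the endpoints over $t=0$ and $t=1$ (contributing $|f_0(\sigma_A)\cap f_0(\sigma_B)|$ and $|f_1(\sigma_A)\cap f_1(\sigma_B)|$) together with the "lateral" points coming from $\partial\sigma_A\times[0,1]$ and $\sigma_A\times\partial[0,1]$, i.e.\ the terms $|f(\sigma_{A\setminus a}\times[0,1])\cap f(\sigma_B\times[0,1])|$ and $|f(\sigma_A\times[0,1])\cap f(\sigma_{B\setminus b}\times[0,1])|$. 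Summing over all partitions, each lateral term $|f(\sigma_P\times[0,1])\cap f(\sigma_Q\times[0,1])|$ with $P\sqcup Q=[d+2]\setminus\{v\}$ occurs exactly twice — once in the partition $\{P\cup\{v\},Q\}$ (removing $v$ from the first block) and once in $\{P,Q\cup\{v\}\}$ (removing $v$ from the second block) — and so cancels modulo $2$. Therefore
$$\rho(f_0)+\rho(f_1)\equiv\sum_{\{A,B\}}|\partial I_{A,B}|\equiv0\pmod 2,$$
and since any two general position PL maps are joined by such a homotopy, $\rho\equiv1$. (This is the $d$‑dimensional version of the "second proof of Lemma \ref{star}".)

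The main obstacle is the general position input underlying all three steps: one needs the $d$‑dimensional analogue of the Parity Lemma \ref{0-even} and Proposition \ref{grapl-gp}, namely that for general position PL maps of complexes whose dimensions add up to $d$ (resp.\ to $d+1$ over $\R^d\times[0,1]$) the intersections are finite point sets (resp.\ compact $1$‑manifolds with precisely the boundary described above), so that $\rho$ is well defined and the homotopy bookkeeping is legitimate; this transversality package is routine PL topology (cf.\ \cite{Sk14}) but is the one nontrivial ingredient. As an alternative, one can hide all of it inside the Borsuk–Ulam theorem \ref{ratvto-bu}.b: a map $f$ with no two disjoint faces having intersecting images yields, via $\lambda\oplus\mu\mapsto\big(|\lambda|-|\mu|,\ |\lambda|f(\bar\lambda)-|\mu|f(\bar\mu)\big)\in\R^{d+1}\setminus\{0\}$, a $\Z_2$‑equivariant map from the $2$‑fold deleted join of $\Delta_{d+1}$, which is $S^{d+1}$ with the antipodal action, to $S^d$, contradicting \ref{ratvto-bu}.b.
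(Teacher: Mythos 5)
The paper itself does not prove Theorem \ref{t:tr}: it cites \cite{BB} and remarks (in \S\ref{s:hirtvkf}) that the theorem is a corollary of the Borsuk--Ulam theorem, pointing to \cite[\S2.1]{Sk16}. Your closing alternative is exactly that argument: the $2$-fold deleted join of $\Delta_{d+1}$ is $S^{d+1}$ with the antipodal involution, the map $(t_1-t_2,\,t_1f(x)-t_2f(y))$ is equivariant and vanishes precisely at a Radon coincidence, so a counterexample would give an equivariant map $S^{d+1}\to S^d$, contradicting Theorem \ref{ratvto-bu}.b; this is correct and coincides with the route the paper indicates. Your main argument, by contrast, is a genuinely different and more elementary route: a $d$-dimensional Radon number summed over complementary pairs, computed to be $1$ on the simplex-plus-barycenter map via uniqueness of the linear Radon partition (Theorem \ref{t:lr}, cf.\ Proposition \ref{ratv-vk2l}), and shown invariant by a general position PL homotopy in which the lateral boundary terms cancel in pairs indexed by $(v,\{P,Q\})$ --- the $d$-dimensional analogue of Lemma \ref{ratv-vk2} and of the second proof of Lemma \ref{star}, a quantitative version the paper only alludes to (referring to \cite[\S4]{Sk16}). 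What it buys is a proof with no configuration spaces or Borsuk--Ulam input, at the price of the PL transversality package you acknowledge.

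One correction is needed for the main route to be literally true. For an unbalanced partition, $|A|-1>|B|-1$, a general position PL map restricted to $\sigma_A$ is not injective, so the set-theoretic cardinality $|f(\sigma_A)\cap f(\sigma_B)|$ is the wrong count: a point of $f(\sigma_B)$ can be covered by $f(\sigma_A)$ an even number of times, and pushing a fold of $f(\sigma_A)$ across $f(\sigma_B)$ changes your $\rho$ by $1$ while the correct count changes by $2$; hence $\rho$ as written is not invariant and the claim $\rho(f)=1$ fails for some general position PL maps. The quantity that makes all three steps work is the number of coincidence pairs $(x,y)\in\sigma_A\times\sigma_B$ with $f(x)=f(y)$, i.e.\ the mod $2$ intersection number of $f|_{\sigma_A}$ and $f|_{\sigma_B}$ --- which is exactly why the planar Radon number in \S\ref{0-ratvtopl} uses the interior modulo $2$ (a mod $2$ preimage count) for the vertex--triangle pairs rather than membership of the image of the vertex in the image of the face. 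The same replacement is needed in the homotopy step: take $I_{A,B}$ to be the coincidence set in $\sigma_A\times\sigma_B\times[0,1]$, which by general position is a compact $1$-manifold with boundary exactly the points you list, rather than the set-theoretic intersection of the two images in $\R^d\times[0,1]$. With this reading, your computation on the model map (one transversal coincidence of multiplicity $1$), the pairing of lateral terms, and the continuous-to-PL approximation all go through.
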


\begin{theorem}[van Kampen--Flores, cf. Theorem \ref{grapl-nonalm}]\label{t:tvkf}
For any continuous map
$\Delta_{2k+2}\to\R^{2k}$ there are two disjoint $k$-dimensional faces whose images intersect.
\end{theorem}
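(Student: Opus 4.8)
The plan is to follow the pattern of Theorem~\ref{grapl-nonalm} proved via Lemma~\ref{11-vankam}: first restrict to the $k$-skeleton $\Delta_{2k+2}^{(k)}$ (only $k$-faces matter) and to general position PL maps; then introduce a $\Z_2$-valued van Kampen number $v(f)$, show it does not depend on $f$, evaluate it on one explicit map using the linear version Theorem~\ref{t:lvkf}, and finally deduce the continuous statement by approximation. The only ``external'' input is the higher-dimensional analogue of the Parity Lemma~\ref{0-even}: two PL cycles mod~$2$ of complementary dimensions in $\R^{m}$ whose vertices are in general position meet in a finite set, and if one of them bounds a PL chain $C$ then this set has even cardinality (combinatorial Stokes, $\partial(C\cap Z)=\partial C\cap Z + C\cap\partial Z$, together with the fact that a $1$-chain has even boundary); recall also that every mod~$2$ cycle in $\R^{m}$ bounds.

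For a general position PL map $f\colon\Delta_{2k+2}^{(k)}\to\R^{2k}$ and any two disjoint $k$-faces $\sigma,\tau$ the images $f\sigma,f\tau$ are $k$-dimensional, hence (general position) meet in finitely many points; set
$$v(f):=\sum_{\{\sigma,\tau\}:\ \sigma\cap\tau=\emptyset,\ \dim\sigma=\dim\tau=k}|f\sigma\cap f\tau|\mod 2,$$
the sum over unordered pairs. I claim $v(f)=1$ always. First, $v(f)=v(f')$ whenever $f,f'$ coincide off the interior of a single $k$-face $\sigma$ and $f|_\sigma$ is linear. Here the decisive structural fact (the analogue of ``the edges of $K_5$ non-adjacent to $\sigma$ form a cycle'') is that the $k$-faces disjoint from $\sigma$ are exactly the $k$-faces of the $(k+1)$-simplex $\Delta'$ on the remaining $k+2$ vertices, so their union is $\partial\Delta'\cong S^k$. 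Since $f\alpha=f'\alpha$ for every $k$-face $\alpha\ne\sigma$, only pairs $\{\sigma,\tau\}$ with $\tau$ disjoint from $\sigma$ contribute to the difference, and writing $f\sigma\cup f'\sigma$ (two $k$-disks glued along $f(\partial\sigma)=f'(\partial\sigma)$, hence a PL $k$-sphere in $\R^{2k}$) we get
$$v(f)-v(f')\equiv\sum_{\tau}\bigl(|f\sigma\cap f\tau|-|f'\sigma\cap f\tau|\bigr)\equiv\bigl|(f\sigma\cup f'\sigma)\cap f(\partial\Delta')\bigr|\mod 2,$$
which is the intersection number of two mod~$2$ $k$-cycles in $\R^{2k}$, hence even by the Stokes argument above. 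As any two general position PL maps are connected by a chain of such single-$k$-face moves (a routine general position argument, possibly after a common subdivision), $v(f)$ is independent of $f$. It remains to produce one $f_0$ with $v(f_0)=1$: take $f_0$ linear, given by $2k+3$ points in general position in $\R^{2k}$; then $v(f_0)$ is the parity of the number of unordered pairs of disjoint $(k+1)$-element vertex sets with intersecting convex hulls, which is odd by the ``quantitative'' form of Theorem~\ref{t:lvkf} (cf.\ the role of Proposition~\ref{0-ra2}.b in Lemma~\ref{11-vankam}), or by a direct computation for the cyclic polytope $C(2k+3,2k)$ via Gale's evenness condition.

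Theorem~\ref{t:tvkf} now follows: if some continuous map $\Delta_{2k+2}\to\R^{2k}$ had disjoint images on all pairs of disjoint $k$-faces, then by a version of \cite[Approximation Lemma~1.4.6]{Sk20} a nearby general position PL map $g\colon\Delta_{2k+2}^{(k)}\to\R^{2k}$ would still have disjoint images on these finitely many pairs of disjoint compact sets, whence $v(g)=0$, contradicting the claim.

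I expect the base case $v(f_0)=1$ — that is, the ``quantitative'' linear van Kampen--Flores statement, or equivalently an explicit count for one nice configuration — to be the main obstacle; the invariance step, although formally parallel to Lemma~\ref{11-vankam}, additionally needs the higher-dimensional Parity Lemma and the claim that any two general position PL maps are joined by single-$k$-face moves, both routine but not entirely trivial. A second route sidesteps the base-case computation: a map $\Delta_{2k+2}^{(k)}\to\R^{2k}$ without intersecting disjoint faces would induce a free $\Z_2$-equivariant map from the deleted join of $\Delta_{2k+2}^{(k)}$ with itself into $S^{2k}$, which is impossible by Borsuk--Ulam (Theorem~\ref{ratvto-bu}) because that deleted join is $(2k+1)$-connected; there the main obstacle is precisely establishing this connectivity.
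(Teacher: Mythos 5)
The paper does not actually prove Theorem \ref{t:tvkf}: it is stated in the survey section \S\ref{s:hirtvkf} as a classical result, with the cross-reference ``cf.\ Theorem \ref{grapl-nonalm}'' and pointers to \cite[\S2.1, \S4]{Sk16} for the Borsuk--Ulam derivation and the quantitative versions. Your route --- a $\Z_2$-valued van Kampen number on the $k$-skeleton, invariance via a higher-dimensional Parity Lemma (using that the $k$-faces disjoint from $\sigma$ are exactly the $k$-faces of the complementary $(k+1)$-simplex $\Delta'$, so they form the mod~2 cycle $\partial\Delta'$), a base case from the linear theorem, and reduction of the continuous statement by approximation --- is precisely the generalization of Lemma \ref{11-vankam} that this cross-reference indicates, and your Borsuk--Ulam aside is the other route the paper alludes to. So in spirit you follow the intended proof.

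One step, however, is wrong as written, and it affects how you close the argument. A move changing $f$ only on the interior of a single $k$-face leaves the entire $(k-1)$-skeleton, in particular all vertex images, unchanged; so it is false that any two general position PL maps are connected by a chain of such moves, and consequently computing $v(f_0)=1$ for one chosen configuration (your cyclic polytope alternative) does not suffice. The bookkeeping must be exactly that of Lemma \ref{11-vankam}: single-face moves only take an arbitrary general position PL map to the \emph{linear} map on its own vertex images (straighten one $k$-face at a time), and then one needs the quantitative linear van Kampen--Flores statement for \emph{every} general position configuration of $2k+3$ points in $\R^{2k}$ --- the analogue of Proposition \ref{0-ra2}.b --- which you invoke but do not prove; this base case is the real content, and it is not proved in the paper either (it is deferred to \cite[\S4]{Sk16}), so as it stands your argument is a correct reduction to an assumed quantitative lemma rather than a complete proof. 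Two smaller points: $f\sigma\cup f'\sigma$ need not be an embedded sphere, but your chain-level mod~2 cycle formulation is what the parity argument actually uses, so that is harmless; and in the Borsuk--Ulam aside the deleted join is $(2k+1)$-dimensional, hence cannot be $(2k+1)$-connected --- what is needed there is $2k$-connectivity (equivalently, $\Z_2$-index $2k+1$).
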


This implies PL non-embeddability in $\R^{2k}$ of the complete
$k$-complex on $2k+3$ vertices.

The Topological Radon and the van Kampen--Flores Theorems \ref{t:tr} and \ref{t:tvkf} generalize Radon and the Linear van Kampen--Flores Theorems \ref{t:lr} and \ref{t:lvkf}.
These results are nice in themselves, and are also interesting because they are corollaries of the celebrated Borsuk-Ulam Theorem (see e.g. \cite[\S2.1]{Sk16}), of which the topological Radon Theorem \ref{t:tr} is also a simplicial version.
The PL (piecewise-linear) versions of the Topological Radon and the Linear van Kampen--Flores Theorems \ref{t:tr} and \ref{t:tvkf} are as interesting and non-trivial as the stated topological versions, see Remark \ref{r:ae}.

The above results have `quantitative version' analogous to Propositions \ref{0-ra2}.b and \ref{ratv-vk2l}, Lemmas \ref{11-vankam} and \ref{ratv-vk2}, see e.g. \cite[\S4]{Sk16}.
For direct proofs of some implications between these results see \cite[\S4]{Sk16}.

\begin{conjecture}[topological Tverberg conjecture]\label{c:tt}
For every integers $r,d$ and any continuous map $f\colon\Delta_{(d+1)(r-1)}\to \R^d$ there are pairwise disjoint faces $\sigma_1,\ldots,\sigma_r\subset\Delta_{(d+1)(r-1)}$ such that
$f\sigma_1\cap \ldots \cap f\sigma_r\ne\emptyset$.
\end{conjecture}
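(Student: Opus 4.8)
The plan is to run the configuration-space/test-map scheme that already underlies the planar case \ref{ratv-tvpl} (see \S\ref{s:rainbow}), and then to be candid about where it stops working. Write $N:=(d+1)(r-1)$, so the simplex in the statement is $\Delta_N$. Suppose, for contradiction, that $f\colon\Delta_N\to\R^d$ is a continuous map admitting no $r$ pairwise disjoint faces whose images have a common point. Passing to homogeneous coordinates $x\mapsto(1,x)\in\R^{d+1}$ and forming the $r$-fold deleted join $f*\dots*f$ ($r$ factors), followed by the radial projection away from the thin diagonal, one obtains a $\Sigma_r$-equivariant map
$$(\Delta_N)^{*r}_\Delta\ \longrightarrow\ S\big(W_r^{\oplus(d+1)}\big),$$
where $W_r\subset\R^r$ is the standard $(r-1)$-dimensional real representation of $\Sigma_r$. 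The source is the join of $N+1$ copies of the discrete $r$-point set; hence it is a \emph{free} $\Sigma_r$-CW-complex that is $(N-1)$-connected and $N$-dimensional, while the target sphere has dimension $(d+1)(r-1)-1=N-1$. So the whole statement reduces to the purely equivariant assertion: \emph{there is no $\Sigma_r$-equivariant map from an $(N-1)$-connected free $\Sigma_r$-complex to $S(W_r^{\oplus(d+1)})$.}

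First I would settle $r=p$ prime: restrict the action to the cyclic subgroup $\Z_p\subset\Sigma_r$, which is free both on the source and on $S(W_p^{\oplus(d+1)})\cong S^{N-1}$. By Dold's theorem (see \cite[\S2.2]{Ma03}) there is no $\Z_p$-equivariant map from an $(N-1)$-connected free $\Z_p$-space to a free $\Z_p$-space of dimension $N-1$, giving the contradiction. Its quantitative form is precisely the $r$-fold Borsuk--Ulam Theorem \ref{t:3bu}, whose proof --- exactly like the proof of Lemma \ref{l:buqu} --- computes the mod-$p$ number of preimages of $0$ for one explicit equivariant map and checks that it is independent of the map; this is the global avatar of the elementary counting plan of \S\ref{s:triple} and Problem \ref{p:boundary}. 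For $r=p^k$ a prime power one instead restricts to a regularly embedded elementary abelian subgroup $(\Z_p)^k\subset\Sigma_r$ and applies Volovikov's Borsuk--Ulam type theorem. In both cases the crux is the \emph{nonvanishing} of the corresponding equivariant Euler class, equivalently of the mod-$p$ reduction of the $r$-fold intersection ($=$ obstruction) cocycle --- the object whose planar incarnation is Theorem \ref{vankamz-oz}'s cocycle and which is nontrivial for prime powers. This proves the statement whenever $r$ is a power of a prime, and along the way recovers the van Kampen--Flores Theorem \ref{t:tvkf} and the linear $r$-fold van Kampen--Flores Conjecture \ref{c:lvkfg} for prime powers.

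The hard part is that this nonvanishing genuinely \emph{fails} for every $r$ that is not a prime power, so the proposal cannot be completed as stated. By \"Ozaydin's Theorem \ref{t:ozmawa} (whose planar shadow is Theorem \ref{vankamz-oz}: for such $r$ the $r$-fold intersection cocycle is always cohomologically trivial; cf. Proposition \ref{vankamz-oz3}), a $\Sigma_r$-equivariant map $(\Delta_N)^{*r}_\Delta\to S(W_r^{\oplus(d+1)})$ \emph{does} exist, so the configuration-space/test-map scheme provably cannot establish the statement for those $r$. In fact the statement as literally phrased (for \emph{all} $r$ and $d$) is false: feeding \"Ozaydin's equivariant map into the $r$-fold Whitney trick of Mabillard and Wagner and then applying Frick's constraint reduction yields explicit piecewise-linear counterexamples once $d$ is large (for instance $d\ge 3r+1$). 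Hence the realistic output of this plan is a proof of the statement for $r$ a power of a prime --- the Topological Tverberg Theorem, of which Theorems \ref{ratv-totv}, \ref{ratv-tvpl} and \ref{t:tvkf} are the low-dimensional cases --- together with the observation that the remaining cases are truly false, the single obstruction to a uniform proof being exactly \"Ozaydin's vanishing result.
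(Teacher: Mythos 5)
The statement you set out to prove is labeled a \emph{conjecture} in the paper, and the paper offers no proof of it: it only records its status — true for prime powers $r$ \cite{BSS, Oz, vo96}, false for $r$ not a prime power when $d\ge 2r+1$ (via Remark \ref{r:ae}.a and Theorem \ref{t:ozmawaco}.a), and open otherwise. Measured against that, the mathematically substantive part of your proposal — the deleted-join/test-map reduction to the nonexistence of a $\Sigma_r$-equivariant map $(\Delta_N)^{*r}_\Delta\to S(W_r^{\oplus(d+1)})$, settled for $r$ prime by Dold's theorem (the $r$-fold Borsuk--Ulam Theorem \ref{t:3bu}) and for $r$ a prime power by Volovikov's theorem — is correct, and it is exactly the scheme the paper itself sketches in \S\ref{s:rainbow} for the planar case \ref{ratv-tvpl} and attributes to \cite{BSS, Oz, vo96} in general. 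Your dimension and connectivity counts ($[r]^{*(N+1)}$ is $N$-dimensional and $(N-1)$-connected, the target sphere is $(N-1)$-dimensional, and the $\Z_p$-action is free for $p$ prime) are right, so the prime-power case goes through as you describe.

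The genuine flaw is your concluding assessment that ``the remaining cases are truly false.'' What is actually known is weaker, and the paper is careful about it: \"Ozaydin's Theorem \ref{t:ozmawa} removes the equivariant obstruction for every non-prime-power $r$, but the Mabillard--Wagner $r$-fold Whitney trick plus the constraint reduction of Remark \ref{r:ae}.a converts this into counterexamples only when the dimension is large enough — $d\ge 3r+1$ in your formulation, $d\ge 2r+1$ by Theorem \ref{t:ozmawaco}.a — because the Whitney-trick step needs $k\ge 2$ in the $(VKF_{r,k})$-type reduction. For $r$ not a prime power and $d\le 2r$, in particular for $d=2$ (Conjecture \ref{ratv-tvplc}, whose openness the survey repeatedly emphasizes), neither a proof nor a counterexample is known; the vanishing of the obstruction only shows that \emph{this method} cannot decide those cases, not that the statement fails there. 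So your summary should read: proved for prime powers, disproved for $r$ not a prime power and $d\ge 2r+1$, and open in the remaining range — rather than ``false whenever $r$ is not a prime power.''
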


This conjecture generalizes both the Tverberg and the topological Radon Theorems \ref{t:lt} and \ref{t:tr}.
This conjecture is true for a prime power $r$ \cite{BSS, Oz, vo96}, is false for $r$ not a prime power and $d\ge2r+1$ by Remark \ref{r:ae}.a and Theorem \ref{t:ozmawaco}.a below, and is an open problem for $r$ not a prime power and $d\le2r$ (e.g. for $d=2$ and $r=6$).




\begin{conjecture}[$r$-fold van Kampen--Flores]\label{c:vkfg}
For every integers $r,k>0$ and any continuous map $f\colon\Delta_{(kr+2)(r-1)}\to \R^{kr}$ there are pairwise disjoint $k(r-1)$-dimensional faces $\sigma_1,\ldots,\sigma_r\subset\Delta_{(kr+2)(r-1)}$ such that
$f\sigma_1\cap \ldots \cap f\sigma_r\ne\emptyset$.
\end{conjecture}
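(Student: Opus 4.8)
\textbf{The plan} is to deduce Conjecture \ref{c:vkfg} from the topological Tverberg Conjecture \ref{c:tt} by the \emph{constraint method} (Gromov; Blagojevi\'c--Frick--Ziegler): one appends a single auxiliary coordinate to the given map so that the Tverberg faces produced by Conjecture \ref{c:tt} are forced into a prescribed skeleton. Put $N:=(kr+2)(r-1)$ and $d:=kr+1$; then $N=(d+1)(r-1)$, so Conjecture \ref{c:tt} for this $d$ and $r$ speaks about continuous maps $\Delta_N\to\R^d=\R^{kr+1}$.

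First I would take the given continuous map $f\colon\Delta_N\to\R^{kr}$ together with the distance function $\rho\colon\Delta_N\to\R$, $\rho(x):=\dist\bigl(x,\Delta_N^{(k(r-1))}\bigr)$, to the $k(r-1)$-skeleton of $\Delta_N$. Then $\rho$ is continuous, $\rho\ge0$, and $\rho(x)=0$ if and only if the smallest face of $\Delta_N$ containing $x$ has dimension at most $k(r-1)$. Form $g:=(f,\rho)\colon\Delta_N\to\R^{kr}\times\R=\R^{kr+1}$ and apply Conjecture \ref{c:tt} to $g$: this yields pairwise disjoint faces $\tau_1,\ldots,\tau_r\subset\Delta_N$ and points $x_i\in\tau_i$ with $g(x_1)=\cdots=g(x_r)=:y$; write $c$ for the last coordinate of $y$, so $\rho(x_1)=\cdots=\rho(x_r)=c$.

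Next I would argue by cases on $c$. If $c=0$, let $\sigma_i$ be the smallest face of $\Delta_N$ containing $x_i$; then $\dim\sigma_i\le k(r-1)$, the faces $\sigma_i\subseteq\tau_i$ are pairwise disjoint, and the first $kr$ coordinates of $y$ equal $f(x_i)\in f(\sigma_i)$ for every $i$, so $f(\sigma_1)\cap\cdots\cap f(\sigma_r)\neq\emptyset$; one may enlarge each $\sigma_i$ to dimension exactly $k(r-1)$ since the number $N+1=kr(r-1)+2r-1$ of vertices is at least $r\bigl(k(r-1)+1\bigr)$, and this gives the conclusion. If $c>0$, then every $x_i$ lies off the $k(r-1)$-skeleton, so the smallest face $\tau_i'$ containing $x_i$ has at least $k(r-1)+2$ vertices; the $\tau_i'$ being pairwise disjoint faces of $\Delta_N$, we get $r\bigl(k(r-1)+2\bigr)\le|\tau_1'|+\cdots+|\tau_r'|\le N+1=kr(r-1)+2r-1$, i.e.\ $2r\le 2r-1$, a contradiction. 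Hence $c=0$ always, completing the deduction.

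The genuinely hard part is the input, Conjecture \ref{c:tt}: everything above is formal bookkeeping, but Conjecture \ref{c:tt} is presently known only when $r$ is a prime power (via the $r$-fold Borsuk--Ulam Theorem \ref{t:3bu}, cf.\ the proof of Theorem \ref{ratv-tvpl} and Remark \ref{r:ae}), and it fails for $r$ not a prime power once $d\ge 2r+1$ (Remark \ref{r:ae}.a, Theorem \ref{t:ozmawaco}.a). Thus this plan proves Conjecture \ref{c:vkfg} exactly for $r$ a prime power; for $r$ not a prime power the conjecture is in fact known to be false for $k$ large, by constructions related to the \"Ozaydin-type obstruction (Theorems \ref{t:ozmawaco}, \ref{t:ozmawa}), while the cases with $kr+1\le 2r$ (notably $k=1$, i.e.\ the plane, and arbitrary $r$) remain open, in parallel with the linear version Conjecture \ref{c:lvkfg}.
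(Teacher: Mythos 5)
You were asked about a statement that the paper does not prove: Conjecture \ref{c:vkfg} is stated only with a status report --- true for prime powers $r$ by the cited references \cite{Sa91g}, \cite[Corollary in \S1]{Vo96'}, false for $r$ not a prime power and $k\ge2$ by Theorem \ref{t:ozmawaco}.a, and open for $r$ not a prime power and $k=1$. What your write-up actually establishes is the implication $(TT_{r,kr+1})\Rightarrow(VKF_{r,k})$, and that deduction is correct: with $N=(kr+2)(r-1)=(d+1)(r-1)$ for $d=kr+1$, the augmented map $g=(f,\rho)$, the vertex count $r\bigl(k(r-1)+2\bigr)=kr(r-1)+2r>N+1$ excluding the case $c>0$, and the enlargement of the minimal faces $\sigma_i$ to dimension exactly $k(r-1)$ (possible since $r\bigl(k(r-1)+1\bigr)\le N+1$, and harmless since enlarging faces only enlarges their images) are all in order. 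But this is precisely the constraint-method implication that the paper itself records as known in Remark \ref{r:ae}.a (the Constraint Lemma of Gromov and Blagojevi\'c--Frick--Ziegler \cite{Gr10, BFZ14}, cf. \cite{Sk16}), so you have reproved a cited reduction rather than the statement: your argument is conditional on Conjecture \ref{c:tt}, hence it yields Conjecture \ref{c:vkfg} only for prime powers $r$ (where \cite{Sa91g}, \cite{Vo96'} give unconditional proofs by different, index-theoretic means), and it cannot do more, since the input conjecture is itself false for $r$ not a prime power when $d\ge2r+1$ and open when $d\le2r$ --- all of which you acknowledge. Two small corrections to your closing remarks: the open case $k=1$ has target $\R^{r}$, not the plane (the plane occurs only when $kr=2$), and the known failure for $r$ not a prime power holds for every $k\ge2$, not merely for $k$ large.
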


This is true for a prime power $r$ \cite{Sa91g}, \cite[Corollary in \S1]{Vo96v}, is false for $r$ not a prime power and $k\ge2$ by Theorem \ref{t:ozmawaco}.a below, and is an open problem for $r$ not a prime power and $k=1$.

The arguments for results of this subsection form a beautiful and fruitful interplay between combinatorics,
algebra, geometry and topology.
Recall that more motivation, detailed description of references and proofs can found in the surveys mentioned in the `historical notes' of the Introduction.


\subsection{Recognizing realizablity of complexes}\label{s:hialg}

Definition of a $k$-complex (and its relation to hypergraphs) is recalled in \S\ref{s:radc2}.

\UseRawInputEncoding

Realizability of hypergraphs or complexes in the $d$-dimensional Euclidean space $\R^d$ is defined similarly to the realizability of graphs in the plane.
E.g. for 2-complex one `draws' a triangle for every three-element subset.
There are different formalizations of the idea of realizability.


A complex $(V,F)$ is {\bf simplicially} (or linearly) {\bf embeddable} in $\R^d$ if there is a set $V'$ of distinct points in $\R^d$ corresponding to $V$ such that for any subsets $\sigma,\tau\subset V'$ corresponding to elements of $F$ the convex hull $\left<\sigma\right>$ is a
simplex of dimension $|\sigma|-1$ and $\left<\sigma\right>\cap\left<\tau\right>=\left<\sigma\cap\tau\right>$.
\algor{\footnote{This property means that there is an embedded set of simplices in $\R^d$ whose vertices correspond to $V$ and whose simplices correspond to $F$ (an embedded set of simplices of different dimensions in $\R^d$ is defined analogously to \S\ref{0-reaemb}).}
Это свойство формализует <<отсутствие самопересечений>>.}

\begin{theorem}[General Position]\label{0-gp5} Any $k$-complex is simplicially embeddable in $\R^{2k+1}$.
\end{theorem}

Already in the early history of topology mathematicians showed that in the General Position Theorem \ref{0-gp5}
the number $2k+1$ is the least possible.
See the van Kampen--Flores Theorems \ref{t:lvkf} and \ref{t:tvkf}.

\begin{proposition}[cf. Proposition \ref{1-alg}]\label{t:recs} For every fixed $d,k$ there is an algorithm for recognizing the simplicial embeddability of $k$-complexes in $\R^d$.\algor{\footnote{This problem is PSPACE, meaning that there is an algorithm that uses polynomial space for computation (but not polynomial time).
It turns out from the complexity theory that the time is bounded by an exponential function.
Also, whatever problem is solvable in NP, it is also solvable in PSPACE.
But it is conjectured that PSPACE is in general worse than NP and in particular than P.}}
\end{proposition}


\invadraw{Below  $NP$-hardness of an algorithmic problem depending on an integer parameter $n$ means that using a devise which solves this problem at 1 step, we can construct an algorithm which is polynomial in $n$ and which recognizes if a boolean function of $n$ variables is identical zero, the function given as a disjunction of some conjunctions of  variables or their negations (e.g. $f(x_1,x_2,x_3,x_4)=x_1x_2\overline x_3\vee \overline x_2x_3x_4\vee \overline x_1x_2x_4$).}

\algor{Общепринятое формальное определение NP-трудности непросто.
Приведем следующее эквивалентное определение.
Алгоритмическая проблема, зависящая от целочисленного параметра $n$, называется {\it NP-трудной}, если, имея автомат для ее решения за 1 шаг, можно построить полиномиальный по $n$ алгоритм распознавания тождественности нулю булевой формулы от $n$ переменных, являющейся дизъюнкцией конъюнкций переменных и их отрицаний (например,
$f(x_1,x_2,x_3,x_4)=x_1x_2\overline x_3\vee \overline x_2x_3x_4\vee \overline x_1x_2x_4$).}

\begin{conjecture}\label{t:nphhc} For every fixed $d,k$ such that $3\le d\le\frac{3k}2+1$ the algorithmic problem of recognizing simplicial embeddability of $k$-complexes in $\R^d$ is NP hard.\footnote{M. Tancer suggests that it is plausible to approach the conjecture the same way as in \cite{MTW, ST17}.
Namely, one can possibly triangulate the gadgets in advance and glue them together so that the `embeddable gadgets' would be simplicially embeddable for the prescribed triangulations.
By using the same triangulation on gadgets of same type, one can achieve polynomial size triangulation.
Realization of this idea should be non-trivial.}
\end{conjecture}


\begin{figure}[h]\centering
\includegraphics[scale=0.9]{pict.3.eps}
\caption{Subdivision of an edge}
\label{podra1}
\end{figure}

The {\it subdivision of an edge} operation is shown in fig.~\ref{podra1} left (exercise: represent
the {\it subdivision of a face} operation is shown in fig.~\ref{podra1} right as composition of several subdivisions of an edge and inverse operations).
A {\bf subdivision} of a complex $K$ is any complex which can be obtained from $K$ by several subdivisions of an edge.

\UseRawInputEncoding

A complex is {\bf PL} (piecewise linearly) {\bf embeddable} in $\R^d$ if some its subdivision is simplicially embeddable in $\R^d$.\footnote{The related different notion of being topologically
embeddable \algor{(\S\ref{s:topnr})} is not required in this text\algor{ outside Remark \ref{r:pltop} and some parenthetical remarks}.
\invadraw{Embeddability (simplicial, PL or topological) of a complex in $\R^d$ is alternatively defined
as the existence of an injective (simplicial, PL or continuous) map of its body into $\R^d$.}}





\begin{proposition}\label{3-emble}
(a) Any connected 2-manifold that is either orientable or has non-empty boundary is PL embeddable into $\R^3$.

(b) Any 2-manifold is PL embeddable into $\R^4$.

(c) No closed non-orientable 2-manifold is PL embeddable into $\R^3$.
\end{proposition}

Already in the early history of topology mathematicians showed that in the General Position Theorem \ref{0-gp5}
the number $2k+1$ is the least possible.

\begin{proposition}\label{p:vkfm} For any $k$ any of the following $k$-complexes is
not PL (and hence simplicially) embeddable in $\R^{2k}$.

(a) The complete $k$-complex on $2k+3$ vertices, or the $k$-skeleton of the $(2k+2)$-simplex (this complex is $K_5$ for $k=1$; proved by Egbert van Kampen in 1932).

(b) The $k$-complex on $3(k+1)$ vertices split into $k+1$ triples, any $k+1$ vertices of distinct pairs forming a face;
this complex is $K_{3,3}$ for $k=1$ and is the $(k+1)$-join power $[3]^{*(k+1)}$ of the three-point set $[3]$ for arbitrary $k$; proved by A. Flores in 1934).

(c) The $k$-th power of a non-planar graph.
\end{proposition}


Short proof of the simplicial analogue of (a) is given in \cite[\S2, Proof of Theorem 1.4]{Sk14}; the proof generalizes to a proof of (a) itself.
Part (a) (and its TOP analogue\algor{, see \S\ref{s:topnr}}) follows from the van Kampen--Flores Theorem \ref{t:tvkf}.
Part (b) (and its TOP analogue) is deduced from the Borsuk-Ulam Theorem \algor{\ref{vf-bu} in \S\ref{s:topnr}.}
\invadraw{\cite[\S5]{Sk06}, \cite[\S5.7 `PL non-embeddability of $k$-complexes in $\R^{2k}$']{Sk}.}
Part (c) is conjectured by Karl Menger in 1929 but proved only by Brian Ummel in 1978 for $k=2$ \cite{Um78} and by Mikhail Skopenkov for an arbitrary $k$ in 2003 \cite{Sk03}, see exposition in \cite{Sk14}.
These are both early applications of {\it combinatorial topology} (nowadays called algebraic topology) and
the first results of {\it topological combinatorics} (also an area of ongoing active research).

\begin{theorem}[cf. Proposition \ref{grapl-ea}]\label{t:rec}
For every fixed $d,k$ such that either $k=2\ne d-2$ or $d\ge\frac{3k+3}2$ there is an algorithm for recognizing the PL embeddability of $k$-complexes in $\R^d$.
\end{theorem}

\algor{Theorem \ref{t:rec} for $k=d=2$ вытекает из аналогичного результата для графов --- теоремы \ref{grapl-ea}.c, или из аналога теоремы Фари \ref{grapl-fary} и утверждения \ref{t:recs}.
In \cite[Appendix A]{MTW} it is explained that Theorem \ref{t:rec} for $k=d=2$ (even with linear algorithm) follows из критерия типа Куратовского планарности 2-комплексов, т.е. из теоремы Халина-Юнга \ref{pla-hj}.b (но не из \ref{pla-hj}.a!).
Случай $d\ge3$ гораздо более сложен.}
\invadraw{In \cite[Appendix A]{MTW} it is explained that Theorem \ref{t:rec} for $k=d=2$ (even with linear algorithm) follows from the Kuratowski-type Halin-Jung planarity criterion for 2-complexes (stated there).}
Theorem \ref{t:rec} for $k=d-1=2$ is proved in \cite{MST+}.
In \cite[text after Theorem 1.4]{CKV}, \cite[\S1]{ST17} it is explained that Theorem \ref{t:rec} for $d\ge\frac{3k+3}2$ (even with polynomial algorithm) follows from \cite[Theorem 1.1]{CKV} and the Haefliger-Weber `configuration spaces' criterion \algor{\ref{dp-wu}} for embeddability of complexes\invadraw{ (stated there or in the survey \cite[Theorem 5.5]{Sk06})}.


The assumption of Theorem \ref{t:rec} is fulfilled when $d=2k\ge6$.
The idea of proof for $d=2k\ge6$ generalizes the proof of Proposition \ref{grapl-ea}.b presented in \S\ref{0vkam2}, see \algor{\S\ref{0vkam}.}
\invadraw{\cite[\S5.9 `Recognizing realizability of $k$-complexes in $\R^{2k}$']{Sk}.}

\begin{theorem}\label{t:undec1} For every fixed $d,k$ such that
$5\le d\in\{k,k+1\}$
there is no algorithm recognizing PL embeddability of $k$-complexes in $\R^d$.
\end{theorem}

This
is deduced in \cite[Theorem 1.1]{MTW} from the Novikov theorem on unrecognizability of the sphere.
The analogue of this for $8\le d\le\frac{3k+1}2$ is announced in the paper \cite{FWZ} containing a mistake \cite[\S3]{Sk20e} (see also \cite{KS20}).

\begin{theorem}\label{t:nphh} For every fixed $d,k$ such that $3\le d\le\frac{3k}2+1$ the algorithmic problem of recognizing PL embeddability of $k$-complexes in $\R^d$ is $NP$-hard.
\end{theorem}

This is proved for $d\ge4$ and $d=3$ in \cite{MTW} and in \cite{MRS+}, respectively.
See a simpler exposition  for $d\ge4$ in \cite{ST17} (where also a generalization was proved).
The proof for $d\ge4$ uses the construction \cite{SSS} of counterexamples to the Haefliger-Weber criterion \algor{\ref{dp-wu}} for embeddability of complexes.
For a `3- and 2-dimensional explanation' of ideas of proof see Propositions \ref{1-k5-1}, \ref{grapl-ram} and
\algor{п. \ref{0-nph}.}
\invadraw{\cite[\S5.11 `NP hardness of PL embeddability of complexes']{Sk}.}

\newcommand\unk{{\bf?}}
\newcommand\nee{~~~}
\newcommand\ja{{$+$}}
\newcommand\und{{UD}}
\newcommand\NPh{{NPh}}
\newcommand\NP{{NP}}
\newcommand\pol{{P}}

The following table summarizes the above results on the algorithmic problem
of recognizing PL embeddability of $k$-complexes in $\R^d$ (\ja\ $=$ always embeddable,
\pol\ $=$ polynomial-time solvable, D $=$ algorithmically decidable,
\NPh\ $=$ \NP-hard, \und\ $=$ algorithmically undecidable).

\begin{center}
\begin{tabular}{c|ccccccccccccc}
$k\backslash\ d$&~2~&~3~&4    &5    &6   &7   &8   &9   &10    &~11   &~12  &~13 &~14   \\
\hline
1 & \pol&\ja  &\ja  &\ja  &\ja &\ja &\ja &\ja &\ja   &\ja  &\ja &\ja & \ja   \\
2 & \pol&D,\NPh &\NPh &\ja  &\ja &\ja &\ja &\ja &\ja   &\ja  &\ja &\ja & \ja   \\
3 &\nee&D,\NPh &\NPh &\NPh &\pol &\ja &\ja &\ja &\ja   &\ja  &\ja &\ja & \ja   \\
4 &\nee&\nee &\NPh &\und &\NPh&\NPh&\pol &\ja &\ja   &\ja  &\ja &\ja & \ja   \\
5 &\nee&\nee &\nee &\und &\und&\NPh&\NPh&\pol&\pol   &\ja  &\ja &\ja & \ja   \\
6 &\nee&\nee &\nee &\nee &\und&\und&\NPh&\NPh&\NPh   &\pol &\pol&\ja & \ja   \\
7 &\nee&\nee &\nee &\nee &\nee&\und&\und&\NPh&\NPh   &\NPh &\pol&\pol& \pol
\end{tabular}
\end{center}

\subsection{Algorithmic recognition of almost realizablity of complexes}\label{s:hiae}

A (continuous, or PL) map  $f\colon K\to \R^d$ from a complex $K$ is an {\bf almost $r$-embedding} if $f\sigma_1\cap \ldots \cap f\sigma_r=\emptyset$ whenever $\sigma_1,\ldots,\sigma_r$ are pairwise disjoint faces of $K$.

\begin{remark}\label{r:ae} (a) In this language the Topological Tverberg Conjecture \ref{c:tt} and the $r$-fold van Kampen--Flores Conjecture \ref{c:vkfg} state that

($TT_{r,d}$) for every integers $r,d$ there are no almost $r$-embeddings $\Delta_{(d+1)(r-1)}\to\R^d$.

($VKF_{r,k}$) for every integers $r,k$ there are no almost $r$-embeddings of
the union of $k(r-1)$-faces of $\Delta_{(kr+2)(r-1)}$ in $\R^{kr}$.

We have $(TT_{r,kr+1})\Rightarrow(VKF_{r,k})$.
This was proved in \cite[2.9.c]{Gr10} and implicitly rediscovered in \cite[Lemma 4.1.iii and 4.2]{BFZ14}, \cite[proof of Theorem 4]{Fr15}; see survey \cite[Constraint Lemma 1.8 and Historical Remark 1.10]{Sk16}.

(b) The notion of an almost $2$-embedding implicitly appeared in studies realizability of graphs and complexes (Theorems \ref{grapl-nonalm}, \ref{t:tvkf} and \ref{t:rec}).
It was explicitly formulated in the Freedman-Krushkal-Teichner work on the van Kampen obstruction \cite{FKT}.

(c) Any sufficiently small perturbation of an almost $r$-embedding is again an almost $r$-embedding.
So the existence of a {\it continuous} almost $r$-embedding is equivalent to the existence of a {\it PL} almost $r$-embedding, and to the existence of a {\it general position PL} almost $r$-embedding.
Cf. \cite[Approximation Lemma 1.4.6]{Sk20}.
\end{remark}

See more introduction in \cite[\S1.2]{Sk16}.

\begin{problem}\label{p:2ae2} Which 2-complexes admit a PL map to $\R^2$ without triple points?
Which 2-complexes are almost 3-embeddable in $\R^2$?
Are there algorithms for checking the above properties of 2-complexes?
Same questions for $\R^2$ replaced by $\R^3$, or for `triple' and `almost 3-embeddable' replaced by `$r$-tuple' and `almost $r$-embeddable'.\footnote{Analogous problems for maps from graphs to the line are investigated in studies of {\it cutwidth}, see \cite{TSB, LY04, Kho} and references therein.}
\end{problem}



\begin{theorem}\label{t:ozmawaco} If $r$ is not a prime power, then

(a) for any $k\ge2$ there is an almost $r$-embedding of any  $k(r-1)$-complex in $\R^{kr}$.
\cite{MW15, AMSW}

(b) there is an almost $r$-embedding of any $s$-complex in $\R^{s+\big\lceil\tfrac{s+3}r\big\rceil}$.
\cite{AKS}
\end{theorem}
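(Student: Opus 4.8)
The plan is to derive both parts from one scheme, which generalises the van Kampen--Hanani--Tutte circle of ideas of \S\ref{0vkam2} and \S\ref{s:tvkam} to higher multiplicity:
(1) reduce almost $r$-embeddability to the vanishing of an equivariant obstruction --- the ``$r$-fold van Kampen obstruction'', i.e.\ the cohomology class of the $r$-fold intersection cocycle of \S\ref{s:tvescz};
(2) use the higher-dimensional \"Ozaydin Theorem \ref{t:ozmawa} to see that this obstruction vanishes whenever $r$ is not a prime power;
(3) convert the resulting equivariant map back into a genuine almost $r$-embedding by an $r$-fold Whitney trick.

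For part (a), fix a $k(r-1)$-hypergraph $K$ and put $n:=k(r-1)$, $d:=kr$. First I would invoke the $r$-fold generalisation (Mabillard--Wagner, \cite{MW15}) of the Haefliger--Weber deleted product criterion: in the appropriate dimension range there is an almost $r$-embedding $|K|\to\R^d$ if and only if there is a $\Sigma_r$-equivariant map $K^{\times r}_\Delta\to S^{d(r-1)-1}$, where $K^{\times r}_\Delta\subset|K|^r$ is the space of $r$-tuples of points with pairwise disjoint carriers, on which $\Sigma_r$ acts freely, and $\Sigma_r$ acts by coordinate permutation on the representation sphere. Since $\dim K^{\times r}_\Delta\le rn=d(r-1)$ is just one above the dimension of the sphere, the only obstruction to such a map is a single top-dimensional class $o(K)$, the $r$-fold van Kampen obstruction, and by naturality $o(K)$ is the restriction of $o(\Delta_N)$ for any large complete skeleton containing $K$. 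By \"Ozaydin Theorem \ref{t:ozmawa} (for $r$ not a prime power a Sylow-subgroup transfer argument gives $(r!/p^{\alpha_{r,p}})\cdot o(\Delta_N)=0$ for every prime $p<r$, cf.\ Proposition \ref{vankamz-oz3}, and these integers are coprime, so $o(\Delta_N)=0$) we get $o(K)=0$ and hence an equivariant map. Finally the equivariant map is promoted to an almost $r$-embedding by the $r$-fold Whitney trick; this works directly when the codimension $d-n=k$ is $\ge 3$ (Mabillard--Wagner \cite{MW15}), and for $k=2$, i.e.\ codimension $2$, one must instead use the codimension-$2$ refinement of Avvakumov--Mabillard--Skopenkov--Wagner \cite{AMSW}.

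For part (b), I would first reduce to complete skeleta: every $s$-hypergraph on $m$ vertices is a subhypergraph of $\Delta_{m-1}^{(s)}$, and an almost $r$-embedding of a complex restricts to every subhypergraph, so it suffices to almost-$r$-embed $\Delta_N^{(s)}$ into $\R^d$ for every $N$, where $d:=s+\lceil(s+3)/r\rceil$. The point of the bound is that $d$ is the least integer with $rd\ge(r+1)s+3$, which is exactly the metastable range where the deleted product criterion of step (1) applies; the $r$-fold van Kampen obstruction for $\Delta_N^{(s)}$ again lies in a single (top) equivariant cohomology group, is again killed by \"Ozaydin Theorem \ref{t:ozmawa}, and the $r$-fold Whitney trick --- now carried out with the borderline dimension bookkeeping of \cite{AKS19} --- converts the equivariant map into the desired almost $r$-embedding.

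The main obstacle is step (3), the \emph{$r$-fold Whitney trick}: converting the purely homological/equivariant vanishing of $o(K)$ into an actual geometric removal of all $r$-fold intersection points. Already for $r=2$ this is the classical Whitney trick, which forces a codimension-$\ge 3$ hypothesis; its $r$-fold version --- especially the codimension-$2$ incarnation needed for $k=2$ in part (a) and the tight dimension count needed in part (b) --- is the long technical core of \cite{MW15, AMSW, AKS19}. By contrast the deleted product reformulation and the application of \"Ozaydin's theorem are essentially formal, being the higher-multiplicity analogues of Proposition \ref{starpr-r} and Theorem \ref{vankamz-oz} respectively.
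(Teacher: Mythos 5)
For part (a) your argument is correct and is essentially the paper's own deduction: your steps (1)--(2) (deleted product, single top obstruction, killed by the coprime transfers $r!/p^{\alpha_{r,p}}$) are exactly what is packaged in Theorem \ref{t:ozmawa}, and your step (3) is Theorem \ref{t:mawa} (MW15 for codimension $\ge3$, AMSW for $k=2$; note $k+r\ge5$ holds automatically since $r\ge6$). The dimension count is clean here because $\dim \widetilde K^{r}\le r\cdot k(r-1)=d(r-1)$, i.e.\ exactly one above the sphere dimension, so only the primary obstruction exists.

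For part (b) there is a genuine gap, and it sits precisely at the place you call ``borderline dimension bookkeeping''. The paper does not prove (b) (it cites \cite{AKS19}), and your sketch asserts that for $\Delta_N^{(s)}$ ``the $r$-fold van Kampen obstruction again lies in a single (top) equivariant cohomology group''. That is false once $s$ is large compared with $r$: with $d=s+\lceil(s+3)/r\rceil$ one has $rs\le d(r-1)$ if and only if $s\le(r-1)\lceil(s+3)/r\rceil$, which holds for $s\le 3r-3$ but fails in general (e.g.\ $r=6$, $s=100$ gives $\dim\widetilde K^{r}=rs=600$ while $d(r-1)=590$). So the deleted product of an $s$-complex can exceed the ``primary obstruction dimension'' by arbitrarily much, and then there are higher obstructions with coefficients in the finite groups $\pi_i(S^{d(r-1)-1})$, $i>d(r-1)-1$. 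These are not single well-defined classes, and the coprimality trick does not dispose of them: multiplication by the Sylow indices $r!/p^{\alpha_{r,p}}$ is not zero on, say, $\Z_2$-coefficients. Hence ``Özaydin Theorem \ref{t:ozmawa}'' (which is the Tverberg-dimension, primary-obstruction statement, valid because there $rk(r-1)=d(r-1)$) cannot simply be quoted again.

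To close the gap you would need either the stronger form of \"Ozaydin's result --- existence of a $\Sigma_r$-equivariant map whenever equivariant maps exist for all Sylow subgroups, which is applicable here because for $r$ not a prime power no Sylow subgroup acts transitively on $[r]$, so each has a fixed point on $S^{d(r-1)-1}$ and a constant equivariant map exists --- or else the additional reductions carried out in \cite{AKS19} before the metastable theorem (the analogue of Theorem \ref{t:algal} with $rd\ge(r+1)s+3$) is applied. In other words, the passage from the Tverberg-dimension case (a) to the metastable case (b) is the real content of \cite{AKS19}, not routine; your step (1)--(2) for (b) does not supply it, while step (3) (applying the metastable $r$-fold Whitney trick at the borderline $rd=(r+1)s+3$) is fine once the equivariant map is in hand.
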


Part (a) follows from Theorems \ref{t:mawa} and \ref{t:ozmawa} below.

\begin{theorem}[\cite{MW16, AMSW, Sk17, Sk17o}]\label{t:algal}
For every fixed $k,d,r$ such that either $rd\ge(r+1)k+3$ or $d=2r=k+2\ne4$ there is a polynomial algorithm for checking PL almost $r$-embeddability of $k$-complexes in $\R^d$.
\end{theorem}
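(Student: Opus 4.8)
The plan is to mimic, in higher dimension and higher multiplicity, the deduction of Proposition \ref{grapl-ea}.b (and of Theorem \ref{grapl-ea-r}) from the van Kampen--Hanani--Tutte criterion \ref{t:pl-vk}, thereby also generalizing Theorem \ref{t:rec}.b, which is exactly the case $r=2$ (there ``$2d\ge(2+1)k+3$'' reads ``$d\ge\frac{3k+3}2$'', and ``$d=2r=k+2\ne4$'' is the excluded case $d=4=k+2$). Namely: first reduce PL almost $r$-embeddability to the vanishing of the $r$-fold van Kampen obstruction, then reduce that vanishing to the solvability of an explicit system of linear equations decidable in polynomial time.

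First I would fix $k,d,r$ and, given a $k$-hypergraph $K=(V,F)$ with $n:=|V|$, produce in time polynomial in $n$ a general position PL map $g$ of the relevant skeleton of $K$ to $\R^d$ --- e.g. place the vertices on the moment curve $t\mapsto(t,t^2,\dots,t^d)$ with polynomially bounded parameters and join faces linearly; for fixed $k$ there are $O(n^{k+1})$ faces, so all incidence and intersection data are listable in polynomial time. From $g$ one computes the $r$-fold intersection cocycle $\nu_g$ (the higher-dimensional analogue of the cochain of \S\ref{s:tvescz}) on the $(r-1)d$-skeleton $K^{\underline r}$ of the $r$-fold deleted product of $K$, which has $O(n^{r(k+1)})$ cells --- polynomial in $n$ for fixed $k,d,r$. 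The coefficients form the $\Z$-module twisted by the sign action of $\Sigma_r$ on the orientation of $(\R^d)^{r-1}$, so one works with $\Sigma_r$-equivariant (``super-symmetric'', cf. \S\ref{s:radcz}) cochains and coboundaries.

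The decisive topological input is a \emph{completeness} statement: in the metastable range $rd\ge(r+1)k+3$ the hypergraph $K$ admits a PL almost $r$-embedding in $\R^d$ if and only if $\nu_g$ is super-symmetrically cohomologous to zero. This is the $r$-fold Haefliger--Weber-type criterion obtained from the higher-dimensional Hanani--Tutte Theorem \ref{t:mawa} and its multiplicity-$r$ refinements \cite{MW16, AMSW}, which translate almost $r$-embeddability into the existence of a $\Sigma_r$-equivariant map of deleted products into a sphere, the unique obstruction to which is the class $[\nu_g]$. Granting this, deciding almost $r$-embeddability becomes deciding whether the integer vector $\nu_g$ lies in the column span of the (polynomial-size) incidence matrix of the equivariant coboundary; this is done by Smith normal form or Gaussian elimination over $\Z$ (and, when $r$ is not a prime power, one may additionally invoke the \"Ozaydin-type divisibility of \S\ref{s:tvescz} and Theorem \ref{t:ozmawa}, so that only a $\Z_r$- or finite-quotient computation remains). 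The one routine care point is bit-complexity: since $g$ has coordinates of polynomially bounded size, the intersection numbers and the Smith-form entries stay of polynomial bit-size.

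Finally, the exceptional range $d=2r=k+2\ne4$ lies exactly one step below the metastable bound, so the clean completeness dictionary above is not available off the shelf. Here I would invoke the finer results of \cite{Sk17, Sk17o} (in the spirit of Haefliger's deleted-product analysis and of the $\beta$-invariant of \cite{Sk07}): they show that in this case PL almost $r$-embeddability is still governed by a computable invariant, possibly together with one secondary obstruction, each again reducing to the same type of linear-algebra membership test over $\Z$ or a finite module. I expect the main obstacle to be exactly this completeness input --- establishing the ``obstruction vanishes iff almost $r$-embeddable'' equivalence and the algorithmic nature of the secondary invariant in the exceptional case --- whereas the passage from obstruction-vanishing to polynomial-time linear algebra, once one is in the metastable range, is routine and parallels \S\ref{0vkam2}.
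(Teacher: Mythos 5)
The paper does not actually prove this theorem; it is a survey statement cited to \cite{MW16, AMSW, Sk17, Sk17o}, with the single indication that for $(r-1)d=rk$ the result is deduced from Theorem \ref{t:mawa}. Your reconstruction is essentially correct in that special top-dimensional case, but as a proof of the full statement it has a genuine gap, and moreover you have inverted which case is the ``clean'' one.

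The gap is your completeness claim for the whole metastable range: you assert that for all $rd\ge(r+1)k+3$ almost $r$-embeddability is equivalent to the vanishing of the single class $[\nu_g]$, ``the unique obstruction'' to the equivariant map problem. This is only true when $rk\le(r-1)d$, because the deleted product $\tilde K^{\underline r}$ has dimension up to $rk$ while the target sphere is $S^{(r-1)d-1}$; once $rk>(r-1)d$ (which does occur inside the metastable range whenever $k>3(r-1)$, e.g.\ $r=2$, $k=6$, $d=11$, or $r=2$, $k=7$, $d=12$), higher obstructions involving $\pi_i(S^{(r-1)d-1})$ for $i\ge(r-1)d$ appear, and vanishing of the primary cohomological obstruction is not known to suffice. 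Accordingly your ``Smith normal form on one coboundary matrix'' step does not decide the problem there. The known proof in that range combines the metastable deleted-product criterion of \cite{MW16, Sk17} (almost $r$-embeddability $\Leftrightarrow$ existence of a $\Sigma_r$-equivariant map $\tilde K^{\underline r}\to S^{(r-1)d-1}$) with the algorithmic homotopy-theoretic machinery of \cite{CKV} for deciding existence of such equivariant maps in the stable range (which the metastable inequality does guarantee); this is exactly the pattern of the proof of Theorem \ref{t:rec}.b quoted in the paper, where the $d=2k$ subcase is obstruction-plus-linear-algebra but ``the general case is more complicated''. Also note that Theorem \ref{t:mawa} literally concerns only $k(r-1)$-hypergraphs in $\R^{kr}$, so it cannot serve as the topological input across the whole metastable range as you use it.

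Conversely, your treatment of the exceptional case $d=2r=k+2\ne4$ is backwards: this is precisely an instance of $(r-1)d=rk$ with codimension $2$, where Theorem \ref{t:mawa} (from \cite{AMSW, Sk17o}, with the hypothesis $k+r\ge5$ accounting exactly for the exclusion $d\ne4$) together with the completeness of the obstruction cocycle for the top-dimensional equivariant problem \cite{MW15} gives the clean dictionary ``$r$-fold intersection cocycle cohomologically trivial $\Leftrightarrow$ almost $r$-embeddable'', and there your polynomial-time integer linear algebra is the entire argument; no unspecified secondary obstruction is needed. So the correct division of labour is: linear algebra on the $r$-fold van Kampen obstruction when $(r-1)d=rk$ (including the exceptional case), triviality when $rk<(r-1)d$ within the metastable range, and the \cite{CKV}-type algorithm when $rk>(r-1)d$ within the metastable range — the last of these is what your proposal is missing.
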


For $(r-1)d=rk$ Theorem \ref{t:algal} was deduced in \cite{MW15, AMSW} from Theorem \ref{t:mawa} below.


For a version of Theorem \ref{t:nphh} with `embeddability' replaced by `almost 2-embeddability' see  \cite{ST17}.


We shall state the \"Ozaydin theorem \cite{Oz} (generalizing the \"Ozaydin Theorem in the plane \ref{vankamz-oz}) in the simplified form of Theorem \ref{t:ozmawa} below.
This is different from the standard form \cite[Theorem 3.3]{Sk16} but is equivalent to the standard form by a proposition of Mabillard-Wagner \cite[Proposition 3.4]{Sk16}.
For the statement we need the following definitions.

\begin{figure}[h]
\centerline{\includegraphics[width=8cm]{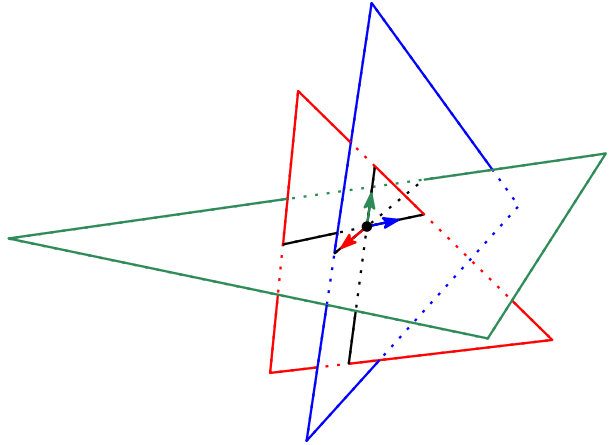}}
\caption{A 3-fold point and its 3-intersection sign}\label{f:gl3}
\end{figure}

Let $K$ be a $k(r-1)$-complex for some $k\ge1$, $r\ge2$, and $f\colon K\to \R^{kr}$ a PL map in general position.

Then preimages $y_1,\ldots,y_r\in K$ of any $r$-fold point $y\in\R^{kr}$ (i.e., of a point having
$r$ preimages) lie in the interiors of $k(r-1)$-dimensional simplices of $K$.
Choose arbitrarily an orientation for each of the $k(r-1)$-simplices.
By general position, $f$ is affine on a neighborhood $U_j$ of $y_j$ for each $j=1,\ldots,r$.
Take a positive basis of $k$ vectors in the oriented normal space to oriented $fU_j$.
{\it The $r$-intersection sign} of $y$ is the sign $\pm 1$ of the basis in $\R^{kr}$ formed by $r$ such $k$-bases.
See figs. \ref{f:sign} and \ref{f:gl3}.

This is classical for $r=2$, see \S\ref{0thint}, and is analogous for $r\ge3$, cf. \S\ref{s:triple}, \cite[\S2.2]{MW15}.

We call the map $f$ a {\bf $\Z$-almost $r$-embedding} if $f\sigma_1\cdot\ldots\cdot f\sigma_r=0$ whenever $\sigma_1,\ldots,\sigma_r$ are pairwise disjoint simplices of $K$.
Here the {\it algebraic $r$-intersection number}  $f\sigma_1\cdot\ldots\cdot f\sigma_r\in\Z$ is defined as the sum of the $r$-intersection signs of all $r$-fold points $y\in f\sigma_1\cap\ldots\cap f\sigma_r$.
The sign of $f\sigma_1\cdot\ldots\cdot f\sigma_r$ depends on an arbitrary choice of orientations for each $\sigma_i$ and on the order of $\sigma_1,\ldots,\sigma_r$, but the condition $f\sigma_1\cdot\ldots\cdot f\sigma_r=0$ does not.
See fig. \ref{f:gl2} for $r=2$.

Clearly, an almost $r$-embedding is a $\Z$-almost $r$-embedding.

\begin{theorem}[\cite{MW15, AMSW}; cf. Theorems \ref{vkam-z2} and \ref{vkam-z2-r}]\label{t:mawa}
If $k\ge2$, $k+r\ge5$ and there is a $\Z$-almost $r$-embedding of a $k(r-1)$-complex $K$ in $\R^{kr}$, then there is an almost $r$-embedding of $K$ in $\R^{kr}$.
\end{theorem}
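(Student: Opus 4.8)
The plan is to follow the classical van Kampen-type deletion/duplication scheme, in the $r$-fold equivariant form developed by Mabillard--Wagner, and to cite the main technical ingredient (the $r$-fold Whitney trick) rather than reprove it. The statement is exactly the codimension-$k$, $r$-fold analogue of the Hanani--Tutte Theorems \ref{vkam-z2} and \ref{vkam-z2-r}: a homological/algebraic vanishing condition (the intersection cocycle is a $\Z$-almost $r$-embedding) implies the geometric condition (an actual almost $r$-embedding exists). First I would recast both conditions in terms of the $r$-fold \emph{deleted product} $K^{\underline r}_\Delta$ (the subcomplex of pairwise-disjoint $r$-tuples of simplices of $K$) with its free $\Sigma_r$-action, and the associated equivariant obstruction in $H^{k(r-1)}_{\Sigma_r}(K^{\underline r}_\Delta;\,\mathcal Z)$ to the existence of an equivariant map $K^{\underline r}_\Delta\to S(W_r^{\oplus k})$, where $W_r$ is the standard $(r-1)$-dimensional representation. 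The algebraic $r$-intersection numbers $f\sigma_1\cdot\ldots\cdot f\sigma_r$ of a general position PL map $f$ are exactly the values of the obstruction cocycle (this is the $r$-fold Mabillard--Wagner version of the intersection-cocycle constructions of \S\S\ref{s:radcz}--\ref{s:tvescz}), so the hypothesis "$f$ is a $\Z$-almost $r$-embedding'' says precisely that this obstruction cocycle vanishes on top-dimensional cells, hence, since $\dim K^{\underline r}_\Delta = k(r-1)$, that the whole obstruction class vanishes and an equivariant map exists.

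Next I would run the converse direction: from an equivariant map $\varphi\colon K^{\underline r}_\Delta\to S(W_r^{\oplus k})$ produce an almost $r$-embedding. This is the genuinely hard step and is where the dimension hypotheses $k\ge 2$ and $k+r\ge 5$ enter. I would invoke the $r$-fold Whitney trick of Mabillard--Wagner: if $f\colon K\to\R^{kr}$ is a general position PL map and $\sigma_1,\dots,\sigma_r$ are pairwise disjoint $k(r-1)$-simplices with $f\sigma_1\cdot\ldots\cdot f\sigma_r=0$, then (in codimension $k\ge 3$, and in the boundary case $k=2,r\ge 3$ by the work of Avvakumov--Mabillard--Skopenkov--Wagner quoted as \cite{AMSW}) one can locally modify $f$ near these simplices to remove all $r$-fold points among their images, without creating new $r$-fold points among other $r$-tuples. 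Starting from any general position PL realization of the vanishing class, one cancels the $r$-fold intersection points of each bad $r$-tuple one at a time; finiteness of the simplex count guarantees termination, and the result is an almost $r$-embedding. The constraint $k+r\ge 5$ is exactly what makes the cancellation region (of dimension $\le 2k(r-1)-kr+1 = k(r-2)+1$ in a $(kr)$-manifold) embeddable and the finger-move/Whitney-disk argument go through; the case $k=2$, $r=2$ (which is excluded) is the unsolved "$2$-dimensional'' Whitney-trick obstruction.

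The main obstacle, as indicated, is the $r$-fold Whitney trick itself: unlike the classical $r=2$ case, one needs to cancel $r$-fold intersection points by finding an equivariant analogue of a Whitney disk, and this requires both the codimension hypothesis $k\ge 2$ and the metastable-range inequality $k+r\ge 5$; its proof is the substantial content of \cite{MW15} (and \cite{AMSW} for the boundary case), so I would quote it rather than reconstruct it. Everything else — identifying the algebraic $r$-intersection numbers with an obstruction cocycle, and using $\dim K^{\underline r}_\Delta=k(r-1)$ to upgrade cocycle-vanishing to class-vanishing to existence of an equivariant map — is the formal equivariant-obstruction-theory bookkeeping already illustrated in \S\ref{s:tvkam} in the planar case, and I would present it by direct analogy with Lemma \ref{star-r} and Proposition \ref{starpr-r}.
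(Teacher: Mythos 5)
The survey itself gives no proof of Theorem \ref{t:mawa}: it is quoted from \cite{MW15, AMSW}, and the genuine content is the higher-multiplicity Whitney trick that you also propose to quote. With that understood, the operative part of your proposal is your second paragraph, and it does match the cited argument: the hypothesis already hands you a general position PL map $f$ with $f\sigma_1\cdot\ldots\cdot f\sigma_r=0$ for every $r$-tuple of pairwise disjoint $k(r-1)$-simplices, and the proofs in \cite{MW15} (codimension $k\ge3$) and \cite{AMSW} (the boundary case $k=2$, $r\ge3$, which is where $k+r\ge5$ bites) cancel, for one such $r$-tuple at a time, the isolated $r$-fold points in oppositely signed pairs, checking that the local modifications do not spoil the remaining $r$-tuples.

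Your first paragraph, however, is an unnecessary detour and is not correct as written. No equivariant obstruction theory is needed here: the hypothesis is the literal vanishing of the intersection numbers for a specific map, not merely the vanishing of a cohomology class, and your round trip (cocycle vanishes $\Rightarrow$ class vanishes $\Rightarrow$ equivariant map exists $\Rightarrow$ almost $r$-embedding) silently requires, on the way back, realizing coboundaries geometrically by finger moves; that is the content of the different, metastable-range deleted product criterion of \cite{MW16}, not of Theorem \ref{t:mawa}. The bookkeeping there is also off: for maps of a $k(r-1)$-dimensional hypergraph to $\R^{kr}$ the relevant equivariant target is the sphere of dimension $kr(r-1)-1$ (i.e.\ $S(W_r^{\oplus kr})$, not $S(W_r^{\oplus k})$), and the $r$-fold deleted product has dimension $kr(r-1)$, not $k(r-1)$, so the obstruction lives in top degree $kr(r-1)$. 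Finally, the excluded case $k=r=2$ is not ``unsolved'': by Freedman--Krushkal--Teichner \cite{FKT} (cf.\ Remarks \ref{r:ae}.b and \ref{r:me-fkt}) the corresponding implication fails for $2$-complexes in $\R^4$, which is exactly why the hypothesis $k+r\ge5$ cannot be dropped. If you strip the first paragraph and state the cancellation procedure directly on the given $\Z$-almost $r$-embedding, quoting the $r$-fold Whitney trick of \cite{MW15, AMSW}, your write-up coincides with the intended (cited) proof.
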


\begin{theorem}[cf. {\cite[Theorem 4]{AKS}, \cite[Theorem 5.1]{AK19}}]\label{t:ozmawa} If $r$ is not a prime power and $k\ge2$, then there is a $\Z$-almost $r$-embedding of any  $k(r-1)$-complex in $\R^{kr}$.
\end{theorem}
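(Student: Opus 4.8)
The plan is to construct the desired $\Z$-almost $r$-embedding by obstruction theory, reducing the statement to a statement about the $r$-fold intersection cocycle and then invoking the higher-dimensional \"Ozaydin machinery in exactly the form already packaged in the plane case. First I would recall the combinatorial framework: for a $k(r-1)$-hypergraph $K$ and a general position PL map $f:K\to\R^{kr}$, the function assigning the algebraic $r$-intersection number $f\sigma_1\cdot\ldots\cdot f\sigma_r$ to each ordered $r$-tuple of pairwise disjoint $k(r-1)$-simplices is the \emph{$r$-fold intersection cocycle} of $f$; this is the higher-dimensional analogue of the cocycle defined in \S\ref{s:tvescz}. As in the plane case (the $r$-fold analogues of Lemma \ref{star-r} and Proposition \ref{elcob-r}), the cohomology class of this cocycle in the appropriate equivariant cohomology group $H^{kr}_{\Sigma_r}$ of the simplicial $r$-fold deleted product of $K$ does not depend on $f$, and it vanishes if and only if $K$ admits a $\Z$-almost $r$-embedding in $\R^{kr}$. (This last equivalence is the $\Z$-coefficient analogue of the Mabillard--Wagner identification of the intersection cocycle with the obstruction cocycle; it holds because, in codimension $k\ge2$ and above the metastable-range bound implicit here, the single obstruction to deforming a general position PL map so that the algebraic $r$-intersection numbers all vanish is precisely this cohomology class — this is standard equivariant obstruction theory once one checks the relevant connectivity of the target sphere $S^{k(r-1)-1}$, which holds for $k\ge2$.)

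Next I would invoke the \"Ozaydin-type triviality statement. The key input is Theorem \ref{t:ozmawa}'s companion fact, i.e. the higher-dimensional \"Ozaydin theorem in the form analogous to Theorem \ref{vankamz-oz} (together with Proposition \ref{vankamz-oz3}): when $r$ is not a prime power, for the \emph{complete} $k(r-1)$-hypergraph $\Delta^{k(r-1)}_{(kr+2)(r-1)}$ — the universal case — the $r$-fold intersection cocycle of any general position PL map to $\R^{kr}$ is cohomologically trivial. The reason, exactly as spelled out after Theorem \ref{vankamz-oz}, is that for each prime $p<r$ there is a Sylow-$p$-subgroup $G_p\subset\Sigma_r$ of index $r!/p^{\alpha_{r,p}}$, and on the level of the $G_p$-equivariant problem the obstruction dies (one can extend $G_p$-equivariantly because the relevant configuration space is highly connected); a transfer argument shows the $\Sigma_r$-obstruction multiplied by $r!/p^{\alpha_{r,p}}$ is trivial, and since $r$ is not a prime power these integers $\{r!/p^{\alpha_{r,p}}\}_{p<r}$ are coprime, forcing the obstruction itself to be zero.

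Then I would deduce the statement for an arbitrary $k(r-1)$-hypergraph $K$ by naturality: $K$ is a subhypergraph of a complete $k(r-1)$-hypergraph on enough vertices, and the $r$-fold intersection cocycle is natural with respect to inclusions of hypergraphs (restriction of a general position PL map restricts the cocycle). Hence triviality for the complete hypergraph pulls back to triviality for $K$; by the obstruction-theoretic equivalence recalled above, $K$ admits a $\Z$-almost $r$-embedding in $\R^{kr}$. The one point requiring care is that the ambient complete hypergraph needs to have at least $(kr+2)(r-1)+1$ vertices for the relevant cohomology to be the "universal" one — but since $K$ is finite we may always enlarge the vertex set, and adding isolated vertices does not change $|K|$ nor obstruct anything, so this is harmless.

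The main obstacle I expect is the obstruction-theoretic equivalence between vanishing of the $\Z$-coefficient intersection cocycle's cohomology class and existence of a genuine $\Z$-almost $r$-embedding — in other words, verifying that the cocycle is a \emph{complete} obstruction with integer coefficients. For $r=2$ this is classical (the Haefliger--Weber / van Kampen theory), and the Mabillard--Wagner work (Theorem \ref{t:mawa}, and the discussion after Theorem \ref{vankamz-oz}) supplies the $r$-fold version; the delicate part is the dimension/connectivity bookkeeping ensuring we are in the range where the single primary obstruction is the only one (this is where $k\ge2$ enters crucially, paralleling the role of $k\ge2$ in Theorem \ref{t:mawa}). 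Everything else — naturality, the coprimality arithmetic, the transfer argument — is routine once that foundation is in place, and indeed the cited references \cite[Theorem 4]{AKS19} and \cite[Theorem 5.1]{AK19} carry it out.
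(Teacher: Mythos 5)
Your proposal is correct and takes essentially the same route as the paper, which presents Theorem \ref{t:ozmawa} as the simplified form of \"Ozaydin's theorem \cite{Oz} and obtains it from the standard equivariant-map form via the Mabillard--Wagner identification of the $r$-fold intersection cocycle with the complete obstruction to a $\Z$-almost $r$-embedding for $k\ge2$ (\cite[Proposition 3.4]{Sk16}, \cite{MW15}) --- precisely your combination of obstruction-completeness plus the Sylow/transfer/coprimality vanishing. Only your glosses need correcting, though they do not affect the skeleton since those steps are cited rather than proved: the target sphere is $S^{kr(r-1)-1}$, not $S^{k(r-1)-1}$; the $G_p$-obstruction dies not because of connectivity but because $G_p$ cannot act transitively on $[r]$ when $r$ is not a power of $p$, so the sphere has a $G_p$-fixed point (this is exactly where the prime-power hypothesis enters, and connectivity alone could not suffice, as the complex exceeds the sphere's dimension by one); $k\ge2$ is a codimension condition needed to realize elementary coboundaries by local (finger-move) modifications, not a connectivity or metastable-range condition; and the detour through the complete hypergraph on $(kr+2)(r-1)+1$ vertices is unnecessary, since the transfer argument applies to any $k(r-1)$-hypergraph $K$ directly.
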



{\bf Books, surveys and expository papers in this list are marked by the stars.}

\end{document}